\pgfplotsset{compat=1.16}
\theoremstyle{plain} 
\newcommand*{\haus}{\cal{H}}
\DeclareMathOperator{\Obs}{Obs}
\newcommand*\bigcdot{\mathpalette\bigcdot@{.7}}
\newcommand*\bigcdot@[2]{\mathbin{\vcenter{\hbox{\scalebox{#2}{$\m@th#1\,\bullet\,$}}}}}
\newcommand*{\obs}{{\textup{obs}}}
\numberwithin{equation}{section}
\title[Observability of the Schrödinger equation with confining potential]{Observability of the Schrödinger equation\\with subquadratic confining potential\\in the Euclidean space\\}
\author{Antoine Prouff}
\address{Université Paris-Saclay, CNRS, Laboratoire de Mathématiques d'Orsay, 91405, Orsay, France.}
\email{antoine.prouff@universite-paris-saclay.fr}
\theoremstyle{definition}
\newcommand*{\N}{\mathbf{N}}
\newcommand*{\Z}{\mathbf{Z}}
\newcommand*{\Q}{\mathbf{Q}}
\newcommand*{\R}{\mathbf{R}}
\newcommand*{\eps}{\varepsilon}
\newcommand*{\cal}[1]{{\mathcal{#1}}}
\renewcommand*{\frak}[1]{{\mathfrak{#1}}}
\renewcommand*{\rm}[1]{{\mathrm{#1}}}
\newcommand*{\ovl}[1]{\overline{#1}}
\newcommand*{\one}{\mathbf{1}}
\newcommand*{\sph}{\mathbf{S}}
\newcommand*{\tor}{\mathbf{T}}
\newcommand*{\e}{e}	
\newcommand*{\ii}{i} 	
\newcommand*{\dd}{\mathop{}\mathopen{} d}
\newcommand*{\cont}{C}
\newcommand*{\sch}{{\mathcal S}}
\DeclarePairedDelimiterX{\brak}[2]{\langle}{\rangle}{#1, #2}
\DeclarePairedDelimiterX{\inp}[2]{(}{)}{#1, #2}
\DeclarePairedDelimiterX{\comm}[2]{[}{]}{#1, #2}
\DeclarePairedDelimiterX{\poiss}[2]{\{}{\}}{#1, #2}
\DeclarePairedDelimiterX{\liebrak}[2]{[}{]}{#1, #2}
\DeclarePairedDelimiter{\parens}{(}{)}
\DeclarePairedDelimiter\abs{\lvert}{\rvert}
\DeclarePairedDelimiter\norm{\lVert}{\rVert}
\DeclarePairedDelimiter\jap{\langle}{\rangle}
\newcommand*{\comp}{c}
\DeclareMathOperator{\dom}{Dom}
\DeclareMathOperator{\sign}{sign}
\DeclareMathOperator{\supp}{supp}
\DeclareMathOperator{\dist}{dist}
\DeclareMathOperator{\im}{Im}
\newcommand*{\lap}{\Delta} 
\DeclareMathOperator{\hess}{Hess}
\newcommand*{\sympf}{\sigma}
\DeclareMathOperator{\ran}{Ran}
\DeclareMathOperator{\quantization}{Op}
\newcommand*{\Op}[2][]{{\quantization}_{#1}\hspace{-0.075em}\parens*{#2}}
\newcommand*{\id}{\mathrm{Id}}
\newcommand*{\strongto}[2][]{\xrightarrow[#2]{#1}}
\renewcommand*{\set}[2]{\left\{#1:#2\right\}}
\newcommand*{\moyal}{\mathbin{\#}}
\newlength\oversetwidth
\newlength\underwidth
\begin{document}
\frontmatter
\begin{abstract}
We consider the Schrödinger equation in $\mathbf{R}^d$, $d \ge 1$, with a confining potential growing at most quadratically. Our main theorem characterizes open sets from which observability holds, provided they are sufficiently regular in a certain sense. The observability condition involves the Hamiltonian flow associated with the Schrödinger operator under consideration. It is obtained using semiclassical analysis techniques. It allows to provide with an accurate estimation of the optimal observation time. We illustrate this result with several examples. In the case of two-dimensional harmonic potentials, focusing on conical or rotation-invariant observation sets, we express our observability condition in terms of arithmetical properties of the characteristic frequencies of the oscillator.
\end{abstract}

 \renewcommand{\subjclassname}{\textup{2020} Mathematics Subject Classification}
\subjclass{35J10, 35Q40, 35S05, 47D08, 81Q20, 81S30, 93B07}
\keywords{Schrödinger equation, observability, Schrödinger group, semiclassical analysis, quantum-classical correspondence, harmonic oscillators}

\thanks{This project originated and was initially developed while visiting Universidad Politécnica de Madrid during the academic year 2020-2021 and the spring 2022. I thank this institution for its hospitality. I also thank Fabricio Macià for introducing me to this problem and for sharing many ideas and material, including unpublished works with Shu Nakamura~\cite{MaciaNakamura:pcomm}. I am also grateful to Matthieu Léautaud for countless discussions and for his helpful comments on a preliminary version of this paper. This work has been partially supported by FMJH's ``Junior scientific visibility program" (France) and grant MTM2017-85934-C3-3-P (MINECO, Spain).}
\maketitle
{
\hypersetup{linkcolor=black}
\tableofcontents
}

\mainmatter


\section{Introduction and main results}

We are concerned with the observability of the Schrödinger equation with a confining potential in the Euclidean space:
\begin{equation} \label{eq:Schrodinger}
\ii \partial_t \psi = P \psi \,, \qquad P = V(x) - \tfrac{1}{2} \lap \,, \qquad t \in \R, x \in \R^d,
\end{equation}
where $V$ is a real-valued potential, bounded from below. Specific assumptions shall be stated below. The general problem reads as follows: we wonder which measurable sets $\omega \subset \R^d$ and times $T > 0$ satisfy
\begin{equation} \label{eq:obsineq}
\exists C > 0 : \forall u \in L^2(\R^d) \,, \qquad \norm*{u}_{L^2(\R^d)}^2 \le C \int_0^T \norm*{\e^{- \ii t P} u}_{L^2(\omega)}^2 \dd t \,.
\tag*{$\mathrm{Obs}(\omega, T)$}
\end{equation}
When this property $\Obs(\omega, T)$ is true, we say that the Schrödinger equation~\eqref{eq:Schrodinger} is observable from $\omega$ in time $T$, or that $\omega$ observes the Schrödinger equation. The question consists in finding conditions on the pair $(\omega, T)$ ensuring that one can recover a fraction of the mass of the initial data $u$, by observing the solution $\psi(t) = \e^{-\ii t P} u$ of~\eqref{eq:Schrodinger} in $\omega$ during a time $T$. We will often call $\omega$ the observation set and $T$ the observation time. As for the constant $C$ in the inequality, we will refer to it as the observation cost throughout the text. When an observation set $\omega$ is fixed, the infimum of times $T > 0$ such that $\Obs(\omega, T)$ holds is called the optimal observation time, and is denoted by $T_\star = T_\star(\omega)$. It is clear that this so-called observability inequality holds for $\omega = \R^d$ in any time $T > 0$. This is because the propagator solving the Schrödinger equation $\e^{- \ii t P}$ is an isometry  on $L^2(\R^d)$.\footnote{Another consequence of this is that the condition $\Obs(\omega, T)$ is ``open" with respect to $T$: if $\Obs(\omega, T)$ is true with cost $C > 0$, then $\Obs(\omega, T - \eps)$ is true as soon as $\eps < 1/C$. See Lemma~\ref{lem:obsopenintime} in the appendix for a precise statement.} But from the viewpoint of applications, one would like to find the smallest possible observation sets and the corresponding optimal times for which the observability inequality holds.

The observability question for Schrödinger-type equations has been extensively investigated over the past decades, mainly in compact domains of $\R^d$ or compact Riemannian manifolds. See the surveys of Laurent~\cite{Laurent14} or Macià~\cite{Macia:15} for an overview. In a compact Riemannian manifold, Lebeau showed that the so-called Geometric Control Condition (introduced for the wave equation in~\cite{RT:74,BLR:92}) is sufficient to get observability of the Schrödinger equation in any time $T > 0$~\cite{Leb:92}. This means that all billiard trajectories have to enter the observation set in finite time. See for instance Phung~\cite{Phung:01} for later developments in Euclidean domains. However, works by Haraux~\cite{Har:89plaque} and Jaffard~\cite{JaffardPlaques} on the torus show that this condition is not always necessary. Since then, considerable efforts have been made to find the good geometric condition characterizing the observability of the Schrödinger equation, depending on the geometrical context. This question is closely related to that of understanding the concentration or delocalization of Laplace eigenfunctions or quasimodes, which rule the propagation of states through the Schrödinger evolution; see~\cite{BZ:04}. The latter properties are linked to the behavior of the underlying classical dynamics, which is supposed to drive the quantum dynamics at high frequency. In the literature, mainly two different dynamical situations have been investigated. On the one hand, complete integrability, meaning existence of many conserved quantities, usually features symmetries that result in high multiplicity in the spectrum at the quantum level. This allows for possible concentration of eigenfunctions. On the other hand, chaotic systems, epitomized by the geodesic flow of negatively curved Riemannian manifolds, go along with strong instability properties. For instance, quantum ergodicity states that most\footnote{In fact, the situation is more complicated due to the possible existence of a sparse subsequence of eigenmodes concentrating around unstable closed classical trajectories---a phenomenon known as scarring.} Laplace eigenfunctions are delocalized on manifolds with ergodic geodesic flow. Here we collect a non-exhaustive list of references illustrating this diversity of situations. On the torus, observability was investigated by several authors. In addition to~\cite{Har:89plaque,JaffardPlaques}, let us mention Burq and Zworski~\cite{BZ:04,BZ:12,BZ:19}, Bourgain, Burq and Zworski~\cite{BBZ14}, Macià~\cite{MaciaTorus}, as well as Anantharaman and Macià~\cite{AMSurvey,AM:14}. General completely integrable systems were studied by Anantharaman, Fermanian Kammerer and Macià~\cite{AFKM:15}. As for the disk, the question of characterizing open sets from which observability holds was solved by Anantharaman, Léautaud and Macià~\cite{ALM:16cras,ALM:16}. Macià and Rivière thoroughly described what happens on the sphere and on Zoll manifolds~\cite{MR:16,MR:19}. In the negatively curved setting, we refer to Anantharaman~\cite{An:08}, Anantharaman and Rivière~\cite{AR:12}, Eswarathasan and Rivière~\cite{ER:17}, Dyatlov and Jin~\cite{DJ:17}, Jin~\cite{Jin:18} and recently Dyatlov, Jin and Nonnenmacher~\cite{DJN:22}. See also Privat, Trélat and Zuazua~\cite{PTZ:14} in connection with quantum ergodicity.

Recently, there has been a growing interest in the question of observability for the Schrödinger equation in the Euclidean space, for which new difficulties arise due to the presence of infinity in space. Täufer~\cite{Taufer:22} deals with the observability of the free Schrödinger equation in $\R^d$, showing that it is observable from any non-empty periodic open set in any positive time. It relies on the Floquet-Bloch transform and the theory of lacunary Fourier series. It was later generalized by Le Balc'h and Martin~\cite{LBM:23} to the case of periodic measurable observation sets with a periodic $L^\infty$ potential, in dimension $2$. In~\cite{HWW:22}, Huang, Wang and Wang characterize measurable sets for which the Schrödinger equation~\eqref{eq:Schrodinger} is observable, in dimension $d = 1$ when $V(x) = \abs*{x}^{2m}$, $m \in \N$. They prove that, in the case where $m = 1$ (resp. $m \ge 2$), one has observability from $\omega \subset \R$ in some time (resp. in any time) if and only if
\begin{equation} \label{eq:weakthickness}
\liminf_{x \to + \infty} \dfrac{\abs*{\omega \cap [-x, x]}}{\abs*{[-x, x]}} > 0 \,,
\end{equation}
where $\abs*{ \bigcdot }$ is the one-dimensional Lebesgue measure. Such a set is called ``weakly thick". Simultaneously, Martin and Pravda-Starov~\cite{MartinPravda-Starov:21} provided a generalization of this condition in dimension $d$ which turns out to be necessary if $d \ge 1$ and sufficient if $d = 1$ for observability to hold, in the case of the fractional harmonic Schrödinger equation, namely equation~\eqref{eq:Schrodinger} with $P = ( - \lap + \abs*{x}^2 )^s$, where $s \ge 1$. In the particular cases of potentials or operators discussed above, the techniques that are used, mainly relying on abstract harmonic analysis tools, provide very strong results. However, it seems that more general potentials remain out of reach, since the arguments involved require the knowledge of precise spectral estimates on eigenvalues and eigenfunctions, explicit asymptotics and symmetry properties. Moreover, regarding the case of the harmonic oscillator, the existing results focus on the properties of the sets for which observability holds, but given such a set, they do not give a hint of what would be the minimal time for which the observability inequality holds. In fact they provide an upper bound for this optimal time independent of the open set, corresponding to half a period of the classical harmonic oscillator. But it is reasonable to think that this upper bound can be improved taking into account the geometry of the observation set.

\subsection*{Motivations, assumptions and notation}

The present work aims to address the issues discussed above, namely:
\begin{enumerate} [label=\alph*)]
\item find a robust method to prove that the Schrödinger equation is observable from a given set with less restrictions on the dimension or the potential (e.g.\ variations of the harmonic potential like $x \cdot Ax$ where $A$ is a real symmetric positive-definite $d \times d$  matrix, or potentials of the form $\jap*{x}^{2m}$ with $m > 0$ a real number);
\item provide a more accurate upper bound for the optimal observation time depending on the shape of the observation set.
\end{enumerate}
Throughout this work, we make the following assumptions on the potential:
\begin{enonce}{Assumption} \label{assum:V}
The potential $V$ is $C^\infty$ smooth and satisfies
\begin{align}
\exists m > 0, \exists C, r > 0 : \forall \abs*{x} \ge r \qquad
	\dfrac{1}{C} \jap*{x}^{2m} \le V(x) \le C \jap*{x}^{2m} \,, \label{eq:assumgrowthV} \\
\forall \alpha \in \N^d, \exists C_\alpha > 0 : \forall x \in \R^d, \qquad
	\abs*{\partial^\alpha V(x)} \le C_\alpha \jap*{x}^{2m - \abs*{\alpha}} \,. \label{eq:assumgrowthderivativesV}
\end{align}
Unless stated otherwise, we assume that the potential is subquadratic, namely $0 < m \le 1$.
\end{enonce}
Throughout the article, we shall refer to the left-hand side inequality in~\eqref{eq:assumgrowthV} by saying that the potential is \emph{elliptic}. In addition, the notion of \emph{principal symbol} that we will use is made clear below.

\begin{defi}[Principal symbol] \label{def:principalsymbol}
Let $V_0$ and $V$ be two potentials satisfying Assumption~\ref{assum:V} above with a power $m > 0$. We say that $V_0$ and $V$ have the same principal symbol if
\begin{equation*}
\forall \alpha \in \N^d, \exists C_\alpha > 0 : \forall x \in \R^d, \qquad
	\abs*{\partial^\alpha (V - V_0)(x)} \le C_\alpha \jap*{x}^{2m - 1 - \abs*{\alpha}} \,.
\end{equation*}
This defines an equivalence relation. The equivalence class of such a potential $V$ is called the principal symbol of $V$.
\end{defi}

Classical spectral theory arguments ensure that the operator $V(x) - \tfrac{1}{2} \lap$ with domain $C_c^\infty(\R^d)$ is essentially self-adjoint (from now on, its closure will be denoted by $P$) and that the evolution problem~\eqref{eq:Schrodinger} on $L^2(\R^d)$ is well-posed. In fact, most of our results will depend only on the principal symbol of $V$, namely they will not depend on perturbations of the potential of order $\jap{x}^{2m - 1}$.

Our strategy emphasizes the role of the underlying classical dynamics ruling the evolution of high-energy solutions to the Schrödinger equation~\eqref{eq:Schrodinger}, by means of the so-called quantum-classical correspondence principle. This motivates the introduction of the symbol of the operator $P$, defined by
\begin{equation*}
p(x, \xi)
	:= V(x) + \frac{\abs*{\xi}^2}{2} \,, \qquad (x, \xi) \in \R^{2d} \,.
\end{equation*}
This is a smooth function on the phase space $\R^{2d} \simeq \R_x^d \times \R_\xi^d$, tending to $+ \infty$ as $(x, \xi) \to \infty$, since the potential is elliptic. Throughout this text, typical phase space points will be denoted by $\rho = (x, \xi)$, and we will sometimes use the notation $\pi : \R^{2d} \to \R^d$ for the projection $(x, \xi) \mapsto x$. We will often refer to $p$ as the classical Hamiltonian, and to its quantization $P$ as the quantum Hamiltonian.  The Hamiltonian flow $(\phi^t)_{t \in \R}$ on $\R^{2d}$, which preserves $p$, is defined as the flow generated by the Hamilton equation:
\begin{equation} \label{eq:defHamiltonianflow}
\dfrac{\dd}{\dd t} \phi^t(\rho) = J \nabla p\left(\phi^t(\rho)\right) \,, \qquad \phi^0(\rho) = \rho \,.
\end{equation}
It is well-defined for all times under our assumptions. Here $J = \begin{pmatrix} 0 & I_d \\ - I_d & 0 \end{pmatrix}$ is the symplectic matrix. Introducing $(x^t, \xi^t) = \phi^t(\rho)$ the position and momentum components of the flow, this can be rewritten as
\begin{equation} \label{eq:defHamiltonianflowcoord}
\left\{
\begin{aligned}
\dfrac{\dd}{\dd t} x^t &= \xi^t \\
\dfrac{\dd}{\dd t}\xi^t &= - \nabla V(x^t)
\end{aligned}
\right. \,, \qquad (x^0, \xi^0) = \rho \,.
\end{equation}
In the sequel, we will refer to the $x$-component of a trajectory of the Hamiltonian flow as a \emph{projected trajectory}.

\subsection{Main result}

Let us insist on the fact that the result below applies for confining potentials having a subquadratic growth, i.e.\ $0 < m \le 1$. We will explain later why we restrict ourselves to this case. Throughout the article, the open ball of radius $r$ centered at $x \in \R^d$ is denoted by $B_r(x)$. Our main result reads as follows.

\begin{theo} \label{thm:main}
Let $V_0$ and $V$ be potentials on $\R^d$ satisfying Assumption~\ref{assum:V} with some $m \in (0, 1]$, having the same principal symbol. Set $P = V(x) - \tfrac{1}{2} \Delta$ and denote by $\e^{- \ii t P}$ the propagator solving the Schrödinger equation
\begin{equation*}
\ii \partial_t \psi = P \psi \,.
\end{equation*}
Also denote by $(\phi_0^t)_{t \in \R}$ the Hamiltonian flow associated with the symbol $p_0(x, \xi) = V_0(x) + \frac{1}{2} \abs{\xi}^2$. For any Borel set $\omega \subset \R^d$, define for any $R > 0$ the thickened set
\begin{equation*}
\omega_R = \bigcup_{x \in \omega} B_R(x) \,,
\end{equation*}
and introduce for any $T > 0$ the classical quantity\footnote{The integral makes sense when $\omega$ is Borel. Indeed, the map $(t, \rho) \mapsto \one_{\omega \times \R^d}(\phi_0^t(\rho))$ is then Lebesgue-measurable, so that the same is true for $t \mapsto \one_{\omega \times \R^d}(\phi_0^t(\rho))$ when $\rho$ is fixed. Tonelli's theorem~\cite[Theorem 4.2.5]{Lerner:bookintegration} then shows that the map $\rho \mapsto \int_0^T \one_{\omega \times \R^d}(\phi_0^t(\rho)) \dd t$ is Lebesgue-measurable.}
\begin{equation*}
\frak{K}_{p_0}^\infty(\omega, T)
	= \liminf_{\rho \to \infty} \int_0^T \one_{\omega \times \R^d}\left(\phi_0^t(\rho)\right) \dd t
	= \liminf_{\rho \to \infty} \abs*{\set{t \in (0, T)}{(\pi \circ \phi_0^t)(\rho) \in \omega}} \,.
\end{equation*}
Fix a Borel set $\omega \subset \R^d$. Then the following two assertions hold:
\begin{enumerate}[label=(\roman*)]
\item (Sufficient condition) Assume there exists $T_0 > 0$ such that
\begin{equation} \label{eq:dynamicalcondition}
\frak{K}_{p_0}^\infty := \frak{K}_{p_0}^\infty(\omega, T_0) > 0 \,.
\end{equation}
Then there exists a constant $L = L(d, T_0, p_0, p) > 0$ such that for $R = \frac{L}{\frak{K}_{p_0}^\infty}$, for any compact set $K \subset \R^d$ and any $T > T_0$, $\Obs(\omega_R \setminus K, T)$ is true, namely:
\begin{equation*}
\exists C > 0 : \forall u \in L^2(\R^d) \,, \qquad
    \norm*{u}_{L^2(\R^d)}^2 \le C \int_0^T \norm*{\e^{- \ii t P} u}_{L^2(\omega_R \setminus K)}^2 \dd t \,.
\end{equation*}
\item (Necessary condition) Assume there exists a time $T > 0$ such that $\Obs(\omega, T)$ is true with cost $C_\obs > 0$, that is to say
\begin{equation} \label{eq:obscost}
    \forall u \in L^2(\R^d) \,, \qquad
    \norm*{u}_{L^2(\R^d)}^2 \le C_\obs \int_0^T \norm*{\e^{- \ii t P} u}_{L^2(\omega)}^2 \dd t \,.
\end{equation}
Then there exists a constant $c = c(d, T, p_0, p)$ such that for any $R \ge 1$ and any compact set $K \subset \R^d$, it holds:
\begin{equation*}
\frak{K}_{p_0}^\infty(\omega_R \setminus K, T)
	\ge \dfrac{1}{C_\obs} - c \dfrac{\jap{\log R}^{1/2}}{R} \,.
\end{equation*}
\end{enumerate}
\end{theo}

The rest of the introduction is organized as follows: in Subsection~\ref{subsec:ideaofproof}, we comment on Theorem~\ref{thm:main} and describe the main ideas of the proof. Then we discuss various examples of application. We begin with examples in dimension~$1$ in Subsection~\ref{subsec:ex1D}. In Subsection~\ref{subsec:ex2D}, we investigate the particular case of harmonic oscillators in two dimensions. We specifically focus on conical and rotation-invariant observation sets in Subsections~\ref{subsub:conicalsets} and~\ref{subsub:sphericalsets} respectively. These are cases where one can prove accurate estimates on the optimal observation time---see for instance Proposition~\ref{prop:twocones}. Arithmetical properties of the characteristic frequencies of the harmonic oscillator under consideration also play a key role, as evidenced by Proposition~\ref{prop:anisotropicsphericalsets}. Then in Subsection~\ref{subsec:exother}, we present other consequences of Theorem~\ref{thm:main} concerning observability of eigenfunctions of the Schrödinger operator $P$ and energy decay of the damped wave equation. Lastly, we discuss the links between our work and the Kato smoothing effect in Subsection~\ref{subsec:Kato}, and provide with further explanations regarding the natural semiclassical scaling of the problem and the criticality of quadratic potentials in Subsection~\ref{subsec:semiclassicalscaling}.

\subsection{Idea of proof and comments} \label{subsec:ideaofproof}

The core of our work consists in establishing a suitable version of Egorov's theorem to relate the evolution through the Schrödinger flow of high-energy initial data on the quantum side, to the action of the associated Hamiltonian flow on the classical side. This is done using semiclassical analysis. To apply this theory, we approximate the indicator function of $\omega$ by a smooth and sufficiently flat cut-off function. This is how the larger set $\omega_R$ arises. Although Theorem~\ref{thm:main} is not a complete characterization of sets for which observability holds, it provides an almost necessary and sufficient condition of observability, up to thickening the observation set, and it gives sharp results in many concrete situations. See the examples given in Subsections~\ref{subsec:ex1D}, \ref{subsec:ex2D} and~\ref{subsec:exother} below. We review remarkable features of this statement.

\begin{itemize}[label=\textbullet]
    \item The observability condition~\eqref{eq:dynamicalcondition} we find is reminiscent of the Geometric Control Condition that rules the observability or control of the wave equation in a number of geometrical contexts, especially compact Riemannian manifolds~\cite{RT:74,BLR:88,BLR:92}. It reflects the importance of the quantum-classical correspondence in this problem: high-energy solutions to the Schrödinger equation, lifted to phase space, propagate along the trajectories of the Hamiltonian flow. Our constant $\frak{K}_{p_0}^\infty(\omega, T)$ is to some extent different from the one quantifying the Geometric Control Condition for the wave equation (see the constant $C(t)$ of Lebeau~\cite{Leb:96} or the constant $\frak{K}(T)$ of Laurent and Léautaud~\cite{LL:16}). Indeed, the latter constant consists in averaging some function (typically the indicator function of $\omega$) along speed-one geodesics in a time interval $[0, T]$. In contrast, our constant $\frak{K}_{p_0}^\infty(\omega, T)$ does the same, except that the length of trajectories tends to infinity as their initial datum $\rho$ goes to infinity in phase space. This is consistent with the infinite speed of propagation of singularities for the Schrödinger equation.
	\item Let us insist on the fact that the Schrödinger equation~\eqref{eq:Schrodinger} does not contain any semiclassical parameter. Instead, we artificially introduce a semiclassical parameter $R \to + \infty$, which we use to enlarge the observation set. This is natural in view of the fact that remainders in the quantum-classical correspondence are expressed in terms of derivatives of the symbol under consideration: scaling these symbols by $\frac{1}{R}$ thus produces remainders of the same order.
    \item On the technical side, the non-compactness of the Euclidean space yields new difficulties. In our problem, the use of semiclassical defect measures seems to be limited to very particular geometries of the observation set: roughly speaking, only homogeneous symbols can be paired with such measures, which would theoretically restrict the scope of the result to conical observation sets. Instead, we use (and prove) a version of Egorov's theorem to study the operator $\e^{\ii t P} \one_\omega \e^{- \ii t P}$. The idea of using Egorov's theorem was introduced in control theory by Dehman, Lebeau~\cite{DL:09} and Laurent, Léautaud~\cite{LL:16}. Of course, we must pay a particular attention to the remainder terms, in connection with the non-compactness of the ambient space. The great advantage of this is that we can describe the evolution of a fairly large class of symbols on the phase space, which in turn allows to study observability for a variety of observation sets.
    \item Our result is very robust since it is valid for a fairly large class of potentials, with the noteworthy property that the statement only involves the principal symbol of the potential. Indeed, up to enlarging the parameter $R$, the fact that the dynamical condition~\eqref{eq:dynamicalcondition} is fulfilled or not in $\omega_R$ is independent of the representative of the equivalence class of $V_0$ (introduced in Definition~\ref{def:principalsymbol}) chosen to compute $\frak{K}_p^\infty(\omega_R, T)$. This is a consequence of Corollary~\ref{cor:classicalobssubprincipalperturbation}. This was already evidenced in the context of propagation of singularities for solutions to the perturbed harmonic Schrödinger equation; see Mao and Nakamura~\cite{MaoNakamura:09}. The stability under subprincipal perturbation of the potential fails to be true if one considers superquadratic potentials ($m > 1$), as we can see by the examination of the trajectories of the flow. Take $V_0$ satisfying Assumptions~\ref{assum:V} for some $m > 1$, and perturb this potential with some $W$ behaving like $\jap*{x}^{2m - 1}$. Consider the Hamiltonian flow associated with the potential $V = V_0 + W$. Then the second derivative of a trajectory of the classical flow is given by
    \begin{equation*}
    \dfrac{\dd^2}{\dd x^2} x^t = - \nabla V_0(x^t) - \nabla W(x^t) \,.
    \end{equation*}
    We remark that the perturbation is of order $\nabla W(x^t) \approx \jap*{x^t}^{2 (m - 1)}$, which may blow up when $x^t$ is large. When $m \le 1$, the perturbation of the trajectory remains bounded, and can therefore be absorbed by thickening the observation set. See Subsection~\ref{subsec:invariancesubprincipalperturbation} and the proof of Theorem~\ref{thm:main} at the end of Section~\ref{sec:proofmain} for further details.
    \item At the level of the Hamiltonian flow, the difference between $m \le 1$ and $m > 1$ can also be understood by looking at the equation solved by the differential of the flow: differentiating the Hamilton equation~\eqref{eq:defHamiltonianflow} yields
    \begin{equation*}
    \dfrac{\dd}{\dd t} \dd \phi^t(\rho) = J \hess p\left(\phi^t(\rho)\right) \dd \phi^t(\rho) \,.
    \end{equation*}
    We deduce that the differential of the flow behaves as
    \begin{equation*}
    \abs*{\dd \phi^t} \lesssim \e^{t \abs{\hess p}} \,,
    \end{equation*}
    which means that the norm of the Hessian of the Hamiltonian plays the role of a local Lyapunov exponent for the classical dynamics. Yet $\hess p$ is uniformly bounded on phase space if and only if $m \le 1$. Incidentally, it is likely that for $m < 1$, one can exploit the decay of $\hess p$ at infinity in the space variable in order to get small remainders in the proof of Egorov's theorem (see Proposition~\ref{prop:Egorovthm}) instead of taking $R$ large. This might allow to thicken $\omega$ by any positive $\eps$ rather than by a large parameter $R$. Since we are mostly interested in quadratic potentials in this work, we chose not to refine our result in this direction.
    \item It is possible that the necessary condition can be slightly improved by propagating coherent states rather than using Egorov's theorem on quantum observables. This is discussed in more details in Subsection~\ref{subsub:refinementconical}.
\end{itemize}

\subsection{Examples in dimension \texorpdfstring{$1$}{1}} \label{subsec:ex1D}

The one-dimensional case gives an insight of how the potential can influence the geometry of sets for which observability holds.

\subsubsection{Harmonic potential}

The one-dimensional harmonic oscillator corresponds to $V(x) = \frac{1}{2} x^2$. The Hamiltonian flow reads:
    \begin{equation*}
        \phi^t(x, \xi) = \left(x \cos t + \xi \sin t, - x \sin t + \xi \cos t\right) \,, \qquad (x, \xi) \in \R^2, t \in \R \,.
    \end{equation*}
    Our dynamical condition~\eqref{eq:dynamicalcondition} can then be written as
    \begin{equation*}
        \liminf_{(x, \xi) \to \infty} \int_0^T \one_\omega(x \cos t + \xi \sin t) \dd t > 0 \,.
    \end{equation*}
    In view of the periodicity of the flow, it is relevant to consider $T = 2 \pi$. Under this additional assumption, condition~\eqref{eq:dynamicalcondition} reduces to
    \begin{equation} \label{eq:frakK-HWW}
        {\frak K}^\infty
        	:= \liminf_{A \to \infty} \int_0^{2 \pi} \one_\omega(A \sin t) \dd t > 0 \,,
    \end{equation}
    where $A$ has to be thought as (the square-root of) the energy $p(x, \xi) = \frac{1}{2} (x^2 + \xi^2)$. We claim that this is equivalent to the weak thickness~\eqref{eq:weakthickness} condition of Huang, Wang and Wang~\cite{HWW:22}. Suppose that $\frak{K}^\infty > 0$. First, notice that
    \begin{equation*}
        \int_0^{2 \pi} \one_\omega(A \sin t) \dd t
            = 2 \int_{- \pi/2}^{\pi/2} \one_\omega(A \sin t) \dd t \,.
    \end{equation*}
    Second, fix $c \in (0, \frak{K}^\infty/2)$. Since the integrand is bounded by $1$, we can slightly reduce the time interval to $[- \frac{\pi}{2} + \frac{c}{3}, \frac{\pi}{2} - \frac{c}{3}]$ so that $y = A \sin t$ defines a proper change of variables:
    \begin{align*}
        \frac{c}{3}
            &\le \liminf_{A \to \infty} \int_{- \pi/2}^{\pi/2} \one_\omega(A \sin t) \dd t - \frac{2}{3} c
            \le \liminf_{A \to \infty} \int_{- \frac{\pi}{2} + \frac{c}{3}}^{\frac{\pi}{2} - \frac{c}{3}} \one_\omega(A \sin t) \dd t \\
            &\le \liminf_{A \to \infty} \int_{- \frac{\pi}{2} + \frac{c}{3}}^{\frac{\pi}{2} - \frac{c}{3}} \one_\omega(A \sin t) \dfrac{A \abs{\cos t}}{A \frac{2}{\pi} \times \frac{c}{3}} \dd t
            = \dfrac{3 \pi}{2 c} \liminf_{A \to \infty} \dfrac{1}{A} \int_{- A \sin(\frac{\pi}{2} - \frac{c}{3})}^{A \sin(\frac{\pi}{2} - \frac{c}{3})} \one_\omega(y) \dd y \,.
    \end{align*}
    We used the concavity inequality $\cos t \ge 1 - \frac{2}{\pi} \abs{t}$ on $[- \frac{\pi}{2}, \frac{\pi}{2}]$ to get the third inequality. This gives
    \begin{equation*}
        \liminf_{A \to \infty} \dfrac{\abs*{\omega \cap [- A, A]}}{\abs*{[- A, A]}} > 0 \,,
    \end{equation*}
    namely $\omega$ is weakly thick. Conversely, we can follow the same lines, using that the Jacobian $\abs{\cos t}$ is less than $1$, to show that any weakly thick set satisfies~\eqref{eq:frakK-HWW}. Although our main theorem allows to conclude that observability is true only on a slightly larger set, it is more precise than the previous result from~\cite{HWW:22} with respect to the optimal observation time: we can estimate this optimal time depending on the geometry of the observation set. In addition, our result is stable under subprincipal perturbation of the potential. In particular, weak thickness of $\omega$ implies observability from $\omega_R$ (for some $R$ given by Theorem~\ref{thm:main}) for any potential whose principal symbol is $\tfrac{1}{2} x^2$ (or any positive multiple of $x^2$). Anticipating on the next paragraph, observe that a weakly thick set can contain arbitrarily large gaps, hence is not necessarily thick (see~\cite[Example 4.12]{HWW:22}).

\subsubsection{Potentials having critical points}

An interesting phenomenon appears when the potential possesses a sequence of critical points going to infinity. To construct such a potential, we proceed as follows. We set
    \begin{equation} \label{eq:potentialwithcriticalpoints}
        V(x) = \bigl( 2 + \sin\left(a \log \jap*{x}\right) \bigr) x^2 \,, \qquad x \in \R \,,
    \end{equation}
    where $a$ is a positive parameter to be chosen properly. See Figure~\ref{subfig:potentialwithcriticalpoints} for an illustration.
    
\begin{figure}
		\centering
		\begin{subfigure}{0.45\textwidth}
		\includegraphics[scale=1.0]{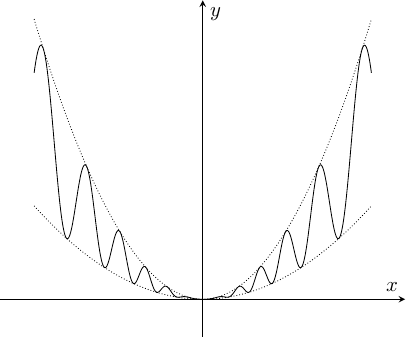}
		\caption{A potential $V$ of the form~\eqref{eq:potentialwithcriticalpoints}. The dotted lines correspond to the potentials $x^2$ and $3 x^2$.}
		\label{subfig:potentialwithcriticalpoints}
		\end{subfigure}
		\hfill
		\begin{subfigure}{0.45\textwidth}
		\includegraphics[scale=1.0]{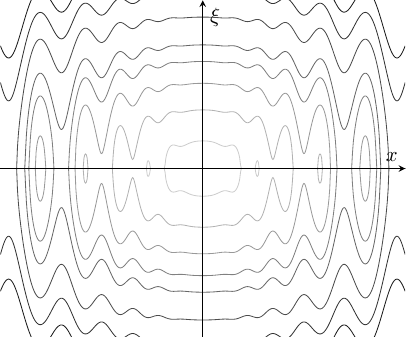}
        \caption{Some level sets of the Hamiltonian $p(x, \xi) = V(x) + \tfrac{1}{2} \abs{\xi}^2$. The corresponding picture for the harmonic potential is just a collection of concentric ellipses.}
        \label{subfig:hamiltonianwithcriticalpoints}
        \end{subfigure}
        \caption{Case of a potential with critical points.}
        \label{fig:potentialwithcriticalpoints}
\end{figure}
    
    One can check that Assumption~\ref{assum:V} is fulfilled: $V$ is subquadratic, elliptic (bounded from below by $x^2$) and each derivative yields a gain of $\jap{x}^{-1}$. Notice however that this is not a subprincipal perturbation of the harmonic potential. It holds for any $x \in \R$:
    \begin{align*}
        V'(x)
            &= \dfrac{x}{\jap{x}^2} \left( 2 \jap{x}^2 \left(2 + \sin\left(a \log \jap{x}\right)\right) + a x^2 \cos \left(a \log \jap{x}\right) \right) \\
            &= \dfrac{x}{\jap{x}^2} \left( \left(4 + 2 \sin\left(a \log \jap{x}\right)\right) + x^2 \left(4 + 2 \sin\left(a \log \jap{x}\right)\right) + a \cos \left(a \log \jap{x}\right)\right) \,.
    \end{align*}
    Factorizing the last two terms, we can write for a certain angle $\varphi_a$:
    \begin{align*}
        V'(x)
            &= \dfrac{x}{\jap{x}^2} \left( \left(4 + 2 \sin\left(a \log \jap{x}\right)\right) + x^2 \left(4 + \sqrt{4 + a^2} \sin\left(\varphi_a + a \log \jap{x}\right)\right)\right) \\
            &= \dfrac{x}{\jap{x}^2} \left( \left(4 + 2 \sin\left(a \log \jap{x}\right)\right) + 4 x^2 \left(1 + \sqrt{\dfrac{1}{4} + \left(\dfrac{a}{4}\right)^2} \sin\left(\varphi_a + a \log \jap{x}\right)\right)\right) \,.
    \end{align*}
    When $\frac{1}{4} + (\frac{a}{4})^2 > 1$, which is true if and only if $a > 2 \sqrt{3}$, we can find two sequences $(x_n^+)_{n \in \N}$ and $(x_n^-)_{n \in \N}$ tending to infinity such that
    \begin{equation*}
    \left\{
    \begin{aligned}
        \sqrt{\dfrac{1}{4} + \left(\dfrac{a}{4}\right)^2} \sin\left(\varphi_a + a \log \jap{x_n^+}\right) &\ge - 1 + \eta \\
        \sqrt{\dfrac{1}{4} + \left(\dfrac{a}{4}\right)^2} \sin\left(\varphi_a + a \log \jap{x_n^-}\right) &\le - 1 - \eta
     \end{aligned}
     \right. \,,
    \end{equation*}
    for some sufficiently small $\eta > 0$. The intermediate value theorem then implies that there exist infinitely many points $x_n^0$, with $\abs{x_n^0}$ tending to infinity, where $V'(x_n^0) = 0$. Now we observe from~\eqref{eq:defHamiltonianflowcoord} that the trajectories of the Hamiltonian flow with initial data $\rho_n = (x_n^0, 0)$ are stationary, that is
    \begin{equation*}
        \phi^t(\rho_n) = \rho_n \,, \qquad \forall t \in \R \,.
    \end{equation*}
    We deduce the following: assume that the Schrödinger equation~\eqref{eq:Schrodinger} is observable from $\omega \subset \R$ in some time for this potential. Then, the necessary condition of Theorem~\ref{thm:main} tells us that there exists $R > 0$ such that for any $n$ large enough, $x_n^0 \in \omega_R$. We can rephrase this as
    \begin{equation} \label{eq:observationsetwhentherearecriticalpoints}
    \exists n_0 \in \N : \forall n \ge n_0 \,, \qquad
    	\omega \cap B_R(x_n^0) \neq \varnothing \,.
    \end{equation}
    This is consistent with the phase portrait depicted in Figure~\ref{subfig:hamiltonianwithcriticalpoints}: some energy might be trapped around small closed trajectories encircling stable critical points. Hence, in order to have observability, $\omega$ cannot be too far away from those points. In fact, one observes that~\eqref{eq:observationsetwhentherearecriticalpoints} concerns all critical points, whatever the sign of $V''(x_n^0)$.

    In conclusion, the situation of a potential of the form~\eqref{eq:potentialwithcriticalpoints} is in contrast with the previous case of the harmonic potential $\frac{1}{2} x^2$ where the weak thickness condition allowed for large gaps around any sequence of points $x_n \to \infty$ satisfying $\abs{x_{n+1}} \gg \abs{x_n}$. Notice that $\omega$ can still have large gaps away from critical points though.

\subsubsection{Sublinear potentials}

Our last remark in the one-dimensional case concerns potentials having a sublinear growth, namely $m \in (0, 1/2]$. In this situation, the trajectories of the Hamiltonian flow whose initial datum has purely potential energy (namely $\xi = 0$) do not escape far away from their initial location. This is because:
    \begin{equation*}
        \dfrac{\dd}{\dd t} \xi^t  = - V'(x^t) = O\left(\jap*{x^t}^{2m - 1}\right) \,,
    \end{equation*}
    which remains bounded uniformly as soon as $m \le 1/2$. For the same reason, $m=1/2$ also appears to be critical in Proposition~\ref{prop:timeincylinder}. If observability from $\omega \subset \R$ holds in some time for such a potential, the necessary condition of Theorem~\ref{thm:main} leads to the conclusion that $\omega$ has to intersect any interval of length $2 R$, for some $R > 0$. Likewise, in higher dimension, any set from which the Schrödinger equation is observable must satisfy
	\begin{equation} \label{eq:notholes}
	\exists R > 0 : \forall x \in \R^d \,, \qquad
		\omega \cap B_R(x) \neq \varnothing \,.
	\end{equation} 
	Therefore, sets observing the Schrödinger equation~\eqref{eq:Schrodinger} for a sublinear potential cannot have arbitrarily large holes.\footnote{Notice that~\eqref{eq:notholes} is much weaker that the usual thickness condition of control theory:
    \begin{equation*}
    \exists R, c > 0 : \forall x \in \R^d \,, \qquad
    	\abs*{\omega \cap B_R(x)} \ge c \abs*{B_R(x)} \,.
    \end{equation*}}
    Although the case of bounded potentials (i.e.\ $m = 0$) is not in the scope of this article, let us mention that this observation is consistent with recent results on the free Schrödinger equation. See Huang, Wang, Wang~\cite{HWW:22} and Täufer~\cite{Taufer:22}, as well as Le Balc'h and Martin~\cite{LBM:23} for the case of bounded periodic potentials in two dimensions.

\subsection{Observability of two-dimensional harmonic oscillators} \label{subsec:ex2D}

As an application of Theorem~\ref{thm:main}, we study the observability of harmonic oscillators in conical or rotation-invariant sets. Our results mainly concern the two-dimensional case. The examples presented in this subsection suggest that there is no general reformulation of our dynamical condition~\eqref{eq:dynamicalcondition} in purely geometrical terms. That is to say, it seems difficult to find an equivalent condition that would not involve the Hamiltonian flow (e.g.\ thickness, weak thickness...). In contrast, by restricting ourselves to a certain class of potentials (harmonic oscillators at the principal level here) and a certain class of observation sets (conical or rotation-invariant), one can indeed transform the dynamical condition into a geometrical one. Along the way, we will see that observability properties are very sensitive to slight modifications of the coefficients of the harmonic oscillator under consideration. This subsection culminates in Proposition~\ref{prop:anisotropicsphericalsets}, where we show that observability of rotation-invariant sets is governed by Diophantine properties of the oscillator's coefficients.

Let us first recall basics about general harmonic oscillators. Let $A$ be a real symmetric positive-definite $d \times d$ matrix and set $H_A = \tfrac{1}{2} (x \cdot A x - \Delta)$. Up to an orthonormal change of coordinates, one can assume that $A$ is diagonal, so that the potential can be written
\begin{equation*}
V_A(x)
	= \dfrac{1}{2} x \cdot A x
	= \dfrac{1}{2} \sum_{j = 1}^d \nu_j^2 x_j^2 \,.
\end{equation*}
The \emph{characteristic frequencies} of $H_A$ are those numbers $\nu_1, \nu_2, \ldots, \nu_d$, that we will always assume to be positive. The corresponding Hamiltonian flow is explicit: denoting by $x_1(t), x_2(t), \ldots, x_d(t)$ and $\xi_1(t), \xi_2(t), \ldots, \xi_d(t)$ the components of $\phi^t$, we can solve the Hamilton equations~\eqref{eq:defHamiltonianflowcoord}:
\begin{equation} \label{eq:explicitoschflow}
\left\{
\begin{aligned}
x_j(t) &= \cos(\nu_j t) x_j(0) + \dfrac{1}{\nu_j} \sin(\nu_j t) \xi_j(0) \\
\xi_j(t) &= - \nu_j \sin(\nu_j t) x_j(0) + \cos(\nu_j t) \xi_j(0)
\end{aligned}
\right.\,, \qquad \forall j \in \{1, 2, \ldots, d\} \,.
\end{equation}
From this expression, we see that each coordinate is periodic, so that the trajectories whose initial conditions are of the form $x_j(0) = x_0 \delta_{j = j_0}, \xi_j(0) = \xi_0 \delta_{j = j_0}$ with $x_0, \xi_0 \in \R$, are periodic, with period $2 \pi/\nu_{j_0}$ (unless both $x_0$ and $\xi_0$ vanish, in which case the trajectory is a point). Assuming $d = 2$, we can classify harmonic oscillators into three categories. See Figure~\ref{fig:trajectoriesharmonicoscillators} for an illustration.
\begin{itemize} [label=\textbullet]
\item We call \emph{isotropic} a harmonic oscillator\footnote{For general dimension, we still call isotropic any harmonic oscillator having all its characteristic frequencies equal.} with $\nu_1 = \nu_2 = \nu$. In this situation, energy surfaces, that is, level sets of the classical Hamiltonian, are concentric spheres in phase space (up to a symplectic change of coordinates). Trajectories of the Hamiltonian flow are great circles on these spheres, so that their projection on the $x$-variable ``physical space" are ellipses. The flow is periodic, with period $\frac{2\pi}{\nu}$.
\item The harmonic oscillator is said to be \emph{anisotropic rational} when $\frac{\nu_2}{\nu_1}$ is a rational number different from $1$. Trajectories, although all closed, exhibit a more complicated behavior. Writing $\frac{\nu_2}{\nu_1} = \frac{p}{q}$ with $p$ and $q$ coprime positive integers, the period of the flow is $p \frac{2\pi}{\nu_2} = q \frac{2\pi}{\nu_1}$. Projected trajectories are known in the physics literature as Lissajous curves~\cite{Lissajous}.
\item We say a harmonic oscillator is \emph{anisotropic irrational} when $\frac{\nu_2}{\nu_1} \in \R \setminus \Q$. In that case, the Hamiltonian flow is aperiodic. Trajectories are dense in invariant tori (see~\eqref{eq:invarianttori} below), yielding projected trajectories that fill rectangles parallel to the eigenspaces of the matrix $A$.
\end{itemize}

\begin{figure}
\centering
\begin{subfigure}{\textwidth}
         \centering
         \includegraphics[scale=1.18]{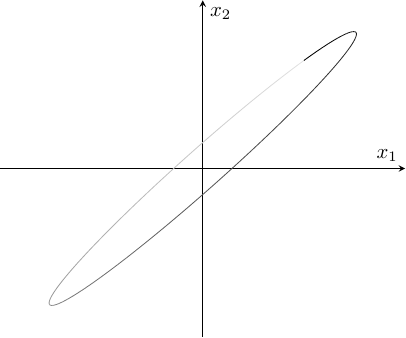}
         \caption{Isotropic harmonic oscillator: $\frac{\nu_2}{\nu_1} = 1$.}
         \label{subfig:isotropic}
     \end{subfigure}
     \vfill
     \begin{subfigure}{\textwidth}
         \centering
         \includegraphics[scale=1.18]{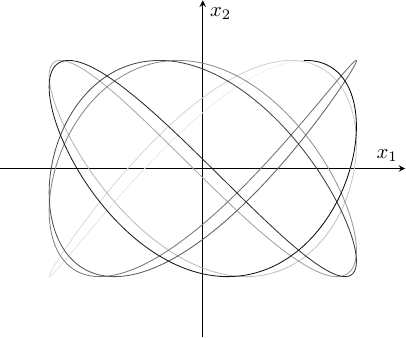}
         \caption{Rational anisotropic harmonic oscillator: $\frac{\nu_2}{\nu_1} = \frac{7}{5}$.}
         \label{subfig:anisotropicrational}
     \end{subfigure}
     \vfill
     \begin{subfigure}{\textwidth}
         \centering
         \includegraphics[scale=1.18]{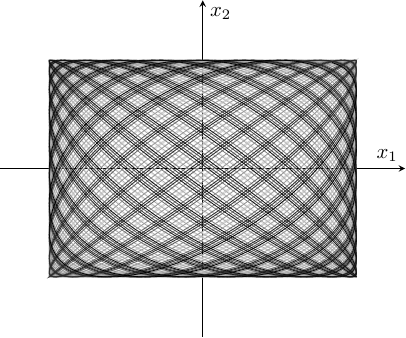}
         \caption{Irrational anisotropic harmonic oscillator: $\frac{\nu_2}{\nu_1} = \frac{\pi}{3}$.}
         \label{subfig:anisotropicirrational}
     \end{subfigure}
        \caption{Typical projected trajectories of two-dimensional harmonic oscillators. Shading indicates the course of the trajectory.}
        \label{fig:trajectoriesharmonicoscillators}
\end{figure}

In the multi-dimensional setting, the description of the flow can be achieved by examining the $\Q$-vector space generated by the characteristic frequencies. The dimension of the latter gives the number of periodic decoupled ``sub-oscillators" from which we can reconstruct the dynamics of the whole oscillator. This is thoroughly explained in the article of Arnaiz and Macià~\cite{AM:22}, who compute the set of quantum limits of general harmonic oscillators, and study their behavior when bounded perturbations of the potential are added~\cite{AM:22perturbed}.

In order to understand well the classical dynamics of the harmonic oscillator, it is convenient to take advantage of the complete integrability of this dynamical system. Here, the classical Hamiltonian is the sum of the one-dimensional Hamiltonians $\frac{1}{2}(\nu_j^2 x_j^2 + \xi_j^2)$, which are conserved by the flow, as one can see from the explicit expression~\eqref{eq:explicitoschflow}. This property implies that energy levels are foliated in (possibly degenerate) invariant $d$-dimensional tori of the form:
\begin{equation} \label{eq:invarianttori}
\tor_E
    = \set{(x, \xi) \in \R^{2d}}{\forall j, \, \dfrac{1}{2} \left(\nu_j^2 x_j^2 + \xi_j^2\right) = E_j} \,, \qquad
        E = (E_1, E_2, \ldots, E_d) \in \R_+^d \,.
\end{equation}
The projection of these tori on the $x$-variable space yields rectangles, as in Figure~\ref{subfig:anisotropicirrational}.

The goal of the following examples is to highlight the fact that observability is sensitive to the global properties of the Hamiltonian flow. We will show that isotropic and anisotropic harmonic oscillators behave differently with respect to observability, i.e.\ the sets that observe the Schrödinger equation are not the same. One can already anticipate that the isotropic oscillator $\nu_1 = \nu_2$ has less such sets since its classical trajectories are all ellipses, that is, they are very simple and only explore a small part of the classically allowed region. It contrasts with the anisotropic situation $\nu_1 \neq \nu_2$, where, in the rational case for instance, trajectories visit more exhaustively the classically allowed region. It makes it harder to find a set that is not reached by any of these trajectories. It is even more the case when $\nu_1$ and $\nu_2$ are rationally independent, since the trajectories are then dense in the invariant torus to which they belong, as we already discussed.

\subsubsection{Observability from conical sets} \label{subsub:conicalsets}

We first investigate the case where the observation set $\omega$ is conical, namely it is invariant by dilations with positive scaling factor:
\begin{equation} \label{eq:defconicalset}
\forall x \in \R^d, \forall \lambda > 0 \,, \qquad
	\left( x \in \omega \quad \Longleftrightarrow \quad \lambda x \in \omega \right) \,.
\end{equation}
We will see that exploiting the symmetries of harmonic oscillators is sometimes sufficient to obtain satisfactory results, without the need of our main theorem (see Subsection~\ref{subsub:refinementconical}). However, Theorem~\ref{thm:main} will prove useful to estimate precisely the optimal observation time in some situations.

As we already noticed, it follows from the expression of the flow~\eqref{eq:explicitoschflow} that, whatever the characteristic frequencies, the classical dynamics exhibits periodic trajectories contained in the coordinate axes. Those starting from the origin are of the form
\begin{equation*}
x_j(t) = \dfrac{1}{\nu_j} \sin(\nu_j t) \xi_j(0) \,, \qquad \xi_j(t) = \cos(t) \xi_j(0)
\end{equation*}
for one $j \in \{1, 2, \ldots, d\}$, and with all the other components being equal to zero. Thus it appears that a general necessary condition for a conical $\omega$ to observe the Schrödinger equation~\eqref{eq:Schrodinger}, working for any harmonic oscillator, is that it contains at least half of each line spanned by an eigenvector of $A$. Note that this works in any dimension.

\begin{prop} \label{prop:necessaryconditionconicset}
Consider $P = V(x) - \tfrac{1}{2} \Delta$, where $V$ is a potential fulfilling Assumptions~\ref{assum:V} and having principal symbol $V_A(x) = \frac{1}{2} x \cdot A x$, $A$ being a real symmetric positive-definite $d \times d$ matrix. Let $\omega \subset \R^d$ be a conical set and assume that it observes the Schrödinger equation in some time $T > 0$. Then for all eigenvector $v$ of $A$, it holds $v \in \ovl{\omega}$ or $-v \in \ovl{\omega}$.
\end{prop}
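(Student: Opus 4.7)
The plan is to argue by contradiction using the necessary condition of Theorem~\ref{thm:main}(ii), by producing a sequence of phase-space points going to infinity whose projected trajectories spend almost no time in any fixed thickening $\omega_R$. Suppose $v$ is an eigenvector of $A$ with $\abs{v} = 1$ and $A v = \nu^2 v$ (with $\nu > 0$), and assume toward contradiction that neither $v$ nor $-v$ belongs to $\ovl{\omega}$. Then there exists $\eta \in (0, 1)$ such that $B_\eta(v) \cap \omega = \varnothing$ and $B_\eta(-v) \cap \omega = \varnothing$. Since $\omega$ is conical, applying the dilation of factor $\mu > 0$ yields
\begin{equation*}
B_{\mu \eta}(\mu v) \cap \omega = \varnothing
	\quad \text{and} \quad
	B_{\mu \eta}(- \mu v) \cap \omega = \varnothing \,,
\end{equation*}
so that $\dist(\pm \mu v, \omega) \ge \mu \eta$ for every $\mu > 0$.

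Next, I would use the fact that the two-dimensional plane $\{(s v, s' v) : s, s' \in \R\} \subset \R^{2d}$ is invariant under the Hamiltonian flow $\phi_0^t$ of $p_0 = V_A + \tfrac{1}{2} \abs{\xi}^2$, because $v$ is an eigenvector of $A$; the restricted dynamics is that of a one-dimensional harmonic oscillator of frequency $\nu$. Taking $\rho_\lambda := (\lambda v, 0)$, this gives
\begin{equation*}
\pi \circ \phi_0^t(\rho_\lambda) = \lambda \cos(\nu t) \, v \,, \qquad t \in \R \,.
\end{equation*}
Combined with the preceding estimate, the inclusion $\pi \circ \phi_0^t(\rho_\lambda) \in \omega_R$ forces $\lambda \abs{\cos(\nu t)} \eta < R$, i.e.\ $\abs{\cos(\nu t)} < R/(\lambda \eta)$. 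Once $\lambda > R/\eta$, the subset of $t \in [0, T]$ where this holds is contained in a union of at most $1 + \lfloor T \nu / \pi \rfloor$ intervals, each of length $O\bigl( R/(\lambda \eta \nu) \bigr)$ centered at a zero of $\cos(\nu\, \bigcdot)$. A direct measure estimate then yields
\begin{equation*}
\int_0^T \one_{\omega_R \setminus K}\bigl( \pi \circ \phi_0^t(\rho_\lambda) \bigr) \dd t
	\le \int_0^T \one_{\omega_R}\bigl( \pi \circ \phi_0^t(\rho_\lambda) \bigr) \dd t
	= O\bigl( R/\lambda \bigr)
	\xrightarrow[\lambda \to \infty]{} 0 \,,
\end{equation*}
with implicit constants depending only on $T, \nu, \eta$, and uniformly in the compact set $K$. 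Since $\rho_\lambda \to \infty$ in $\R^{2d}$, this forces $\frak{K}_{p_0}^\infty(\omega_R \setminus K, T) = 0$ for every $R \ge 1$ and every compact $K \subset \R^d$.

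To conclude, I would invoke Theorem~\ref{thm:main}(ii), which under the standing observability assumption produces a constant $c = c(d, T, p_0, p) > 0$ such that $\frak{K}_{p_0}^\infty(\omega_R \setminus K, T) \ge 1/C_\obs - c \jap{\log R}^{1/2}/R$ for every $R \ge 1$. Choosing $R$ large enough that the right-hand side is positive contradicts the vanishing of $\frak{K}_{p_0}^\infty(\omega_R \setminus K, T)$ obtained above, completing the proof. The only genuinely non-mechanical step is the geometric one: translating the angular hypothesis ``$\pm v \notin \ovl{\omega}$'' into the quantitative separation $\dist(\pm \mu v, \omega) \ge \mu \eta$. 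Conicity is indispensable here, since it promotes the small angular gap near $\pm v$ into a gap that scales linearly with the energy of the trajectory, which is precisely what is needed to overwhelm any fixed thickening by $R$.
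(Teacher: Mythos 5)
Your proof is correct and follows essentially the same route as the paper's: exploit conicity to turn the angular gap around $\pm v$ into a linearly scaling distance bound, run the explicit periodic eigendirection trajectory of the harmonic flow, show the time spent in $\omega_R$ is $O(R/\lambda)$, and contradict the necessary condition of Theorem~\ref{thm:main}(ii). The only cosmetic differences are your choice of initial datum $(\lambda v, 0)$ in place of the paper's $(0, \eta e)$ and a direct sublevel-set measure estimate for $\abs{\cos(\nu t)}$ in place of the paper's change of variables $s = \eta \sin t$.
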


Now we place ourselves in dimension $d = 2$. We know from the above Proposition~\ref{prop:necessaryconditionconicset} that the closure of a conical set which observes the Schrödinger equation has to contain at least half of any line spanned by an eigenvector of the matrix $A$. Here, we exhibit a conical observation set, illustrated on Figure~\ref{fig:saturatingcurve}, that behaves differently according to whether the harmonic oscillator is isotropic or not.

\begin{prop}[Conical sets and anisotropy] \label{prop:twocones}
Let $d = 2$ and consider a potential $V$ fulfilling Assumption~\ref{assum:V}, and with principal symbol
\begin{equation*}
V_A(x) = \frac{1}{2} x \cdot A x \,, \qquad x \in \R^2 \,,
\end{equation*}
where $A$ is a real symmetric positive-definite matrix. Denote by $\nu_+ \ge \nu_- > 0$ its characteristic frequencies. Choose an orthonormal basis of eigenvectors $(e_+, e_-)$ of $A$, so that $A e_\pm = \nu_\pm^2 e_\pm$. For any $\eps \in (0, \pi/2)$, define the two cones with aperture $\eps$:
\begin{equation} \label{eq:deftwocones}
C_\eps^\pm = \set{x \in \R^2}{\abs{x \cdot e_\mp} < \tan\left(\dfrac{\eps}{2}\right) x \cdot e_\pm} \,.
\end{equation}
Then the set $\omega(\eps) = C_\eps^+ \cup C_\eps^-$ observes the Schrödinger equation if and only if the oscillator is anisotropic, that is $\nu_- < \nu_+$. In that case, there exist constants $C, c > 0$, possibly depending on $\nu_+, \nu_-$, such that for any $\eps \in (0, \pi/2)$,
\begin{equation} \label{eq:optimaltimeconicset}
T_0 - C \eps^2 \le T_\star\left(\omega(\eps)\right) \le T_0 - c \eps^2 \,,
    \qquad \rm{where} \quad T_0 = \dfrac{\pi}{\nu_+} \left(2 + \left\lfloor \dfrac{\nu_+}{\nu_-} \right\rfloor\right) \,.
\end{equation}
\end{prop}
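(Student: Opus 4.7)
My plan is to apply Theorem~\ref{thm:main} and reduce everything to an explicit computation on the classical flow of $p_0(x,\xi) = \tfrac12 x\cdot Ax + \tfrac12|\xi|^2$. The set $\omega(\eps)$ is a cone invariant under positive dilations, and the flow $\phi_0^t$ of a harmonic oscillator is a linear symplectomorphism commuting with those dilations. The function $\rho\mapsto \int_0^T \one_{\omega(\eps)}(\pi\phi_0^t(\rho))\dd t$ is therefore constant on rays from the origin, which yields
\begin{equation*}
\frak{K}_{p_0}^\infty(\omega(\eps), T) = \inf_{\hat\rho \in S^3} \int_0^T \one_{\omega(\eps)}(\pi\phi_0^t(\hat\rho)) \dd t \,.
\end{equation*}
A similar argument shows that the $R$-thickening in $\omega(\eps)_R$ amounts, at scale $\lambda$, to thickening by $R/\lambda$, so it disappears in the limit $\rho \to \infty$. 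In the orthonormal basis $(e_+, e_-)$ of eigenvectors of $A$, trajectories read $x_\pm(t) = A_\pm\cos(\nu_\pm t - \varphi_\pm)$ with $A_+^2 + A_-^2 = 1$, reducing everything to trigonometry.

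To rule out observability in the isotropic case $\nu_+ = \nu_-$, I would construct a trajectory violating the necessary condition of Theorem~\ref{thm:main}(ii). Choosing $A_+ = A_-$ and $\varphi_+ = \varphi_-$ makes the trajectory degenerate to the segment proportional to $\cos(\nu t)(e_+ + e_-)$; a small phase perturbation turns it into a thin ellipse lying essentially along the diagonal $e_+ + e_-$, at angular distance near $\pi/4$ from both coordinate axes. Because $\eps/2 < \pi/4$, this trajectory and all its dilations stay uniformly away from $\omega(\eps)$, and at scale $\lambda$ even the $R$-thickening $\omega(\eps)_R$ is eventually avoided. Hence $\frak{K}_{p_0}^\infty(\omega(\eps)_R \setminus K, T) = 0$ for every $R, K, T$, and Theorem~\ref{thm:main}(ii) prohibits observability.

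In the anisotropic case $\nu_+ > \nu_-$, the trajectory lies in $C_\eps^+$ (resp.\ $C_\eps^-$) in a small time window around each crossing of the positive $e_+$ (resp.\ $e_-$) half-axis, i.e.\ around zeros of $x_-$ with $x_+ > 0$ (resp.\ of $x_+$ with $x_- > 0$); near such a crossing at height $h > 0$, the in-cone time scales as $\eps h/\nu_\mp$. For the upper bound $T_\star(\omega(\eps)) \leq T_0 - c\eps^2$, the core combinatorial lemma is that on any time window of length $T_0 = \pi(2 + N)/\nu_+$, with $N = \lfloor\nu_+/\nu_-\rfloor$, every unit-norm trajectory performs such a good crossing with height $h \geq c(\nu_+,\nu_-) > 0$. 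This is proved by a pigeonhole argument on the angular phase $\beta(t) = \nu_- t - \varphi_-$ on $\R/2\pi\Z$, whose increment between two consecutive zeros of $x_+$ is $\pi\nu_-/\nu_+ \in (\pi/(N+1), \pi/N]$: out of the at most $N + 2$ zeros of $x_+$ contained in a window of length $T_0$, at least one has $\beta$ in the good arc $(-\pi/2,\pi/2) \bmod 2\pi$ by a definite margin, and one separately checks that the corresponding height is bounded away from zero, except in a regime where the analogous argument on $\alpha(t) = \nu_+ t - \varphi_+$ produces the crossing instead. Feeding this into Theorem~\ref{thm:main}(i) applied to the slightly narrower cone $\omega' = C_{\eps-\delta}^+ \cup C_{\eps-\delta}^-$, with $\delta, R, K$ chosen so that $\omega'_R \setminus K \subset \omega(\eps)$, then gives $\Obs(\omega(\eps), T)$ for any $T > T_0 - c\eps^2$.

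Finally, the lower bound $T_\star \geq T_0 - C\eps^2$ comes from an explicit extremal trajectory whose only good crossings in $[0, T_0]$ occur with heights of order $\eps$, and a phase tuning that shifts the last such crossing just past $T = T_0 - C\eps^2$. For this trajectory $\int_0^T \one_{\omega(\eps)}(\pi\phi_0^t(\hat\rho))\dd t = 0$, and Theorem~\ref{thm:main}(ii), applied along the scaled family $\lambda\hat\rho$ as $\lambda \to \infty$, prevents observability in time $T$. I expect the main technical difficulty to be the quantitative combinatorial lemma of the previous paragraph: obtaining the sharp count $\pi(N+2)/\nu_+$ together with the $\eps^2$ correction requires careful case analysis across the regimes $A_+ \ll A_-$, $A_+ \gg A_-$ and $A_+ \asymp A_-$, and tight control of the near-corner crossings where both the pigeonhole margin and the height $h$ become small simultaneously.
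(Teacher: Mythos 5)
Your plan follows essentially the same route as the paper's proof: reduction to the classical dynamical condition of Theorem~\ref{thm:main} via homogeneity of the harmonic flow, exclusion of the isotropic case by a trajectory oscillating along the diagonal $e_+ + e_-$ (which is the content of Proposition~\ref{prop:necessaryconditionconicset} applied to the eigenvector $e_+ + e_-$), an upper bound on $T_\star$ obtained from a combinatorial argument producing a favourable axis crossing inside a window of length $T_0$ (the paper's Step~4 is exactly your pigeonhole on the zeros of the fast coordinate), and a lower bound from an explicit near-extremal trajectory with a tuned phase shift (the paper's Step~6).

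There is, however, one quantitative claim in your core lemma that is false as stated and that you should repair, because it is precisely where the $\eps^2$ in~\eqref{eq:optimaltimeconicset} comes from. You assert that every unit-norm trajectory performs a good crossing in $[0,T_0]$ with height $h \ge c(\nu_+,\nu_-) > 0$, and that the in-cone time near such a crossing is of order $\eps h/\nu_\mp$. Together these would give a dynamical constant of order $\eps$, hence $T_\star \le T_0 - c\eps$, which for small $\eps$ falls below your own lower bound $T_0 - C\eps^2$ --- and is indeed contradicted by your own extremal trajectory, whose crossings have height of order $\eps$ only. Two corrections are needed. First, the transverse coordinate leaves the cone at speed $A_\mp\nu_\mp$, so the in-cone time near a crossing of the positive $e_\pm$ half-axis at height $h$ is of order $\eps h/(A_\mp\nu_\mp)$, not $\eps h/\nu_\mp$. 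Second, the crossing your pigeonhole produces (a zero of the fast coordinate $x_+$ with $x_->0$ by a definite phase margin) has height only proportional to the amplitude $A_-$ of the slow coordinate, which can be arbitrarily small. The correct statement is a dichotomy: if $A_-/A_+ \lesssim \eps$ then $\abs{x_-}\le A_- \lesssim \eps\, x_+$ on a macroscopic set of times and the trajectory spends a time of order $1/\nu_+$ in $C_\eps^+$ with no crossing argument at all; otherwise $A_-/A_+\gtrsim\eps$ and the crossing at height $\gtrsim A_-$ yields an in-cone time of order $\eps\cdot\eps=\eps^2$. With that repair (plus the boundary adjustment when the crossing falls at an endpoint of the time window), your pigeonhole does deliver the required lemma: over the at least $N+2$ zeros of $x_+$ in a window of length $T_0$ the phase $\beta$ sweeps strictly more than $\pi$ while each step $\pi\nu_-/\nu_+$ is strictly less than $\pi$, so one zero lands in the open good arc with a margin depending only on $\nu_\pm$.
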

This results leads to several noteworthy observations.

First, it does not distinguish between rational and irrational anisotropic oscillators: one cannot guess, from the knowledge that observability form $\omega(\eps)$ holds, whether the oscillator is rational or irrational.

Second, the time $T_0$, obtained formally as the limiting optimal observation time when $\eps \to 0$, does not vary continuously with respect to $\nu_+$ and $\nu_-$ because of the floor function. This is related to special symmetry properties of the Hamiltonian flow that appear when $\nu_+$ is a multiple of $\nu_-$, namely the projected trajectories can go from a quadrant to another one crossing the origin, and thus avoiding to cross the observation cones. See Figure~\ref{fig:saturatingcurve}.

\begin{figure}
\centering
\includegraphics[scale=1.5]{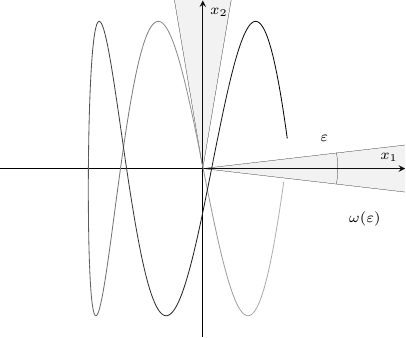}
\caption{The above projected trajectory is responsible for the lower bound on the optimal observation time in~\eqref{eq:optimaltimeconicset}. It is obtained with an oscillator such that $\frac{\nu_2}{\nu_1} = 3.9$. For $\frac{\nu_2}{\nu_1} = 4$, one can choose the initial datum so that the curve goes back to the upper-right quadrant, passing through the origin, without crossing the two cones. This yields a larger lower bound on the optimal time, corresponding to the jump from $\lfloor 3.9 \rfloor = 3$ to $\lfloor 4 \rfloor = 4$ in Formula~\eqref{eq:optimaltimeconicset}.}
\label{fig:saturatingcurve}
\end{figure}

Third, when $\eps$ is fixed, our result does not rule out that the optimal observation time is continuous with respect to the characteristic frequencies. This is because the constants $C$ and $c$ can depend on $\nu_1, \nu_2$.

It is interesting to see what happens when $\nu_+, \nu_- \to \nu$, that is to say when the operator $P$ becomes closer to an isotropic harmonic oscillator. As mentioned earlier, we know from Proposition~\ref{prop:necessaryconditionconicset} that observability is not true for a set of the form $C_\eps^+ \cup C_\eps^-$ for an isotropic oscillator ($\eps < \pi/2$ is important here). Thus it can seem surprising that the optimal observation time for such a set is bounded uniformly in $\nu_+, \nu_-$ as the frequencies tend to $\nu$. Actually, degeneracy in this limit should be seen on the observation cost, rather than on the optimal observation time. Indeed, computations suggest that the value of the dynamical constant $\frak{K}_p^\infty(\omega(\eps), T)$ tends to zero; see~\eqref{eq:dynamicalassumptioneps2} in the proof. This would imply a blow up of the observation cost as $\nu_1, \nu_2 \to \nu$, in virtue of the necessary condition part of Theorem~\ref{thm:main}.

\subsubsection{Refinement for the unperturbed isotropic harmonic oscillator} \label{subsub:refinementconical}

Theorem~\ref{thm:main} allows to conclude about whether an open set $\omega$ observes the Schrödinger equation provided this open set is in a sense ``regular": the thickening process yields open sets that are sufficiently close to a cut-off function. But the quest of characterizing general measurable sets seems to be more delicate. To understand the limitation of our main theorem, we investigate the very particular case of the isotropic harmonic oscillator and conical observation sets in dimension $d \ge 1$. In this setting, we can take advantage of symmetries and exact propagation of coherent states.

For the purpose of the statement, let us introduce some notation. A conical set in $\R^d$ is determined by the subset $\Sigma = \omega \cap \sph^{d - 1}$ in the unit sphere. When $\Sigma \subset \sph^{d - 1}$ we denote by $\omega(\Sigma)$ the conical set defined by
\begin{equation} \label{eq:defomegaIconical}
\omega(\Sigma) = \set{x \in \R^d \setminus \{0\}}{\dfrac{x}{\abs{x}} \in \Sigma} \,.
\end{equation}
Moreover, for any subset $\Sigma \subset \sph^{d - 1}$, we introduce the notation
\begin{equation*}
- \Sigma = \set{\theta \in \sph^{d - 1}}{- \theta \in \Sigma} \,.
\end{equation*}
The lower density of a measurable set $\Sigma \subset \sph^{d - 1}$, denoted by $\Theta_\Sigma^-$, is the function $\sph^{d - 1} \to [0, 1]$ defined by
\begin{equation} \label{eq:deflowerdensity}
\Theta_\Sigma^-(\theta)
	= \liminf_{r \to 0} \dfrac{\sigma\left(\Sigma \cap B_r(\theta)\right)}{\sigma\left(B_r(\theta)\right)} \,, \qquad \forall \theta \in \sph^{d - 1} \,,
\end{equation}
where $B_r(\theta)$ is the ball of radius $r$ centered at $\theta$ in $\R^d$, and $\sigma$ is the uniform probability measure on the unit sphere $\sph^{d - 1}$.

We insist on the fact that the statement below in proved for exact isotropic harmonic oscillators, and not for perturbations of it.

\begin{prop} \label{prop:isotropicconicalsets}
Let $P = \frac{1}{2} ( \nu^2 \abs{x}^2 - \Delta )$ be an isotropic oscillator with characteristic frequency $\nu > 0$. Let $\Sigma \subset \sph^{d - 1}$ be measurable, and $\omega(\Sigma)$ be the corresponding conical set. Set $\widehat \Sigma = \Sigma \cup - \Sigma$ the symmetrized version of $\Sigma$.
\begin{enumerate} [label=(\roman*)]
\item If the Schrödinger equation is observable from $\omega(\Sigma)$ in some time, then it holds
\begin{equation*}
\inf_{\sph^{d - 1}} \Theta_{\widehat \Sigma}^- > 0 \,.
\end{equation*}
\item If $\widehat \Sigma = \Sigma \cup - \Sigma$ has full measure, namely $\sigma\left(\sph^{d - 1} \setminus \widehat \Sigma\right) = 0$, or equivalently $\Theta_{\widehat \Sigma}^-(\theta) = 1$ for all $\theta \in \sph^{d - 1}$, then $\omega(\Sigma)$ observes the Schrödinger equation, with optimal observation time $T_\star < \frac{2 \pi}{\nu}$.
\end{enumerate}
\end{prop}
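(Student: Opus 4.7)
Assertion \textbf{(ii)} rests on the parity identity $\e^{-\ii \pi P/\nu} u(x) = \e^{-\ii \pi d/2} u(-x)$, which expresses that the Hamiltonian flow at time $\pi/\nu$ is the phase-space antipode $(x,\xi)\mapsto(-x,-\xi)$ (checked directly on the Hermite eigenbasis, since $P$ has eigenvalues $(|\alpha|+d/2)\nu$ and Hermite functions have parity $(-1)^{|\alpha|}$). Splitting the time interval $[0, 2\pi/\nu]$ into two half-periods and substituting $t = s + \pi/\nu$ in the second piece, and using that the parity operator $\mathcal{P}$ commutes with $\e^{-\ii s P}$, turns the second half into an observation on $-\omega(\Sigma) = \omega(-\Sigma)$. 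Since $\omega(\Sigma)\cup\omega(-\Sigma)=\omega(\widehat\Sigma)$,
\begin{equation*}
\int_0^{2\pi/\nu}\!\|\one_{\omega(\Sigma)}\e^{-\ii tP}u\|^2\dd t\ \ge\ \int_0^{\pi/\nu}\!\|\one_{\omega(\widehat\Sigma)}\e^{-\ii sP}u\|^2\dd s.
\end{equation*}
Under the full-measure hypothesis the right-hand side equals $(\pi/\nu)\|u\|^2$ by unitarity, which establishes $\Obs(\omega(\Sigma),2\pi/\nu)$ with cost $\nu/\pi$; the openness of $\Obs$ in time recalled in the introduction then upgrades this to $T_\star \le \pi/\nu < 2\pi/\nu$.

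Assertion \textbf{(i)} is proved by probing the observability inequality with suitably squeezed Gaussian wavepackets in each direction and at each scale. Assume $\Obs(\omega(\Sigma),T)$ with cost $C_\obs$; enlarging $T$ if necessary, WLOG $T = 2\pi/\nu$. For each $\theta\in\sph^{d-1}$ and each $r > 0$, test with the normalized squeezed state
\begin{equation*}
u_{\lambda, r}(x) = (\pi r^2\lambda^2)^{-d/4} \e^{-|x-\lambda\theta|^2/(2 r^2 \lambda^2)}, \qquad \lambda > 0.
\end{equation*}
Because $P$ is quadratic, the Wigner transform of $u_{\lambda,r}$ is a centred Gaussian in phase space, transported exactly by the linear symplectic rotation $S(t)$ associated with $\phi^t$; marginalizing in the momentum variable gives
\begin{equation*}
|\e^{-\ii tP}u_{\lambda,r}|^2(x) = (\pi C(t))^{-d/2} \e^{-|x-\lambda\cos(\nu t)\theta|^2/C(t)}, \quad C(t) = r^2\lambda^2\cos^2(\nu t) + \tfrac{\sin^2(\nu t)}{\nu^2 r^2\lambda^2}.
\end{equation*}
The scaling invariance of the cone, via the successive changes of variables $x = \cos(\nu t) y$ then $y = \lambda z$ (for $\cos(\nu t) \ne 0$), rewrites the observation term as
\begin{equation*}
\|\one_{\omega(\Sigma)}\e^{-\ii tP}u_{\lambda,r}\|^2 = G_\Sigma\Bigl(r^2 + \tfrac{\tan^2(\nu t)}{\nu^2 r^2 \lambda^4},\, \sign(\cos(\nu t))\, \theta\Bigr),
\end{equation*}
where $G_\Sigma(s, \theta') := \int_{\omega(\Sigma)} (\pi s)^{-d/2} \e^{-|y-\theta'|^2/s}\dd y$ is the Gaussian mass of the cone at scale $\sqrt{s}$ centred at $\theta'$. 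Dominated convergence as $\lambda\to\infty$ (integrand $\le 1$, pointwise limit a.e.\ in $t$), the identity $G_\Sigma(r^2, -\theta) = G_{-\Sigma}(r^2, \theta)$, and the inclusion-exclusion bound $G_\Sigma + G_{-\Sigma} \le 2 G_{\widehat\Sigma}$ turn the observability inequality into
\begin{equation*}
G_{\widehat\Sigma}(r^2, \theta) \ \ge\ \tfrac{\nu}{2\pi C_\obs} \qquad \text{for every } r > 0 \text{ and every } \theta \in \sph^{d-1}.
\end{equation*}
Were $\widehat\Sigma^c$ of positive $\sigma$-measure, Lebesgue differentiation on $\sph^{d-1}$ would furnish a density-zero Lebesgue point $\theta_0 \in \widehat\Sigma^c$; a radial/tangential decomposition at $\theta_0$ shows that the Lebesgue density of $\omega(\widehat\Sigma) \subset \R^d$ at $\theta_0$ also vanishes, so the Gaussian version of Lebesgue differentiation would force $G_{\widehat\Sigma}(r^2, \theta_0) \to 0$ as $r \to 0$---contradicting the uniform bound. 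Therefore $\widehat\Sigma$ has full $\sigma$-measure, yielding $\Theta_{\widehat\Sigma}^-(\theta) = 1$ for every $\theta$ and, \emph{a fortiori}, $\inf_\theta \Theta_{\widehat\Sigma}^- > 0$.

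The technical heart of the argument is the squeezed-state step: verifying rigorously that the Wigner function of $u_{\lambda,r}$ is exactly Gaussian and propagated by $S(t)$ (standard but delicate because $h = r^2\lambda^2$ is \emph{not} the natural scale of the oscillator, so $u_{\lambda,r}$ is not a coherent state for $P$ and acquires time-dependent squeezing), and tracking the rescaling precisely enough that the effective scale appearing inside $G_\Sigma$ remains the fixed $r^2$ in the limit $\lambda \to \infty$ instead of shrinking to $0$. The isolated critical times $\cos(\nu t) = 0$, where $C(t)$ degenerates, are harmless because the integrand is uniformly bounded by $1$ on a Lebesgue-null subset of $t$.
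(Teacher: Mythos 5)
Your part (ii) reproduces the paper's own argument: the half-period parity identity $\e^{-\ii\pi P/\nu}=z\,\mathcal{P}$ reduces observation from $\omega(\Sigma)$ over a full period to observation from $\omega(\widehat\Sigma)$ over $[0,\pi/\nu]$, and when $\widehat\Sigma$ has full measure the latter is just $(\pi/\nu)\norm{u}^2$ by unitarity; your use of Lemma~\ref{lem:obsopenintime} to get $T_\star<2\pi/\nu$ is exactly the paper's.

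Part (i) is correct but follows a genuinely different route. The paper tests the inequality with \emph{fixed-width} coherent states launched from the origin with large momentum $\xi_0$, so the projected trajectory sweeps the ray $\R_+\theta_0$; it then converts the time integral into a spatial integral along that ray, uses the conical scaling to pull everything back to the unit sphere, and bounds the intersection with a small ball by a spherical cap, landing directly on $\Theta^-_{\widehat\Sigma}(\theta_0)$. You instead use position-squeezed Gaussians of width $r\lambda$ centred at $\lambda\theta$ with vanishing mean momentum; exact metaplectic propagation (legitimate here, since $P$ is exactly quadratic) and the homogeneity of the cone collapse the whole computation to the single quantity $G_{\widehat\Sigma}(r^2,\theta)$, bounded below uniformly in $r$ and $\theta$, and your endgame is the approximate-identity form of Lebesgue differentiation at a putative density-zero point of $\widehat\Sigma$. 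The computations check out: the transported variance $C(t)$, the rescaled width $s=r^2+\tan^2(\nu t)/(\nu^2r^2\lambda^4)\to r^2$, the domination by $1$ near $\cos(\nu t)=0$, and the passage from a spherical density-zero point of $\widehat\Sigma$ to a Lebesgue point of density zero of the cone in $\R^d$ are all sound. Two remarks. First, the reduction ``WLOG $T=2\pi/\nu$'' requires the periodicity argument (splitting $[0,k\cdot 2\pi/\nu]$ into periods) when the given $T$ exceeds $2\pi/\nu$ --- one cannot literally ``enlarge'' there --- but that machinery is already in your part (ii), and alternatively the limit computation works verbatim on $[0,T]$ for any fixed $T$. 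Second, you conclude that $\widehat\Sigma$ has full measure, which looks stronger than the stated $\inf_{\sph^{d-1}}\Theta^-_{\widehat\Sigma}>0$; in fact the two are equivalent, since if the complement had positive measure the Lebesgue density theorem would produce a point where $\Theta^-_{\widehat\Sigma}$ vanishes. Your proof therefore establishes the stated conclusion (and, what each approach buys: the paper's version yields an explicit quantitative lower bound on $\Theta^-_{\widehat\Sigma}$ in terms of $C_\obs$, while yours, via the extra squeezing parameter $r$, reaches the full-measure statement in one stroke and makes transparent that the necessary and sufficient conditions of the proposition coincide as conditions on $\widehat\Sigma$).
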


\begin{rema}
The gap between the sufficient and the necessary conditions above can be thought as the difference between $\Sigma$ being the complement of a Cantor set (thus having full measure) and $\Sigma$ being the complement of a fat Cantor set; see~\cite[Chapter 2, p. 80]{Stromberg:81}. Regarding the estimate on the optimal observation time, the strict inequality is due to Lemma~\ref{lem:obsopenintime}.
\end{rema}

In fact, considering the propagation of coherent state, as investigated for instance by Combescure and Robert (see~\cite{CombescureRobertWP}), one could conjecture that observability is characterized by the property
\begin{equation} \label{eq:dynassumptionL1}
\exists R > 0 : \qquad
    \liminf_{\rho \to \infty} \int_0^T \abs*{\omega \cap B_R\left(\phi^t(\rho)\right)} \dd t > 0 \,.
\end{equation}
This type of integral can be rewritten as
\begin{equation*}
\int_0^T \abs*{\omega \cap B_R\left(\phi^t(\rho)\right)} \dd t
    = \int_0^T \norm*{\one_\omega}_{L^1(B_R(\phi^t(\rho)))} \dd t \,.
\end{equation*}
The necessary condition of Theorem~\ref{thm:main}, namely $\frak{K}_p^\infty(\omega_R, T) > 0$ for some $R$ large enough, involves the quantity
\begin{equation} \label{eq:dynassumptionLinfty}
\int_0^T \one_{\omega_R}\left(\phi^t(\rho)\right) \dd t
    = \int_0^T \norm*{\one_\omega}_{L^\infty(B_R(\phi^t(\rho)))} \dd t \,.
\end{equation}
Since the $L^1$ norm in a ball of radius $R$ is controlled by the $L^\infty$ norm (times a constant of order $R^d$), we know that the dynamical condition~\eqref{eq:dynassumptionL1} is stronger than the condition $\frak{K}_p^\infty(\omega_R, T) > 0$, involving the $L^\infty$ norm as written in~\eqref{eq:dynassumptionLinfty}. In particular, if $\omega$ is dense but Lebesgue negligible, the condition $\frak{K}_p^\infty(\omega_R, T) > 0$ will be satisfied, since then $\omega_R = \R^d$ for any $R > 0$, whereas~\eqref{eq:dynassumptionL1} will not. In this situation, Theorem~\ref{thm:main} would then yield a trivial result, namely that observability holds from the whole space, although it clearly does not hold from $\omega$ itself. Thus~\eqref{eq:dynassumptionL1} seems to be a good guess to free ourselves from thickening the observation set. In addition, this condition would be consistent with the generalized geometric control condition introduced by Burq and Gérard in the context of stabilization of the wave equation~\cite{BurqGerard20}.

\subsubsection{Observability from spherical sets} \label{subsub:sphericalsets}

In this section, we investigate the observability properties of a set consisting in a union of spherical layers. In the sequel, we refer to rotation-invariant (measurable) sets as \emph{spherical} sets. Such a set $\omega$ is completely determined by the data of a measurable set $I \subset \R_+$, such that
\begin{equation} \label{eq:sphericalset}
\omega = \omega(I) = \set{x \in \R^d}{\abs{x} \in I} \,.
\end{equation}
Due to the thickening process that occurs when applying Theorem~\ref{thm:main}, we shall generally make further assumptions, that ensure that a set and its thickened version are somewhat equivalent.

The existence of many periodic circular orbits of the Hamiltonian flow for radial potentials implies that observability from $\omega(I)$ does not hold for such Hamiltonians if $I$ contains large gaps. In fact, the proposition below works for slightly more general potentials.

\begin{prop} \label{prop:sphericalsetwithaxiallysymmetricpotential}
Let $d \ge 2$. Suppose the Hamiltonian $P$ is of the form $P = V(x) - \tfrac{1}{2} \Delta$ with a potential $V$ satisfying Assumption~\ref{assum:V} together with:
\begin{enumerate} [label=(\roman*)]
\item\label{it:assum1} $V(-x) = V(x), \forall x \in \R^d$;
\item\label{it:assum2} there exists an orthogonal change of coordinates $M$ such that
\begin{equation*}
V(M S_\theta M^{-1} x)
	= V(x) \,,
		\qquad \forall x \in \R^d, \forall \theta \in \R \,,
\end{equation*}
where $S_\theta$ is the rotation of angle $\theta$ acting on the first two coordinates; in particular, for every $y \in \R^{d - 2}$, the map $V_y : (x_1, x_2) \mapsto V(M (x_1, x_2, y))$ is radial;
\item\label{it:assum3} the map $\tilde V_0$ such that $V_{y=0}(x_1, x_2) = \tilde V_0(\abs{(x_1, x_2)})$ is non-decreasing.
\end{enumerate}
Then for any spherical set $\omega(I)$, if observability holds from $\omega(I)$ in some time $T > 0$, it holds
\begin{equation} \label{eq:nobiggapinI}
\exists r > 0 : \forall s \in \R_+ : \qquad
    I \cap [s, s + r] \neq \varnothing \,.
\end{equation}
\end{prop}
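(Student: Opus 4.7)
}
The plan is to argue by contraposition using the necessary condition of Theorem~\ref{thm:main}(ii): assuming $\omega(I)$ observes the Schrödinger equation in some time $T>0$ with cost $C_{\obs}$, I shall construct a sequence of classical trajectories escaping to infinity in phase space yet staying forever inside the complement of $\omega(I)_R$, thereby forcing $\frak{K}_p^\infty(\omega(I)_R, T) \le 0$ and contradicting the lower bound $\frak{K}_p^\infty(\omega(I)_R, T) \ge 1/C_{\obs} - c\jap{\log R}^{1/2}/R$ provided by Theorem~\ref{thm:main}(ii) with $K = \varnothing$ and $R$ large enough (applied with $V_0 = V$, i.e.\ $p_0 = p$).

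The first step is to reduce to a two-dimensional radial dynamical problem. In the orthogonal coordinates $y = M^{-1}x$, write $W(y) = V(My)$; assumption~\ref{it:assum2} states that $W(S_\theta y) = W(y)$ for every $\theta$, so $W(y_1, y_2, y') = \tilde W(|(y_1, y_2)|, y')$ with $y' \in \R^{d-2}$. Combining the evenness of $W$ (which follows from assumption~\ref{it:assum1}) with the case $\theta = \pi$ of the axial symmetry gives $\tilde W(r, -y') = \tilde W(r, y')$, hence $\partial_{y'} \tilde W(r, 0) = 0$ for every $r \ge 0$. Consequently the symplectic subspace $\Pi = \{y' = 0,\ \xi' = 0\}$ is invariant under the Hamiltonian flow of $W$, and the restricted dynamics are those of the planar radial Hamiltonian $\frac{1}{2}|\xi_{(12)}|^2 + \tilde V_0(|y_{(12)}|)$ on $\R^4$. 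Since $M$ is orthogonal, $|x(t)| = |y(t)|$ throughout; furthermore, the rotation invariance of $\omega(I)$ yields $\omega(I)_R = \{x \in \R^d : \dist(|x|, I) < R\}$.

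Now assume~\eqref{eq:nobiggapinI} fails. Then for every $r > 0$ there exists a gap $[s, s + r] \subset \R_+ \setminus I$, and by choosing $r = r_n \to \infty$ we obtain nested gaps $[\tilde s_n, \tilde s_n + \tilde r_n] \subset \R_+ \setminus I_R$ of length $\tilde r_n \to \infty$ and with $\tilde s_n + \tilde r_n \to \infty$ (since $I_R$ differs from $I$ by a bounded $R$-thickening). Pick $r_n^0 \in (\tilde s_n, \tilde s_n + \tilde r_n)$; as $\tilde V_0 \in C^\infty(\R_+)$ is non-decreasing, $\tilde V_0'(r_n^0) \ge 0$. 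If $\tilde V_0'(r_n^0) > 0$, choose as initial datum $\rho_n \in \Pi$ the point corresponding to $y_{(12)}(0) = (r_n^0, 0)$ and $\xi_{(12)}(0) = (0, \sqrt{r_n^0\, \tilde V_0'(r_n^0)})$; the resulting trajectory is the uniform circular orbit of radius $r_n^0$ with angular speed $\omega_n = \sqrt{\tilde V_0'(r_n^0)/r_n^0}$. If instead $\tilde V_0'(r_n^0) = 0$, then $\rho_n = (M(r_n^0, 0, \ldots, 0), 0)$ is a fixed point of the flow. In either case $|\pi \phi^t(\rho_n)| \equiv r_n^0 \in \R_+ \setminus I_R$, and $|\rho_n| \ge r_n^0 \to \infty$, so
\begin{equation*}
\int_0^T \one_{\omega(I)_R \times \R^d}\bigl(\phi^t(\rho_n)\bigr) \dd t = 0 \quad \text{for every } n.
\end{equation*}
Letting $n \to \infty$ gives $\frak{K}_p^\infty(\omega(I)_R, T) \le 0$, yielding the desired contradiction once $R$ is chosen large enough that the lower bound from Theorem~\ref{thm:main}(ii) is strictly positive.

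The main subtlety is to guarantee that the two-dimensional plane $\Pi$ is genuinely invariant in dimension $d \ge 3$, which requires combining assumptions~\ref{it:assum1} and~\ref{it:assum2} (neither alone suffices to kill $\partial_{y'} \tilde W$ at $y' = 0$). A secondary technical point is handling the case $\tilde V_0'(r_n^0) = 0$: assumption~\ref{it:assum3} only provides non-decreasingness, so strict circular orbits may not exist within every gap, but stationary orbits work as a substitute since their initial data still escape to infinity (through $|\rho_n| = r_n^0 \to \infty$).
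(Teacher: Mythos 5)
Your proof is correct and follows essentially the same route as the paper's: both exploit assumptions~\ref{it:assum1}--\ref{it:assum3} to produce circular (or, when $\tilde V_0'$ vanishes, stationary) Hamiltonian orbits of constant radius $\abs{x^t}$ escaping to infinity, and then invoke the necessary condition of Theorem~\ref{thm:main} to conclude. The only cosmetic differences are that you work in the rotated coordinates $y = M^{-1}x$ and argue by contraposition, whereas the paper establishes the equivalent structural fact $\nabla V(x_0) = c\,x_0$ with $c \ge 0$ on the plane $L_0$ and argues directly.
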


\begin{rema}
The hypotheses are fulfilled for harmonic oscillators in $d$ dimensions having at least two identical characteristic frequencies.
\end{rema}

In dimension $2$, Proposition~\ref{prop:sphericalsetwithaxiallysymmetricpotential} allows to conclude that spherical sets observing the Schrödinger equation for isotropic harmonic oscillators have to occupy space somewhat uniformly---they cannot contain arbitrarily large gaps. Therefore, we shall rule out isotropic harmonic oscillators from our study of observability from spherical sets. Instead, we investigate how the anisotropy of a harmonic oscillator can help to get observability from an observation set made of concentric rings. The proposition below investigates, in dimension $2$, the observability from spherical sets of the form $\omega(I)$
where $I = \bigcup I_n$ is a countable union of open intervals in $\R_+$. We require additionally that $\abs{I_n} \to + \infty$ (we drop this assumption if there are only finitely many $I_n$'s). To any such set, we associate a number between $0$ and $1$ that quantifies the distribution of the annuli $\omega(I_n)$ at infinity:
\begin{equation} \label{eq:defkappastar}
\kappa_\star(I)
	= \min \set{\kappa \in [0, 1]}{\liminf_{r \to + \infty} \dfrac{1}{r} \abs*{I \cap [\kappa r, r]} = 0}
		\in [0, 1] \,.
\end{equation}

While investigating the observability property from such a set $\omega(I)$, we will see that it is relevant to compare the geometrical quantity $\kappa_\star(I)$ with a dynamical quantity that encodes relevant features of the underlying Hamiltonian flow. This dynamical constant is expressed in terms of a function $\Lambda : \R_+^\star \to [0, 1]$ defined by
\begin{equation} \label{eq:deffunctionLambda}
\Lambda(\mu) =
\left\{
\begin{aligned}
& \tan\left(\dfrac{\pi/2}{p + q}\right) & \quad & \rm{if} \;\; \mu = \frac{p}{q} \,, \;\; \gcd(p, q) = 1 \,, \;\; p - q \equiv 0 \pmod{2} \\
& \sin\left(\dfrac{\pi/2}{p + q}\right) & \quad & \rm{if} \;\; \mu = \frac{p}{q} \,, \;\; \gcd(p, q) = 1 \,, \;\; p - q \equiv 1 \pmod{2} \\
& 0 & \quad & \rm{if} \;\; \mu \in \R \setminus \Q 
\end{aligned}
\right. \, .
\end{equation}

As far as the optimal observation time $T_\star$ is concerned, we shall use Diophantine properties of $\mu$ to approximate irrational oscillators by rational ones, for which we can control $T_\star$ by the period of the flow. This motivates the introduction of the irrationality exponent of an irrational number $\mu$, defined by
\begin{equation} \label{eq:defirrationalityexponent}
\tau(\mu) = \sup \set{s \in \R}{\abs*{\mu - \dfrac{p}{q}} < \dfrac{1}{q^s} \; \textrm{for infinitely many coprime couples $(p, q)$}} \,.
\end{equation}
Dirichlet's approximation theorem tells us that $\tau(\mu) \in [2, + \infty]$, for any irrational number. Also keep in mind that $\tau(\mu) = 2$ is achieved for Lebesgue-almost every irrational. See the lecture notes~\cite{Durand:mnt} or the books~\cite{EinsiedlerWard:Book,Schmidt:Book} for further details.

\begin{prop}[Spherical sets and anisotropy] \label{prop:anisotropicsphericalsets}
Let $d = 2$ and consider a potential $V$ fulfilling Assumption~\ref{assum:V}, and with principal symbol
\begin{equation*}
V_A(x) = \dfrac{1}{2} x \cdot A x \,, \qquad x \in \R^2 \,,
\end{equation*}
where $A$ is a real symmetric positive-definite matrix. Denote by $\nu_1$ and $\nu_2$ the characteristic frequencies of $A$, and assume that $\nu_1 \neq \nu_2$. We fix $I = \bigcup I_n$ a union of open intervals in $\R_+$, assuming that $\abs{I_n} \to + \infty$. Denote by $\omega(I)$ the corresponding open spherical set in $\R^2$, as defined in~\eqref{eq:sphericalset}. Then observability from $\omega(I)$ holds in some time $T$ if and only if
\begin{equation} \label{eq:conditionsphericalsets}
\kappa_\star(I)
	> \Lambda\left(\frac{\nu_2}{\nu_1}\right) \,.
\end{equation}
Moreover, the optimal observation time $T_\star$ can be estimated as follows:
\begin{itemize}[label=\textbullet]
\item if $\frac{\nu_2}{\nu_1} \in \Q$, writing $\frac{\nu_2}{\nu_1} = \frac{p}{q}$ with $p, q$ positive coprime integers, it holds
\begin{equation*}
T_\star
	< \frac{\pi}{\nu_2} p = \frac{\pi}{\nu_1} q \,;
\end{equation*}
\item if $\frac{\nu_2}{\nu_1} \in \R \setminus \Q$ is Diophantine, that is $\tau = \tau(\tfrac{\nu_2}{\nu_1}) < \infty$, then it holds
\begin{equation} \label{eq:optimaltimeinDiophantinecase}
\forall \eps > 0, \exists c_\eps, C_\eps > 0 : \qquad
	c_\eps \left(\dfrac{1}{\kappa_\star(I)}\right)^{\frac{1}{\tau - 1 + \eps}} \le T_\star \le C_\eps \left(\dfrac{1}{\kappa_\star(I)}\right)^{\tau - 1 + \eps} \,.
\end{equation}
The constants $c_\eps$ and $C_\eps$ may depend on $\nu_1, \nu_2$, but not on $I$.
\end{itemize}
\end{prop}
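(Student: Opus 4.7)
The strategy is to apply Theorem~\ref{thm:main} to $\omega(I)$ and reduce the dynamical condition~\eqref{eq:dynamicalcondition} to the stated inequality $\kappa_\star(I) > \Lambda(\nu_2/\nu_1)$. By the principal symbol invariance (Corollary~\ref{cor:classicalobssubprincipalperturbation}), we may take $V = V_A$, and then, after an orthonormal change of coordinates---which leaves the rotation-invariant set $\omega(I)$ unchanged---we may assume $A = \mathrm{diag}(\nu_1^2, \nu_2^2)$. The projected flow~\eqref{eq:explicitoschflow} gives $x_j(t) = a_j \cos(\nu_j t + \phi_j)$, so
\[
r(t)^2 = a_1^2 \cos^2(\nu_1 t + \phi_1) + a_2^2 \cos^2(\nu_2 t + \phi_2).
\]
Setting $M(\rho) = \max_t r(t)$, $m(\rho) = \inf_t r(t)$ and $\kappa(\rho) = m(\rho)/M(\rho) \in [0, 1]$, the scaling $\rho \mapsto \lambda \rho$ leaves $\kappa(\rho)$ invariant, so the asymptotics of the flow as $\rho \to \infty$ are encoded in the range of $\kappa$.

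\textbf{Step 1: extremal Lissajous radii.} The heart of the argument is the identity
\[
\sup_{\rho} \kappa(\rho) = \Lambda(\nu_2/\nu_1),
\]
identifying $\Lambda$ as the largest min-to-max radius ratio realized by any orbit (these are precisely the trajectories most likely to miss a given spherical shell, as their radius is confined to a narrow annulus). In the irrational case, Kronecker's density theorem ensures that every non-degenerate orbit is dense in its invariant torus~\eqref{eq:invarianttori}, which contains the point $(\pi/2, \pi/2)$ in angle coordinates where $r$ vanishes; hence $\kappa(\rho) = 0 = \Lambda(\nu_2/\nu_1)$. In the rational case $\nu_2/\nu_1 = p/q$ with $p, q$ coprime positive integers, the flow is periodic with period $T_\mathrm{per} = 2\pi q/\nu_1 = 2\pi p/\nu_2$, and the supremum is attained on a distinguished Lissajous orbit whose closest approach to the origin, normalized by its maximal distance, equals $\Lambda(p/q)$. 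The parity of $p - q$ reflects the symmetry type of this extremal curve: when both $p, q$ are odd, the two cosines can hit zero at the same $t$ and the near-approach is ``on-axis'', producing the $\tan(\pi/(2(p+q)))$ factor; when one of $p, q$ is even, the simultaneous vanishing is obstructed and the extremal configuration is ``orthogonal'', yielding $\sin(\pi/(2(p+q)))$.

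\textbf{Step 2: translation to $\frak{K}^\infty$ and application of Theorem~\ref{thm:main}.} For a trajectory with maximal radius $M$, the pushforward of Lebesgue measure on one (half-)period of $r$ admits a density uniformly bounded away from $0$ on compact subsets of $(m(\rho), M)$; this gives the two-sided estimate
\[
\int_0^T \one_I\bigl(r(t)\bigr) \dd t \asymp \dfrac{\abs{I \cap [\kappa(\rho) M, M]}}{M}
\]
for $T$ beyond a half-period in the rational case, and---after conversion of time averages into phase-space integrals by unique ergodicity on the invariant torus---for $T$ sufficiently large in the irrational case. Taking $\liminf_{\rho \to \infty}$, the worst trajectories are those with $\kappa(\rho)$ closest to $\Lambda$; combined with the definition~\eqref{eq:defkappastar} of $\kappa_\star$, this produces the equivalence $\frak{K}_{p_0}^\infty(\omega(I), T) > 0 \iff \kappa_\star(I) > \Lambda(\nu_2/\nu_1)$. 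Theorem~\ref{thm:main}(i) then supplies observability from $\omega(I)_R$; exploiting $\abs{I_n} \to \infty$, one chooses $I'' = \bigcup (a_n + R, b_n - R) \subset I$ with $\omega(I'')_R \subset \omega(I)$ and $\kappa_\star(I'') = \kappa_\star(I)$, yielding observability from $\omega(I)$ itself. Conversely, Theorem~\ref{thm:main}(ii) applied to $\omega(I)$ forces $\frak{K}_{p_0}^\infty(\omega(I)_R, T) > 0$ for some large $R$; since thickening does not alter the large-scale density structure, $\kappa_\star(\omega(I)_R) = \kappa_\star(I)$ and the inequality $\kappa_\star(I) > \Lambda(\nu_2/\nu_1)$ follows.

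\textbf{Step 3: bounds on $T_\star$ and main obstacle.} In the rational case, a direct inspection of the parities of $\nu_j T_\mathrm{per}/2 \in \{\pi p, \pi q\}$ yields the radial symmetry $r(t + T_\mathrm{per}/2) = r(t)$, so the dynamical condition holds for every $T > T_\mathrm{per}/2 = \pi p/\nu_2 = \pi q/\nu_1$; Lemma~\ref{lem:obsopenintime} then upgrades this to the strict bound $T_\star < \pi p/\nu_2$. In the Diophantine case, I would approximate $\mu = \nu_2/\nu_1$ by the convergents $p_n/q_n$ of its continued fraction, for which $\abs{\mu - p_n/q_n} \le q_n^{-\tau + \eps}$, and use a quantitative continuity of the constant $\frak{K}^\infty$ with respect to the frequencies (on a fixed time window) to transfer the rational observation time $\sim q_n/\nu_1$ to the irrational flow. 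Optimizing $q_n$ against $\kappa_\star(I)$ then gives the upper bound in~\eqref{eq:optimaltimeinDiophantinecase}; the lower bound follows by constructing trajectories of the irrational flow that approximately close up after time $\sim q_n/\nu_1$ while visiting only a sparse portion of $I$. The main obstacle is Step~1: the trigonometric maximization together with the parity-dependent evaluation of the extremum requires a delicate geometric analysis of Lissajous curves. The Diophantine estimate in Step~3 is likewise technical, as it demands a precise quantitative stability of $\frak{K}^\infty$ in the frequency parameters and a careful accounting of close returns of the quasiperiodic flow.
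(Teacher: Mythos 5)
Your overall strategy coincides with the paper's: reduce to the dynamical condition of Theorem~\ref{thm:main}, identify $\Lambda(\nu_2/\nu_1)$ as the extremal min-to-max radius ratio of projected trajectories, convert time averages into radial measures by a coarea argument, use $\abs{I_n}\to\infty$ to make thickening/shrinking harmless, and approximate by convergents for the Diophantine bounds. However, the proposal has genuine gaps at exactly the points that carry the proof. First, Step~1 --- the identity $\sup_\rho \kappa(\rho) = \Lambda(\nu_2/\nu_1)$ together with the parity-dependent evaluation --- is asserted, not proved; you yourself flag it as ``the main obstacle.'' This is the heart of the proposition, since it is what produces the threshold in the statement. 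The paper's argument is nontrivial: one must show that the minimum of $(1-\lambda)\sin^2(\nu_1 t+\theta_1)+\lambda\sin^2(\nu_2 t+\theta_2)$ is attained between a closest pair of zeros of the two sines (quantified by the distance $d_0(\theta_1,\theta_2) = \dist(p\theta_1/(\pi/2)-q\theta_2/(\pi/2), 2\Z)$), then solve the resulting $\max_\lambda\min_s$ problem via the saddle point $\lambda_0=q/(p+q)$, $s_0=p/(p+q)$, and finally observe that the analogous quantity for the maximum of $r$ involves $d_{\pi/2}$, which equals $d_0$ or $1-d_0$ according to the parity of $p-q$; this is where $\tan$ versus $\sin$ comes from. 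A heuristic about ``on-axis'' versus ``orthogonal'' closest approaches does not substitute for this computation.

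Second, the two-sided estimate $\int_0^T \one_I(r(t))\,\dd t \asymp \abs{I\cap[\kappa(\rho)M, M]}/M$ is false in the upper-bound direction as stated: near the turning points of $t\mapsto\abs{x^t}$ the Jacobian $\abs{\gamma'(t)}$ vanishes and the pushforward density blows up, so a sparse set $I$ concentrated near the extremal radii could be visited for a long time. The necessity direction requires (a) proving that the critical set of $\gamma(t)=\abs{x^t}$ is locally finite with a uniform bound on the multiplicity of preimages --- this is the paper's Lemma~\ref{lem:criticalpointsisolated}, which needs a fourth-order Taylor expansion and uses anisotropy in an essential way --- and (b) truncating away from the critical set (the sets $B_N$) before applying the Area formula. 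Third, unique ergodicity does not deliver the irrational case: the relevant observable is the indicator of $I$ rescaled by $M(\rho)\to\infty$, hence changes with $\rho$, so no uniform convergence rate is available; moreover the ``if and only if'' must cover Liouville ratios, for which your Diophantine transfer is unavailable. The paper instead runs the convergent approximation $\abs{x^t-x_j^t}\le A_2\nu_1\abs{t}/q_j^2$ for all irrationals, obtaining the characterization from the thresholds $q_j\ge 6\pi/\kappa_\star$ (sufficiency) and $q_j\le\delta/\kappa_\star$ (necessity), and only then invokes $\tau = 1+\limsup_j \log q_{j+1}/\log q_j$ to get~\eqref{eq:optimaltimeinDiophantinecase}. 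You would need to supply all three of these ingredients for the proof to close.
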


Let us review the meaning of the different quantities involved in this statement.

The number $\kappa_\star(I)$ introduced in~\eqref{eq:defkappastar} encodes some notion of density\footnote{Beware of the fact that $\kappa_\star(I)$ does not coincide in general with the lower density of $I$ defined by
\begin{equation*}
\Theta_\infty(I)
	= \liminf_{r \to + \infty} \frac{\abs{I \cap [0, r]}}{\abs{[0, r]}} \,. \end{equation*}
In fact, the two quantities satisfy
\begin{equation*}
\Theta_\infty(I) \le \kappa_\star(I)
	\qquad \rm{and} \qquad
\kappa_\star(I) = 0 \;\, \Longleftrightarrow \,\; \Theta_\infty(I) = 0 \,.
\end{equation*}
The second assertion follows from the definition of $\kappa_\star(I)$. To check the first assertion, we write
\begin{equation*}
\dfrac{\abs{I \cap [0, r]}}{\abs{[0, r]}}
	= \kappa_\star \dfrac{\abs{I \cap [0, \kappa_\star r]}}{\abs{[0, \kappa_\star r]}} + \dfrac{1}{r} \abs{I \cap [\kappa_\star r, r]}
	\le \kappa_\star + \dfrac{1}{r} \abs{I \cap [\kappa_\star r, r]} \,.
\end{equation*}
Then taking lower limits as $r \to + \infty$ and using the definition of $\kappa_\star$ yield the desired inequality. Notice that the equality $\Theta_\infty(I) = \kappa_\star(I)$ is not true in general, as one can see from the example $I = \bigcup_{n \in \N} (n, n+\frac{1}{2})$, for which we have $\Theta_\infty(I) = \frac{1}{2}$ but $\kappa_\star(I) = 1$.} of the set~$I$. For instance, $\kappa_\star(I) = 1$ means that $I$ has positive density in any window $[\kappa r, r]$ with $\kappa < 1$ as $r \to + \infty$. In contrast, $\kappa_\star(I)$ close to zero means that the annuli are extremely sparse at infinity. This quantity is well-defined, for the map
\begin{equation*}
\kappa \longmapsto \liminf_{r \to + \infty} \dfrac{1}{r} \int_{\kappa r}^r \one_I(s) \dd s
\end{equation*}
is non-increasing and lower semi-continuous (even Lipschitz-continuous in fact). That it is non-increasing comes from the monotonicity of the integral and of the lower limit, whereas the continuity follows from the fact that
\begin{equation*}
\abs*{\dfrac{1}{r} \int_{\kappa_2 r}^r \one_I(s) \dd s - \dfrac{1}{r} \int_{\kappa_1 r}^r \one_I(s) \dd s}
    \le \abs*{\kappa_2 - \kappa_1} \,.
\end{equation*}

Given $\mu \in \R_+^\star$, the constant $\Lambda(\mu)$ defined in~\eqref{eq:deffunctionLambda} is related to the flow of a harmonic oscillator with characteristic frequencies $\nu_1, \nu_2$ such that $\mu = \frac{\nu_2}{\nu_1}$. More precisely, it corresponds to the largest ratio between the minimum and the maximum of the distance to the origin of a projected trajectory: if we write $(x^t, \xi^t)(\rho_0) = \phi^t(\rho_0)$, then we shall prove that
\begin{equation} \label{eq:Lambda(mu)geom}
\Lambda\left(\dfrac{\nu_2}{\nu_1}\right)
	= \sup_{\rho_0 \in \R^4 \setminus \{0\}} \dfrac{\inf_{t \in \R} \abs{x^t(\rho_0)}}{\sup_{t \in \R} \abs{x^t(\rho_0)}} \,.
\end{equation}
Thus we can refer to this quantity as the optimal ``radial aspect ratio" of projected trajectories.
Observability from $\omega(I)$ will depend on whether the critical trajectories that attain this maximal ratio spend sufficient time in $\omega(I)$, hence the criterion $\kappa_\star(I) > \Lambda(\tfrac{\nu_2}{\nu_2})$. See Figure~\ref{fig:non_obs_spherical} for an illustration of the case where such trajectories are not seen by the observation set.
\begin{figure}
\centering
\includegraphics[scale=1.5]{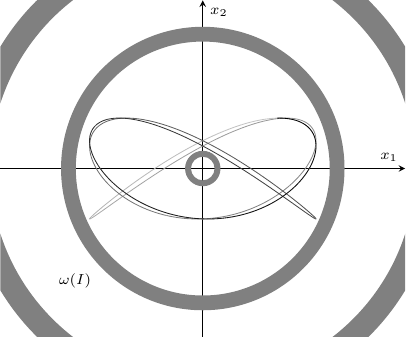}
\caption{The above curve is a projected trajectory of a harmonic oscillator with $\frac{\nu_2}{\nu_1} = \frac{4}{3}$, that does not intersect the observation set $\omega(I)$. The existence of a sequence of energy layers $\{p = E_n\}$, $E_n \to + \infty$, containing such curves would imply that observability from $\omega(I)$ fails.}
\label{fig:non_obs_spherical}
\end{figure}
Notice that maximizing the ratio in~\eqref{eq:Lambda(mu)geom} with respect to any non-zero initial data is the same as taking the upper limit as $\rho_0 \to \infty$ since the Hamiltonian flow is homogeneous. Thus $\Lambda(\mu)$ can be understood as a quantity that captures the behavior of the flow at infinity. In addition, we remark that $\Lambda(\mu) = \Lambda(1/\mu)$, which means that this value depends only on the spectrum of the matrix $A$, and not on the choice of a specific basis of $\R^2$. The maximum of $\Lambda$ is reached exactly at $1$, where it is equal to $\tan(\frac{\pi}{4}) = 1$. This is consistent with the fact that in two dimensions, isotropic harmonic oscillators are the only ones possessing circular orbits: the norm of the trajectory $\abs{x^t(\rho_0)}$ is constant for well-chosen initial data.

The distinction between rational and irrational values of $\mu$ is natural in light of the complete integrability of the flow of harmonic oscillators. When the ratio of characteristic frequencies $\mu = \frac{\nu_2}{\nu_1}$ is rational, writing $\mu = \frac{p}{q}$ with $p, q$ a couple of coprime integers, one can check that the Hamiltonian flow of the corresponding harmonic oscillator is periodic of period $\frac{2 \pi}{\nu_2} p = \frac{2 \pi}{\nu_1} q$. In that case, there are many orbits of the flow whose projection on the $x$-variable space stays away from the origin, thus producing a positive $\Lambda(\mu)$, as one can see on Figure~\ref{subfig:anisotropicrational}. When $\mu$ is irrational, it is known that (non-degenerate) trajectories are dense in the invariant torus to which they belong. In particular, any projected trajectory can get arbitrarily close to the origin, up to waiting a long enough time, so that $\Lambda(\mu) = 0$; see Figure~\ref{subfig:anisotropicirrational}.

Lastly, let us point out that that the estimate~\eqref{eq:optimaltimeinDiophantinecase} of the optimal observation time for Diophantine irrational does not give any precise information for a given open set $I$, but is relevant for fixed $\nu_1, \nu_2$ in the asymptotics $\kappa_\star(I) \ll 1$.

\begin{rema}
It can look surprising that Proposition~\ref{prop:anisotropicsphericalsets} gives an exact characterization of spherical sets for which observability holds, whereas Theorem~\ref{thm:main} provides a necessary and sufficient condition up to thickening the observation set. This improvement is made possible by the extra assumption that $\abs{I_n} \to + \infty$. It ensures that thickening the observation set by a radius $R$ is negligible compared to the width of the annulus $\omega(I_n)$, for $n$ large.
\end{rema}

\begin{rema}[Non-Diophantine irrationals] \label{rmk:nonDiophantineirrationals}
When $\mu = \frac{\nu_2}{\nu_1} \in \R \setminus \Q$, one can estimate $T_\star$, even if $\tau = \tau(\mu) = + \infty$, using the so-called convergents of $\mu$. These are the rational numbers arising in the continued fraction expansion algorithm. Denote them in irreducible form by $\mu_j = \frac{p_j}{q_j}$. It is known that this sequence is the most efficient way to approximate an irrational number by rationals (a result known as Lagrange theorem; see~\cite[Theorem 1.3]{Durand:mnt} or~\cite{EinsiedlerWard:Book,Schmidt:Book}). These convergents satisfy
\begin{equation} \label{eq:diophantineapproximation}
\forall j \in \N \,, \qquad
	\abs*{\mu - \dfrac{p_j}{q_j}}
		< \dfrac{1}{q_j^2} \,.
\end{equation}
(This is why $\tau(\mu) \ge 2$ holds for any irrational.) We will show in the proof of Proposition~\ref{prop:anisotropicsphericalsets} the following: when $\mu \in \R \setminus \Q$, there exist constants $c_1, c_2 > 0$ and $\delta_1, \delta_2 > 0$, possibly depending on $\nu_1, \nu_2$, such that
\begin{equation} \label{eq:estimateoptimaltimewithconvergentsqj}
c_1 q_{j_1} \le T_\star \le c_2 q_{j_2}
\end{equation}
(see~\eqref{eq:boundsonTstar} in the proof), where $j_1$ is the largest index for which $q_j \le \frac{\delta_1}{\kappa_\star}$, and $j_2$ is the smallest index for which $q_j \ge \frac{\delta_2}{\kappa_\star}$.

The bounds~\eqref{eq:optimaltimeinDiophantinecase} are particularly interesting when $\tau$ has the smallest possible value, that is $\tau = 2$, which is the case of Lebesgue-almost every irrational. However, we see that the lower and upper bounds~\eqref{eq:optimaltimeinDiophantinecase} get far apart as $\tau$ goes to infinity. This reflects the fact that the gaps between the denominators of consecutive convergents get wider at each step of the continued fraction expansion. Irrationals having an infinite irrationality exponent are known as Liouville numbers. There are many of them: the set of Liouville numbers is an instance of a Lebesgue negligible set having the cardinality of the continuum. When $\frac{\nu_2}{\nu_1}$ is a Liouville number, the bounds~\eqref{eq:estimateoptimaltimewithconvergentsqj} on the optimal observation time are very poor, owing to the lacunary behavior of the $q_j$'s.
\end{rema}

\subsection{Other applications} \label{subsec:exother}

Let us briefly discuss two other applications of Theorem~\ref{thm:main}.

\subsubsection{Uniform observability of eigenfunctions}

Under Assumption~\ref{assum:V}, the operator $P$ is self-adjoint with compact resolvent. Thus, its spectrum consists in a collection of eigenvalues with finite multiplicity. A direct consequence of an observability inequality $\Obs(\omega, T)$ in a set $\omega$ is the fact that the eigenfunctions of $P$ are uniformly observable from $\omega$:
\begin{equation*}
\exists c > 0 : \forall u \in L^2(\R^d) \,, \qquad
	\left( P u = \lambda u \quad \Longrightarrow \quad \norm*{u}_{L^2(\omega)} \ge c \norm*{u}_{L^2(\R^d)} \right) \,.
\end{equation*}
Theorem~\ref{thm:main} thus furnishes a sufficient condition for this to hold. In particular, for anisotropic oscillators, Proposition~\ref{prop:twocones} implies that uniform observability of eigenfunctions from the two cones defined in~\eqref{eq:deftwocones} is true. This can certainly be deduced from the work of Arnaiz and Macià~\cite{AM:22} that characterizes quantum limits of harmonic oscillators. From Proposition~\ref{prop:anisotropicsphericalsets}, we obtain a similar uniform estimate in spherical sets satisfying the assumptions of the proposition together with the condition~\eqref{eq:conditionsphericalsets}. This time, it is not clear that one can deduce this result as easily from the knowledge of quantum limits~\cite{AM:22}. See also~\cite{DSV:23} for recent works about spectral inequalities for the Hermite operator, and~\cite{Martin:22} for anisotropic Shubin operators.

\subsubsection{Energy decay of the damped wave equation}

Lastly, our study leads to stabilization results concerning the damped wave equation
\begin{equation} \label{eq:dweq}
\left\{
\begin{aligned}
\partial_t^2 \psi + P \psi + \one_\omega \partial_t \psi &= 0 \\
(\psi, \partial_t \psi)_{\vert t = 0} &= U_0 \in \dom P^{1/2} \times L^2
\end{aligned}
\right. \,,
\end{equation}
with damping in $\omega \subset \R^d$, provided $P \ge 0$ (assume for instance that the potential $V$ is non-negative). This equation comes with a natural energy
\begin{equation*}
\cal{E}(U_0, t)
	= \dfrac{1}{2} \left(\norm*{P^{1/2}\psi(t)}_{L^2}^2 + \norm*{\partial_t \psi(t)}_{L^2}^2\right) \,,
\end{equation*}
which decays over time. Let us recall that Anantharaman and Léautaud proved in~\cite[Theorem 2.3]{AL:14} that an observability inequality $\Obs(\omega, T)$ implies a decay at rate $t^{-1/2}$ for the damped wave equation~\eqref{eq:dweq}, meaning that there exists a constant $C > 0$ such that
\begin{equation*}
\cal{E}(U_0, t) \le \dfrac{C}{t} \left( \norm*{P u_0}_{L^2}^2 + \norm*{{P^{1/2}} u_1}_{L^2}^2 \right) \,, \qquad \forall t > 0 \,,
\end{equation*}
for all initial data in the domain of the damped wave operator, here $U_0 = (u_0, u_1) \in \dom P \times \dom P^{1/2}$. Their result applies in our setting since $P$ has compact resolvent under Assumption~\ref{assum:V}. Our examples thus provide concrete situations where such a decay occurs.

\subsection{Link with the Kato smoothing effect} \label{subsec:Kato}

The dynamical condition~\eqref{eq:dynamicalcondition} concerns only what happens at infinity in phase space. We will see that trajectories of the Hamiltonian flow escape from any compact set (in the $x$ variable) most of the time provided the initial data has large enough energy, namely $p(\rho)$ is large enough. This is the reason why one can remove any compact set from the observation without losing observability: no energy can be trapped in a compact set. Quantitatively, we will check that, given $T > 0$, there exist a constant $C > 0$ and $E_0 > 0$ such that
    \begin{equation} \label{eq:capitalOuniforminr??}
\forall r \ge 0, \forall \rho \in \{p \ge E_0\} \,, \quad
    \abs*{\set{t \in [0, T]}{\phi^t(\rho) \in B_r(0) \times \R^d}}
    	= \int_0^T \one_{B_r(x^t)} \dd t
        \le C \dfrac{r}{\sqrt{p(\rho)}}
    \end{equation}
(see Corollary~\ref{cor:classicalnonobs}). We can rephrase this by saying that compact sets are not \emph{classically observable}. This property is related to the Kato smoothing effect as follows. Writing $(x^t, \xi^t) = \phi^t(\rho)$, for any $\eps > 0$, we compute using Fubini's theorem:
\begin{equation*}
\int_0^T \dfrac{\sqrt{p(\rho)}}{\jap{x^t}^{1 + \eps}} \dd t
	= \int_0^T \left( \int_{\jap{x^t}}^{+ \infty} (1 + \eps) \dfrac{\sqrt{p(\rho)}}{r^{2 + \eps}} \dd r \right) \dd t
	= \int_1^{+ \infty} (1 + \eps) \dfrac{\sqrt{p(\rho)}}{r} \left( \int_0^T \one_{B_r(0)}\left(\jap{x^t}\right) \dd t\right) \dfrac{\dd r}{r^{1 + \eps}} \,.
\end{equation*}
From~\eqref{eq:capitalOuniforminr??}, we deduce that
\begin{equation*}
\int_0^T \dfrac{\sqrt{p(\rho)}}{\jap{x^t}^{1 + \eps}} \dd t
	\le C \int_1^{+ \infty} (1 + \eps) \dfrac{\dd r}{r^{1 + \eps}} \,,
\end{equation*}
and the latter integral is indeed convergent when $\eps > 0$. This is the classical analogue to the so-called Kato smoothing effect. In our context, the latter says roughly that
	\begin{equation*}
	\int_0^T \norm*{\jap{x}^{- \frac{1+\eps}{2}} P^{1/4} \e^{- \ii t P} u}_{L^2(\R^d)}^2 \dd t \le C \norm*{u}_{L^2(\R^d)}^2 \,.
	\end{equation*}
    See for instance Doi~\cite{Doi05} for a thorough discussion on this topic. See also the survey of Robbiano~\cite{Robbiano:smoothingSurvey}, as well as Robbiano and Zuily~\cite{RZ:08,RZ:09} and Burq~\cite{BurqSmoothing} for related results. The main phenomenon responsible for this smoothing effect is the fact that $P$ contains a Laplace-Beltrami operator associated with a non-trapping metric (here a flat metric), that is to say all geodesics escape at infinity forward and backward in time. In our case, working with a flat Laplacian enables us to compare the trajectories of the Hamiltonian flow to straight lines, at least for some time near the origin. It would be interesting to see whether our study can be adapted to operators of the form $P = V(x) - \tfrac{1}{2} \Delta_g$ with a non-trapping metric $g$ on $\R^d$ (sufficiently flat at infinity). See~\cite[Lemma 3.1]{MaciaNakamura:pcomm} for an alternative proof that non-trapping implies failure of observability from \emph{bounded} observation sets. The argument relies on semiclassical defect measures.

\subsection{Natural semiclassical scaling for homogeneous potentials} \label{subsec:semiclassicalscaling}

A way to comprehend what goes wrong when the potential is superquadratic is to introduce the natural semiclassical scales associated to our problem, based on an observation of Macià and Nakamura~\cite{MaciaNakamura:pcomm}. Take for simplicity $p(x, \xi) = \abs{x}^{2m} + \abs{\xi}^2$. Following classical arguments, we recall in Appendix~\ref{app:reduction} that the observability inequality reduces to a high-energy observability inequality: roughly speaking, we can restrict ourselves to $L^2$ functions $u$ that are microlocalized around some level set $\{p = E\}$ with $E \gg 1$. Writing
    \begin{equation*}
        p(x, \xi) = E
            \qquad \Longleftrightarrow \qquad
        \abs*{\dfrac{x}{E^{1/2m}}}^{2m} + \abs*{\dfrac{\xi}{E^{1/2}}}^2 = 1 \,,
    \end{equation*}
    we may introduce a small Planck parameter $h$ such that $E = h^{- \gamma}$ for some power $\gamma > 0$. Thus we have
    \begin{equation*}
        \abs{h^{\gamma/2m} x}^{2m} + \abs{h^{\gamma/2} \xi}^2 = 1 \,.
    \end{equation*}
    This motivates the definition of an $h$-dependent Weyl quantization (see Appendix~\ref{app:pseudo})
    \begin{equation*}
        \Op[h]{a}
        	:= \Op[1]{a(h^{\gamma/2m} x, h^{\gamma/2} \xi)} \,,
    \end{equation*}
    for any classical observable $a$ on the phase space. This quantization is properly ``normalized" by choosing $\gamma = \frac{2m}{m + 1}$: with this choice, the corresponding pseudodifferential calculus is expressed in powers of $h$, since then $h^{\gamma/2m} h^{\gamma/2} = h$. Therefore the relevant semiclassical Schrödinger operator is
    \begin{equation*}
        P_h = \Op[h]{p} = h^{\gamma} P \,.
    \end{equation*}
    If one wants to express the observability inequality in terms of the associated propagator, one is then lead to study
    \begin{equation*}
        \e^{- \ii t P} u = \e^{- \ii \frac{t h^{1 - \gamma}}{h} P_h} u \,.
    \end{equation*}
    In other words, running the Schrödinger evolution on a time interval $[0, T]$ amounts to consider a semiclassical time scale of order $h^{1 - \gamma} = h^{\frac{1 - m}{1 + m}}$. It is then clear that this time blows up as $h \to 0$ when $m > 1$. Yet the analysis of the quantum-classical correspondence, for long times, is much more difficult. In particular, it restricts considerably the amount of classical observables whose evolution can be described through the usual Egorov theorem. For this reason, we will not pursue in this direction and stick to the case $m \le 1$. An interesting approach to study this would be to consider first particular potentials for which the classical flow is completely integrable, e.g.\ anharmonic oscillators (see~\cite{BLR:22}). Indeed, observability of the Schrödinger equation has been successfully investigated taking advantage of the completely integrable nature of the underlying classical dynamics in some particular geometrical contexts (e.g.\ in the disk~\cite{ALM:16cras,ALM:16} which corresponds morally to $m = \infty$; see also~\cite{AM:14} on the torus and~\cite{AFKM:15}).

\subsection{Plan of the article}

Section~\ref{sec:studyclassicalflow} below is devoted to the study of the underlying classical dynamics: we show that the Hamiltonian flow is roughly stable under subprincipal perturbations of the potential, and that high-energy projected trajectories can cross compact sets only on a very short period of time. Then we establish an instance of quantum-classical correspondence adapted to our context in Section~\ref{sec:proofmain}, and subsequently prove Theorem~\ref{thm:main}. This is the core of the article. Next, in Sections~\ref{sec:proofconical} and~\ref{sec:proofspherical}, we deal with the examples presented in Subsections~\ref{subsub:conicalsets}, \ref{subsub:refinementconical} and Subsection~\ref{subsub:sphericalsets} (observability from conical and spherical sets respectively). Finally, we recall in Appendix~\ref{app:reduction} a classical result, related to the notion of unique continuation, that shows that the sought observability inequality is equivalent to a similar high-energy inequality. Appendix~\ref{app:pseudo} collects reminders about pseudodifferential operators, as well as refined estimates on the pseudodifferential calculus and the G{\aa}rding inequality needed for Section~\ref{sec:proofmain}.


\section{Study of the classical dynamics} \label{sec:studyclassicalflow}

In this section, we investigate the properties of the Hamiltonian flow $(\phi^t)_{t \in \R}$ associated with $p$. This study consists essentially in analyzing the ODE system that defines~$\phi^t$, namely the Hamilton equation~\eqref{eq:defHamiltonianflow}. The dynamical condition of Theorem~\ref{thm:main}
\begin{equation*}
    {\frak K}_p^\infty(\omega, T)
    	= \liminf_{\rho \to \infty} \int_0^T \one_{\omega \times \R^d}\left(\phi^t(\rho)\right) \dd t > 0
\end{equation*}
motivates the study of what can be referred to as ``classical observability".

\begin{defi}[Classical observability] \label{def:classicalobs}
Let $q = q(t; \rho)$ be a Borel-measurable\footnote{Recall that Borel-measurability is slightly stronger than Lebesgue-measurability. This restriction ensures that $t \mapsto q(t; \phi^t(\rho))$ is Lebesgue-measurable. This is not a problem in our context since we will consider functions $q$ that are continuous, or at worse, indicator functions of Borel sets.} function on $\R \times \R^{2d}$. Then we say that $q$ is classically observable~if
\begin{equation} \label{eq:notationCcal}
{\frak K}_p^\infty(q)
	:= \liminf_{\rho \to \infty} \int_{\R} q\left(t; \phi^t(\rho)\right) \dd t > 0 \,.
\end{equation}
\end{defi}

Of course, we will be specifically interested in the case where $p$ contains a subquadratic potential and $q = \one_{(0, T) \times \omega \times  \R^{2d}}$, but it is interesting to work out this problem in a more general setting in order to understand to what extent quadratic potentials are critical for the Schrödinger equation.

\subsection{Invariance of classical observability under subprincipal perturbation} \label{subsec:invariancesubprincipalperturbation}

In this subsection, we consider a set of classical symbols on $\R^{2d}$ of order $n_1$ in $x$ and $n_2$ in $\xi$, defined by
\begin{equation*}
S^{n_1, n_2}
	= \set{a \in \cont^\infty(\R^{2d})}{\forall \alpha \in \N^{2d}, \sup_{(x, \xi) \in \R^{2d}} \dfrac{\abs{\partial^\alpha a(x, \xi)}}{\jap*{x}^{n_1 - \abs{\alpha}} + \jap*{\xi}^{n_2 - \abs{\alpha}}} < \infty} \,.
\end{equation*}
A basic example is the classical Hamiltonian $p(x, \xi) = V(x) + \frac{1}{2} \abs{\xi}^2$ that we consider: it belongs to $S^{2m, 2}$. We draw the reader's attention to the fact that this is not a standard symbol class in microlocal analysis. Our aim here is simply to study symbols whose derivatives have similar decay properties as  the classical Hamiltonian $p$. We will not make use of any notion of pseudodifferential calculus in this subsection.

It is clear that these symbol classes are nested in the following way: if $n_1 \le n_1'$ and $n_2 \le n_2'$, then $S^{n_1, n_2} \subset S^{n_1', n_2'}$ (and this inclusion is even continuous with respect to the associated Fréchet structure). Given $n_1, n_2 \in \R$, a real-valued symbol $a \in S^{n_1, n_2}$ is said to be elliptic in $S^{n_1, n_2}$ if $a(x, \xi) \ge c (\jap{x}^{n_1} + \jap{\xi}^{n_2})$ provided $\abs{(x, \xi)}$ is large enough. In addition, the binary relation
\begin{equation} \label{eq:principalsymbolnonstandard}
\forall f, g \in S^{n_1, n_2} \,, \qquad
    f = g \pmod{S^{n_1 - 1, n_2 - 1}} \quad \Longleftrightarrow \quad f - g \in S^{n_1 - 1, n_2 - 1}
\end{equation}
is an equivalence relation, and the projection on the quotient space $S^{n_1, n_2} / S^{n_1 - 1, n_2 - 1}$ is called the principal symbol. Two symbols are said to have the same principal symbol if they belong to the same equivalence class through this projection. In the example of our classical Hamiltonian $p$, these notions of ellipticity and principal symbol are consistent with the terminology used right after Assumption~\ref{assum:V} regarding the potential $V$.

The proposition below is essentially an application of Grönwall's Lemma.

\begin{prop}[Stability estimate] \label{prop:stabilityestimate}
Fix $n_1, n_2 > 0$ and let $p_1, p_2 \in S^{n_1, n_2}$ be elliptic symbols in $S^{n_1, n_2}$. Assume they have the same principal symbol in the sense of~\eqref{eq:principalsymbolnonstandard}. Consider the Hamiltonian flows $(\phi_1^t)_{t \in \R}$ and $(\phi_2^t)_{t \in \R}$ associated with $p_1$ and $p_2$ respectively. Then there exists a constant $C > 0$ such that
\begin{equation*}
\abs*{\phi_2^t(\rho) - \phi_1^t(\rho)}
    \le \e^{C t \jap{p_1(\rho)}^{\max(0, 1 - \frac{2}{n_+})}} \,, \qquad \forall \rho \in \R^{2d}, \forall t \ge 0 \,,
\end{equation*}
where $n_+ = \max(n_1, n_2)$. In particular, when $n_1, n_2 \le 2$, there exists $C > 0$ such that
\begin{equation*}
\abs*{\phi_2^t(\rho) - \phi_1^t(\rho)}
    \le \e^{C t} \,, \qquad \forall \rho \in \R^{2d}, \forall t \ge 0 \,.
\end{equation*}
\end{prop}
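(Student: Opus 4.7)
The plan is to apply Grönwall's inequality to the difference $\delta^t(\rho) := \phi_2^t(\rho) - \phi_1^t(\rho)$. The first preliminary step is to establish uniform-in-time bounds on the position and momentum components of both flows in terms of $E := \jap{p_1(\rho)}$. For each $j \in \{1, 2\}$, the conservation law $p_j(\phi_j^t(\rho)) = p_j(\rho)$ combined with the ellipticity of $p_j$ in $S^{n_1, n_2}$ yields, for $\abs{\phi_j^t(\rho)}$ large enough, $\jap{x_j^t}^{n_1} + \jap{\xi_j^t}^{n_2} \lesssim 1 + p_j(\rho)$. The assumption $p_2 - p_1 \in S^{n_1 - 1, n_2 - 1}$ combined with a Young-type inequality $\jap{x}^{n_1 - 1} \le \varepsilon \jap{x}^{n_1} + C_\varepsilon$ (valid since $n_1 > 0$) gives $\abs{p_2 - p_1} \le \tfrac{1}{2}\bigl(\jap{x}^{n_1} + \jap{\xi}^{n_2}\bigr) + C$, so both $p_1$ and $p_2$ are comparable up to constants, yielding $\jap{x_j^t} \lesssim E^{1/n_1}$ and $\jap{\xi_j^t} \lesssim E^{1/n_2}$, uniformly in $t \in \R$.

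Next, I would split the derivative of $\delta^t$ using the Hamilton equations and a Taylor expansion in one step:
\begin{equation*}
\dfrac{\dd}{\dd t}\delta^t(\rho) = J \nabla(p_2 - p_1)\bigl(\phi_2^t(\rho)\bigr) + \left(\int_0^1 J \hess p_1\bigl(\phi_1^t(\rho) + s\, \delta^t(\rho)\bigr) \dd s\right) \delta^t(\rho) \,.
\end{equation*}
Since $p_2 - p_1 \in S^{n_1 - 1, n_2 - 1}$, each entry of $\nabla(p_2 - p_1)$ belongs to $S^{n_1 - 2, n_2 - 2}$; likewise, each entry of $\hess p_1$ lies in $S^{n_1 - 2, n_2 - 2}$. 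Both are therefore bounded pointwise by $C(\jap{x}^{n_1 - 2} + \jap{\xi}^{n_2 - 2})$. Evaluating at $\phi_2^t(\rho)$ or at the convex combinations $\phi_1^t(\rho) + s\, \delta^t(\rho)$ (which inherit the same ellipticity bounds since they lie on the segment joining $\phi_1^t(\rho)$ and $\phi_2^t(\rho)$), and using that $\jap{x}^{n_1 - 2} \le \max(1, \jap{x}^{n_1 - 2}) \le C E^{\max(0, 1 - 2/n_1)}$ together with the analogous estimate for $\xi$, I get
\begin{equation*}
\jap{x}^{n_1 - 2} + \jap{\xi}^{n_2 - 2} \le C E^{\max(0, 1 - 2/n_+)}, \qquad n_+ = \max(n_1, n_2),
\end{equation*}
where monotonicity of $n \mapsto 1 - 2/n$ absorbs the two exponents into a single one.

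Plugging these bounds back gives a differential inequality of the form $\abs{\dot\delta^t} \le C E^\alpha \bigl(1 + \abs{\delta^t}\bigr)$ with $\alpha = \max(0, 1 - 2/n_+)$. Grönwall's inequality applied to $u(t) = 1 + \abs{\delta^t(\rho)}$ with initial value $u(0) = 1$ then yields $u(t) \le \e^{C E^\alpha t}$, which gives the announced bound. The second statement ($n_1, n_2 \le 2$) follows at once from $\alpha = 0$.

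The main obstacle is not conceptual but a careful bookkeeping: ensuring that the comparison $p_2 \asymp p_1$ holds globally (not just in the ellipticity regime), so that both flows remain in a region where the $S^{n_1, n_2}$ bounds yield the uniform estimates on $\jap{x_j^t}$ and $\jap{\xi_j^t}$. Small values of $E$ can be absorbed into the constant $C$ since the right-hand side $\e^{C t E^\alpha}$ is already bounded below by $1$.
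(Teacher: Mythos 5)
Your proposal is correct and follows essentially the same route as the paper: split $\tfrac{\dd}{\dd t}(\phi_2^t-\phi_1^t)$ into a perturbation term $\nabla(p_2-p_1)$ evaluated along one flow plus a linearization term controlled by the Hessian on the segment joining the two flows (the paper uses the mean-value inequality with $\hess p_2$ where you use the integral Taylor remainder with $\hess p_1$ — a symmetric, immaterial choice), bound both via ellipticity and conservation of energy by $C(1+\abs{p_1(\rho)}^{\max(0,1-2/n_+)})$, and conclude by Grönwall applied to $1+\abs{\delta^t}$ (the paper uses $\jap{\delta^t}$ to the same effect).
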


\begin{rema}
This result ensures that the distance between $\phi_1^t(\rho)$ and $\phi_2^t(\rho)$ is bounded provided $n_+ \le 2$, on a time interval $[0, T]$ independent of $\rho$. In our problem, this condition on $n_+$ means exactly that the potential is subquadratic.
\end{rema}

\begin{proof}
In this proof, we write $n_+ = \max(n_1, n_2)$ and $n_- = \min(n_1, n_2)$. Set $\tilde p = p_2 - p_1$, which belongs to $S^{n_1 - 1, n_2 - 1}$ by assumption. The Hamilton equation~\eqref{eq:defHamiltonianflow} gives
\begin{align} \label{eq:diffineq}
\abs*{\dfrac{\dd}{\dd t} \left( \phi_2^t(\rho) - \phi_1^t(\rho) \right)}
    &= \abs*{J \left( \nabla p_2\left(\phi_2^t(\rho)\right) - \nabla p_1\left(\phi_1^t(\rho)\right) \right)} \nonumber\\
    &\le \abs*{\nabla p_2\left(\phi_2^t(\rho)\right) - \nabla p_2\left(\phi_1^t(\rho)\right)} + \abs*{\nabla \tilde p\left(\phi_1^t(\rho)\right)} \,.
\end{align}
By assumption, $p_1$ and $p_2$ are elliptic at infinity in $S^{n_1, n_2}$ so that for any $\rho = (x, \xi)$ large enough, one has:
\begin{equation} \label{eq:ellipticityp1p2}
\dfrac{1}{C} \left(\jap*{x}^{n_1} + \jap*{\xi}^{n_2}\right)
    \le \abs{p_j(\rho)}
    \le C \left(\jap*{x}^{n_1} + \jap*{\xi}^{n_2}\right) \,, \qquad j \in \{1, 2\} \,.
\end{equation}
From the definition of $S^{n_1 - 1, n_2 - 1}$, which contains $\tilde p$, we have
\begin{equation*}
\abs*{\nabla \tilde p(\rho)}
	\le C \left(\jap*{x}^{n_1 - 2} + \jap*{\xi}^{n_2 - 2}\right) \,.
\end{equation*}
The ellipticity of $p_2$, that is, the left-hand side of~\eqref{eq:ellipticityp1p2}, then yields
\begin{equation*}
\abs*{\nabla \tilde p(\rho)}
	\le C \left( \abs*{p_1(\rho)}^{\max(0, 1 - \frac{2}{n_1})} + \abs*{p_1(\rho)}^{\max(0, 1 - \frac{2}{n_2})} \right)
	\le C' \abs*{p_1(\rho)}^{\max(0, 1 - \frac{2}{n_+})} \,,
\end{equation*}
provided $\abs{\rho}$ is large enough. We obtain on the whole phase space:
\begin{equation} \label{eq:nablatildep}
\abs*{\nabla \tilde p(\rho)}
	\le C + C \abs*{p_1(\rho)}^{\max(0, 1 - \frac{2}{n_+})} \,, \qquad \forall \rho \in \R^{2d} \,.
\end{equation}
Now we deal with the other term in~\eqref{eq:diffineq}: the mean-value inequality yields
\begin{equation} \label{eq:mviforgronwall}
\abs*{\nabla p_2\left(\phi_2^t(\rho)\right) - \nabla p_2\left(\phi_1^t(\rho)\right)}
    \le \abs*{\phi_2^t(\rho) - \phi_1^t(\rho)} \times \sup_{s \in [0, 1]} \abs*{\hess p_2\left((1 - s) \phi_1^t(\rho) + s \phi_2^t(\rho)\right)} \,.
\end{equation}
Write for short $\rho_s^t = (1 - s) \phi_1^t(\rho) + s \phi_2^t(\rho)$. Using that $p_2 \in S^{n_1, n_2}$, we obtain
\begin{equation*}
\abs*{\hess p_2\left(\rho_s^t\right)}
    \le C \left( \jap*{(1 - s) x_1^t + s x_2^t}^{n_1 - 2} + \jap*{(1 - s) \xi_1^t + s \xi_2^t}^{n_2 - 2} \right) \,,
\end{equation*}
where we wrote $\phi_j^t(\rho) = (x_j^t, \xi_j^t)$, $j \in \{1, 2\}$. Then we use the classical inequality $\jap{a + b} \le 2(\jap{a} + \jap{b})$ to get
\begin{align*}
\abs*{\hess p_2\left(\rho_s^t\right)}
    &\le C \left( \left(\jap*{x_1^t} + \jap*{x_2^t}\right)^{\max(0, n_1 - 2)} + \left(\jap*{\xi_1^t} + \jap*{\xi_2^t}\right)^{\max(0, n_2 - 2)} \right) \\
    &\le C' \left(\jap*{x_1^t}^{\max(0, n_1 - 2)} + \jap*{\xi_1^t}^{\max(0, n_2 - 2)}\right)
    	+ C' \left(\jap*{x_2^t}^{\max(0, n_1 - 2)} + \jap*{\xi_2^t}^{\max(0, n_2 - 2)}\right) \,.
\end{align*}
Next we use the ellipticity of $p_1$ and $p_2$ and the fact that they are conserved by the corresponding flows:
\begin{align*}
\abs*{\hess p_2\left(\rho_s^t\right)}
	&\le C \left( \abs*{p_1\left(\phi_1^t(\rho)\right)}^{\max(0, 1 - \frac{2}{n_+})} + \abs*{p_2\left(\phi_2^t(\rho)\right)}^{\max(0, 1 - \frac{2}{n_+})} \right) \\
	&= C \left( \abs*{p_1(\rho)}^{\max(0, 1 - \frac{2}{n_+})} + \abs*{p_2(\rho)}^{\max(0, 1 - \frac{2}{n_+})} \right) \,,
\end{align*}
which holds for $\abs{\rho}$ large enough. Up to adding a constant, this works for all $\rho \in \R^d$. Finally we use the fact that $p_1$ and $p_2$ are comparable (a consequence of ellipticity) to obtain
\begin{equation*}
\abs*{\hess p_2\left(\rho_s^t\right)}
	\le C + C \abs*{p_1(\rho)}^{\max(0, 1 - \frac{2}{n_+})} \,,
		\qquad \forall \rho \in \R^{2d} \,.
\end{equation*}
Plugging this into~\eqref{eq:mviforgronwall}, that results in
\begin{equation*}
\abs*{\nabla p_2\left(\phi_2^t(\rho)\right) - \nabla p_2\left(\phi_1^t(\rho)\right)}
    \le C \abs*{\phi_2^t(\rho) - \phi_1^t(\rho)} \times \left(1 + \abs*{p_1(\rho)}^{\max(0, 1 - \frac{2}{n_+})}\right) \,,
\end{equation*}
for all $\rho \in \R^{2d}$. Putting this together with~\eqref{eq:nablatildep}, we estimate the right-hand side of~\eqref{eq:diffineq} from above as:
\begin{equation*}
\abs*{\dfrac{\dd}{\dd t} \left( \phi_2^t(\rho) - \phi_1^t(\rho) \right)}
    \le C\left(1 + \abs*{\phi_2^t(\rho) - \phi_1^t(\rho)} \right) \times \left(1 + \abs*{p_1(\rho)}^{\max(0, 1 - \frac{2}{n_+})}\right) \,.
\end{equation*}
We deduce that
\begin{align*}
\abs*{\dfrac{\dd}{\dd t} \jap*{\phi_2^t(\rho) - \phi_1^t(\rho)}}
    &= \abs*{\dfrac{\dd}{\dd t} \left( \phi_2^t(\rho) - \phi_1^t(\rho) \right) \cdot \dfrac{\phi_2^t(\rho) - \phi_1^t(\rho)}{\jap{\phi_2^t(\rho) - \phi_1^t(\rho)}}} \\
    &\le C \jap*{\phi_2^t(\rho) - \phi_1^t(\rho)} \left(1 + \abs*{p_1(\rho)}^{\max(0, 1 - \frac{2}{n_+})}\right) \,,
\end{align*}
for any $\rho \in \R^{2d}$. We conclude by Grönwall's Lemma that
\begin{equation*}
\jap*{\phi_2^t(\rho) - \phi_1^t(\rho)}
    \le \e^{C t \jap{p_1(\rho)}^{\max(0, 1 - \frac{2}{n_+})}} \,, \qquad \forall \rho \in \R^{2d}, \forall t \ge 0 \,,
\end{equation*}
which gives the sought result.
\end{proof}

The result below roughly states that our dynamical condition is invariant under sub-principal perturbation of the potential $V$, under the assumption that $V$ is sub-quadratic.

\begin{coro} \label{cor:classicalobssubprincipalperturbation}
Fix $0 < n_1, n_2 \le 2$ and let $p_1, p_2 \in S^{n_1, n_2}$ be elliptic symbols in $S^{n_1, n_2}$, and assume they have the same principal symbol in the sense of~\eqref{eq:principalsymbolnonstandard}. Consider the Hamiltonian flows $(\phi_1^t)_{t \in \R}$ and $(\phi_2^t)_{t \in \R}$ associated with $p_1$ and $p_2$ respectively. For any $T > 0$, there exists a constant $C = C_T > 0$ such that the following holds: for any function $q =q(t; \rho)$, Lipschitz in $\rho$ and such that
\begin{equation*}
\supp q \subset [-T, T] \times \R^{2d} \,,
\end{equation*}
one has
\begin{equation*}
\abs*{\int_{\R} q\left(t; \phi_2^t(\rho)\right) \dd t - \int_{\R} q\left(t; \phi_1^t(\rho)\right) \dd t}
	\le C \norm*{\nabla_\rho q}_{L^\infty(\R \times \R^{2d})} \,, \qquad \forall \rho \in \R^{2d} \,.
\end{equation*}
In particular, it holds
\begin{equation*}
\abs*{\frak{K}_{p_2}^\infty(q) - \frak{K}_{p_1}^\infty(q)}
	\le C \norm*{\nabla_\rho q}_{L^\infty(\R \times \R^{2d})} \,.
\end{equation*}
\end{coro}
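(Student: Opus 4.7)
The plan is to derive the estimate directly from the stability bound of Proposition~\ref{prop:stabilityestimate}. The crucial feature in the subquadratic regime $n_+ := \max(n_1, n_2) \le 2$ is that $\max(0, 1 - \tfrac{2}{n_+}) = 0$, whence Proposition~\ref{prop:stabilityestimate} supplies a constant $C > 0$ with
\begin{equation*}
\abs{\phi_2^t(\rho) - \phi_1^t(\rho)} \le \e^{C t}, \qquad \forall \rho \in \R^{2d}, \forall t \ge 0.
\end{equation*}
Inspection of the proof shows that in this regime the Hessian bound $\abs{\hess p_2} \le C$ and the gradient bound $\abs{\nabla(p_2 - p_1)} \le C$ are uniform in $\rho$ and insensitive to the signs of $p_1, p_2$; consequently Grönwall's lemma applied in reverse time (equivalently, applied to the Hamiltonian flows of $-p_1, -p_2$, which retain the same principal symbol and the same symbol bounds) yields the identical estimate for $t \le 0$, so that
\begin{equation*}
\abs{\phi_2^t(\rho) - \phi_1^t(\rho)} \le \e^{C \abs{t}}, \qquad \forall \rho \in \R^{2d}, \forall t \in \R.
\end{equation*}

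Given this uniform stability estimate, the first inequality will follow from a pointwise comparison. Since $q$ vanishes for $\abs{t} > T$, both integrals in the statement reduce to integrals over $[-T, T]$, and the Lipschitz assumption in $\rho$ gives
\begin{equation*}
\abs{q(t; \phi_2^t(\rho)) - q(t; \phi_1^t(\rho))} \le \norm{\nabla_\rho q}_{L^\infty(\R \times \R^{2d})} \abs{\phi_2^t(\rho) - \phi_1^t(\rho)} \le \norm{\nabla_\rho q}_{L^\infty(\R \times \R^{2d})} \e^{C \abs{t}}.
\end{equation*}
Integrating over $[-T, T]$ and using the triangle inequality will then yield the claim with $C_T = \tfrac{2(\e^{CT} - 1)}{C}$, independent of $\rho$.

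The second assertion is immediate from the first: the pointwise bound
\begin{equation*}
\int_\R q(t; \phi_2^t(\rho)) \dd t \le \int_\R q(t; \phi_1^t(\rho)) \dd t + C_T \norm{\nabla_\rho q}_{L^\infty(\R \times \R^{2d})}
\end{equation*}
passes to the $\liminf$ as $\rho \to \infty$ on both sides (the second term being constant in $\rho$) to give $\frak{K}_{p_2}^\infty(q) \le \frak{K}_{p_1}^\infty(q) + C_T \norm{\nabla_\rho q}_{L^\infty}$, and swapping the roles of $p_1$ and $p_2$ yields the reverse inequality. There is no substantive obstacle in this proof once Proposition~\ref{prop:stabilityestimate} is in hand; the only mildly delicate point, namely extending the stability estimate to negative times, is straightforward precisely because the subquadratic hypothesis $n_+ \le 2$ makes the relevant symbol bounds on $\hess p_2$ and $\nabla(p_2 - p_1)$ both uniform in phase space and independent of the sign of the Hamiltonians.
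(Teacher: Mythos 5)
Your proof is correct and follows essentially the same route as the paper: the mean-value inequality combined with the stability estimate of Proposition~\ref{prop:stabilityestimate}, integration over $[-T,T]$, and passage to the lower limit for the second claim. Your explicit justification of the stability bound for negative times (via Grönwall in reverse time, noting that the uniform bounds on $\hess p_2$ and $\nabla(p_2-p_1)$ in the subquadratic regime do not require ellipticity or a sign condition) fills in a detail the paper leaves implicit when it integrates over $[-T,T]$.
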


\begin{proof}
This is a direct application of the mean-value inequality and Proposition~\ref{prop:stabilityestimate}, observing that $n_+ = \max(n_1, n_2) \le 2$:
\begin{multline*}
\abs*{\int_{\R} q\left(t; \phi_2^t(\rho)\right) \dd t - \int_{\R} q\left(t; \phi_1^t(\rho)\right) \dd t} \\
	\le \int_{-T}^T \norm*{\nabla_\rho q}_{L^\infty(\R \times \R^{2d})} \abs*{\phi_2^t(\rho) - \phi_1^t(\rho)} \dd t
	\le 2 T \e^{C T} \norm*{\nabla_\rho q}_{L^\infty(\R \times \R^{2d})} \,.
\end{multline*}
Taking lower limits in $\rho$ yields the second claim.
\end{proof}

\subsection{Quantitative estimates of classical (non-)observability}

In this subsection, we show that $\one_{(0, T) \times B_r(0) \times \R^d}$ is not classically observable in the sense of Definition~\ref{def:classicalobs} when the Hamiltonian is of the form $p(x, \xi) = V(x) + \frac{1}{2} \abs{\xi}^2$. Actually for this class of Hamiltonians, we can prove a more precise result.

\begin{prop} \label{prop:timeincylinder} 
Let $p$ be a symbol of the form $p(x, \xi) = V(x) + \frac{1}{2} \abs{\xi}^2$, with $V$ fulfilling Assumption~\ref{assum:V} with an arbitrary $m > 0$.
\begin{itemize}[label=\textbullet]
\item If $m \ge 1/2$, there exists a constant $C > 0$ and $E_0 > 0$ such that for all $E \ge E_0$, one has
\begin{equation*}
\forall r \ge 0, \forall \rho \in \{p = E\} \,, \qquad
    \abs*{\set{t \in \left[0, E^{\frac{1}{2} (\frac{1}{m} - 1)}\right]}{\phi^t(\rho) \in B_r(0) \times \R^d}}
        \le C \dfrac{r}{\sqrt{E}} \,.
\end{equation*}
\item If $m < 1/2$, then for any $\eps > 0$ small enough, there exists a constant $C > 0$ and $E_0 > 0$ such that for all $E \ge E_0$, one has
\begin{equation*}
\forall r \ge 0, \forall \rho \in \{p = E\} \,, \qquad
    \abs*{\set{t \in \left[0, E^{\frac{1}{2} (\frac{1}{m} - 1) - \eps}\right]}{\phi^t(\rho) \in B_r(0) \times \R^d}}
        \le C \dfrac{r}{\sqrt{E}} \,.
\end{equation*}
\end{itemize}
\end{prop}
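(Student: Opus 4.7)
The plan is to control the set $I = \set{t \in [0, T_E]}{x^t \in B_r(0)}$ by studying the auxiliary function $f(t) = x^t \cdot \xi^t$ along the trajectory $\phi^t(\rho) = (x^t, \xi^t)$, whose derivative along the flow reads $\dot f(t) = \abs*{\xi^t}^2 - x^t \cdot \nabla V(x^t)$. Using Assumption~\ref{assum:V} to bound both $V(x)$ and $\abs*{x \cdot \nabla V(x)}$ by $C \jap*{x}^{2m}$, together with the energy conservation identity $\abs*{\xi^t}^2 = 2(E - V(x^t))$, I would first establish that there exists a small constant $c_0 > 0$, depending only on $V$, such that for $E$ large and for any $r \le c_0 E^{1/(2m)}$, one has $\dot f(t) \ge E$ on the set $\{x^t \in B_r(0)\}$. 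Since $\abs*{f(t)} \le r \sqrt{2E}$ on the same set, it follows that each connected component of $I$ has length at most $C r/\sqrt{E}$.

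The heart of the argument is then to show that the number of components of $I$ in $[0, T_E]$ is bounded. I would proceed by contradiction: between two consecutive components $[a_j, b_j]$ and $[a_{j+1}, b_{j+1}]$, the function $f$ satisfies $f(b_j) \ge 0$ (the trajectory is exiting $B_r$) and $f(a_{j+1}) \le 0$ (it is re-entering $B_r$). Enlarging the radius to $R = c_0 E^{1/(2m)}$, the same estimate gives $\dot f(t) \ge E > 0$ on $\{x^t \in B_R(0)\}$. If the trajectory stayed inside $B_R(0)$ throughout the closed gap $[b_j, a_{j+1}]$, integrating $\dot f$ would yield $f(a_{j+1}) - f(b_j) \ge E(a_{j+1} - b_j) > 0$, contradicting $f(a_{j+1}) \le 0 \le f(b_j)$. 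Consequently, in each gap the trajectory must reach some point with $\abs*{x^t} > c_0 E^{1/(2m)}$.

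Since $\abs*{\dot x^t} = \abs*{\xi^t} \le \sqrt{2E}$, traveling from $\abs*{x^t} = r$ up to $\abs*{x^t} > c_0 E^{1/(2m)}$ and back takes time at least $c_3 E^{1/(2m) - 1/2}$, at least when $r \le c_0 E^{1/(2m)}/2$. This is where the two regimes separate. For $m \ge 1/2$, the time horizon $T_E = E^{\frac{1}{2}(\frac{1}{m}-1)} = E^{1/(2m) - 1/2}$ is comparable to this minimal gap, which bounds the number of components by a constant depending only on $V$. For $m < 1/2$, the extra factor $E^{-\eps}$ in the horizon makes $T_E$ strictly smaller than the minimal time gap as soon as $E \ge E_0(\eps)$, so that $I$ still has $O(1)$ components. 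In both situations, multiplying by the component-wise bound gives $\abs*{I} \le C r/\sqrt{E}$ in the regime $r \le c_0 E^{1/(2m)}/2$, while the complementary regime $r > c_0 E^{1/(2m)}/2$ is handled by the trivial estimate $\abs*{I} \le T_E$, which is dominated by $C r/\sqrt{E}$ thanks to the lower bound $r/\sqrt{E} \gtrsim E^{1/(2m) - 1/2}$ (with the $E^{-\eps}$ slack used precisely here when $m < 1/2$).

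I expect the main obstacle to lie in the velocity estimate $\dot f \ge E$ on $B_r$: it requires dominating both $V$ and $x \cdot \nabla V$ by a fraction of $E$, which is what ultimately forces the smallness condition $r \le c_0 E^{1/(2m)}$ and fixes the scale at which the whole argument can be run. The trick of comparing with the slightly larger ball $B_R$ at the same radius $R = c_0 E^{1/(2m)}$ is what avoids any delicate analysis of critical points of $V$ or of local extrema of $\abs*{x^t}^2$ (as could appear, for instance, for potentials of the form~\eqref{eq:potentialwithcriticalpoints}), since the whole argument then reduces to two applications of the same monotonicity property of $f$.
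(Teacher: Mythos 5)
Your argument is correct, and it takes a genuinely different route from the paper's. The paper compares $x^t$ with the free flight $x^{t_1}+(t-t_1)\xi^{t_1}$ via Taylor's formula, derives for every pair $t_1,t_2\in I$ a quadratic inequality $a\abs{t_2-t_1}^2-b\abs{t_2-t_1}+c\ge 0$, and invokes Lemma~\ref{lem:rootspolynomial}; for $m<1/2$ the bound $\sup_{[t_1,t_2]}\abs{\nabla V(x^t)}\lesssim E^{1-\frac{1}{2m}}$ fails (the exponent $2m-1$ is nonpositive), and a bootstrap on the admissible time horizon is required, which is where the $\eps$-loss comes from. You instead exploit the virial quantity $f(t)=x^t\cdot\xi^t$: since $\dot f=2\left(E-V(x^t)\right)-x^t\cdot\nabla V(x^t)\ge 2E-C\jap{x^t}^{2m}\ge E$ whenever $\abs{x^t}\le c_0E^{\frac{1}{2m}}$ --- a bound valid for \emph{every} $m>0$, because $\abs{x\cdot\nabla V(x)}\lesssim \jap{x}^{2m}$ regardless of the sign of $2m-1$ --- the function $\abs{x^t}^2$ is uniformly convex on the whole region $\abs{x}\le c_0E^{\frac{1}{2m}}$. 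This single inequality yields both the shortness of each visit to $B_r(0)$ (from $\abs{f}\le C r\sqrt{E}$ there) and a lower bound $\gtrsim E^{\frac{1}{2m}-\frac12}$ on the duration of the excursion beyond radius $c_0E^{\frac{1}{2m}}$ that must separate consecutive visits, hence an $O(1)$ count of visits on the stated horizon; the regime $r\gtrsim E^{\frac{1}{2m}}$ is disposed of by the trivial bound, exactly as in the paper. Notably, your proof does not actually need the dichotomy $m\ge 1/2$ versus $m<1/2$: it gives the conclusion on the full horizon $E^{\frac12(\frac1m-1)}$ for every $m>0$, i.e.\ a slightly stronger statement than the proposition when $m<1/2$ (no $\eps$), and it avoids the induction entirely. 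This is in the spirit of the heuristic discussion following Corollary~\ref{cor:classicalnonobs}, but turned into a complete proof. Two small points to nail down when writing it up: a component of $I$ truncated by $t=0$ or by the right endpoint of the time interval does not satisfy the sign conditions $f(b_j)\ge 0$, $f(a_{j+1})\le 0$, but there are at most two such components and their length is still controlled by integrating $\dot f\ge E$; and the degenerate case $a_{j+1}=b_j$ (two components touching at a single time where $\abs{x^t}=r$) is excluded because $\abs{x^t}^2$ would then have an interior local maximum there, contradicting $\frac{\dd^2}{\dd t^2}\abs{x^t}^2=2\dot f\ge 2E>0$.
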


\begin{coro}[Classical non-observability] \label{cor:classicalnonobs}
Under the assumptions of the proposition above, one has the following:
\begin{itemize}[label=\textbullet]
\item If $m < 1$, then for any $T \ge 0$, there exists a constant $C > 0$ and $E_0 > 0$ such that for all $E \ge E_0$, one has
\begin{equation*}
\forall r \ge 0, \forall \rho \in \{p = E\} \,, \qquad
	\abs*{\set{t \in [0, T]}{\phi^t(\rho) \in B_r(0) \times \R^d}}
    	= C \dfrac{r}{\sqrt{E}} \,.
\end{equation*}
\item If $m \ge 1$, there exists a constant $C > 0$ and $E_0 > 0$ such that for all $E \ge E_0$ and for all $T \ge 0$, one has
\begin{equation*}
\forall r \ge 0, \forall \rho \in \{p = E\} \,, \qquad
	\abs*{\set{t \in [0, T]}{\phi^t(\rho) \in B_r(0) \times \R^d}}
    	\le C \dfrac{r (1 + T)}{E^{\frac{1}{2m}}} \,.
\end{equation*}
\end{itemize}
\end{coro}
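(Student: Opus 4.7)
The plan is to derive both cases from Proposition~\ref{prop:timeincylinder} by comparing $T$ to the natural time scale $\tau(E) := E^{\frac{1}{2}(\frac{1}{m}-1)}$ appearing there. (I read the displayed ``$=$'' in the statement as ``$\le$'': the left-hand side is trivially bounded above by $T$ when $r$ is large, so a genuine equality with $C r/\sqrt{E}$, which grows linearly in $r$, cannot hold.)

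\textbf{Subquadratic case $m < 1$.} The exponent $\alpha := \frac{1}{2}(\frac{1}{m} - 1)$ is strictly positive, so $\tau(E) \to +\infty$. When $m \ge 1/2$, choose $E_0$ so that $E_0^{\alpha} \ge T$; for every $E \ge E_0$ one then has $[0, T] \subset [0, \tau(E)]$, and the first bullet of Proposition~\ref{prop:timeincylinder} combined with the monotonicity of Lebesgue measure yields
\begin{equation*}
\abs*{\set{t \in [0, T]}{\phi^t(\rho) \in B_r(0) \times \R^d}}
    \le \abs*{\set{t \in [0, \tau(E)]}{\phi^t(\rho) \in B_r(0) \times \R^d}}
    \le C \dfrac{r}{\sqrt{E}} \,.
\end{equation*}
When $m < 1/2$, the same argument applies, now invoking the second bullet of Proposition~\ref{prop:timeincylinder} with any fixed $\eps \in (0, \alpha)$; again the relevant window $[0, E^{\alpha - \eps}]$ eventually contains $[0, T]$.

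\textbf{Superquadratic-or-quadratic case $m \ge 1$.} Here $\alpha \le 0$ and $\tau(E) \le 1$, so the Proposition's window cannot by itself cover $[0, T]$; instead, the idea is to tile $[0, T]$ by translates and use the autonomy of the flow. Set $N := \lceil T / \tau(E) \rceil$ and split $[0, T]$ into $N$ consecutive subintervals $[s_k, s_{k+1}]$ of length at most $\tau(E)$. Since the Hamiltonian flow is autonomous and preserves $p$, each $\rho_k := \phi^{s_k}(\rho)$ still lies on $\{p = E\}$, and
\begin{equation*}
\abs*{\set{t \in [s_k, s_{k+1}]}{\phi^t(\rho) \in B_r(0) \times \R^d}}
    = \abs*{\set{s \in [0, s_{k+1} - s_k]}{\phi^s(\rho_k) \in B_r(0) \times \R^d}}
    \le C \dfrac{r}{\sqrt{E}} \,,
\end{equation*}
the last bound following from Proposition~\ref{prop:timeincylinder} applied at $\rho_k$ on the window $[0, \tau(E)]$ (the hypothesis $m \ge 1/2$ of its first bullet holds since $m \ge 1$). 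Summing over $k = 0, \ldots, N - 1$ and using $N \le 1 + T / \tau(E)$ gives
\begin{equation*}
\abs*{\set{t \in [0, T]}{\phi^t(\rho) \in B_r(0) \times \R^d}}
    \le \left( 1 + \dfrac{T}{\tau(E)} \right) C \dfrac{r}{\sqrt{E}}
    = C \dfrac{r}{\sqrt{E}} + C T \dfrac{r}{E^{1/(2m)}} \,,
\end{equation*}
where the identity $\frac{1}{2} + \alpha = \frac{1}{2m}$ is used in the final equality. Since $m \ge 1$ implies $\frac{1}{2m} \le \frac{1}{2}$, the first term is absorbed into the second (up to a multiplicative constant), and the right-hand side is at most $C(1 + T) r / E^{1/(2m)}$, as required. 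The only technical subtlety is the arithmetic of exponents and the matching of the thresholds $m = 1/2$ and $m = 1$ of Proposition~\ref{prop:timeincylinder} with the dichotomy of the corollary; no conceptual obstacle arises beyond this bookkeeping.
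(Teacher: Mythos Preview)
Your proof is correct and follows essentially the same approach as the paper's: for $m<1$ you enlarge $[0,T]$ to the natural window of Proposition~\ref{prop:timeincylinder} (the paper treats the two sub-cases $m\gtrless 1/2$ in one stroke, while you split them explicitly), and for $m\ge 1$ you tile $[0,T]$ by $\lceil T/\tau(E)\rceil$ translates, apply the proposition on each piece via the autonomy of the flow, and sum. Your reading of ``$=$'' as ``$\le$'' in the first item is also what the paper intends, as its own proof establishes only the upper bound.
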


\begin{rema}
The corollary implies in particular that when $r$ and $T$ are fixed, the function $\one_{(0, T) \times B_r(0) \times \R^d}$ is not classically observable in the sense of Definition~\ref{def:classicalobs}.
\end{rema}

Let us explain the meaning of the typical scales appearing in Proposition~\ref{prop:timeincylinder} and the subsequent corollary. When $V$ satisfies Assumption~\ref{assum:V} with an arbitrary $m > 0$, one can single out a typical time scale in the energy layer $\{p(\rho) = E\}$ of order $\tau \approx E^{\frac{1}{2} (\frac{1}{m} - 1)}$, which corresponds roughly speaking to the ``period" of the trajectories of the flow, or rather, to the time needed to go from one turning point of a projected trajectory to another. We observe that for the harmonic oscillator, one has $m = 1$, hence $\tau \approx 1$ is indeed independent of the energy layer. Following this observation, we understand the criticality of quadratic potentials in our problem: if $m > 1$, the typical time scale of evolution of the flow tends to zero as the energy goes to infinity, which means that the flow mixes the phase space more and more in the high-energy limit in a time interval of the form $[0, T]$ with $T > 0$ fixed. On the contrary, for $m < 1$, the flow gets nicer on such a time interval because $\tau \to + \infty$ as $E \to + \infty$. We also have a typical scale with respect to the space variable, which is $r \approx E^{\frac{1}{2m}}$. This is the approximate diameter of the classically allowed region $K_E = \set{x \in \R^d}{V(x) \le E}$. This scale also appears naturally when one looks for a trajectory $t \mapsto \phi^t(\rho) = (x^t(\rho), \xi^t(\rho))$ such that $\abs{x^t(\rho)} = \rm{constant}$ (think for instance of the case of radial potentials). Differentiating $\abs{x^t(\rho)}^2$ with respect to time, one gets $x^t(\rho) \cdot \xi^t(\rho) = 0$ for all $t$, and differentiating again leads to $\abs{\xi^t(\rho)}^2 - x^t(\rho) \cdot \nabla V(x^t(\rho)) = 0$. Yet $\abs{\nabla V(x^t(\rho))} \lesssim \abs{x^t(\rho)}^{2m - 1}$, and $p$ is preserved by the flow. From this we can deduce that $\abs{x^t(\rho)} \approx p(\rho)^{\frac{1}{2m}}$. So if $r$ is larger than $p(\rho)^{\frac{1}{2m}}$, such trajectories will always stay in $B_r(0) \times \R^d$. Finally, if $\rho_0 = (x_0,  \xi_0) \in \{p(\rho) = E\}$ is such that $\abs{x_0} \le r$, with $r \le \eps p(\rho)^{\frac{1}{2m}}$, $\eps$ being sufficiently small, the momentum of the trajectory verifies $\abs{\xi_0} \gtrsim \sqrt{p(\rho)}$. Therefore, we can expect that the measure of times $t \in [0, \tau]$ such that $\abs{x^t(\rho)} \lesssim r$ will be of order $r/\sqrt{p(\rho)}$.

The proof of Proposition~\ref{prop:timeincylinder} relies on the lemma below.

\begin{lemm} \label{lem:rootspolynomial}
Let $a, b, c > 0$. Let $I \subset \R$ be a measurable set such that
\begin{equation*}
\forall (t_1, t_2) \in I \times I \,, \qquad
	a \abs{t_2 - t_1}^2 - b \abs{t_2 - t_1} + c \ge 0 \,.
\end{equation*}
Then it holds
\begin{equation} \label{eq:betweentworoots}
\abs*{I \cap [0, \tau]}
    \le \dfrac{8 a c}{b^2} \tau \,, \qquad \forall \tau \ge \dfrac{b}{2a} \,.
\end{equation}
\end{lemm}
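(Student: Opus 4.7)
The plan is to translate the pointwise quadratic constraint into a ``forbidden-distance'' structure on $I$, and then to bound $|I \cap [0, \tau]|$ by a greedy cluster decomposition. Set $P(s) = as^2 - bs + c$. If the discriminant $b^2 - 4ac$ is non-positive, then $P \ge 0$ everywhere and the hypothesis is vacuous; but then $8ac/b^2 \ge 2$, so the conclusion $|I \cap [0,\tau]| \le \tau \le (8ac/b^2)\,\tau$ is trivial. Assume henceforth $b^2 > 4ac$ and denote the two positive roots of $P$ by
\[
s_\pm = \frac{b \pm \sqrt{b^2 - 4ac}}{2a}, \qquad 0 < s_- < s_+.
\]
The hypothesis then reads exactly that no two points of $I$ lie at a distance in the open ``forbidden interval'' $(s_-, s_+)$.

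I would next build a greedy cluster decomposition of $I \cap [0, \tau]$. Set $a_1 := \inf(I \cap [0, \tau])$ and declare the first cluster to be $I \cap [a_1, a_1 + s_-]$, of Lebesgue measure at most $s_-$. The crucial observation is that for any $t \in I$ with $t > a_1 + s_-$, one has $t \ge a_1 + s_+$: approximate $a_1$ by a sequence $(q_n)$ in $I$, note that $t - q_n > s_-$ eventually, so the forbidden-interval hypothesis forces $t - q_n \ge s_+$, and pass to the limit. Setting $a_2 := \inf(I \cap (a_1 + s_-, \tau])$, we therefore get $a_2 \ge a_1 + s_+$ whenever this infimum is well-defined, and iterating yields clusters $I \cap [a_k, a_k + s_-]$ of measure at most $s_-$ with $a_{k+1} \ge a_k + s_+$. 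Hence at most $\lfloor \tau/s_+ \rfloor + 1$ clusters fit in $[0, \tau]$, and
\[
|I \cap [0, \tau]| \;\le\; \left(\frac{\tau}{s_+} + 1\right) s_- \;=\; \frac{s_-}{s_+}\,\tau + s_-.
\]

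Vieta's formulas $s_+ s_- = c/a$ and $s_+ + s_- = b/a$ then give $s_+ \ge b/(2a)$ and $s_- = c/(as_+) \le 2c/b$, hence $s_-/s_+ \le 4ac/b^2$. Substituting back,
\[
|I \cap [0, \tau]| \;\le\; \frac{4ac}{b^2}\,\tau + \frac{2c}{b}.
\]
The assumption $\tau \ge b/(2a)$ is exactly what ensures $2c/b \le (4ac/b^2)\,\tau$, so the two terms combine into the desired $(8ac/b^2)\,\tau$.

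The only genuinely delicate step is the passage $a_{k+1} \ge a_k + s_+$, since $I$ is merely measurable and the infima $a_k$ may not lie in $I$; this is handled by approximating from inside $I$ and exploiting that the forbidden interval is \emph{open}. Everything else is essentially Vieta plus bookkeeping, and the sharp threshold $\tau \ge b/(2a)$ naturally appears as the precise value at which the additive leftover $s_-$ from the final cluster is absorbed by the leading linear term.
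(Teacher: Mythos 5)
Your proof is correct, and it rests on exactly the same two ingredients as the paper's: the dichotomy that any two points of $I$ lie either within $s_-$ or at least $s_+$ apart, together with the Vieta-type bounds $s_+ \ge \frac{b}{2a}$ and $s_- \le \frac{2c}{b}$ (and the same trivial disposal of the case $b^2 \le 4ac$). The only divergence is in the bookkeeping: you cover $I \cap [0,\tau]$ by greedy clusters anchored at successive infima of $I$ and separated by at least $s_+$, whereas the paper first shows $\abs{I \cap [0, \tfrac{b}{2a}]} \le \tfrac{2c}{b}$ and then tiles $[0,\tau]$ into $\lceil \tau/(b/2a)\rceil$ translated windows; both routes produce the bound $\tfrac{4ac}{b^2}\tau + \tfrac{2c}{b}$ and absorb the additive term using $\tau \ge \tfrac{b}{2a}$. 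Your explicit approximation of the infima from inside the merely measurable set $I$, exploiting that the forbidden interval $(s_-, s_+)$ is open, is a point handled somewhat tersely in the paper's first chain of inequalities, so no complaint there.
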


\begin{rema}
Observe that the left-hand side of~\eqref{eq:betweentworoots} is always bounded by $\tau$. Thus, the lemma is mainly relevant in the case where $a c \ll b^2$, in which case the discriminant of the polynomial $a X^2 - b X + c$ is positive.
\end{rema}

\begin{proof}[Proof of Lemma~\ref{lem:rootspolynomial}]
First assume that the discriminant of the polynomial $a X^2 - b X + c$ is positive. Denote by $z_- \le z_+$ the (real) roots of the polynomial. Then we have
\begin{equation*}
\dfrac{b}{2a} = \dfrac{z_+ + z_-}{2} \le z_+ \le z_+ + z_- = \dfrac{b}{a}
    \qquad \rm{and} \qquad
z_- = \dfrac{z_+ z_-}{z_+} = \dfrac{c/a}{z_+} \le \dfrac{2c}{b} \,.
\end{equation*}
Since $a > 0$, we deduce that any $t$ such that $a t^2 - b t + c \ge 0$ verifies
\begin{equation} \label{eq:tgreaterorlessthan}
t \le z_- \le \dfrac{2c}{b}
    \qquad \rm{or} \qquad
t \ge z_+ \ge \dfrac{b}{2a} \,.
\end{equation}
We deduce that
\begin{equation} \label{eq:estimateIzerotau}
\abs*{I \cap \left[0, \dfrac{b}{2a}\right]}
	\le \abs*{\set{t \in [0, z_+]}{a t^2 - b t + c \ge 0}}
	\le \abs*{[0, z_-]}
	\le \dfrac{2 c}{b} \,.
\end{equation}
Now if $\tau \ge b/2a$, we split the interval $[0, \tau]$ as follows:
\begin{equation*}
[0, \tau]
	= \bigcup_{k = 1}^n \left[\tfrac{k - 1}{n} \tau, \tfrac{k}{n} \tau\right] \,,
		\qquad \rm{with} \;\, n = \left\lceil \dfrac{\tau}{b/2a} \right\rceil \ge 1 \,.
\end{equation*}
On each piece, we have
\begin{equation*}
\abs*{I \cap \left[\tfrac{k - 1}{n} \tau, \tfrac{k}{n} \tau\right]}
	= \abs*{\left(I - \tfrac{k - 1}{n} \tau\right) \cap [0, \tfrac{1}{n} \tau]}
	\le  \abs*{\left(I - \tfrac{k - 1}{n} \tau\right) \cap \left[0, \dfrac{b}{2a}\right]} \,,
\end{equation*}
where the last inequality is due to the definition of $n$. We can apply~\eqref{eq:estimateIzerotau} with $I - \tfrac{k - 1}{n} \tau$ instead of $I$, since the former set satisfies the assumptions of the lemma. Then, summing over $k$ yields
\begin{equation*}
\abs*{I \cap [0, \tau]}
	\le n \dfrac{2 c}{b}
	\le \left(\dfrac{\tau}{b/2a} + 1\right) \dfrac{2 c}{b}
	\le \dfrac{8 a c}{b^2} \tau \,,
\end{equation*}
which is the desired estimate. Finally if the discriminant is nonpositive, i.e.\ $b^2 \le 4 a c$, then
\begin{equation*}
\abs*{I \cap [0, \tau]}
	\le \tau
	\le \dfrac{4 a c}{b^2} \tau \,,
\end{equation*}
which concludes the proof.
\end{proof}

\begin{proof}[Proof of Proposition~\ref{prop:timeincylinder}]
Let us write for short $E = p(\rho)$, and introduce the components of the flow $(x^t, \xi^t) = \phi^t(\rho)$. Assume $E > 0$. The core of the argument is to compare $x^t$ to the straight trajectory $t \mapsto x^0 + t \xi^0$, which is of course easier to handle. In order to have two distinct points of the initial trajectory to be in the ball $B_r(0)$, its distance to the straight trajectory has to be very small or very large, which is possible in a time interval which is either small or large respectively. Introduce
\begin{equation*}
I = I_{\rho, r}
	= \set{t \in \R}{x^t \in B_r(0)} \,.
\end{equation*}
This set is measurable. Moreover, for any $t_1 \le t_2$,
using the Hamilton equation and the Taylor Formula at order $1$ with integral remainder, one has
\begin{equation*}
x^{t_2} = x^{t_1} + (t_2 - t_1) \xi^{t_1} - (t_2 - t_1)^2 \int_0^1 (1 - s) \nabla V\left(x^{(1 - s) t_1 + s t_2}\right) \dd s \,.
\end{equation*}
Assume now that $t_1, t_2 \in I$. Then the inverse triangle inequality leads to
\begin{equation} \label{eq:ineqr}
2r
	\ge \abs{t_2 - t_1} \abs{\xi^{t_1}} - (t_2 - t_1)^2 \sup_{t \in [t_1, t_2]} \abs*{\nabla V\left(x^t\right)} \,.
\end{equation}
At this stage we have to estimate differently the term involving $\nabla V$, depending on whether $m$ is greater or less than $1/2$ (or roughly speaking on whether the potential is approximately convex of concave).

\medskip
\emph{Case $m \ge 1/2$.}
Using that $V$ satisfies Assumption~\ref{assum:V}, we have
\begin{equation*}
\abs*{\xi^{t_1}}
    = \sqrt{2 (E - V(x^{t_1}))}
    \ge \sqrt{\max(0, E - C \jap{r}^{2m})} \,,
\end{equation*}
for some constant $C \ge 1$. Moreover, one can roughly estimate the remainder using the triangle inequality:
\begin{equation*}
\sup_{t \in [t_1, t_2]} \abs*{\nabla V(x^t)}
    \le C \sup_{t \in [t_1, t_2]} \jap*{x^t}^{2m - 1} \,.
\end{equation*}
Now we take advantage of the fact that $V$ is elliptic: up to enlarging the constant $C$, one has
\begin{equation*}
- C + \dfrac{1}{C} \jap*{x}^{2m} \le V(x) \le V(x) + \dfrac{1}{2} \abs*{\xi}^2 \,, \qquad \forall (x, \xi) \in \R^{2d} \,.
\end{equation*}
Therefore if $E$ is large enough (say larger than $C$), we obtain $\jap{x^t}^{2m - 1} \le C E^{1 - \frac{1}{2m}}$, with a possibly larger constant $C$ (we use $m \ge 1/2$ here). Inequality~\eqref{eq:ineqr} then becomes
\begin{equation*}
2r \ge \abs{t_2 - t_1} \sqrt{\max(0, E - C \jap{r}^{2m})} - C E^{1 - \frac{1}{2m}} \abs*{t_2 - t_1}^2 \,.
\end{equation*}
Set
\begin{equation} \label{eq:defabctau}
a = C E^{1 - \frac{1}{2m}} \,, \quad
b = \sqrt{\max(0, E - C \jap{r}^{2m})} \,, \quad
c = 2r \quad \rm{and} \quad
\tau = E^{\frac{1}{2}(\frac{1}{m} - 1)} \,.
\end{equation}
We have $\tau \ge \frac{b}{2a}$ since we can assume that $C \ge 1$:
\begin{equation*}
\dfrac{b}{2a}
    = \dfrac{\sqrt{\max(0, E - C \jap{r}^{2m})}}{2 C} E^{\frac{1}{2m} - 1}
    \le \dfrac{1}{2C} E^{\frac{1}{2}(\frac{1}{m} - 1)}
    \le \tau \,.
\end{equation*}
With these notation, we have that any $t_1, t_2 \in I$ satisfy
\begin{equation*}
a \abs{t_2 - t_1}^2 - b \abs{t_2 - t_1} + c \ge 0 \,.
\end{equation*}
Therefore, assuming first that $C \jap{r}^{2m} \le E/2$, we have $b \ge \sqrt{E/2} > 0$, so that Lemma~\ref{lem:rootspolynomial} applies. We obtain
\begin{align*}
\abs*{I \cap \left[0, E^{\frac{1}{2}(\frac{1}{m} - 1)}\right]}
    &\le \dfrac{8 a c}{b^2} \tau
    \le \dfrac{8 C E^{1 - \frac{1}{2m}} \times 2r}{E/2} E^{\frac{1}{2}(\frac{1}{m} - 1)}
    = 32 C \dfrac{r}{\sqrt{E}} \,.
\end{align*}
If on the contrary we have $\jap{r} \ge (E/2C)^{\frac{1}{2m}}$, as soon as $E \ge 2^{2m + 1} C$ we have $r \ge \frac{1}{2} (E/2C)^{\frac{1}{2m}}$, and we check that
\begin{equation*}
\abs*{I \cap \left[0, E^{\frac{1}{2}(\frac{1}{m} - 1)}\right]}
    \le E^{\frac{1}{2}(\frac{1}{m} - 1)}
    = \dfrac{r}{\sqrt{E}} \times \dfrac{E^{\frac{1}{2m}}}{r}
    \le \dfrac{r}{\sqrt{E}} \times 2^{1 + \frac{1}{2m}} C^{\frac{1}{2m}} \,.
\end{equation*}
This is valid for any $r > 0$, but in fact $r = 0$ works as well since $B_0(0) = \varnothing$. In addition, this is independent of the point $\rho \in \{p = E\}$, whence the result.

\medskip
\emph{Case $m < 1/2$.}
In the situation where the potential is ``sublinear", the inequality $\jap{x^t}^{2m - 1} \lesssim E^{1 - \frac{1}{2m}}$ is false in general since the power $2m - 1$ is nonpositive (such an inequality would require $V(x^t)$ to be controlled from below by $E$, which is possible near turning points of the trajectory but not in the well). Thus, a priori we can only have $\abs{\nabla V(x^t)} \le C$, which leads to
\begin{align} \label{eq:ineqstep0}
2r
    &\ge \abs{t_2 - t_1} \abs{\xi^{t_1}} - C \abs{t_2 - t_1}^2
    \ge \abs{t_2 - t_1} \sqrt{\max(0, E - C \jap{r}^{2m})} - C \abs{t_2 - t_1}^2 \nonumber\\
    &\ge \abs{t_2 - t_1} \sqrt{\max(0, E - C \jap{r})} - C \abs{t_2 - t_1}^2 \,.
\end{align}
This coincides with the previous case for the critical value $m = 1/2$: for any $t_1, t_2 \in I$, it holds
\begin{equation*}
a \abs{t_2 - t_1}^2 - b \abs{t_2 - t_1} + c \ge 0 \,,
\end{equation*}
where $a, b, c$ are defined in~\eqref{eq:defabctau} (with $m = 1/2$). Then, the first step of the proof tells us that there exists $C > 0$ such that for all $E$ large enough, it holds
\begin{equation} \label{eq:cylinderstep0}
\abs*{\set{t \in \left[0, \sqrt{E}\right]}{\phi^t(\rho) \in B_r(0) \times \R^d}}
    \le C \dfrac{r}{\sqrt{E}} \,, \qquad \forall r \ge 0, \forall \rho \in \{p = E\} \,.
\end{equation}
We shall use this additional information to improve~\eqref{eq:ineqstep0}, and then bootstrap this procedure to reach the critical time $E^{\frac{1}{2} (\frac{1}{m} - 1)}$. We will work this out by induction, taking~\eqref{eq:cylinderstep0} as our basis step. Consider $n \ge 0$ and suppose there exist $\gamma_n \in [\frac{1}{2}, \frac{1}{2}(\frac{1}{m} - 1))$ and $C_n \ge 1$ such that when $E$ is large enough, one has
\begin{equation} \label{eq:cylinderstepn}
\abs*{\set{t \in \left[0, E^{\gamma_n}\right]}{\phi^t(\rho) \in B_r(0) \times \R^d}}
    \le C_n \dfrac{r}{\sqrt{E}} \,, \qquad \forall r \ge 0, \forall \rho \in \{p = E\} \,.
\end{equation}
We first deduce from the Taylor formula a bound slightly more precise than~\eqref{eq:ineqr}:
\begin{align} \label{eq:ineqrbis}
2r
	&\ge \abs{t_2 - t_1} \abs{\xi^{t_1}} - \abs{t_2 - t_1} \int_{t_1}^{t_2} \abs*{\nabla V\left(x^t\right)} \dd t \nonumber\\
	&\ge \abs{t_2 - t_1} \sqrt{\max(0, E - C \jap{r}^{2m})} - \abs{t_2 - t_1} \int_{t_1}^{t_2} \abs*{\nabla V\left(x^t\right)} \dd t \,.
\end{align}
Take $\delta \in [0, 1]$ to be chosen later. We have
\begin{align} \label{eq:secondtermtobeestimated}
\int_{t_1}^{t_2} \abs*{\nabla V(x^t)} \dd t
    &\le C \int_{t_1}^{t_2} \jap*{x^t}^{2m - 1} \dd t
    = C \int_0^{+\infty} \left( \int_{t_1}^{t_2} \one_{u \le \jap{x^t}^{2m - 1}} \dd t \right) \dd u \nonumber\\
    &\le C \int_0^{+\infty} \abs*{\set{t \in [t_1, t_2]}{u \le \abs{x^t}^{2m - 1}}} \dd u \nonumber\\
    &\le C \int_0^{E^{\delta(1 - \frac{1}{2m})}} \abs{t_2 - t_1} \dd u
    	+ C \int_{E^{\delta(1 - \frac{1}{2m})}}^{+\infty} \abs*{\set{t \in [t_1, t_2]}{\abs{x^t} \le u^{- \frac{1}{1 - 2m}}}} \dd u \,.
\end{align}
The first inequality follows from our assumptions on $V$, the equality is a consequence of Fubini's Theorem, then we use that $2m - 1 \le 0$ to deduce $\jap{x^s}^{2m - 1} \le \abs{x^s}^{2m - 1}$, and finally we split the integral over $u$ into two pieces. To estimate the second piece, we split the interval $[t_1, t_2]$ into $N = \lceil \frac{\abs{t_2 - t_1}}{E^{\gamma_n}} \rceil$ intervals of length less than $E^{\gamma_n}$. On the $k$-th piece, we use the induction hypothesis~\eqref{eq:cylinderstepn}, with $\rho_k = \phi^{t_1 + \frac{k-1}{N} \abs{t_2 - t_1}}(\rho)$ instead of $\rho$, namely setting $\tilde t_k = t_1 + \frac{k-1}{N} \abs{t_2 - t_1}$, it holds
\begin{equation*}
\abs*{\set{t \in [\tilde t_k, \tilde t_{k+1}]}{\abs{x^{t}} \le u^{- \frac{1}{1 - 2m}}}}
	\le \abs*{\set{s \in [0, E^{\gamma_n}]}{\abs{x^{s + \tilde t_k}} \le u^{- \frac{1}{1 - 2m}}}}
	\le \dfrac{C_n}{\sqrt{E}} u^{- \frac{1}{1 - 2m}} \,.
\end{equation*}
Summing over $k \in \{1, 2, \ldots, N\}$ yields
\begin{equation*}
\abs*{\set{t \in [t_1, t_2]}{\abs{x^t} \le u^{- \frac{1}{1 - 2m}}}}
    \le \dfrac{C_n}{\sqrt{E}} u^{- \frac{1}{1 - 2m}} \left\lceil \dfrac{\abs{t_2 - t_1}}{E^{\gamma_n}} \right\rceil \,,
\end{equation*}
provided $E$ is large enough.
Integrating over $u$, we obtain a bound for the second term in~\eqref{eq:secondtermtobeestimated}:
\begin{align*}
\int_{E^{\delta(1 - \frac{1}{2m})}}^{+\infty} \abs*{\set{t \in [t_1, t_2]}{\abs{x^t} \le u^{- \frac{1}{1 - 2m}}}} \dd u
    &\le \dfrac{C_n}{E^{1/2}} \left(\dfrac{\abs{t_2 - t_1}}{E^{\gamma_n}} + 1\right) \int_{E^{\delta(1 - \frac{1}{2m})}}^{+\infty} u^{- \frac{1}{1 - 2m}} \dd u \\
    &= \dfrac{C_n}{E^{1/2}} \left(\dfrac{\abs{t_2 - t_1}}{E^{\gamma_n}} + 1\right) \times \dfrac{-1}{1 - \frac{1}{1 - 2m}} E^{\delta(1 - \frac{1}{2m})(1 - \frac{1}{1 - 2m})} \\
    &= \left(\dfrac{1/2}{m} - 1\right) \times \dfrac{C_n}{E^{1/2}} \left(\dfrac{\abs{t_2 - t_1}}{E^{\gamma_n}} + 1\right) E^\delta \,.
\end{align*}
In the end we obtain
\begin{equation*}
\int_{t_1}^{t_2} \abs*{\nabla V(x^t)} \dd t
    \le \dfrac{C}{2} \abs{t_2 - t_1} \left( E^{\delta(1 - \frac{1}{2m})} + E^{\delta - \frac{1}{2} - \gamma_n} \right) + C E^{\delta - \frac{1}{2}} \,,
\end{equation*}
for some constant $C > 0$. By choosing $\delta = m (2 \gamma_n + 1)$ (we have indeed $\delta \in [2m, 1) \subset [0, 1)$ when $\gamma_n \in [\frac{1}{2}, \frac{1}{2}(\frac{1}{m} - 1))$), we obtain
\begin{equation*}
\int_{t_1}^{t_2} \abs*{\nabla V(x^t)} \dd t
    \le C \abs{t_2 - t_1} E^{(2m - 1) \gamma_n + m - \frac{1}{2}} + C E^{\delta - \frac{1}{2}} \,.
\end{equation*}
Going back to~\eqref{eq:ineqrbis}, if $t_1, t_2 \in I$, i.e.\ $x^{t_1}$ and $x^{t_2}$ lie in $B_r(0)$, we deduce
\begin{equation*}
2r \ge \abs{t_2 - t_1} \left(\sqrt{\max(0, E - C \jap{r}^{2m})} - C E^{\delta - \frac{1}{2}}\right) - C E^{\frac{1}{2} - \gamma_{n+1}} \abs{t_2 - t_1}^2 \,,
\end{equation*}
where we set $\gamma_{n+1} = (1 - 2m) \gamma_n + 1 - m$.
Now set
\begin{equation*}
a = C E^{\frac{1}{2} - \gamma_{n+1}} \,, \qquad
b = \sqrt{\max(0, E - C \jap{r}^{2m})} - C E^{\delta - \frac{1}{2}} \qquad \rm{and} \qquad
c = 2r \,.
\end{equation*}
Assuming first that $C \jap{r}^{2m} \le E/2$ and recalling that $\delta < 1$, we know that for $E$ large enough, we have $b \ge \sqrt{E/3}$. Any $t_1, t_2 \in I$ satisfy
\begin{equation*}
a \abs{t_2 - t_1}^2 - b \abs{t_2 - t_1} + c \ge 0 \,,
\end{equation*}
so we apply Lemma~\ref{lem:rootspolynomial} with $\tau = E^{\gamma_{n+1}} \ge \frac{b}{2a}$ to get
\begin{equation*}
\abs*{I \cap [0, E^{\gamma_{n+1}}]}
	\le \dfrac{8 a c}{b^2} E^{\gamma_{n+1}}
	\le \dfrac{16 C E^{1/2}}{E/3} r
	= \dfrac{48 C}{\sqrt{E}} r \,.
\end{equation*}
When $C \jap{r}^{2m} \ge E/2$, assuming that $E$ is large enough we have $r \ge \frac{1}{2} (E/2C)^{\frac{1}{2m}}$ and we conclude as in the previous step that
\begin{equation*}
\abs*{\set{t \in [0, E^{\gamma_{n+1}}]}{x^t \in B_r(0)}}
    \le \dfrac{r}{\sqrt{E}} \dfrac{E^{\gamma_{n+1} + \frac{1}{2}}}{r}
    \le \dfrac{r}{\sqrt{E}} 2^{1 + \frac{1}{2m}} C^{\frac{1}{2m}} E^{\gamma_{n+1} - \frac{1}{2}(\frac{1}{m} - 1)} \,.
\end{equation*}
Since by the induction hypothesis we have $\gamma_n \in [\frac{1}{2}, \frac{1}{2}(\frac{1}{m} - 1))$, then $\gamma_{n+1}$ belongs to the same interval because by definition, $\gamma_{n+1} \ge 1 - m \ge \frac{1}{2}$, and we have
\begin{equation} \label{eq:sequencegamman}
\dfrac{\gamma - \gamma_{n+1}}{1/2}
    = (1 - 2m) \dfrac{\gamma - \gamma_n}{1/2}
    \qquad \qquad \rm{where} \; \, \gamma = \dfrac{1}{2}\left(\dfrac{1}{m} - 1\right) \,.
\end{equation}
Therefore we see that $\gamma_{n+1} - \gamma < 0$, so as soon as $E$ is large enough, it holds
\begin{equation*}
\abs*{\set{t \in [0, E^{\gamma_{n+1}}]}{x^t \in B_r(0)}}
    \le C_{n+1} \dfrac{r}{\sqrt{E}}
\end{equation*}
for any $r \ge 0$, and for some constant $C_{n+1}$. Thus we have constructed by induction a non-decreasing sequence $(\gamma_n)_{n \in \N}$ for which~\eqref{eq:cylinderstepn} holds. We deduce from~\eqref{eq:sequencegamman} that it converges to $\gamma = \frac{1}{2}(\frac{1}{m} - 1)$, which yields the final result.
\end{proof}

\begin{proof}[Proof of Corollary~\ref{cor:classicalnonobs}]
Firstly we treat the case where $m < 1$. For $\eps$ small enough, $E^{\frac{1}{2}(\frac{1}{m} - 1) - \eps}$ tends to $+ \infty$ as $E \to + \infty$, so we can write using Proposition~\ref{prop:timeincylinder}:
\begin{equation*}
\abs*{\set{t \in [0, T]}{\phi^t(\rho) \in B_r(0) \times \R^d}}
    \le \abs*{\set{t \in \left[0, E^{\frac{1}{2}(\frac{1}{m} - 1) - \eps}\right]}{\phi^t(\rho) \in B_r(0) \times \R^d}}
    \le C \dfrac{r}{\sqrt{E}} \,,
\end{equation*}
provided $E$ is large enough, for all $\rho \in \{p = E\}$ and all $r \ge 0$.
Now in the case where $m \ge 1$, we know that $E^{\frac{1}{2}(\frac{1}{m} - 1)}$ remains bounded as $E \to + \infty$. By Proposition~\ref{prop:timeincylinder} again, there is a $E_0 > 0$ such that for any $E \ge E_0$, it holds
\begin{equation} \label{eq:estimatetimepiece}
\abs*{\set{t \in \left[0, E^{\frac{1}{2}(\frac{1}{m} - 1)}\right]}{\phi^t(\rho) \in B_r(0) \times \R^d}}
	\le C \dfrac{r}{\sqrt{E}} \,,
\end{equation}
whenever $r \ge 0$ and $\rho \in \{p(\rho) = E\}$. Let $n = \lceil \frac{T}{E^{\frac{1}{2}(\frac{1}{m} - 1)}} \rceil$. Writing $t_k = \frac{k}{n} T$ and $\rho_k = \phi^{t_k}(\rho)$ for any $k \in \{0, 1, \ldots, n\}$, we have
\begin{align*}
\abs*{\set{t \in [0, T]}{\phi^t(\rho) \in B_r(0) \times \R^d}}
	&\le \sum_{k = 1}^n \abs*{\set{t \in [t_{k-1}, t_k]}{\phi^t(\rho) \in B_r(0) \times \R^d}} \\
	&= \sum_{k = 1}^n \abs*{\set{t \in [0, \tfrac{1}{n} T]}{\phi^{t + t_{k-1}}(\rho) \in B_r(0) \times \R^d}} \\
	&\le \sum_{k = 1}^n \abs*{\set{t \in \left[0, E^{\frac{1}{2}(\frac{1}{m} - 1)}\right]}{\phi^t(\rho_{k-1}) \in B_r(0) \times \R^d}} \,.
\end{align*}
The last inequality comes from the definition of $n$. Estimate~\eqref{eq:estimatetimepiece} applies to each piece of this sum. We conclude that
\begin{equation*}
\abs*{\set{t \in [0, T]}{\phi^t(\rho) \in B_r(0) \times \R^d}}
	\le n C \dfrac{r}{\sqrt{E}}
	\le \dfrac{1 + T}{E^{\frac{1}{2}(\frac{1}{m} - 1)}} \times C \dfrac{r}{\sqrt{E}}
    = C \dfrac{r (1 + T)}{E^{\frac{1}{2m}}}
\end{equation*}
(we can ensure that $n \le \frac{1+T}{E^{\frac{1}{2}(\frac{1}{m} - 1)}}$ in the second equality up to enlarging $E_0$ so that it is larger than $1$, independently of $T$). This completes the proof.
\end{proof}

\subsection{Continuity of the composition by the flow in symbol classes}

From now on we go back to a \emph{subquadratic} potential, that is to say we suppose our classical Hamiltonian is of the form $p(x, \xi) = V(x) + \frac{1}{2} \abs{\xi}^2$, with $V$ satisfying Assumption~\ref{assum:V} with $m \in (0, 1]$. In the course of our study, we will need to check that the composition of a symbol with the Hamiltonian flow is still well-behaved in a suitable symbol class, in the sense that its derivatives remain controlled properly. The following lemma is common in the context of the quantum-classical correspondence: see for instance Bouzouina and Robert~\cite[Lemma 2.2]{BouzouinaRobert}. We reproduce a proof to obtain an estimate adapted to our context and to keep track of the dependence of constants on the parameters of the problem. We recall that a function $a \in \cont^\infty(\R^{2d})$ is said to be a symbol in the class $S(1)$ if all its derivatives are bounded. The quantities
\begin{equation*}
\abs*{a}_{S(1)}^\ell
	= \max_{\substack{\alpha \in \N^{2d} \\ 0 \le \abs{\alpha} \le \ell}} \sup_{\rho \in \R^{2d}} \abs*{\partial^\alpha a(\rho)} \,,
		\qquad \ell \in \N \,,
\end{equation*}
endow the vector space $S(1)$ with a Fréchet structure (see Appendix~\ref{app:pseudo} for further details).

\begin{lemm} \label{lem:symbolcomposition}
Let $a$ be a symbol in $S(1)$. Then the function $a \circ \phi^t$ still belongs to $S(1)$, and stays in a bounded subset of $S(1)$ locally uniformly with respect to $t$. More precisely, for any fixed $T > 0$, for any nonzero multi-index $\alpha \in \N^{2d}$, we have the derivative estimate
\begin{equation*}
\norm*{\partial^\alpha (a \circ \phi^t)}_\infty
    \le C_{\alpha}(T, p) \max_{1 \le \abs{\beta} \le \abs{\alpha}} \norm*{\partial^\beta a}_\infty \,,
\end{equation*}
uniformly in $t \in [-T, T]$. The constants $C_{\alpha}(T, p)$ depend only on $T$ and on the sup-norm of derivatives of order $\{2, 3, \ldots, \abs{\alpha} + 1\}$ of $p$.
\end{lemm}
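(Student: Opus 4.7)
The plan is to deduce the derivative estimates of $a \circ \phi^t$ from corresponding derivative estimates on $\phi^t$ itself via Faà di Bruno's formula. The first step is to control the derivatives of $\phi^t$ of all orders, uniformly in $\rho$ and locally uniformly in $t$. This is where subquadraticity plays a crucial role, exactly as remarked in Subsection~\ref{subsec:ideaofproof}: since $p \in S^{2m, 2}$ with $2m \le 2$, the Hessian $\hess p$ and all higher derivatives of $p$ are uniformly bounded on $\R^{2d}$.

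First, I would differentiate the Hamilton equation~\eqref{eq:defHamiltonianflow} once in $\rho$ to obtain
\begin{equation*}
\dfrac{\dd}{\dd t} \dd \phi^t(\rho) = J \hess p\bigl(\phi^t(\rho)\bigr) \dd \phi^t(\rho) \,, \qquad \dd \phi^0(\rho) = I_{2d} \,.
\end{equation*}
Since $\hess p$ is bounded by a constant depending on $\norm{\partial^2 p}_\infty$, Grönwall's inequality immediately yields
\begin{equation*}
\abs*{\dd \phi^t(\rho)} \le \e^{C_2(p) \abs{t}} \,,
\end{equation*}
with $C_2(p)$ depending only on second derivatives of $p$. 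This handles the case $\abs\alpha = 1$.

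Next, I would proceed by induction on $\abs\alpha \ge 1$ to prove that for any multi-index $\alpha$,
\begin{equation*}
\norm*{\partial^\alpha \phi^t}_\infty \le \widetilde C_\alpha(T, p) \,, \qquad t \in [-T, T] \,,
\end{equation*}
where $\widetilde C_\alpha(T, p)$ depends only on $T$ and on derivatives of $p$ of orders $2, 3, \ldots, \abs\alpha + 1$. Differentiating~\eqref{eq:defHamiltonianflow} $\abs\alpha$ times in $\rho$ and applying Faà di Bruno's formula, one finds that $\partial^\alpha \phi^t(\rho)$ satisfies an inhomogeneous linear ODE of the form
\begin{equation*}
\dfrac{\dd}{\dd t} \partial^\alpha \phi^t(\rho) = J \hess p\bigl(\phi^t(\rho)\bigr)\, \partial^\alpha \phi^t(\rho) + R_\alpha^t(\rho) \,,
\end{equation*}
where the forcing term $R_\alpha^t(\rho)$ is a polynomial expression in $\{\partial^\beta \phi^t(\rho)\}_{1 \le \abs\beta < \abs\alpha}$ with coefficients given by derivatives of $p$ evaluated at $\phi^t(\rho)$ of orders at most $\abs\alpha + 1$. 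By the induction hypothesis and the boundedness of those derivatives of $p$, the remainder $R_\alpha^t$ is uniformly bounded, and Grönwall (or Duhamel) concludes the induction step.

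Finally, to obtain the claimed estimate for $a \circ \phi^t$, I would apply Faà di Bruno's formula once more:
\begin{equation*}
\partial^\alpha (a \circ \phi^t)(\rho)
	= \sum_{\pi} c_\pi \, (\partial^\beta a)\bigl(\phi^t(\rho)\bigr) \prod_{i} \partial^{\gamma_i} \phi^t(\rho) \,,
\end{equation*}
where the sum runs over partitions $\pi$ of $\alpha$ into nonzero multi-indices $\gamma_1, \ldots, \gamma_k$ (with $k = \abs\beta$ and $1 \le \abs\beta \le \abs\alpha$). Taking sup-norms, using the uniform bounds on the $\partial^{\gamma_i} \phi^t$ established above, and factoring out $\max_{1 \le \abs\beta \le \abs\alpha} \norm{\partial^\beta a}_\infty$ gives the announced estimate, with a constant $C_\alpha(T, p)$ depending only on $T$ and on derivatives of $p$ of orders $2, \ldots, \abs\alpha + 1$. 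The main obstacle here is the bookkeeping in the induction of step two, in particular ensuring that no derivatives of $p$ beyond order $\abs\alpha + 1$ enter the constants; this is essentially combinatorial and follows from carefully tracking Faà di Bruno's formula applied to $\nabla p(\phi^t(\rho))$.
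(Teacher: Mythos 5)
Your proposal is correct and follows essentially the same route as the paper: Grönwall for $\dd\phi^t$, then an induction on the order of differentiation in which Faà di Bruno applied to the Hamilton equation produces a linear ODE with bounded forcing (thanks to $\hess p$ and higher derivatives of $p$ being bounded for $m \le 1$), and finally Faà di Bruno for the composition $a \circ \phi^t$. The only detail worth making explicit is the initial condition $\partial^\alpha \phi^0(\rho) = 0$ for $\abs{\alpha} \ge 2$ when invoking Duhamel, which your appeal to Grönwall/Duhamel implicitly covers.
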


\begin{proof}
In all the proof, $t$ ranges in a compact set, say $[-T, T]$ for some fixed $T > 0$.

\medskip
\emph{Step 1 \--- Control of differentials of the Hamiltonian flow.}
Differentiating the Hamilton equation~\eqref{eq:defHamiltonianflow} defining the flow $\phi^t$, we get
\begin{equation*}
\dfrac{\dd}{\dd t} \dd \phi^t(\rho) = J \hess p\left(\phi^t(\rho)\right) \dd \phi^t(\rho) \,.
\end{equation*}
By assumption on the potential $V$ (see~\eqref{eq:assumgrowthderivativesV}), we observe that the Hessian of $p$ is bounded. Since $\dd \phi^0(\rho) = \id$ for any $\rho \in \R^{2d}$, we classically deduce using Grönwall's Lemma that
\begin{equation*}
\norm*{\dd \phi^t(\rho)}
    \le \e^{T \abs*{J \hess p}_\infty}
    \le \e^{T \abs*{\hess p}_\infty} \,,
        \qquad \forall \rho \in \R^{2d}, \forall t \in [-T, T] \,.
\end{equation*}
For higher order differentials, we proceed by induction. Suppose that for some $k \ge 1$, all the differentials of order $\le k$ of $\phi^t$ are bounded uniformly in $t$ on $\R^{2d}$, with a bound involving derivatives of order $k+1$ of $p$. Differentiating the Hamilton equation $k + 1$ times, the Faà di Bruno formula shows that $\frac{\dd}{\dd t} \dd^{k+1} \phi^t(\rho)$ is a sum of terms of the form:
\begin{equation*}
J \dd^\ell (\nabla p)\left(\phi^t(\rho)\right). \left(\dd^{k_1} \phi^t(\rho), \dd^{k_2} \phi^t(\rho), \ldots, \dd^{k_\ell} \phi^t(\rho)\right) \,,
\end{equation*}
where $1 \le \ell \le k + 1$ and $k_1 + k_2 + \cdots + k_\ell = k + 1$. Such terms are bounded uniformly in $t \in [-T, T]$ by the induction hypothesis as soon as $\ell \ge 2$ (note that all the differentials of order $\ge 2$ of $p$ are bounded). So in fact the ODE on $\dd^{k+1} \phi^t(\rho)$ can be written
\begin{equation*}
    \dfrac{\dd}{\dd t} \dd^{k+1} \phi^t(\rho) = J \hess p\left(\phi^t(\rho)\right) \dd^{k+1} \phi^t(\rho) + R(t, \rho) \,,
\end{equation*}
where $R(t, \rho)$ satisfies
\begin{equation*}
\abs*{R(t, \rho)}_\infty \le C(T, p) \,, \qquad \forall \rho \in \R^{2d}, \forall t \in [-T, T] \,,
\end{equation*}
where the constant $C(T, p)$ depends only on the sup-norm of derivatives of order $\{2, 3, \ldots, k + 2\}$ of $p$. We conclude by Grönwall's Lemma again, together with Duhamel's Formula that $\dd^{k+1} \phi^t(\rho)$ is bounded similarly: given that $k + 1 \ge 2$, we have $\dd^{k+1} \phi^0(\rho) = 0$ for every $\rho \in \R^{2d}$, so that
\begin{equation*}
\norm*{\dd^{k+1} \phi^t(\rho)}
	\le \int_0^t C(T, p) \e^{\abs{\hess p}_\infty (t - s)} \dd s
	\le T C'(T, p) \,.
\end{equation*}
This finishes the induction.

\medskip
\emph{Step 2 \--- Estimates of derivatives of $a \circ \phi^t$.}
We estimate the derivatives in $x$ or $\xi$. Let $\alpha \in \N^{2d} \setminus \{0\}$, and denote by $(x_1^t, x_2^t, \ldots, x_d^t, \xi_1^t, \xi_2^t \ldots, \xi_d^t)$ the components of the flow. The chain rule together with the Faà di Bruno formula yield that $\partial^\alpha (a \circ \phi^t)$ can be expressed as a sum of terms of the form
\begin{equation*}
(\partial_x^{\tilde \alpha} \partial_\xi^{\tilde \beta} a) \circ \phi^t \times \prod_{j_1 \in \tilde \alpha} \partial^{\alpha_{j_1}} x_{j_1} \times \prod_{j_2 \in \tilde \beta} \partial^{\beta_{j_2}} \xi_{j_2} \,,
\end{equation*}
where $\tilde \alpha, \tilde \beta \in \N^d$ are such that $1 \le \abs{\tilde \alpha} + \abs{\tilde \beta} \le \abs{\alpha}$ and $\alpha_{j_1}, \beta_{j_2} \in \N^{2d} \setminus \{0\}$ satisfy $\sum_{j_1} \alpha_{j_1} + \sum_{j_2} \beta_{j_2} = \alpha$. (By $j_1 \in \tilde \alpha$, $j_2 \in \tilde \beta$, we mean that $j_1, j_2 \in \{1, 2, \ldots, d\}$ are indices for which $\tilde \alpha$ and $\tilde \beta$ are nonzero.) The claim follows immediately from the bounds on the derivatives of $x_j^t$ and $\xi_j^t$ proved in Step 1.
\end{proof}

\section{Proof of the main theorem} \label{sec:proofmain}

We start with a lemma that will enable us to replace $\one_{\omega_R \setminus B_r(0)}$ in the observability inequality with a well-behaved symbol.

\begin{lemm}[Mollifying the observation set] \label{lem:mollifyingobsset}
Let $\omega \subset \R^d$ and denote by $\omega_R$ the open set
\begin{equation*}
\omega_R = \bigcup_{x \in \omega} B_R(x) \,, \qquad R > 0 \,.
\end{equation*}
There exists a symbol $a = a_R \in S(1)$ depending only on the $x$ variable such that
\begin{equation*}
\one_{\omega_{R/2}}(x) \le a_R(x) \le \one_{\omega_R}(x) \,, \qquad \forall x \in \R^d \,.
\end{equation*}
In addition, it satisfies the seminorm estimates:
\begin{equation*}
\forall \ell \in \N, \exists C_\ell > 0 : \forall R \ge 1 \,, \qquad
	\abs*{a_R}_{S(1)}^\ell 
		\le C_\ell
			\qquad \rm{and} \qquad
	\abs*{\nabla a_R}_{S(1)}^\ell
		\le \dfrac{C_\ell}{R} \,.
\end{equation*}
The constants involved do not depend on $\omega$.
\end{lemm}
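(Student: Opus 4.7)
The approach is the standard mollification: we fix once and for all a smooth non-negative bump function $\chi \in C_c^\infty(\R^d)$ with $\supp \chi \subset B_1(0)$ and $\int_{\R^d} \chi \dd y = 1$, then define the rescaled mollifier
\begin{equation*}
\chi_R(y) = \left(\dfrac{4}{R}\right)^d \chi\left(\dfrac{4 y}{R}\right) \,, \qquad \supp \chi_R \subset B_{R/4}(0) \,, \qquad \int_{\R^d} \chi_R = 1 \,,
\end{equation*}
and set
\begin{equation*}
a_R(x) = \left( \one_{\omega_{3R/4}} * \chi_R \right)(x) = \int_{\R^d} \one_{\omega_{3R/4}}(y) \, \chi_R(x - y) \dd y \,.
\end{equation*}
Since $\chi_R$ is smooth, so is $a_R$.

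The first step is the pointwise sandwich. If $x \in \omega_{R/2}$, pick $x_0 \in \omega$ with $\abs{x - x_0} < R/2$; then for any $y$ in $\supp \chi_R(x - \cdot) \subset B_{R/4}(x)$, we have $\abs{y - x_0} < 3R/4$, hence $y \in \omega_{3R/4}$ and therefore $a_R(x) = \int \chi_R(x - y) \dd y = 1$. Conversely, if $x \notin \omega_R$, then for any $y \in B_{R/4}(x)$ and any $z \in \omega$ one has $\abs{x - z} \ge R$, so $\abs{y - z} > 3R/4$, which forces $y \notin \omega_{3R/4}$; thus $a_R(x) = 0$. Combined with the obvious bound $0 \le a_R \le 1$, this gives $\one_{\omega_{R/2}} \le a_R \le \one_{\omega_R}$.

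The seminorm estimates follow from plain scaling. Differentiating under the integral and using $\partial^\alpha \chi_R(y) = (4/R)^{d + \abs{\alpha}} (\partial^\alpha \chi)(4y/R)$ gives, for any multi-index $\alpha$,
\begin{equation*}
\abs*{\partial^\alpha a_R(x)}
    \le \norm*{\partial^\alpha \chi_R}_{L^1(\R^d)}
    = \left(\dfrac{4}{R}\right)^{\abs{\alpha}} \norm*{\partial^\alpha \chi}_{L^1(\R^d)}
    \le \dfrac{C_\alpha}{R^{\abs{\alpha}}} \,.
\end{equation*}
Together with $\abs{a_R} \le 1$, this yields $\abs{a_R}_{S(1)}^\ell \le C_\ell$ whenever $R \ge 1$. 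Applied to a derivative $\partial_j a_R$, the same bound gives $\abs{\partial^\alpha \partial_j a_R} \le C_{\alpha,j} R^{- (\abs{\alpha} + 1)} \le C_{\alpha,j}/R$ for $R \ge 1$, which is exactly the claimed control on $\abs{\nabla a_R}_{S(1)}^\ell$.

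There is no real obstacle here; the only mild subtlety is to choose the three scales $R/2 < 3R/4 < R$ so that the mollification radius $R/4$ fits between them, ensuring the sandwich $\one_{\omega_{R/2}} \le a_R \le \one_{\omega_R}$ holds exactly, and to verify that the constants produced do not depend on $\omega$ (which is clear since they come only from the fixed $\chi$).
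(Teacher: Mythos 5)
Your proof is correct and follows essentially the same route as the paper: the paper also defines $a_R = \kappa_{R/4} \ast \one_{\omega_{3R/4}}$ with a mollifier supported in $B_{R/4}(0)$, verifies the sandwich by the same support considerations, and gets the seminorm bounds by differentiating under the integral. The only cosmetic difference is that you spell out the pointwise triangle-inequality arguments that the paper compresses into a one-line support statement.
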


\begin{proof}
Fix $\kappa \in \cont_\comp^\infty(\R^d)$ a mollifier with the following properties:
\begin{equation*}
\kappa(x) \ge 0, \,, \forall x \in \R^d \,, \qquad
\supp \kappa \subset B_1(0) \qquad \rm{and} \qquad
\int_{\R^d} \kappa(x) \dd x = 1 \,.
\end{equation*}
For any $r > 0$, set $\kappa_r = r^{-d} \kappa(\bigcdot/r)$, so that $\norm{\kappa_r}_{L^1(\R^d)} = 1$. Set for any $R > 0$:
\begin{equation*}
a_R(x) = \left(\kappa_{\frac{1}{4} R} \ast \one_{\omega_{\frac{3}{4} R}}\right)(x) \,, \qquad \forall x \in \R^d \,.
\end{equation*}
We check that $a_R$ defined in this way satisfies the required properties. We first observe that, by definition, $a_R$ is non-negative, and that $a_R \le 1$ by Young's inequality. Now by standard properties of convolution, the support of $a_R$ is contained in $\omega_{\frac{3}{4} R} + B_{\frac{1}{4} R}(0) \subset \omega_R$ (recall that the support of $\kappa$ is a compact subset of $B_1(0)$), which proves that $a_R \le \one_{\omega_R}$. On the other hand, if $x \in \omega_{R/2}$, then $\kappa_{\frac{1}{4} R}(x - \bigcdot)$ is supported in $\omega_{\frac{3}{4} R}$, so that $a_R(x) = 1$, which proves that $a_R \ge \one_{\omega_{R/2}}$. Differentiating under the integral sign, we see that $\norm*{\partial^{\alpha} a_R}_\infty$ is of order $1/R^{\abs{\alpha}}$ for any multi-index $\alpha \in \N^d$, which yields the desired seminorm estimates ($R \ge 1$ is important here). The constants depend only on the supremum norms of derivatives of $\kappa$, and not on $\omega$.
\end{proof}

\begin{rema}
The symbol $a_R$ can be considered as a semiclassical symbol, with Planck parameter $1/R^2$, since by construction each derivative yields a gain of $1/R$. However in view of Lemma~\ref{lem:symbolcomposition}, this property is not preserved by composition by the Hamiltonian flow, since all the derivatives of $a_R^ \circ \phi^t$ of order $\ge 1$ behave as $1/R$. This comes from the fact that, when differentiating $a_R \circ \phi^t$ twice or more, the second, third, and higher order derivatives can hit $\phi^t$ instead of $a_R$.
\end{rema}

We prove a version of Egorov's theorem taking into account the above remark. Our approach is very classical; see~\cite{BouzouinaRobert} or~\cite[Chapter 11]{Zworski:book} for refinements. We refer again to Appendix~\ref{app:pseudo} for an account on the Weyl quantization $\quantization$.

\begin{prop}[Egorov] \label{prop:Egorovthm}
Let $a \in S(1)$. Then the symbol $a \circ \phi^t$ lies in $S(1)$ with seminorm estimates
\begin{equation*}
\forall T > 0, \forall \ell \in \N, \exists C_\ell(T, p) > 0 : \qquad
	\abs*{a \circ \phi^t}_{S(1)}^\ell
    	\le C_\ell(T, p) \abs*{a}_{S(1)}^\ell \,, \quad \forall t \in [- T, T] \,,
\end{equation*}
and, it holds
\begin{equation} \label{eq:Egoroveq}
\e^{\ii t P} \Op{a} \e^{- \ii t P}  = \Op{a \circ \phi^t} + \cal{R}_a(t) \,,
\end{equation}
where the remainder term $\cal{R}_a(t)$ is a bounded operator satisfying
\begin{equation*}
\forall T > 0, \exists C(T, p) > 0 : \qquad
	\norm*{\cal{R}_a(t)}_{L^2 \to L^2} \le C(T, p) \abs*{\nabla a}_{S(1)}^{k_d} \,, \quad \forall t \in [-T, T] \,,
\end{equation*}
for some integer $k_d$ depending only on the dimension.
\end{prop}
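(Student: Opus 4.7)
The plan is to establish the Egorov approximation via Duhamel's principle applied to the Heisenberg equation, with the key bound coming from the pseudodifferential calculus. Define
\[
A(t) := \e^{\ii t P}\Op{a}\e^{-\ii t P} \qquad \rm{and} \qquad B(t) := \Op{a \circ \phi^t} \,,
\]
both satisfying $A(0) = B(0) = \Op{a}$. The seminorm estimate on $a \circ \phi^t$ is a direct consequence of Lemma~\ref{lem:symbolcomposition}: each derivative of $a \circ \phi^t$ is controlled by derivatives of $a$ of order $\ge 1$, multiplied by derivatives of $\phi^t$ that are uniformly bounded on $[-T, T]$ (these depend only on $T$ and on derivatives of $p$).

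Differentiating in $t$ gives $A'(t) = \ii [P, A(t)]$, while the defining property $\frac{\dd}{\dd t}(a \circ \phi^t) = \{p, a \circ \phi^t\}$ of the Hamiltonian flow yields $B'(t) = \Op{\{p, a \circ \phi^t\}}$. The heart of the argument is the pseudodifferential identity
\[
\frac{1}{\ii}[P, \Op{b}] = \Op{\{p, b\}} + \Op{r[b]} \,,
\]
which I would derive from the Moyal calculus for the Weyl product. The key observation is that $p = V(x) + \frac{1}{2}\abs{\xi}^2$ is polynomial of degree $2$ in $\xi$, so the kinetic piece $\frac{1}{2}\abs{\xi}^2$ contributes \emph{exactly} the Poisson bracket $\xi \cdot \partial_x b$ (all higher Moyal corrections vanish since $\partial_\xi^k(\frac{1}{2}\abs{\xi}^2) = 0$ for $k \ge 3$), whereas the potential piece contributes a correction whose schematic structure is
\[
r[b](x, \xi) \;=\; \sum_{\substack{\abs{\alpha} \ge 3 \\ \abs{\alpha}\;\rm{odd}}} c_\alpha\, \partial_x^\alpha V(x)\, \partial_\xi^\alpha b(x, \xi) \,,
\]
rigorously interpreted through the oscillatory integral representation of the Moyal product.

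The remainder estimate then rests on two ingredients. First, since $V$ satisfies Assumption~\ref{assum:V} with $m \le 1$, any derivative $\partial^\alpha V$ with $\abs{\alpha} \ge 3$ decays as $\jap{x}^{2m - \abs{\alpha}} \lesssim \jap{x}^{-1}$; hence $\partial^\alpha V \in S(1)$ with seminorms depending only on $V$. Second, for $b = a \circ \phi^t$ the factors $\partial_\xi^\alpha(a \circ \phi^t)$ appearing in $r[b]$ satisfy $\abs{\alpha} \ge 3 \ge 1$, so the Faà di Bruno formula combined with Lemma~\ref{lem:symbolcomposition} shows that each such factor is a sum of products containing at least one derivative of $a$; its $S(1)$-seminorms are therefore bounded by $C(T, p)\,\abs{\nabla a}_{S(1)}^{k}$ for some finite $k$. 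The Calderón--Vaillancourt theorem (recalled in Appendix~\ref{app:pseudo}) then gives $\norm{\Op{r[a \circ \phi^t]}}_{L^2 \to L^2} \le C(T, p)\,\abs{\nabla a}_{S(1)}^{k_d}$. A Duhamel argument applied to $D(t) := A(t) - B(t)$, which satisfies $D'(t) - \ii[P, D(t)] = \Op{r[a \circ \phi^t]}$ and $D(0) = 0$, yields
\[
\cal{R}_a(t) = D(t) = \int_0^t \e^{\ii(t-s)P}\Op{r[a \circ \phi^s]}\e^{-\ii(t-s)P}\dd s \,,
\]
whose $L^2$-operator norm is bounded by $T \cdot C(T, p)\,\abs{\nabla a}_{S(1)}^{k_d}$ since $\e^{\pm \ii s P}$ is unitary on $L^2(\R^d)$.

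The main technical obstacle is precisely to ensure that the remainder scales with $\abs{\nabla a}_{S(1)}$ rather than merely $\abs{a}_{S(1)}$. This refinement is essential for applying Egorov's theorem to the mollified indicator $a_R$ from Lemma~\ref{lem:mollifyingobsset}---where $\abs{a_R}_{S(1)}^\ell$ is of order one but $\abs{\nabla a_R}_{S(1)}^\ell$ is of order $1/R$---so that the remainder can be made small by taking $R$ large. The gain stems precisely from the structure of the Moyal corrections above: they involve $\partial_\xi^\alpha b$ with $\abs{\alpha} \ge 3$ (never $b$ itself, thanks to the subtraction of the Poisson bracket), and the Faà di Bruno expansion then extracts at least one derivative of $a$ from each such factor, which is exactly what is needed.
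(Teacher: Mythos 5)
Your proposal is correct and follows essentially the same route as the paper: the paper differentiates $Q(s) = \e^{-\ii s P}\Op{a\circ\phi^s}\e^{\ii s P}$ and integrates, which is the same computation as your Duhamel argument for $A(t)-B(t)$, and in both cases the crucial point is that the discrepancy between $\frac{1}{\ii}[P,\Op{b}]$ and $\Op{\{p,b\}}$ is an order-$3$ remainder of the Weyl calculus, hence sees only derivatives of order $\ge 3$ of $a\circ\phi^t$ (controlled by $\abs{\nabla a}_{S(1)}^k$ via Lemma~\ref{lem:symbolcomposition}) and derivatives of order $\ge 3$ of $p$ (bounded since $m\le 1$), after which Calder\'on--Vaillancourt and unitarity of the propagator finish the proof. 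The only cosmetic difference is that you make the structure of the Moyal corrections explicit (odd terms, potential part only), whereas the paper invokes the general refined remainder estimate of Proposition~\ref{prop:refinedestimateremainderpseudocalc}; for full rigor your asymptotic sum over $\abs{\alpha}\ge 3$ should be replaced by the truncated expansion with integral remainder, which is exactly what that proposition supplies.
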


\begin{proof}
The claim that $a \circ \phi^t \in S(1)$ and the subsequent seminorm estimates are provided by Lemma~\ref{lem:symbolcomposition}. To prove~\eqref{eq:Egoroveq}, we follow the classical method that consists in differentiating the time dependent operator
\begin{equation*}
Q(s) = \e^{- \ii s P} \Op{a \circ \phi^s} \e^{\ii s P} \,,
\end{equation*}
and estimating this derivative. For the sake of simplicity, let us introduce $a_s = a \circ \phi^s$. All the operators in this composition map $\sch(\R^d)$ to itself continuously, so that $Q(s) u$ can be differentiated using the chain rule, for any $u \in \sch(\R^d)$. From now on, we will omit to write $u$. Recalling that, by definition of $\phi^s$, we have $\frac{\dd}{\dd s} a_s = \poiss*{p}{a_s}$, it holds
\begin{equation*}
\frac{\dd}{\dd s} \Op{a_s} = \Op{\poiss*{p}{a_s}}
\end{equation*}
(rigorously, one may apply the Dominated Convergence Theorem to the pairing $\brak*{v}{\Op{a_s} u}_{\sch', \sch(\R^d)}$ for two Schwartz functions $u$ and $v$). Therefore we get
\begin{equation} \label{eq:derivativeQ(s)}
\dfrac{\dd}{\dd s} Q(s)
	= - \ii \e^{- \ii s P} \left( \comm*{P}{\Op{a_s}} + \ii \Op{\poiss*{p}{a_s}} \right) \e^{\ii s P}
	= - \ii \e^{- \ii s P} \Op{\cal{R}_3(s)} \e^{\ii s P} \,.
\end{equation}
The symbol $\cal{R}_3(s)$ above is nothing but the remainder of order $3$ in the pseudodifferential calculus between $p$ and $a_s$. Proposition~\ref{prop:refinedestimateremainderpseudocalc} provides a bound on this remainder in terms of seminorms of $a_s$. Recall that, in the subcritical case $m \le 1$, $\partial^\alpha p \in S(1)$ whenever $\abs{\alpha} \ge 2$. Therefore according to Proposition~\ref{prop:refinedestimateremainderpseudocalc}, for any seminorm index $\ell \in \N$, there exist a constant $C_\ell > 0$ as well as an integer $k \ge 0$ such that
\begin{equation*}
\abs*{\cal{R}_3(s)}_{S(1)}^\ell \le C_\ell \abs*{d^3 a_s}_{S(1)}^k \abs*{d^3 p}_{S(1)}^k \,.
\end{equation*}
Then we use Lemma~\ref{lem:symbolcomposition} to obtain
\begin{equation*}
\abs*{\cal{R}_3(s)}_{S(1)}^\ell
    \le C_\ell\left(T, p\right) \abs*{\nabla a}_{S(1)}^k \,,
\end{equation*}
for any $s \in [- T, T]$. Therefore, the Calder\'{o}n-Vaillancourt Theorem (Theorem~\ref{thm:CV}) tells us that the norm of $\Op{\cal{R}_3(s)}$ is bounded, uniformly in $s \in [-T, T]$, by a seminorm of $\nabla a$, and a constant depending only on $T$ and $p$. Plugging this into~\eqref{eq:derivativeQ(s)}, given that the propagator $\e^{\ii s P}$ is an isometry, we obtain the same bound on $\frac{\dd}{\dd s} Q(s)$. Integrating this in $s$, we obtain from the mean-value inequality
\begin{equation*}
\forall t \in [-T, T] \,, \qquad
	\norm*{Q(t) - Q(0)}_{L^2 \to L^2}
		\le 2 T \sup_{s \in [-T, T]} \norm*{\dfrac{\dd}{\dd s} Q(s)}_{L^2 \to L^2}
	\le C(T, p) \abs*{\nabla a}_{S(1)}^{k_d} \,,
\end{equation*}
where the integer $k_d$ depends only on the dimension. Conjugating by the propagator, which is an isometry on $L^2$, yields the desired result.
\end{proof}

We are now in a position to prove our main result.

\begin{proof}[Proof of Theorem~\ref{thm:main}]
We fix $\omega \subset \R^d$, a compact set $K \subset \R^d$, and we introduce $\tilde \omega(R) = (\omega \setminus K_R)_R$, for $R > 0$. One can verify that $\tilde \omega(R) \subset \omega_R \setminus K$. By Lemma~\ref{lem:mollifyingobsset}, there exists a symbol $a_R \in S(1)$ depending on the parameter $R > 0$ such that
\begin{equation} \label{eq:symbolaR}
\one_{(\omega \setminus K_R) \times \R^d}
	\le a_R \le \one_{\tilde \omega(R) \times \R^d} \,,
		\qquad \forall R > 0 \,,
\end{equation}
and $\abs{\nabla a_R}_{S(1)}^\ell \le c_{d,\ell}/R$ for any $\ell \in \N$, with a constant $c_{d,\ell}$ depending only on the dimension and $\ell$, uniformly in $R \ge 1$. Notice that the symbol depends on $\omega$ and $K$ but not its seminorms. On the quantum side, one can regard the functions in~\eqref{eq:symbolaR} as multiplication operators, and understand the inequalities in the sense of self-adjoint operators. Conjugating by the Schrödinger propagator does not change the inequalities, so that:
\begin{equation*}
\e^{\ii t P} \one_{\omega \setminus K_R} \e^{- \ii t P}
	\le \e^{\ii t P} \Op{a_R} \e^{- \ii t P}
    \le \e^{\ii t P} \one_{\tilde \omega(R)} \e^{- \ii t P}
        \,, \qquad \forall t \in \R \,.
\end{equation*}
Then we use Egorov's theorem (Proposition~\ref{prop:Egorovthm}) and we integrate with respect to $t$ to get
\begin{equation} \label{eq:withremainder}
\int_0^{T_0} \e^{\ii t P} \one_{(\omega \setminus K_R) \times \R^d} \e^{- \ii t P} \dd t
	\le \int_0^{T_0} \Op{a_R \circ \phi^t} \dd t + \cal{R}_R
    \le \int_0^{T_0} \e^{\ii t P} \one_{\tilde \omega(R)} \e^{- \ii t P} \dd t \,,
\end{equation}
where the remainder term $\cal{R}_{R}$ is a bounded operator with
\begin{equation} \label{eq:remainderR'}
\norm*{\cal{R}_{R}}_{L^2 \to L^2}
    \le C \abs*{\nabla a_{R}}_{S(1)}^{k_d}
    \le \dfrac{C'}{R} \,,
    	\qquad \forall R \ge 1 \,.
\end{equation}
The constant $C'$ above depends only on $p$ and $T_0$ (and of course on the dimension $d$), but not on $\omega$ or $K$. One can check that the quantization and the integral over $t$ in the middle term of~\eqref{eq:withremainder} commute.\footnote{One can see this by pairing the operator under consideration with two Schwartz functions and use the Dominated Convergence Theorem.}

On the classical side, using the same notation as in~\eqref{eq:notationCcal}, we introduce the quantity
\begin{equation*}
\frak{K}_{p_0}^\infty\left(a_R \one_{(0, T)}\right)
	= \liminf_{\rho \to \infty} \int_0^T a_R\left(\phi_0^t(\rho)\right) \dd t \,,
\end{equation*}
and similarly for $p$ instead of $p_0$, replacing the flow $\phi_0^t$ by $\phi^t$. We claim that for any $T > 0$, there exists a constant $C'' > 0$ depending only on the dimension, on $T$ and on the Hamiltonians $p_0$ and $p$, such that for any compact $\tilde K$, and for any $R > 0$:
\begin{equation} \label{eq:inequalitiesclassicalside}
\left\{
\begin{aligned}
\frak{K}_{p_0}^\infty(\omega, T)
	&\le \frak{K}_p^\infty\left(a_R \one_{(0, T)}\right) + \dfrac{C''}{R} \\
\frak{K}_p^\infty\left(a_R \one_{(0, T)}\right)
	&\le \frak{K}_{p_0}^\infty\left(\omega_R \setminus \tilde K, T\right) + \dfrac{C''}{R}
\end{aligned}
\right. \,.
\end{equation}
The constant $C''$ does not depend on $\omega$ or $K$ from which we built $a_R$ neither. The proof of the first inequality in~\eqref{eq:inequalitiesclassicalside} reads as follows: Corollary~\ref{cor:classicalnonobs} shows that the quantity in the left-hand side does not change if we remove a compact set:
\begin{equation} \label{eq:reductionCcal1}
\frak{K}_{p_0}^\infty(\omega, T)
	= \frak{K}_{p_0}^\infty\left(\omega \setminus K_R, T\right) \,,
		\qquad \forall R > 0 \,.
\end{equation}
Now we use that $\one_{(\omega \setminus K_R) \times \R^d} \le a_R$ to get
\begin{equation} \label{eq:reductionCcal2}
\frak{K}_{p_0}^\infty\left(\omega \setminus K_R, T\right)
	\le \frak{K}_{p_0}^\infty\left(a_R \one_{(0, T)}\right) \,.
\end{equation}
Then we switch from $p_0$ to $p$, having the same principal symbol, using Corollary~\ref{cor:classicalobssubprincipalperturbation}: the function $a_R \one_{(0, T)}$ is compactly supported in time and $\frac{c_{d, 1}}{R}$-Lipschitz in the variable $\rho$, so that
\begin{equation*}
\frak{K}_{p_0}^\infty\left(a_R \one_{(0, T)}\right)
	\le \frak{K}_p^\infty\left(a_R \one_{(0, T)}\right) + \dfrac{C''}{R} \,,
		\qquad \forall R > 0 \,.
\end{equation*}
Putting this together with~\eqref{eq:reductionCcal1} and~\eqref{eq:reductionCcal2} yields the first inequality in~\eqref{eq:inequalitiesclassicalside}. The second inequality in~\eqref{eq:inequalitiesclassicalside} is proved using similar arguments: Corollary~\ref{cor:classicalobssubprincipalperturbation} leads to
\begin{equation*}
\frak{K}_p^\infty\left(a_R \one_{(0, T)}\right)
	\le \frak{K}_{p_0}^\infty\left(a_R \one_{(0, T)}\right) + \dfrac{C''}{R} \,, \qquad \forall R > 0 \,.
\end{equation*}
Then we use from the construction of $a_R$ in~\eqref{eq:symbolaR} that $a_R$ is supported in $\omega_R \times \R^d$, and we apply Corollary~\ref{cor:classicalnonobs} to remove a compact set $\tilde K$. This leads to the sought inequality.

\medskip
\emph{Sufficient condition.}
We wish to bound the left-hand side of~\eqref{eq:withremainder} from below. The high-energy classical observability constant $\frak{K}_{p_0}^\infty := \frak{K}_{p_0}^\infty(\omega, T_0)$ is assumed to be positive. From the first inequality in~\eqref{eq:inequalitiesclassicalside}, with $T_0$ in place of $T$, we can write
\begin{equation} \label{eq:averageboundedfrombelowatinfinity}
\exists A > 0 : \forall \abs*{\rho} \ge A \,, \qquad
	\int_0^{T_0} (a_R \circ \phi^t)(\rho) \dd t
		\ge \dfrac{1}{2} \frak{K}_{p_0}^\infty - \dfrac{C''}{R}
		= c_R \,.
\end{equation}
Take a cut-off function $\chi \in \cont_\comp^\infty(\R^{2d})$ such that $\chi \equiv 1$ on the unit ball, and set $\chi_R = \chi(\bigcdot/(A + R))$. Then $\chi_R$ has compact support, equals one on the ball $B_A(0)$, and it satisfies $\norm{\partial^\alpha \chi_R}_\infty = O(1/R^{\abs{\alpha}})$, with constants independent of $\omega$ again.\footnote{The parameter $A$ depends on $R$, but this will not be a problem in the sequel. The phase space region localized at distance $\le A$ from the origin will be handled by Proposition~\ref{prop:reduction}.} We split the symbol in the left-hand side of~\eqref{eq:averageboundedfrombelowatinfinity} using this cut-off function: we write
\begin{equation*}
\int_0^{T_0} a_R \circ \phi^t \dd t
    = b_0 + b_\infty
\end{equation*}
where we set
\begin{equation*}
b_0 = \chi_R \times \left( \int_0^{T_0} a_R \circ \phi^t \dd t - c_R \right)
    \qquad \rm{and} \qquad
b_\infty = (1 - \chi_R) \int_0^{T_0} a_R \circ \phi^t \dd t + c_R \chi_R \,.
\end{equation*}
Using the Leibniz Formula and Lemma~\ref{lem:symbolcomposition}, we can prove that $b_0 \in S(1)$. Moreover, $b_0$ is compactly supported in $\R^{2d}$, so that $\Op{b_0}$ is a compact operator by~\cite[Theorem 4.28]{Zworski:book}. As for $b_\infty$, the Leibniz Formula and Lemma~\ref{lem:symbolcomposition} lead to the following estimates on derivatives: for all $\alpha \in \N^{2d}$, one has
\begin{equation*}
\norm*{\partial^\alpha b_\infty}_\infty
    \le C_{\alpha} \max_{\alpha_1 + \alpha_2 = \alpha} \norm*{\partial^{\alpha_1} (1 - \chi_R)}_\infty \times \int_0^{T_0} \norm*{\partial^{\alpha_2} (a_R \circ \phi^t)}_\infty \dd t + c_R \dfrac{C_\alpha}{R^{\abs{\alpha}}}
    \le C_{\alpha, T_0, p} \left( \dfrac{1}{R^{\abs{\alpha}}} + \dfrac{1}{R} \right) \,.
\end{equation*}
The last inequality comes from distinguishing the cases $\alpha_2 = 0$ and $\alpha_2 \neq 0$. In the first case, we have $\partial^{\alpha_1}(1 - \chi_R) = O(R^{-\abs{\alpha}})$ and $\abs{a_R \circ \phi^t} \le 1$. Otherwise, Lemma~\ref{lem:symbolcomposition} tells us that $\partial^{\alpha_2} (a_R \circ \phi^t)$ behaves like $\abs{\nabla a_R}_{S(1)}^{\abs{\alpha}} = O(1/R)$, $R \ge 1$. In particular, $b_\infty \in S(1)$ and it holds $\abs{\hess b_\infty}_{S(1)}^\ell = O(1/R)$ for any $\ell \in \N$, with a constant independent of $\omega$ and $K$. In addition, we have $b_\infty \ge c_R$ in view of~\eqref{eq:averageboundedfrombelowatinfinity}. Therefore, the G{\aa}rding inequality (Proposition~\ref{prop:Gaarding}) yields
\begin{equation*}
\Op{b_\infty} \ge \left(c_R - \dfrac{C_1}{R}\right) \id \,.
\end{equation*}
The constant $C_1$ is independent of $\omega$ and $K$ in view of the seminorm estimates of $\hess b_\infty$ discussed above. Going back to~\eqref{eq:withremainder}, we have proved
\begin{equation*}
\int_0^T \e^{\ii t P} \one_{\tilde \omega(R)} \e^{- \ii t P} \dd t
    \ge c_R \id + \Op{b_0} + \cal{R} \,,
    	\qquad \forall R \ge 1 \,.
\end{equation*}
As we have seen in the course of the proof, $\Op{b_0}$ is a compact self-adjoint operator and $\norm*{\cal{R}}_{L^2 \to L^2} \le C_2/R$, with a constant $C_2$ depending only on the dimension, on $T_0$ and on the Hamiltonians $p_0, p$. In view of the definition of $c_R$ in~\eqref{eq:averageboundedfrombelowatinfinity}, taking $R = 4(C'' + C_2 + T_0)/\frak{K}_{p_0}^\infty$, we obtain the desired observability inequality, up to a compact operator:
\begin{equation*}
\int_0^{T_0} \e^{\ii t P} \one_{\tilde \omega(R)} \e^{- \ii t P} \dd t - \Op{b_0}
    \ge \dfrac{1}{4} \frak{K}_{p_0}^\infty \id \,.
\end{equation*}
Notice that indeed $R \ge 1$, since $\frak{K}_{p_0}^\infty \le T_0$. Proposition~\ref{prop:reduction} then applies (see Remark~\ref{rmk:uniquecontinuation}). It yields the sought observability inequality on $\tilde \omega(R) \subset \omega_R \setminus K$, in any time $T > T_0$.

\medskip
\emph{Necessary condition.}
Consider the symbol $a_R$ from~\eqref{eq:symbolaR} with $K = \varnothing$. We fix $R \ge 1$ (not necessarily large), $\tilde K$ compact, and we estimate the observation cost $C_\obs$ in~\eqref{eq:obscost} using the quantity $\frak{K}_{p_0}^\infty(\omega_R \setminus \tilde K, T)$. We will track carefully the dependence of remainders on the parameter $R$. Write for short
\begin{equation*}
\jap*{a_R}_T(\rho)
	= \int_0^T (a_R \circ \phi^t)(\rho) \dd t \,, \qquad \rho \in \R^{2d} \,,
\end{equation*}
and pick a point $\rho_0 \in \R^{2d}$ such that
\begin{equation*}
\jap*{a_R}_T(\rho_0)
	\le \inf_{\rho \in \R^{2d}} \jap*{a_R}_T(\rho) + \dfrac{1}{R} \,.
\end{equation*}
Notice that in virtue of the second inequality of~\eqref{eq:inequalitiesclassicalside}, we have
\begin{equation}  \label{eq:boundfromaboveaveragecalC}
\jap*{a_R}_T(\rho_0)
	\le \frak{K}_p^\infty\left(a_R \one_{(0, T)}\right) + \dfrac{1}{R}
	\le \frak{K}_{p_0}^\infty\left(\omega_R \setminus \tilde K, T\right) + \dfrac{C'' + 1}{R} \,.
\end{equation}
Differentiating under the integral sign and using Lemma~\ref{lem:symbolcomposition}, we check that $\jap{a_R}_T$ is Lipschitz as a function of $\rho$:
\begin{equation} \label{eq:gradientjapa_RT}
\forall \rho \in \R^{2d} \,, \qquad
	\abs*{\nabla \jap{a_R}_T(\rho)}
		\le T \sup_{t \in [0, T]} \abs*{\nabla (a_R \circ \phi^t)(\rho)}
		\le C(T, p) \norm*{\nabla a_R}_\infty
		\le \dfrac{c}{R} \,.
\end{equation}
Consider a Gaussian wave packet centered at $\rho_0$, namely, writing $\rho_0 = (x_0, \xi_0)$, we define
\begin{equation*}
w(x) = \pi^{-d/4} \exp\left(-  \frac{\abs{x - x_0}^2}{2}\right) \e^{\ii \xi_0 \cdot x} \,, \qquad x \in \R^d \,.
\end{equation*}
It is properly normalized: $\norm{w}_{L^2} = 1$. A classical computation (cf.~\cite[Proposition (1.48)]{FollandPhaseSpace}) shows that the Wigner transform of $w$ is the Gaussian in the phase space centered at $\rho_0$, defined by $\rho \mapsto \pi^{-d} \exp(- \abs{\rho - \rho_0}^2)$, that is to say
\begin{equation*}
\inp*{w}{\Op{\jap*{a_{2R}}_T} w}_{L^2}
    = \pi^{-d} \int_{\R^{2d}} \jap*{a_{2R}}_T(\rho) \exp(- \abs{\rho - \rho_0}^2) \dd \rho
    = \pi^{-d} \int_{\R^{2d}} \jap*{a_{2R}}_T(\rho_0 + \rho) \exp(- \abs{\rho}^2) \dd \rho \,.
\end{equation*}
Note that it is a non-negative quantity. Taking an arbitrary $A > 0$ and splitting the integral over $\R^{2d}$ into two pieces, we obtain
\begin{align*}
\inp*{w}{\Op{\jap{a_{2R}}_T} w}_{L^2}
    &\le \int_{B_A(0)} \left(\jap{a_{2R}}_T(\rho_0) + A \norm*{\nabla \jap{a_{2R}}_T}_\infty\right) \pi^{-d} \e^{- \abs{\rho}^2} \dd \rho \\
    &\qquad \qquad \qquad + \int_{\R^{2d} \setminus B_A(0)} \norm*{\jap{a_{2R}}_T}_\infty \pi^{-d} \e^{- \abs{\rho}^2} \dd \rho \\
    &\le \jap*{a_{2R}}_T(\rho_0) + A \dfrac{c}{R} + T \int_{\R^{2d} \setminus B_A(0)} \pi^{-d} \e^{- \abs{\rho}^2} \dd \rho \\
    &\le \frak{K}_{p_0}^\infty\left(\omega_R \setminus \tilde K, T\right) + \dfrac{C'' + 1 + A c}{R} + T \e^{- A^2/2} 2^d \,.
\end{align*}
We used~\eqref{eq:gradientjapa_RT} and~\eqref{eq:boundfromaboveaveragecalC} to obtain the last two inequalities. We take $A = \abs{2 \log R}^{1/2}$ to obtain
\begin{equation*}
\inp*{w}{\Op{\jap{a_{2R}}_T} w}_{L^2}
    \le \frak{K}_{p_0}(\omega_R \setminus \tilde K, T) + \tilde C \dfrac{1 + \abs{\log R}^{1/2}}{R}
\end{equation*}
for some constant $\tilde C > 0$ independent of $R$. Going back to the left-hand side of~\eqref{eq:withremainder} (recall that we chose $K = \varnothing$ here) with $T$ in place of $T_0$, as well as~\eqref{eq:remainderR'}, taking the inner product with $w$ on both sides, we deduce that
\begin{equation*}
\int_0^T \norm*{\e^{- \ii t P} w}_{L^2(\omega)}^2 \dd t
    \le \frak{K}_{p_0}(\omega_R \setminus \tilde K, T) + \tilde C \dfrac{1 + \abs{\log R}^{1/2}}{R} + \dfrac{C'}{R} \,.
\end{equation*}
By assumption, $\Obs(\omega, T)$ is true with a cost $C_\obs > 0$. Recalling that $\norm{w}_{L^2} = 1$, we can bound the left-hand side from below by $C_\obs^{-1}$. We arrive at
\begin{equation*}
\frak{K}_{p_0}(\omega_R \setminus \tilde K, T)
	\ge \dfrac{1}{C_\obs} - \tilde C \dfrac{1 + \abs{\log R}^{1/2}}{R} - \dfrac{C'}{R} \,,
\end{equation*}
which yields the sought result.
\end{proof}

\section{Proofs of observability results from conical sets} \label{sec:proofconical}

In this section, we give proofs of the results presented in Subsections~\ref{subsub:conicalsets} and~\ref{subsub:refinementconical}, which concern observation sets that are conical in the sense of~\eqref{eq:defconicalset}. Propositions~\ref{prop:necessaryconditionconicset}, \ref{prop:twocones} and~\ref{prop:isotropicconicalsets} are proved in Subsections~\ref{subsec:proofnecessaryconditionconicset}, \ref{subsec:prooftwocones} and~\ref{subsec:proofisotropicconicalsets} respectively.

\subsection{Proof of Proposition~\ref{prop:necessaryconditionconicset}} \label{subsec:proofnecessaryconditionconicset}

Let us prove the converse statement: assume there exists a normalized eigenvector $e$ of $A$ such that $e \not\in \ovl{\omega}$ and $-e \not\in \ovl{\omega}$. Let $\nu > 0$ be such that $A e = \nu^2 e$. We claim the following.

\begin{lemm} \label{lem:sejnotinomegaR}
There exists a constant $c > 0$ such that for any $R > 0$, it holds
\begin{equation*}
\forall s \in \R \,, \qquad
	\left( s e \in \omega_R \quad \Longrightarrow \quad \abs{s} \le c R \right) \,.
\end{equation*}
\end{lemm}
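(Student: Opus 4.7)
The plan is to exploit the conical structure of $\omega$ to reduce the problem on the ray $\R e$ to a statement about the unit sphere, and then use an orthogonal projection to quantify the geometric distance.

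First, since $e, -e \not\in \ovl{\omega}$ and the closed set $\ovl{\omega} \cap \sph^{d-1}$ is compact, there exists $\delta \in (0, 1]$ such that the open balls $B_\delta(e)$ and $B_\delta(-e)$ of $\R^d$ do not meet $\omega$. Using that $\omega$ is conical in the sense of~\eqref{eq:defconicalset}, for any nonzero $x \in \omega$ the normalized vector $x/\abs{x}$ lies in $\omega$, hence satisfies
\begin{equation*}
\abs*{\dfrac{x}{\abs{x}} - e} \ge \delta \qquad \text{and} \qquad \abs*{\dfrac{x}{\abs{x}} + e} \ge \delta \,.
\end{equation*}
Squaring these yields a two-sided bound $\abs{x \cdot e / \abs{x}} \le 1 - \delta^2/2$.

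Next, fix $s \in \R$ with $se \in \omega_R$, so that there exists $x \in \omega$ with $\abs{x - se} < R$. If $x = 0$ we get $\abs{s} < R$ directly, so assume $x \neq 0$ and write $r = \abs{x}$. Let $\Pi = \id - e \otimes e$ denote the orthogonal projection onto $e^\perp$. Since $\Pi(se) = 0$, we have $\abs{\Pi x} = \abs{\Pi(x - se)} < R$. On the other hand, the identity $\abs{\Pi x}^2 = r^2 (1 - (x \cdot e/r)^2)$ combined with the sphere estimate above gives
\begin{equation*}
\abs{\Pi x}^2 \ge r^2 \bigl(1 - (1 - \delta^2/2)^2\bigr) = r^2 \cdot c_0 \,, \qquad c_0 := (\delta^2/2)(2 - \delta^2/2) > 0 \,.
\end{equation*}
Comparing the two bounds yields $r < R/\sqrt{c_0}$.

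Finally, the triangle inequality $\abs{s} = \abs{se} \le \abs{x} + \abs{x - se} < r + R$ gives $\abs{s} < (1 + 1/\sqrt{c_0}) R$, which is the desired estimate with $c = 1 + 1/\sqrt{c_0}$ depending only on $\delta$ (hence only on $\omega$ and $e$), not on $R$. The only slightly delicate point is extracting a quantitative lower bound on $1 - (x \cdot e/\abs{x})^2$ from the two conditions $\abs{x/\abs{x}\mp e} \ge \delta$; both conditions are needed to rule out alignment of $x$ with either $+e$ or $-e$, and the projection onto $e^\perp$ is the natural tool to package them into a single inequality.
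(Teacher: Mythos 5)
Your proof is correct, and it rests on the same idea as the paper's: the uniform angular gap $\delta$ (the paper's $\eps$) between the cone $\omega\cup-\omega$ and $e$, combined with the triangle inequality, forces $\abs{s}\lesssim R$. The paper packages the angular gap by comparing $se$ directly with the rescaled point $\abs{s}\,y/\abs{y}$ rather than via the orthogonal projection onto $e^\perp$, but this is only a difference in bookkeeping, not in substance.
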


\begin{proof}
If $s \in \R$ is such that $s e \in \omega_R$, then there exists $y \in \omega \setminus \{0\}$ such that $\abs{s e - y} \le R$. Moreover, since $e$ belongs to the complement of the closed set $\ovl{\omega} \cup - \ovl{\omega}$, there exists $\eps > 0$ such that
\begin{equation*}
\forall x \in \left(\omega \cup - \omega\right) \setminus \{0\} \,, \qquad \abs*{e - \dfrac{x}{\abs{x}}} \ge \eps \,.
\end{equation*}
We apply this to $x = \sign(s) y$ to obtain
\begin{equation*}
\abs{s}
    \le \dfrac{1}{\eps} \abs*{s e - \abs{s} \dfrac{y}{\abs{y}}}
    \le \dfrac{1}{\eps} \abs{s e - y} + \dfrac{1}{\eps} \abs{y} \abs*{1 - \dfrac{\abs{s}}{\abs{y}}}
    \le \dfrac{1}{\eps} \abs{s e - y} + \dfrac{1}{\eps} \abs{s e - y}
    \le \dfrac{2 R}{\eps} \,.
\end{equation*}
We used the inverse triangle inequality to obtain the second to last inequality.
\end{proof}

Using this lemma, for any $T > 0$ and any $\eta > 0$, we can estimate the quantity
\begin{equation*}
\int_0^T \one_{\omega_R \times \R^d} \left(\phi^t(0, \eta e)\right) \dd t
    = \int_0^T \one_{\omega_R} \left(\dfrac{\eta}{\nu} \sin(\nu t) e\right) \dd t
    \le \int_0^{2 N \pi /\nu} \one_{\omega_R} \left(\dfrac{\eta}{\nu} \sin(\nu t) e\right) \dd t \,,
\end{equation*}
where $N = \lceil \nu T/2 \pi \rceil$. Using the periodicity of the sine and a change of variable, we deduce:
\begin{equation*}
\int_0^T \one_{\omega_R \times \R^d} \left(\phi^t(0, \eta e)\right) \dd t
    \le \dfrac{N}{\nu} \int_0^{2 \pi} \one_{\omega_R}\left(\dfrac{\eta}{\nu} \sin(t) e\right) \dd t
    = \dfrac{2 N}{\nu} \int_{-\pi/2}^{\pi/2} \one_{\omega_R} \left(\dfrac{\eta}{\nu} \sin(t) e\right) \dd t \,.
\end{equation*}
Provided $\eta \neq 0$, we make the change of variables $s = \eta \sin t$, for which we have $\dd t = (\eta^2 - s^2)^{-1/2} \dd s$; this leads to
\begin{equation*}
\int_0^T \one_{\omega_R \times \R^d} \left(\phi^t(0, \eta e)\right) \dd t
    \le \dfrac{2 N}{\nu} \int_{-\abs{\eta}}^{\abs{\eta}} \one_{\omega_R} \left(\dfrac{s}{\nu} e\right) \dfrac{\dd s}{\sqrt{\eta^2 - s^2}} \,.
\end{equation*}
From Lemma~\ref{lem:sejnotinomegaR} above, we conclude that for any $\eta$ large enough:
\begin{equation*}
\int_0^T \one_{\omega_R \times \R^d} \left(\phi^t(0, \eta e)\right) \dd t
    \le \dfrac{2 N}{\nu} \int_{-c R \nu}^{c R \nu} \dfrac{\dd s}{\sqrt{\eta^2 - s^2}} \,.
\end{equation*}
An extra change of variables yields
\begin{equation*}
\int_0^T \one_{\omega_R \times \R^d} \left(\phi^t(0, \eta e)\right) \dd t
    \le \dfrac{2 N}{\nu} \int_{-c R \nu/\eta}^{c R \nu/\eta} \dfrac{\dd s}{\sqrt{1 - s^2}}
    = O\left(\dfrac{R}{\eta}\right)
\end{equation*}
as $\eta$ tends to infinity and $R$ is fixed. We deduce that for any $R > 0$, it holds
\begin{equation*}
\liminf_{\rho \to \infty} \int_0^T \one_{\omega_R \times \R^d}\left(\phi^t(\rho)\right) \dd t = 0 \,.
\end{equation*}
The necessary condition of Theorem~\ref{thm:main} then proves that observability cannot hold from $\omega$ in time $T$. \hfill \qedsymbol

\subsection{Proof of Proposition~\ref{prop:twocones}} \label{subsec:prooftwocones}

We first reduce to the case where the matrix $A$ is diagonal in the canonical basis of $\R^2$. Then we investigate the isotropic and anisotropic cases separately.

\medskip
\paragraph*{Step 1 \--- Reduction to positive cones containing half coordinate axes.}
Let $S : \R^2 \to \R^2$ be a linear symplectic mapping. It holds $\nabla (p \circ S) = S^\ast (\nabla p) \circ S$, and we observe that
\begin{equation*}
\dfrac{\dd}{\dd t} S^{-1} \phi^t(S \rho)
    = S^{-1} J (S^{-1})^\ast S^\ast \nabla p\left(\phi^t(S \rho)\right)
    = J \nabla (p \circ S)\left(S^{-1} \phi^t(S \rho)\right) \,.
\end{equation*}
This means that the conjugation of the Hamiltonian flow of $p$ by $S$ is the Hamiltonian flow of $p \circ S$. Thus, for any measurable set $C \subset \R^2 \times \R^2$:
\begin{equation*}
\int_0^T \one_{C}\left(\phi^t(\rho)\right) \dd t
    = \int_0^T \one_{S^{-1} C}\left((S^{-1} \phi^t S)(S^{-1} \rho)\right) \dd t \,,
\end{equation*}
and finally, since $S^{-1} \rho \to \infty$ if and only if $\rho \to \infty$, we deduce that
\begin{equation} \label{eq:reductionpositivecones}
\liminf_{\rho \to \infty} \int_0^T \one_{C}\left(\phi^t(\rho)\right) \dd t
    = \liminf_{\rho \to \infty} \int_0^T \one_{S^{-1} C}\left((S^{-1} \phi^t S)(\rho)\right) \dd t \,.
\end{equation}
Denoting by $Q$ the orthogonal matrix that diagonalizes $A$ as follows:
\begin{equation*}
Q^{-1} A Q = \begin{pmatrix} \nu_-^2 & 0 \\ 0 & \nu_+^2 \end{pmatrix} \,,
    \qquad \rm{with} \; Q \begin{pmatrix} 1 \\ 0 \end{pmatrix} = e_- \; \rm{and} \; Q \begin{pmatrix} 0 \\ 1 \end{pmatrix} = e_+ \,,
\end{equation*}
we apply the above observation~\eqref{eq:reductionpositivecones} to the map
\begin{equation*}
S =
\begin{pmatrix} Q & 0 \\ 0 & Q \end{pmatrix} \,.
\end{equation*}
It is indeed symplectic since $Q = (Q^{-1})^\ast$ is an orthogonal matrix. When the subset of the phase space $C$ is of the form $\omega(\eps)$ given in the statement, the resulting set $S^{-1} C$ is $\tilde \omega(\eps) = C_\eps^1 \cup C_\eps^2$ where
\begin{equation*}
C_\eps^1
	= \set{(x_1, x_2) \in \R^2}{\abs{x_2} < \tan\left(\dfrac{\eps}{2}\right) x_1}
		\qquad \rm{and} \qquad
C_\eps^2
	= \set{(x_1, x_2) \in \R^2}{\abs{x_1} < \tan\left(\dfrac{\eps}{2}\right) x_2} \,.
\end{equation*}
The corresponding Hamiltonian is
\begin{equation*}
(p \circ S)(x, \xi)
    = \dfrac{1}{2} \left( Q x \cdot A Q x + \abs*{Q \xi}^2 \right)
    = \dfrac{1}{2} \left( \nu_-^2 {x_1}^2 + \nu_+^2 {x_2}^2 + \abs*{\xi}^2 \right) \,.
\end{equation*}
That is to say, we have reduced the problem to the study of observability from $\tilde \omega(\eps)$ for the above Hamiltonian: the Schrödinger equation is observable from $\omega(\eps)$ in time $T$ for the Hamiltonian $p$ is and only if it is observable from $\tilde \omega(\eps)$ in time $T$ for the Hamiltonian $p \circ S$. From now on, we write $\omega(\eps)$ instead of $\tilde \omega(\eps)$, $p$ instead of $p \circ S$ respectively, and $(\nu_1, \nu_2) = (\nu_-, \nu_+)$ .

\medskip
\paragraph*{Step 2 \--- Isotropic case.}
The case where $\nu_+ = \nu_- = \nu$ follows from Proposition~\ref{prop:necessaryconditionconicset}. Indeed, since $\eps < \pi/2$, one has $\ovl{\omega(\eps)} \cap L_\pm = \{0\}$ where $L_\pm = \{x_2 = \pm x_1\}$ are eigenspaces of $A = \nu^2 \id$. Therefore, isotropic oscillators are not observable from $\omega(\eps)$.

\medskip

\paragraph*{Anisotropic case.} We assume that the harmonic oscillator is anisotropic, i.e.\ $\nu_1 < \nu_2$, and we want to show that $\omega(\eps)$ observes the Schrödinger equation. Anticipating the use of Theorem~\ref{thm:main} where the observation set has to be enlarged, we will rather prove that the dynamical condition in~\eqref{eq:dynamicalcondition} is verified by the smaller set $\omega(\frac{\eps}{2}) = C_{\eps/2}^1 \cup C_{\eps/2}^2$. We fix an initial point $\rho^0 = (x_1^0, x_2^0; \xi_1^0, \xi_2^0) \in \R^2 \times \R^2$. We write the space components of the flow as follows:
\begin{multline*}
x_j^t
    = \cos(\nu_j t) x_j^0 + \dfrac{1}{\nu_j} \sin(\nu_j t) \xi_j^0
    = A_j \sin(\nu_j t + \theta_j) \,, \qquad j \in \{1, 2\}, t \in \R \,, \\
		\rm{with} \qquad
A_j = \sqrt{(x_j^0)^2 + \left(\dfrac{\xi_j^0}{\nu_j}\right)^2}
    \qquad \rm{and} \qquad
\cos \theta_j =  \dfrac{\xi_j^0/\nu_j}{A_j} \,, \;\; \sin \theta_j = \dfrac{x_j^0}{A_j} \,.
\end{multline*}
Our first goal will be to prove that the dynamical condition~\eqref{eq:dynamicalcondition} is satisfied in the time interval $[0, T_0]$, where $T_0$ is given in~\eqref{eq:optimaltimeconicset}. We can consider $\rho^0$ to be nonzero since we are interested in what happens at infinity. Therefore $A_1 > 0$ or $A_2 > 0$. Also keep in mind that $\rho^0 \to \infty$ if and only if $\abs{(A_1, A_2)} \to + \infty$.

\medskip
\paragraph*{Step 3 \--- Time spent in $C_{\eps/2}^2$.}
First we look at the possibility to be in the cone $C_{\eps/2}^2$. This will certainly be the case provided $A_1$ is very small compared to $A_2$, that is to say the projected trajectory $(x_1^t, x_2^t)$ is almost contained in the ordinate axis. We prove the following:
\begin{equation} \label{eq:firstcone!}
\int_0^{T_0} \one_{C_{\eps/2}^2}(x_1^t, x_2^t) \dd t
    \ge \dfrac{\pi}{\nu_2} \left(1 - \dfrac{A_1/A_2}{\tan(\frac{\eps}{4})}\right) \,.
\end{equation}
Suppose $t \in [0, T_0]$ is such that $\sin(\nu_2 t + \theta_2) \ge \delta$, namely $x_2^t \ge A_2 \delta$. Assuming that $A_2 > 0$, one has
\begin{equation} \label{eq:firstcone?}
\abs{x_1^t} \le A_1 \le \dfrac{A_1}{A_2 \delta} x_2^t \,.
\end{equation}
We want to quantify the amount of $t$ such that this holds. In the following estimate, we use the fact that $T_0 \ge \frac{2\pi}{\nu_2}$ and the classical concavity inequality $\sin x \ge \frac{2}{\pi} x, \forall x \in [0, \frac{\pi}{2}]$:
\begin{equation*}
\int_0^{T_0} \one_{\sin(\nu_2 t + \theta_2) \ge \delta} \dd t
    \ge \int_0^{2 \pi/\nu_2} \one_{\sin(\nu_2 t) \ge \delta} \dd t
    \ge \dfrac{1}{\nu_2} \int_0^{2 \pi} \one_{\sin t \ge \delta} \dd t
    \ge \dfrac{2}{\nu_2} \int_0^{\pi/2} \one_{\frac{2}{\pi} t \ge \delta} \dd t
    = \dfrac{\pi}{\nu_2} (1 - \delta) \,.
\end{equation*}
Now in~\eqref{eq:firstcone?}, we wish $\frac{A_1}{A_2 \delta}$ to be strictly less than $\tan(\eps/4)$, that is to say $\delta > A_1/(A_2 \tan(\eps/4))$. Therefore, for any $\delta$ satisfying this condition, the time spent by the trajectory in $C_{\eps/2}^2$ can be bounded from below by:
\begin{equation*}
\int_0^{T_0} \one_{x_2^t \ge A_2 \delta} \dd t
    \ge \int_0^{T_0} \one_{\sin(\nu_2 t + \theta_2) \ge \delta} \dd t
    \ge \dfrac{\pi}{\nu_2} (1 - \delta) \,,
\end{equation*}
so that, maximizing the right-hand side with respect to $\delta$, one obtains~\eqref{eq:firstcone!}. Notice that this inequality is useful only if $A_1/A_2$ is small enough. In the opposite case where $A_1/A_2$ is large, we use another argument ($\nu_1$ and $\nu_2$ do not play a symmetric role here).

\medskip
\paragraph*{Step 4 \--- Time spent in $C_{\eps/2}^1$.}
Let us now consider the times when the trajectory is in the other cone $C_{\eps/2}^1$. Set $\eta = \lfloor \frac{\nu_2}{\nu_1} \rfloor + 1 - \frac{\nu_2}{\nu_1} \in (0, 1]$. The main claim in this step of the proof is the following:
\begin{equation} \label{eq:claimstep4}
\exists t_2 \in [0, T_0] : \qquad
	x_2^{t_2} = 0 \quad \rm{and} \quad x_1^{t_2} \ge A_1 \delta_1 \,,
		\qquad \rm{where} \; \, \delta_1 = \min\left(\dfrac{\nu_1}{\nu_2} \eta, 1 - \dfrac{\nu_1}{\nu_2}\right) \,.
\end{equation}	
Denote by $t_1$ the first zero of $\sin(\nu_1 t + \theta_1)$ in $[0, T_0]$. It exists since by definition, $T_0 \ge \frac{\pi}{\nu_2}(1 + \frac{\nu_2}{\nu_1}) \ge \frac{\pi}{\nu_1}$. It turns out that $t_1$ is given by
\begin{equation*}
t_1
    = \dfrac{\pi}{\nu_1} \left(\left\lceil \dfrac{\theta_1}{\pi} \right\rceil - \dfrac{\theta_1}{\pi}\right) \,.
\end{equation*}
Then $t_1 \in [0, \frac{\pi}{\nu_1})$, and we know that $\sin(\nu_1 t + \theta_1)$ has constant sign on $I_1 := [0, t_1]$, on $I_2 := [t_1, t_1 + \frac{\pi}{\nu_1}] \cap [0, T_0]$ and on $I_3 := [t_1 + \frac{\pi}{\nu_1}, t_1 + \frac{2 \pi}{\nu_1}] \cap [0, T_0]$. Observe that $I_1$ is possibly reduced to a singleton, $I_2$ is always non trivial, and $I_3$ is possibly empty. One can check this from the fact that $T_0$ can be rewritten
\begin{equation} \label{eq:T_ast}
T_0 = \frac{\pi}{\nu_1} + (1 + \eta) \dfrac{\pi}{\nu_2} \,.
\end{equation}
Because $T_0 \ge \frac{\pi}{\nu_1}$, we know that $\sin(\nu_1 t + \theta_1)$ vanishes at least once in $[0, T_0]$. We first distinguish cases according to whether there are a single one or more than two of these zeroes in this interval.

\begin{itemize} 
\item Assume first $t_1$ is the only zero in $[0, T_0]$. Then in view of~\eqref{eq:T_ast}, it lies at distance $> (1 + \eta) \frac{\pi}{\nu_2}$ from the boundary of $[0, T_0]$, otherwise $t_1 + \frac{\pi}{\nu_1}$ or $t_1 - \frac{\pi}{\nu_1}$ is another zero in $[0, T_0]$. In particular, the intervals $[0, t_1]$ and $[t_1, T_0]$ have length $\ge (1 + \eta) \frac{\pi}{\nu_2}$. We know that $\sin(\nu_1 t + \theta_1) \ge 0$ on one of these intervals, that we denote by $\tilde I$. Given that $\tilde I$ has length $\ge (1 + \eta) \frac{\pi}{\nu_2}$, it contains a zero of $\sin(\nu_2 t + \theta_2)$, lying at distance $\ge \frac{\pi}{\nu_2} \frac{\eta}{2}$ from the boundary of $\tilde I$. We denote such a zero by $t_2$. Given that the only zero of $\sin(\nu_1 t + \theta_1)$ in $\tilde I$ is $t_1$, we deduce that the distance between $t_2$ and the closest zero $t_1'$ of $\sin(\nu_1 t + \theta_1)$ is at least $\frac{\pi}{\nu_2} \frac{\eta}{2}$. Then the inequality $\sin x \ge \frac{2}{\pi} x$ on $x \in [0, \frac{\pi}{2}]$ yields
\begin{equation} \label{eq:frombelowt1}
\sin(\nu_1 t_2 + \theta_1)
    = \sin\left(\nu_1(t_2 - t_1') + \nu_1 t_1' + \theta_1\right)
    = \sin\left(\nu_1 \abs{t_2 - t_1'}\right)
    \ge \dfrac{2 \nu_1}{\pi} \abs{t_2 - t_1'}
    \ge \dfrac{\nu_1}{\nu_2} \eta \,.
\end{equation}
The absolute value resulting from the second inequality is due to the fact that we chose $t_2$ in an interval where $\sin(\nu_1 t + \theta_1) \ge 0$, or equivalently, $\nu_1 t_1 + \theta_1$ is an even or odd multiple of $\pi$ according to the sign of $t_2 - t_1$. We conclude that $x_2^{t_2} = 0$ by definition of $t_2$ and that we have $x_1^{t_2} \ge A_1 \frac{\nu_1}{\nu_2} \eta$ in virtue of~\eqref{eq:frombelowt1}, hence the claim~\eqref{eq:claimstep4}.

\item Now we treat the case where $t_1 + \frac{\pi}{\nu_1}$ also lies in $[0, T_0]$. In other words, the interval $J_1 := [t_1, t_1 + \frac{\pi}{\nu_1}]$ is contained in $[0, T_0]$. As we already mentioned, $\sin(\nu_1 t + \theta_1)$ has constant sign on $J_1$.
\begin{itemize} 
\item If $\sin(\nu_1 t + \theta_1) \ge 0$ on $J_1$, since $J_1$ has length $\frac{\pi}{\nu_1} > \frac{\pi}{\nu_2}$, then $t \mapsto \sin(\nu_2 t + \theta_2)$ vanishes in $J_1$, and we can choose a zero $t_2$ at distance $\ge \frac{\pi}{2}(\frac{1}{\nu_1} - \frac{1}{\nu_2})$ from the boundary of $J_1$. Reproducing the previous argument with the concavity inequality for the sine function, we deduce that
\begin{equation*}
\sin(\nu_1 t_2 + \theta_1)
    \ge \dfrac{2 \nu_1}{\pi} \times \dfrac{\pi}{2} \left(\dfrac{1}{\nu_1} - \dfrac{1}{\nu_2}\right)
    = 1 - \dfrac{\nu_1}{\nu_2} \,.
\end{equation*}
Therefore in this case, there is $t_2 \in [0, T_0]$ with $x_2^{t_2} = 0$ and $x_1^{t_2} \ge (1 - \frac{\nu_1}{\nu_2}) A_1$, hence the claim~\eqref{eq:claimstep4}.

\item In the remaining case where $\sin(\nu_1 t + \theta_1) \le 0$ on $J_1$, we introduce some additional notation. We denote by $t_-$ (resp. $t_+$) the largest (resp. smallest) zero of $\sin(\nu_2 t + \theta_2)$ which is $< t_1$ (resp. $> t_1 + \frac{\pi}{\nu_1}$), given respectively by
\begin{equation*}
t_- = \dfrac{\pi}{\nu_2} \left(\left\lceil \dfrac{\nu_2 t_1 + \theta_2}{\pi}\right\rceil - 1 - \dfrac{\theta_2}{\pi}\right)
    \;\, \rm{and} \;\,
t_+ = \dfrac{\pi}{\nu_2} \left(\left\lfloor \dfrac{\nu_2 (t_1 + \frac{\pi}{\nu_1}) + \theta_2}{\pi}\right\rfloor + 1 - \dfrac{\theta_2}{\pi}\right) \,.
\end{equation*}
They both have the property that $\sin(\nu_1 t_\pm + \theta_1) > 0$, but we wish to quantify this statement in order to have a uniform lower bound. We observe that we can write
\begin{equation*}
t_+ - t_-
    = \dfrac{\pi}{\nu_2} \left(k + 1 + \left\lfloor \dfrac{\nu_2}{\nu_1}\right\rfloor\right) \,,
\end{equation*}
with $k \in \{0, 1\}$. Indeed, from the definition of $t_+$ and $t_-$ and the properties of the floor and ceiling functions, we see that
\begin{equation*}
k = \left\lfloor \dfrac{\nu_2 (t_1 + \frac{\pi}{\nu_1}) + \theta_2}{\pi}\right\rfloor - \left\lceil \dfrac{\nu_2 t_1 + \theta_2}{\pi}\right\rceil + 1 - \left\lfloor \dfrac{\nu_2}{\nu_1} \right\rfloor
\end{equation*}
is an integer satisfying
\begin{equation*}
-1 \le \dfrac{\nu_2}{\nu_1} - 1 - \left\lfloor \dfrac{\nu_2}{\nu_1} \right\rfloor
    < k
    \le 1 + \dfrac{\nu_2}{\nu_1} - \left\lfloor \dfrac{\nu_2}{\nu_1} \right\rfloor < 2 \,,
\end{equation*}
whence $k = 0$ or $1$. In particular, we remark that the distance between $t_-$ and $t_+$ is always less than $T_0$. This implies that either $t_-$ or $t_+$ belongs to $[0, T_0]$.
\begin{itemize} 
\item Suppose $t_-$ and $t_+$ both belong to $[0, T_0]$. We have
\begin{equation*}
(t_+ - t_-) - \dfrac{\pi}{\nu_1}
    = \dfrac{\pi}{\nu_2} \left(k + 1 + \left\lfloor \dfrac{\nu_2}{\nu_1}\right\rfloor - \dfrac{\nu_2}{\nu_1}\right)
    = \dfrac{\pi}{\nu_2} (k + \eta)
    \ge \dfrac{\pi}{\nu_2} \eta \,.
\end{equation*}
Recalling that $t_- < t_1$ and $t_+ > t_1 + \frac{\pi}{\nu_1}$, we deduce that either $t_1 - t_- \ge \frac{\pi}{2 \nu_2} \eta$ or $t_+ - (t_1 + \frac{\pi}{\nu_1}) \ge \frac{\pi}{2 \nu_2} \eta$. We call $t_2$ the zero, among $t_-$ and $t_+$, that satisfies this property. Then, the concavity inequality for the sine function allows to conclude that $x_1^{t_2} \ge A_1 \frac{\nu_1}{\nu_2} \eta$ again.
\item If $t_- \not\in [0, T_0]$, so that $t_+ \in [0, T_0]$, we can estimate the distance of $t_+$ from $t_1 + \frac{\pi}{\nu_1}$ and $T_0$ as follows:
\begin{equation} \label{eq:needk=0}
t_+ - \left(t_1 + \dfrac{\pi}{\nu_1}\right)
    = t_+ - t_- - \dfrac{\pi}{\nu_1} - (t_1 - t_-)
    = \dfrac{\pi}{\nu_2} (k + \eta) - (t_1 - t_-)
    \ge \dfrac{\pi}{\nu_2} \eta - \dfrac{\pi}{\nu_2}(1 - k) \,,
\end{equation}
where we used the fact that $\abs{t_1 - t_-} \le \frac{\pi}{\nu_2}$ by construction in the last inequality; and
\begin{equation} \label{eq:checkthatzeroinside}
T_0 - t_+
    = T_0 - (t_+ - t_-) - t_-
    = \dfrac{\pi}{\nu_2}(1 - k) - t_-
    \ge \dfrac{\pi}{\nu_2}(1 - k) \,,
\end{equation}
where this time we have used that $t_- < 0$ by assumption.

Now observe that $t_+$ satisfies by definition
\begin{equation*}
t_1 + \dfrac{2 \pi}{\nu_1} - t_+
	= \dfrac{\pi}{\nu_1} - \left(t_+ - \left(t_1 + \dfrac{\pi}{\nu_1}\right)\right)
	\ge \dfrac{\pi}{\nu_1} - \dfrac{\pi}{\nu_2} \,.
\end{equation*}
Thus, if $\abs{t_+ - (t_1 + \frac{\pi}{\nu_1})} \ge \frac{\pi}{2} \min(\frac{\eta}{\nu_2}, \frac{1}{\nu_1} - \frac{1}{\nu_2})$, then $t_2 = t_+$ lies at distance $\ge \frac{\pi}{2} \min(\frac{\eta}{\nu_2}, \frac{1}{\nu_1} - \frac{1}{\nu_2})$ from the boundary of the interval $[t_1 + \frac{\pi}{\nu_1}, t_1 + \frac{2 \pi}{\nu_1}]$, to which it belongs. This allows to deduce that $x_1^{t_2} \ge A_1 \delta_1$ using the inequality $\sin x \ge \frac{2}{\pi} x$ on $[0, \frac{\pi}{2}]$ again, and $x_2^{t_2} = 0$ by definition of $t_2 = t_+$. If on the contrary $\abs{t_+ - (t_1 + \frac{\pi}{\nu_1})} \le \frac{\pi}{2} \min(\frac{\eta}{\nu_2}, \frac{1}{\nu_1} - \frac{1}{\nu_2})$, then from~\eqref{eq:needk=0}, it follows that $k = 0$, so that $t_2 = t_+ + \frac{\pi}{\nu_2} \le T_0$ from~\eqref{eq:checkthatzeroinside}. Then we have
\begin{equation*}
t_1 + \dfrac{2 \pi}{\nu_1} - t_2
	= \dfrac{\pi}{\nu_1} - \dfrac{\pi}{\nu_2} - \left(t_+ - \left(t_1 + \dfrac{\pi}{\nu_1}\right)\right)
	\ge \dfrac{\pi}{2} \left(\dfrac{1}{\nu_1} - \dfrac{1}{\nu_2}\right) \,.
\end{equation*}
In particular, $t_2$ lies again at large enough distance of the boundary of $[t_1 + \frac{\pi}{\nu_1}, t_1 + \frac{2 \pi}{\nu_1}]$. We deduce as before that $x_1^{t_2} \ge A_1 \delta_1$ and $x_2^{t_2} = 0$.
\item It remains to deal with the case where $t_+ \not\in [0, T_0]$, hence $t_- \in [0, T_0]$, which is symmetrical. We write
\begin{align}
t_1 - t_-
    &= - \left(t_+ - t_1 - \dfrac{\pi}{\nu_1}\right) + t_+ - t_- - \dfrac{\pi}{\nu_1}
    \ge - \dfrac{\pi}{\nu_2}  + \dfrac{\pi}{\nu_2} (k + \eta)
    = \dfrac{\pi}{\nu_2} \eta - \dfrac{\pi}{\nu_2}(1 - k) \,, \label{eq:needk=0bis}\\
t_-
    &= T_0 - (t_+ - t_-) + t_+ - T_0
    \ge \dfrac{\pi}{\nu_2} (1 - k) \label{eq:checkthatzeroinintervalbis}\,,
\end{align}
using respectively that $\abs{t_1 + \frac{\pi}{\nu_1} - t_+} \le \frac{\pi}{\nu_2}$ by construction of $t_+$, and $t_+ > T_0$ by assumption. By definition of $t_-$ we have
\begin{equation*}
t_- - \left(t_1 - \dfrac{\pi}{\nu_1}\right)
	= \dfrac{\pi}{\nu_1} - \left(t_1 - t_-\right)
	\ge \dfrac{\pi}{\nu_1} - \dfrac{\pi}{\nu_2} \,,
\end{equation*}
so that $t_2 = t_-$ satisfies $x_1^{t_2} \ge A_1 \delta_1$ and $x_2^{t_2} = 0$ provided $\abs{t_1 - t_-} \ge \frac{\pi}{2} \min(\frac{\eta}{\nu_2}, \frac{1}{\nu_1} - \frac{1}{\nu_2})$. Otherwise, $k = 0$ in virtue of~\eqref{eq:needk=0bis} so that~\eqref{eq:checkthatzeroinintervalbis} ensures that $t_2 = t_- - \frac{\pi}{\nu_2} \ge 0$. Then we check that
\begin{equation*}
t_2 - \left(t_1 - \dfrac{\pi}{\nu_1}\right)
	= \dfrac{\pi}{\nu_1} - \dfrac{\pi}{\nu_2} - \left(t_1 - t_-\right)
	\ge \dfrac{\pi}{2} \left(\dfrac{1}{\nu_1} - \dfrac{1}{\nu_2}\right) \,,
\end{equation*}
and we conclude similarly to the previous case.
\end{itemize} 
\end{itemize} 
\end{itemize} 
The discussion above shows that~\eqref{eq:claimstep4} is true. In particular, $(x_1^{t_2}, x_2^{t_2})$ is in the cone $C_{\eps/2}^1$. Using that the sine function is $1$-Lipschitz, we know that for $t$ in a neighborhood of $0$, it holds
\begin{equation*}
\abs*{x_2^{t_2 + t}} \le A_2 \nu_2 \abs{t}
    \qquad \rm{and} \qquad
x_1^{t_2 + t} \ge A_1 \left(\delta_1 - \nu_1 \abs{t}\right) \,.
\end{equation*}
So for $t$ small enough, $(x_1^{t_2 + t}, x_2^{t_2 + t})$ will remain in the cone $C_{\eps/2}^1$. Quantitatively, as soon as $t$ fulfills the condition
\begin{equation} \label{eq:conditiononabst}
\abs{t}
    < \dfrac{\frac{\delta_1}{\nu_1}}{1 + \frac{\nu_2}{\nu_1} \frac{A_2/A_1}{\tan(\eps/4)}} \,,
\end{equation}
we compute that
\begin{equation*}
x_1^{t_2 + t}
    > A_1 \dfrac{\delta_1 \frac{\nu_2}{\nu_1} \frac{A_2/A_1}{\tan(\eps/4)}}{1 + \frac{\nu_2}{\nu_1} \frac{A_2/A_1}{\tan(\eps/4)}}
    > \dfrac{A_2 \nu_2}{\tan(\eps/4)} \abs{t}
    \ge \dfrac{\abs{x_2^{t_2 + t}}}{\tan(\eps/4)} \,.
\end{equation*}
This means that for $t$ satisfying~\eqref{eq:conditiononabst}, the point $(x_1^{t_2 + t}, x_2^{t_2 + t})$ belongs indeed to the cone $C_{\eps/2}^1$. In the case where $t_2 = 0$ or $t_2 = T_0$, we may restrict ourselves to times $t$ satisfying $t \ge 0$ or $t \le 0$ in addition to~\eqref{eq:conditiononabst}, so that in the end, we obtain
\begin{equation} \label{eq:secondcone!}
\int_0^{T_0} \one_{C_{\eps/2}^1}(x_1^t, x_2^t) \dd t
    \ge \min\left( T_0, \dfrac{\frac{\delta_1}{\nu_1}}{1 + \frac{\nu_2}{\nu_1} \frac{A_2/A_1}{\tan(\eps/4)}} \right) \,.
\end{equation}

\medskip
\paragraph*{Step 5 \--- Upper bound on the optimal observation time.}
Now that we have~\eqref{eq:firstcone!} and~\eqref{eq:secondcone!} at hand, we can obtain a lower bound independent of the values of $A_1$ and $A_2$. If on the one hand $\frac{A_1}{A_2} \le \frac{\tan(\eps/4)}{2}$, then~\eqref{eq:firstcone!} yields
\begin{equation*}
\int_0^{T_0} \one_{(x_1^t, x_2^t) \in \omega(\frac{\eps}{2})} \dd t
    \ge \dfrac{\pi}{2 \nu_2} \,,
\end{equation*}
while on the other hand, if $\frac{A_2}{A_1} \le \frac{2}{\tan(\eps/4)}$, then~\eqref{eq:secondcone!} leads to
\begin{equation} \label{eq:dynamicalassumptioneps2}
\int_0^{T_0} \one_{(x_1^t, x_2^t) \in \omega(\frac{\eps}{2})} \dd t
    \ge \min\left( T_0, \dfrac{\frac{\delta_1}{\nu_1}}{1 + \frac{\nu_2}{\nu_1} \frac{2}{\tan^2(\eps/4)}} \right)
    \ge \dfrac{\eps^2}{16} \min\left( T_0, \dfrac{\delta_1}{\nu_1 + 2 \nu_2} \right)
\end{equation}
(to get the second inequality, use that $\frac{\eps}{4} \le \tan(\frac{\eps}{4}) \le 1$ since $\eps \le \pi/2$ by assumption). On the whole, we have
\begin{equation*}
\int_0^{T_0} \one_{(x_1^t, x_2^t) \in \omega(\frac{\eps}{2})} \dd t
    \ge \dfrac{\eps^2}{32} \min\left( \dfrac{\pi}{\nu_2}, \dfrac{\delta_1}{\nu_1 + \nu_2} \right) = c \eps^2 \,,
\end{equation*}
and setting $T_\eps = T_0 - \frac{c}{2} \eps^2$, we deduce
\begin{equation*}
\int_0^{T_\eps} \one_{(x_1^t, x_2^t) \in \omega(\frac{\eps}{2})} \dd t
    \ge \int_0^{T_0} \one_{(x_1^t, x_2^t) \in \omega(\frac{\eps}{2})} \dd t - \dfrac{c}{2} \eps^2
    \ge \dfrac{c}{2} \eps^2 \,.
\end{equation*}
Therefore the dynamical condition~\eqref{eq:dynamicalcondition} holds in time $T_\eps$. Setting $T = T_0 - \frac{c}{4} \eps^2 > T_\eps$, we use Theorem~\ref{thm:main} to conclude that observability is true on $[0, T]$ from $\omega(\frac{\eps}{2})_R \setminus K$, for some $R > 0$ and for any compact set $K$. We can take $K$ to be a ball with radius large enough so that $\omega(\frac{\eps}{2})_R \setminus K \subset \omega(\eps)$ (this can be justified by an argument similar to Lemma~\ref{lem:sejnotinomegaR}). We conclude that observability holds from $\omega(\eps)$ in time $T$. This proves the upper bound in~\eqref{eq:optimaltimeconicset}.

\medskip
\paragraph*{Step 6 \--- Lower bound on the optimal observation time.}
Fix $\eps \in (0, \pi/4)$. We recall that $\nu_2 > \nu_1$. Our objective is to exhibit trajectories $(x_1^t, x_2^t)$ that do not meet the set $\omega(2 \eps)$. They typically look like the one shown in Figure~\ref{fig:saturatingcurve}. Take $\delta > 0$ a small parameter to be chosen later. These trajectories we look for are of the form
\begin{equation} \label{eq:deftrajectorieswelookfor}
x_1^t = A_1 \sin\left(\pi \dfrac{\nu_1}{\nu_2}(1 - \delta) - \nu_1 t\right)
    \qquad \rm{and} \qquad
x_2^t = A_2 s \sin\left(\pi \delta + \nu_2 t\right) \,,
\end{equation}
with $s \in \{+1, -1\}$, and $A_1, A_2 > 0$ to be tuned properly later on as well.

Let us introduce three remarkable times $t_0$, $t_1$ and $t_2$: provided $\delta < 1/2$, the first zeroes of $x_1^t$ and $x_2^t$ in the interval $[0, T_0]$ coincide and are given by
\begin{equation*}
t_0 = \frac{\pi}{\nu_2}(1 - \delta) \,.
\end{equation*}
The next zero of $x_1^t$ is
\begin{equation*}
t_1 = t_0 + \dfrac{\pi}{\nu_1} \,.
\end{equation*}
As for $x_2^t$, its first zero that is strictly larger than $t_1$ is given by
\begin{equation} \label{eq:deft2withT0}
t_2
    = \dfrac{\pi}{\nu_2} \left(1 + \left\lfloor \delta + \dfrac{\nu_2}{\pi} t_1 \right\rfloor - \delta\right)
    = \dfrac{\pi}{\nu_2} \left(1 + \left\lfloor 1 + \dfrac{\nu_2}{\nu_1} \right\rfloor - \delta\right)
    = T_0 - \dfrac{\pi}{\nu_2} \delta \,.
\end{equation}
Notice that $t_2 \le T_0$. By construction, the interval $[t_1, t_2]$ has length $t_2 - t_1 \in (0, \frac{\pi}{\nu_2}]$, and $x_2^t$ has constant sign on this interval. We choose the sign $s$ involved in the definition~\eqref{eq:deftrajectorieswelookfor} of $x_2^t$ in such a way that $x_2^t \le 0$ on $[t_1, t_2]$. In particular, the projected trajectory $(x_1^t, x_2^t)$ cannot cross $C_{2 \eps}^2$ in the time interval $[t_1, t_2]$. Likewise, since $x_1^0 > 0$, it follows that $x_1^t \le 0$ on $[t_0, t_1]$, by definition of $t_0, t_1$. In particular, the curve $(x_1^t, x_2^t)$ cannot be in $C_{2 \eps}^1$ for $t \in [t_0, t_1]$.

Set $T = t_2 - \frac{\pi}{\nu_2} \delta$. In each interval $[0, t_0]$, $[t_0, t_1]$ and $[t_1, T]$, we want to exclude the possibility for the trajectory to be in $C_{2 \eps}^1$ or $C_{2 \eps}^2$ by suitably choosing the parameters $\delta, A_1, A_2$.

To achieve this goal, we are interested in estimating from above and from below $x_1^t$ and $x_2^t$ in these intervals. We first deal with $x_1^t$. Recalling that the sine function is $1$-Lipschitz, we know that
\begin{equation} \label{eq:upperboundx1t}
\abs*{x_1^t} \le A_1 \nu_1 \min\left(\abs*{t - t_0}, \abs*{t - t_1}\right) \,, \qquad \forall t \in \R \,.
\end{equation}
We obtain lower estimates by roughly bounding from below $\sin x$ on $[0, \pi]$ by the ``triangle" function $\frac{2}{\pi} \min(x, \pi - x)$. For $t \in [0, t_0]$, that leads to
\begin{align} \label{eq:lowerboundx1t-}
\abs*{x_1^t}
    &\ge A_1 \dfrac{2}{\pi} \min\left(\nu_1 \abs*{t_0 - t}, \abs*{\pi - \nu_1 (t_0 - t)}\right)
    \ge A_1 \dfrac{2 \nu_1}{\pi} \min\left(\abs*{t_0 - t}, \abs*{\dfrac{\pi}{\nu_1} - \dfrac{\pi}{\nu_2} + \delta \dfrac{\pi}{\nu_2} + t}\right) \nonumber\\
    &\ge A_1 \dfrac{2 \nu_1}{\pi} \min\left(\abs*{t_0 - t}, \dfrac{\pi}{\nu_1} - \dfrac{\pi}{\nu_2}\right) \,,
\end{align}
for $t \in [t_0, t_1]$ we obtain
\begin{equation} \label{eq:lowerboundx1t0}
\abs*{x_1^t}
    \ge A_1 \dfrac{2 \nu_1}{\pi} \min\left(\abs*{t - t_0}, \abs*{t - t_1}\right) \,,
\end{equation}
while for $t \in [t_1, T]$, we obtain
\begin{align*}
\abs*{x_1^t}
    &\ge A_1 \dfrac{2}{\pi} \min\left(\nu_1 \abs*{t - t_1}, \abs*{\pi - \nu_1 (t - t_1)}\right)
    \ge A_1 \dfrac{2 \nu_1}{\pi} \min\left(\abs*{t - t_1}, \abs*{\dfrac{\pi}{\nu_1} + t_1 - t}\right) \\
    &\ge A_1 \dfrac{2 \nu_1}{\pi} \min\left(\abs*{t_1 - t}, \dfrac{\pi}{\nu_1} + t_1 - T\right) \,.
\end{align*}
The last inequality rests on the fact that $\frac{\pi}{\nu_1} + t_1 \ge T$. More quantitatively, we have
\begin{equation} \label{eq:checkonT}
\begin{split}
\dfrac{\pi}{\nu_2} + t_1
    &= T_0 + \dfrac{\pi}{\nu_2} \left( 1 + \dfrac{\nu_2}{\nu_1} + 1 - \delta - 2 - \left\lfloor \dfrac{\nu_2}{\nu_1} \right\rfloor \right) \\
    &= T + \dfrac{\pi}{\nu_2} \delta + \dfrac{\pi}{\nu_2} \left( \dfrac{\nu_2}{\nu_1} - \left\lfloor \dfrac{\nu_2}{\nu_1} \right\rfloor \right) \,.
\end{split}
\end{equation}
In particular, it holds
\begin{equation} \label{eq:checkonTadd}
\dfrac{\pi}{\nu_1} + t_1 - T
    = \pi \left(\dfrac{1}{\nu_1} - \dfrac{1}{\nu_2}\right) + \left(\dfrac{\pi}{\nu_2} + t_1 - T\right)
    \ge \pi \left(\dfrac{1}{\nu_1} - \dfrac{1}{\nu_2}\right) + \dfrac{\pi}{\nu_2} \delta \,,
\end{equation}
which leads to
\begin{equation*} \label{eq:lowerboundx1t+}
\abs*{x_1^t}
    \ge A_1 \dfrac{2 \nu_1}{\pi} \min\left(\abs*{t_1 - t}, \pi \left(\dfrac{1}{\nu_1} - \dfrac{1}{\nu_2}\right)\right) \,, \qquad \forall t \in [t_1, T] \,.
\end{equation*}
We obtain a similar estimate for $x_2^t$: it vanishes at $t_0$ and $t_2$, so using again that the sine function is $1$-Lipschitz we get
\begin{equation} \label{eq:upperboundx2t}
\abs*{x_2^t}
	\le A_2 \nu_2 \min\left(\abs*{t_0 - t}, \abs*{t_2 - t}\right) \,,
		\qquad \forall t \in \R \,.
\end{equation}
Near, $t_1$, we want an accurate upper bound using the fact that $x_2^{t_1} \le 0$ (recall that we chose the sign $s$ in~\eqref{eq:deftrajectorieswelookfor} so that this is true): for any $t \in \R$, we have
\begin{equation} \label{eq:upperboundx2tspecial}
x_2^t
    \le x_2^t - x_2^{t_1}
    \le A_2 \nu_2 \abs{t - t_1} \,.
\end{equation}
As for a lower bound, we obtain for $t \in [0, t_0]$:
\begin{align} \label{eq:lowerboundx2t-}
\abs*{x_2^t}
    &\ge A_2 \dfrac{2}{\pi} \min\left(\nu_2 \abs*{t_0 - t}, \abs*{\pi - \nu_2 (t_0 - t)}\right)
    \ge A_2 \dfrac{2 \nu_2}{\pi} \min\left(\abs*{t_0 - t}, \delta \dfrac{\pi}{\nu_2} + t\right) \nonumber\\
    &\ge A_2 \dfrac{2 \nu_2}{\pi} \min\left(\abs*{t_0 - t}, \delta \dfrac{\pi}{\nu_2}\right) \,,
\end{align}
and for $t \in [t_1, T]$:
\begin{align} \label{eq:lowerboundx2t+}
\abs*{x_2^t}
    &\ge A_2 \dfrac{2}{\pi} \min\left(\abs*{\pi - \nu_2 (t_2 - t)}, \nu_2 \abs*{t_2 - t}\right)
    \ge A_2 \dfrac{2 \nu_2}{\pi} \min\left(\abs*{\dfrac{\pi}{\nu_2} - (t_2 - t)}, \abs*{t_2 - t}\right) \nonumber\\
    &\ge A_2 \dfrac{2 \nu_2}{\pi} \min\left(\abs{t - t_1}, \dfrac{\pi}{\nu_2} \delta\right) \,.
\end{align}
This time, the last inequality holds true since on the one hand, $t_2 - t \ge t_2 - T = \dfrac{\pi}{\nu_2} \delta$, and on the other hand, thanks to~\eqref{eq:checkonT} and~\eqref{eq:deft2withT0}, we check that for any $t \in [t_1, T]$:
\begin{equation*}
\dfrac{\pi}{\nu_2} - (t_2 - t)
    = (t - t_1) + \left(\dfrac{\pi}{\nu_2} + t_1\right) - t_2
    = (t - t_1) + \dfrac{\pi}{\nu_2} \left(\dfrac{\nu_2}{\nu_1} - \left\lfloor \dfrac{\nu_2}{\nu_1} \right\rfloor\right)
    \ge t - t_1 \,.
\end{equation*}

Now we show that the two conditions
\begin{align}
2 \dfrac{\nu_1}{\nu_2} \eps
	&\le \dfrac{A_2}{A_1} \delta \,, \label{eq:C1-}\\
2 \dfrac{\nu_2}{\nu_1} \eps
	&\le \dfrac{A_1}{A_2} \min\left(1, \dfrac{\nu_2}{\nu_1} - 1 \right) \label{eq:C2-}\,,
\end{align}
imply that the curve $(x_1^t, x_2^t)$ does not cross the set $\omega(2 \eps)$ in the interval $[0, T]$. We study the three intervals $[0, t_0]$, $[t_0, t_1]$ and $[t_1, T]$ separately.
 \begin{itemize}
     \item Let $t \in [0, t_0]$. On the one hand, the condition~\eqref{eq:C1-} implies that
      \begin{equation*}
     A_1 \nu_1 \abs{t - t_0} \dfrac{4 \eps}{\pi}
        \le A_2 \dfrac{2 \nu_2}{\pi} \min\left(\abs{t - t_0}, \delta \dfrac{\pi}{\nu_2}\right)
     \end{equation*}
     (recall that $t_0 \le \frac{\pi}{\nu_2}$ and $\delta \le 1/2$). Using that $\tan \eps \le \frac{\eps}{\pi/4}$ for $\eps \in [0, \frac{\pi}{4}]$, we obtain
     \begin{equation*}
     A_1 \nu_1 \abs{t - t_0} \tan \eps
        \le A_2 \dfrac{2 \nu_2}{\pi} \min\left(\abs{t - t_0}, \delta \dfrac{\pi}{\nu_2}\right) \,,
     \end{equation*}
     which leads to $\tan(\eps) \abs{x_1^t} \le \abs{x_2^t}$ in virtue of~\eqref{eq:upperboundx1t} and~\eqref{eq:lowerboundx2t-}. Therefore $(x_1^t, x_2^t) \not\in C_{2 \eps}^1$. On the other hand, the condition~\eqref{eq:C2-} implies that
      \begin{equation*}
     A_2 \nu_2 \abs{t_0 - t} \dfrac{4 \eps}{\pi} \le A_1 \dfrac{2 \nu_1}{\pi} \min\left(\abs*{t_0 - t}, \dfrac{\pi}{\nu_1} - \dfrac{\pi}{\nu_2}\right)
     \end{equation*}
     (recall again that $t_0 \le \frac{\pi}{\nu_2}$). Using that $\tan \eps \le \frac{\eps}{\pi/4}$ for $\eps \in [0, \frac{\pi}{4}]$, we obtain
     \begin{equation*}
     A_2 \nu_2 \abs{t_0 - t} \tan \eps \le A_1 \dfrac{2 \nu_1}{\pi} \min\left(\abs*{t_0 - t}, \dfrac{\pi}{\nu_1} - \dfrac{\pi}{\nu_2}\right) \,,
     \end{equation*}
     which leads to $\tan(\eps) \abs{x_2^t} \le \abs{x_1^t}$ in virtue of~\eqref{eq:upperboundx2t} and~\eqref{eq:lowerboundx1t-}. Therefore $(x_1^t, x_2^t) \not\in C_{2 \eps}^2$.
     \item On $[t_0, t_1]$, the situation is slightly simpler because we already know that $x_1^t \le 0$ on this interval, which means that the trajectory does not cross $C_{2 \eps}^1$ by construction. In addition, condition~\eqref{eq:C2-} implies that
     \begin{equation*}
     A_2 \nu_2 \min\left(\abs*{t_0 - t}, \abs*{t_1 - t}\right) \dfrac{4 \eps}{\pi}
        \le A_1 \dfrac{2 \nu_1}{\pi} \min\left(\abs*{t - t_0}, \abs*{t_1 - t}\right) \,.
     \end{equation*}
     Then~\eqref{eq:upperboundx2t}, \eqref{eq:upperboundx2tspecial} and~\eqref{eq:lowerboundx1t0} yield $\tan(\eps) x_2^t \le \abs{x_1^t}$, hence $(x_1^t, x_2^t) \not\in C_{2 \eps}^2$.
     \item We finally consider $t \in [t_1, T]$. Notice that by construction, it holds $x_2^t \le 0$ on $[t_1, T]$, so that the trajectory does not enter $C_{2 \eps}^2$. To disprove the fact that it meets $C_{2 \eps}^1$, we check that the condition~\eqref{eq:C1-} implies
     \begin{equation*}
     A_1 \nu_1 \abs*{t - t_1} \dfrac{4 \eps}{\pi}
        \le A_2 \dfrac{2 \nu_2}{\pi} \min\left(\abs*{t - t_1}, \dfrac{\pi}{\nu_2} \delta\right) \,,
     \end{equation*}
     owing to the fact that $\frac{\pi}{\nu_2} \ge T - t_1 \ge t - t_1$ (this can be deduced from~\eqref{eq:checkonTadd}). Then~\eqref{eq:upperboundx1t} and~\eqref{eq:lowerboundx2t+} lead to $\tan(\eps) \abs{x_1^t} \le \abs{x_2^t}$, which shows indeed that $(x_1^t, x_2^t) \not\in C_{2 \eps}^1$.
\end{itemize}
To sum up, in order to ensure that $t \mapsto (x_1^t, x_2^t)$ does not cross $\omega(2 \eps)$, it suffices to choose $A_1/ A_2$ properly, as well as $\delta$, so that~\eqref{eq:C1-} and~\eqref{eq:C2-} are fulfilled. If we set
\begin{equation} \label{eq:conditionratioA1A2}
\delta = \dfrac{4 \eps^2}{\min(1, \frac{\nu_2}{\nu_1} - 1)}
    \qquad \rm{and} \qquad
\dfrac{A_1}{A_2} = 2 \eps \dfrac{\frac{\nu_2}{\nu_1}}{\min(1, \frac{\nu_2}{\nu_1} - 1)} \,,
\end{equation}
we can check that these two conditions are indeed satisfied.

The conclusion is as follows: we consider a sequence of initial data of the form $\rho_n = (A_{1, n} \sin(\pi \frac{\nu_1}{\nu_2} (1 - \delta)), A_{2, n} s \sin(\pi \delta))$ with $A_{1, n}/A_{2, n}$ as in~\eqref{eq:conditionratioA1A2} and $A_{1, n}, A_{2, n} \to \infty$ as $n \to \infty$. The $x$ component of the trajectory $t \mapsto \phi^t(\rho_n)$ is then of the same form as the projected trajectory $(x_1^t, x_2^t)$ that we studied. Given that these trajectories do not cross $\omega(2 \eps)$, we conclude that the observability condition of Theorem~\ref{thm:main} is not true in time $T$, namely
\begin{equation*}
\frak{K}_{p_0}^\infty\left(\omega(2 \eps), T\right)
	= 0 \,.
\end{equation*}
Yet for any $R > 0$, as we have already seen earlier, $\omega(\eps)_R$ is contained in $\omega(2 \eps)$ modulo a compact set. Thus for any $R > 0$, there exists a compact set $K(R) \subset \R^d$ such that
\begin{equation*}
\frak{K}_{p_0}^\infty\left(\omega(\eps)_R \setminus K(R), T\right)
	\le \frak{K}_{p_0}^\infty\left(\omega(2 \eps), T\right)
	= 0 \,.
\end{equation*}
We conclude thanks to the necessary condition in Theorem~\ref{thm:main} that observability cannot hold in $\omega(\eps)$ in time $T$. It remains to see that by definition (recall~\eqref{eq:deft2withT0} and~\eqref{eq:conditionratioA1A2}), we have
\begin{equation*}
T
    = t_2 - \dfrac{\pi}{\nu_2} \delta
    = T_0 - 2 \dfrac{\pi}{\nu_2} \delta
    = T_0 - C \eps^2 \,.
\end{equation*}
This ends the proof of the lower bound of the optimal observation time. \hfill \qedsymbol

\subsection{Proof of Proposition~\ref{prop:isotropicconicalsets}} \label{subsec:proofisotropicconicalsets}

The aim of this proposition is to study observability from measurable conical sets for the (exact) isotropic harmonic oscillator. We first simplify the situation owing to periodicity properties of the isotropic quantum harmonic oscillator.

\medskip
\paragraph*{Step 1 \--- Upper bound of the optimal observation time.}
First recall that there exists a complex number $z$ of modulus $1$ such that
\begin{equation} \label{eq:pointreflectionproperty}
\e^{\ii \frac{\pi}{\nu} P} u
	= z u(- \bigcdot) \,,
		\qquad \forall u \in L^2(\R^d) \,.
\end{equation}
See for instance\footnote{The property~\eqref{eq:pointreflectionproperty} can be derived from the fact that the spectrum of $P$ is made of half integer multiples of $\nu$, together with parity properties of eigenfunctions.}~\cite[Subsection 11.3.1]{Zworski:book} or~\cite[(4.26)]{FollandPhaseSpace}. In particular, the propagator $\e^{- \ii t P}$ is $\frac{2 \pi}{\nu}$-periodic modulo multiplication by $z^2$. This enables us to show that observability holds in some time $T$ if and only if it holds in time $\frac{2 \pi}{\nu}$: assume the Schrödinger equation is observable from $\omega \subset \R^d$ in some time $T$; let $k$ be an integer such that $k \frac{2 \pi}{\nu} \ge T$. The aforementioned $\frac{2 \pi}{\nu}$-periodicity of the harmonic oscillator leads to
\begin{align} \label{eq:reductionperiodicity}
\norm*{u}_{L^2(\R^d)}^2
    &\le C \int_0^T \norm*{\e^{- \ii t P} u}_{L^2(\omega)}^2 \dd t
    \le C \int_0^{k \frac{2 \pi}{\nu}} \norm*{\e^{- \ii t P} u}_{L^2(\omega)}^2 \dd t \nonumber\\
    &= C k \int_0^{\frac{2 \pi}{\nu}} \norm*{\e^{- \ii t P} u}_{L^2(\omega)}^2 \dd t \,,
\end{align}
for any $u \in L^2$ so that observability holds in $\omega$ in time $\frac{2 \pi}{\nu}$. In particular, the optimal observation time is always $\le \frac{2 \pi}{\nu}$. We can further reduce the observation time by $(2 C k)^{-1}$ (see Lemma~\ref{lem:obsopenintime}), so that the optimal observation time is in fact $T_\star < \frac{2 \pi}{\nu}$. Incidentally, the property~\eqref{eq:pointreflectionproperty} yields
\begin{equation} \label{eq:symmetrizationomega}
\int_0^{\pi/\nu} \norm*{\e^{- \ii t P} u}_{L^2(\omega \cup - \omega)}^2 \dd t
    \le \int_0^{2 \pi/\nu} \norm*{\e^{- \ii t P} u}_{L^2(\omega)}^2 \dd t
    \le 2 \int_0^{\pi/\nu} \norm*{\e^{- \ii t P} u}_{L^2(\omega \cup - \omega)}^2 \dd t \,,
\end{equation}
which will be useful later on.

\medskip
\paragraph*{Step 2 \--- Necessary condition.}
Assume observability holds from $\omega = \omega(\Sigma)$ in some time $T$. Let $k$ be a positive integer such that $k \frac{2 \pi}{\nu} \ge T$. Using~\eqref{eq:reductionperiodicity} and~\eqref{eq:symmetrizationomega}, we obtain
\begin{equation*}
\forall u \in L^2(\R^d) \,, \;
    \norm*{u}_{L^2(\R^d)}^2
        \le C \int_0^T \norm*{\e^{- \ii t P} u}_{L^2(\omega)}^2 \dd t
        \le 2 C k \int_0^{\pi/\nu} \norm*{\e^{- \ii t P} u}_{L^2(\omega \cup - \omega)}^2 \dd t \,.
\end{equation*}
We choose for $u$ a particular coherent state. Following Combescure and Robert~\cite{CombescureRobertBook}, for any $\rho_0 = (x_0, \xi_0)$, we set
\begin{equation*}
\varphi_{\rho_0}(x)
    = \left(\dfrac{\nu}{\pi}\right)^{d/4} \e^{- \frac{\ii}{2} \xi_0 \cdot x_0 + \ii \xi_0 \cdot x} \exp\left(- \dfrac{\nu}{2} \abs*{x - x_0}^2\right) \,.
\end{equation*}
Then it holds
\begin{equation} \label{eq:propagationofcoherentstate}
\e^{- \ii t P} \varphi_{\rho_0}
    = \e^{- \frac{\ii}{2} t \nu d} \varphi_{\rho_t} \,,
\end{equation}
where $\rho_t = \phi^t(\rho_0)$ is the evolution of $\rho_0$ in phase space along the Hamiltonian flow associated with $p(x, \xi) = \frac{1}{2}(\nu^2 \abs{x}^2 + \abs{\xi}^2)$, that is to say
\begin{equation*}
\rho_t = \left( \cos(\nu t) x_0 + \sin(\nu t) \dfrac{\xi_0}{\nu}, - \nu \sin(\nu t) x_0 + \cos(\nu t) \xi_0 \right) \,.
\end{equation*}
Equation~\eqref{eq:propagationofcoherentstate} can be checked by observing that the derivative of both sides agree, or by applying~\cite[Proposition 3]{CombescureRobertBook}.
Selecting an initial datum of the form $\rho_0 = (0, \xi_0)$ with a non-zero $\xi_0$, the observability inequality implies
\begin{align} \label{eq:observabilitycoherentstateisotropic}
1 = \norm*{\varphi_{\rho_0}}_{L^2(\R^d)}^2
    &\le C \int_0^T \norm*{\varphi_{\rho_t}}_{L^2(\omega)}^2 \dd t
    \le 2 k C \int_0^{\pi/\nu} \norm*{\varphi_{\rho_t}}_{L^2(\omega \cup - \omega)}^2 \dd t \nonumber\\
    &= 2 k C \left( \dfrac{\pi}{\nu} \right)^{d/2} \int_0^{\pi/\nu} \int_{\omega \cup - \omega} \abs*{\exp\left( - \dfrac{\nu}{2} \abs*{x - \sin(\nu t) \dfrac{\xi_0}{\nu}}^2 \right)}^2 \dd x \dd t \nonumber\\
    &= 4 k \dfrac{C}{\nu} \left( \dfrac{\pi}{\nu} \right)^{d/2} \int_0^{\pi/2} \int_{\omega \cup - \omega} \exp\left( - \nu \abs*{x - \sin(t) \dfrac{\xi_0}{\nu}}^2 \right) \dd x \dd t \,.
\end{align}
We used a change of variables in the integral over $t$ and the fact that $\sin(x) = \sin(\pi - x)$ to obtain the last equality. Next we truncate the integrals in $t$ and in $x$ using respectively a small parameter $\delta > 0$ and a large parameter $R > 0$:
\begin{multline*}
\int_0^{\pi/2} \left( \dfrac{\pi}{\nu} \right)^{d/2} \int_{\omega \cup - \omega} \exp\left( - \nu \abs*{x - \sin(t) \dfrac{\xi_0}{\nu}}^2 \right) \dd x \dd t \\
    \le \pi \delta + \int_{\frac{\pi}{2} \delta}^{\frac{\pi}{2} (1 - \delta)} \left( \dfrac{\pi}{\nu} \right)^{d/2} \left( \int_{\omega \cup - \omega} \exp\left( - \nu \abs*{x - \sin(t) \dfrac{\xi_0}{\nu}}^2 \right) \one_{B_R(\sin(t) \frac{\xi_0}{\nu})}(x) \dd x\right. \\
    	\left.+ \int_{\R^d \setminus B_R(0)} \e^{- \nu \abs{x}^2} \dd x \right) \dd t \,.
\end{multline*}
The rightmost integral is controlled by $c/R$ for some constant $c > 0$. Therefore it holds
\begin{multline*}
\int_0^{\pi/2} \left( \dfrac{\pi}{\nu} \right)^{d/2} \int_{\omega \cup - \omega} \exp\left( - \nu \abs*{x - \sin(t) \dfrac{\xi_0}{\nu}}^2 \right) \dd x \dd t \\
    \le \pi \delta + \dfrac{c}{R} + \left( \dfrac{\pi}{\nu} \right)^{d/2} \int_{\frac{\pi}{2} \delta}^{\frac{\pi}{2} (1 - \delta)} \abs*{\left(\omega \cup - \omega\right) \cap B_R\left(\sin(t) \dfrac{\xi_0}{\nu}\right)} \dd t \,.
\end{multline*}
We get rid of the sine in the right-hand side by noticing that $\cos t \ge 1 - \frac{2}{\pi} t \ge \delta$ for any $t \in [\frac{\pi}{2} \delta, \frac{\pi}{2} (1 - \delta)]$, and changing variables:
\begin{align*}
\int_{\frac{\pi}{2} \delta}^{\frac{\pi}{2} (1 - \delta)} \abs*{\left(\omega \cup - \omega\right) \cap B_R\left(\sin(t) \dfrac{\xi_0}{\nu}\right)} \dd t
    &\le \int_{\frac{\pi}{2} \delta}^{\frac{\pi}{2} (1 - \delta)} \abs*{\left(\omega \cup - \omega\right) \cap B_R\left(\sin(t) \dfrac{\xi_0}{\nu}\right)} \dfrac{\abs{\cos t}}{\delta} \dd t \\
    &= \dfrac{1}{\delta} \int_{\sin(\frac{\pi}{2} \delta)}^{\sin(\frac{\pi}{2} (1 - \delta))} \abs*{\left(\omega \cup - \omega\right) \cap B_R\left(s \dfrac{\xi_0}{\nu}\right)} \dd s \,.
\end{align*}
Using that $\sin x \ge \frac{2}{\pi} x$ on $[0, \frac{\pi}{2}]$, we finally deduce that
\begin{equation} \label{eq:towardslowerdensitywithremainder}
\begin{multlined}
\int_0^{\pi/2} \left( \dfrac{\pi}{\nu} \right)^{d/2} \int_{\omega \cup - \omega} \exp\left( - \nu \abs*{x - \sin(t) \dfrac{\xi_0}{\nu}}^2 \right) \dd x \dd t \\
    \qquad\qquad \le \pi \delta + \dfrac{c}{R} + \dfrac{1}{\delta} \left( \dfrac{\pi}{\nu} \right)^{d/2} \int_\delta^1 \abs*{\left(\omega \cup - \omega\right) \cap B_R\left(s \dfrac{\xi_0}{\nu}\right)} \dd s \,.
\end{multlined}
\end{equation}
We plug this into~\eqref{eq:observabilitycoherentstateisotropic} to obtain
\begin{equation} \label{eq:towardslowerdensity}
\dfrac{1}{2} = \dfrac{1}{2} \norm*{\varphi_{\rho_0}}_{L^2(\R^d)}^2
    \le 4 k \dfrac{C}{\delta \nu} \left( \dfrac{\pi}{\nu} \right)^{d/2} \int_\delta^1 \abs*{\left(\omega \cup - \omega\right) \cap B_R\left(s \dfrac{\xi_0}{\nu}\right)} \dd s \,,
\end{equation}
where we absorbed the remainder terms of~\eqref{eq:towardslowerdensitywithremainder} in the left-hand side by choosing $\delta$ sufficiently small and $R$ sufficiently large. We now use a scaling argument in the right-hand side, which is possible since the set $\omega \cup - \omega$ is conical: for any $s \in [\delta, 1]$, writing
\begin{equation} \label{eq:defthetar}
\theta_0 = \frac{\xi_0}{\abs{\xi_0}}
	\qquad \rm{and} \qquad
r = \frac{\nu R}{\delta \abs{\xi_0}} \,,
\end{equation}
we have
\begin{align*}
\abs*{\left(\omega \cup - \omega\right) \cap B_R\left(s \dfrac{\xi_0}{\nu}\right)}
    &= \left(s \frac{\abs{\xi_0}}{\nu}\right)^d \abs*{\left(\omega \cup - \omega\right) \cap B_{\frac{\nu R}{s \abs{\xi_0}}}\left(\theta_0\right)} \\
    &\le \left(\frac{R}{\delta}\right)^d r^{-d} \abs*{\left(\omega \cup - \omega\right) \cap B_r\left(\theta_0\right)} \,.
\end{align*}
After integrating over the $s$ variable, the estimate~\eqref{eq:towardslowerdensity} becomes
\begin{equation} \label{eq:firstfact}
1 = \norm*{\varphi_{\rho_0}}_{L^2(\R^d)}^2
    \le 8 k \dfrac{C}{\delta \nu} \left( \dfrac{\pi}{\nu} \right)^{d/2} \left(\frac{R}{\delta}\right)^d r^{-d} \abs*{\left(\omega \cup - \omega\right) \cap B_r\left(\theta_0\right)} \,.
\end{equation}
We now reformulate the right-hand side in terms of the lower density $\Theta_{\widehat \Sigma}^-$ defined in~\eqref{eq:deflowerdensity}. To do so, we observe that the triangle inequality yields for $r \in (0, 1)$:
\begin{equation*}
\forall x \in B_r(\theta_0) \,, \;\;
    \abs{\abs{x} - 1} \le \abs{x - \theta_0}
        \;\;\, \rm{and} \;\;\,
    \abs*{\dfrac{x}{\abs{x}} - \theta_0} = \abs*{\dfrac{x - \theta_0}{\abs{x}} + \dfrac{\theta_0}{\abs{x}}\left(1 - \abs{x}\right)} \le \dfrac{2 r}{1 - r} \,,
\end{equation*}
which in turn leads to
\begin{equation*}
B_r(\theta_0)
    \subset \set{x \in \R^d}{1 - r \le \abs{x} \le 1 + r \quad \rm{and} \quad \abs*{\dfrac{x}{\abs{x}} - \theta_0} \le \dfrac{2 r}{1 - r}} \,,
    	\qquad r \in (0, 1) \,.
\end{equation*}
Recall that if $\abs{\xi_0}$ is large enough, then~\eqref{eq:defthetar} implies $r \in (0, 1)$. We conclude by a spherical change of coordinates that
\begin{align} \label{eq:secondfact}
\abs*{\left(\omega \cup - \omega\right) \cap B_r\left(\theta_0\right)}
    &\le \int_{1 - r}^{1 + r} \int_{\sph^{d - 1}} \one_{\abs{\theta - \theta_0} \le \frac{2 r}{1 - r}} \one_{\omega \cup - \omega}(\tilde r \theta) c_d \tilde r ^{d - 1} \dd \sigma(\theta) \dd \tilde r \nonumber\\
    &\le \int_{1 - r}^{1 + r} \int_{\sph^{d - 1} \cap B_{\frac{2 r}{1 - r}}(\theta_0)} \one_{\widehat \Sigma}(\theta) c_d 2^{d - 1} \dd \sigma(\theta) \dd \tilde r \nonumber\\
    &= c_d 2^{d - 1} \times 2 r \sigma\left( \widehat \Sigma \cap B_{\frac{2 r}{1 - r}}(\theta_0) \right) \,.
\end{align}
In addition, one has
\begin{equation} \label{eq:thirdfact}
\sigma\left(B_r(\theta_0)\right) \le c_d' r^{d - 1} \,.
\end{equation}
(In the above estimates, $c_d$ and $c_d'$ are constants depending only on the dimension.) Combining~\eqref{eq:firstfact}, \eqref{eq:secondfact} and~\eqref{eq:thirdfact}, we obtain
\begin{align*}
1 = \norm*{\varphi_{\rho_0}}_{L^2(\R^d)}^2
    &\le c_d c_d' 2^{d + 3} k \dfrac{C}{\delta \nu} \left( \dfrac{\pi}{\nu} \right)^{d/2} \left(\frac{R}{\delta}\right)^d
    		\times \left(\dfrac{2}{1 - r}\right)^{d-1} \dfrac{1}{c_d' (\frac{2 r}{1 - r})^{d - 1}} \sigma\left( \widehat \Sigma \cap B_{\frac{2 r}{1 - r}}(\theta_0) \right) \\
    &\le c_d c_d' 2^{d + 3} k \dfrac{C}{\delta \nu} \left( \dfrac{\pi}{\nu} \right)^{d/2} \left(\frac{R}{\delta}\right)^d \left(\dfrac{2}{1 - r}\right)^d \dfrac{\sigma\left( \widehat \Sigma \cap B_{\frac{2 r}{1 - r}}(\theta_0) \right)}{\sigma\left( B_{\frac{2 r}{1 - r}}(\theta_0) \right)} \,.
\end{align*}
Recalling that $r$ behaves as $1/\abs{\xi_0}$, it remains to let $\xi_0 \to \infty$ with $\frac{\xi_0}{\abs{\xi_0}} = \theta_0$ arbitrary, to deduce that
\begin{equation*}
1
	\le c_d c_d' 2^{d + 3} k \dfrac{C}{\delta \nu} \left( \dfrac{\pi}{\nu} \right)^{d/2} \left(\frac{2R}{\delta}\right)^d \Theta_{\widehat \Sigma}^-(\theta_0) \,,
		\qquad \forall \theta_0 \in \sph^{d - 1} \,.
\end{equation*}
This concludes the proof of the necessary condition.

\medskip
\paragraph*{Step 3 \--- Sufficient condition.}
Write for short $\omega = \omega(\Sigma)$ again. The fact that $\widehat \Sigma = \Sigma \cup - \Sigma$ has full measure, namely $\sigma(\sph^{d - 1} \setminus \widehat \Sigma) = 0$, implies that $\R^d \setminus (\omega \cup - \omega)$ is Lebesgue negligible (recall the definition of $\omega(\Sigma)$ in~\eqref{eq:defomegaIconical}). Therefore the left-hand side of~\eqref{eq:symmetrizationomega} with $k = 1$ yields
\begin{equation*}
\int_0^{2 \pi/\nu} \norm*{\e^{- \ii t P} u}_{L^2(\omega)}^2 \dd t
    \ge \int_0^{\pi/\nu} \norm*{\e^{- \ii t P} u}_{L^2(\omega \cup - \omega)}^2 \dd t
    = \int_0^{\pi/\nu} \norm*{\e^{- \ii t P} u}_{L^2(\R^d)}^2 \dd t 
    = \frac{\pi}{\nu} \norm*{u}_{L^2(\R^d)}^2 \,,
\end{equation*}
where we used the fact that the propagator is an isometry. This completes the proof.~\hfill \qedsymbol

\section{Proofs of observability results from spherical sets} \label{sec:proofspherical}

In this section, we give proofs of the results presented in Subsection~\ref{subsub:sphericalsets}, which concern observation sets that are spherical in the sense of~\eqref{eq:sphericalset}. Propositions~\ref{prop:sphericalsetwithaxiallysymmetricpotential} and~\ref{prop:anisotropicsphericalsets} are proved in Subsections~\ref{subsec:proofaxiallysymmetricpotential} and~\ref{subsec:proofanisotropicsphericalsets} respectively.

\subsection{Proof of Proposition~\ref{prop:sphericalsetwithaxiallysymmetricpotential}} \label{subsec:proofaxiallysymmetricpotential}

The rotation $S_\theta$ of angle $\theta$ reads
\begin{equation*}
S_\theta y = \left(\cos \theta y_1 + \sin \theta y_2, - \sin \theta y_1 + \cos \theta y_2, y_3, \ldots, y_d\right) \,, \;\;\; y = (y_1, y_2, \ldots, y_d) \in \R^d \,.
\end{equation*}
In the sequel, we set $L_0$ to be the two dimensional plane spanned by the vectors
\begin{equation*}
e_1 = M(1, 0, 0, \ldots, 0)
    \qquad \rm{and} \qquad
e_2 = M(0, 1, 0, 0, \ldots 0) \,,
\end{equation*}
The two linear maps
\begin{equation*}
\Pi_{L_0} = \dfrac{1}{2} M (\id - S_\pi) M^{-1}
    \qquad \rm{and} \qquad
\Pi_{L_0^\perp} = \dfrac{1}{2} M (\id + S_\pi) M^{-1}
\end{equation*}
are the orthogonal projectors on $L_0$ and $L_0^\perp$ respectively, since $M$ is orthogonal. With the notation of assumption~\ref{it:assum3}, we can write, with a slight abuse of notation,
\begin{equation} \label{eq:abuseofnotationV}
V(x_0)
	= \tilde V_0\left(\abs*{M^{-1} x_0}\right) \,,
		\qquad \forall x_0 \in L_0 \,.
\end{equation}

Let us investigate the properties of the gradient of $V$ on $L_0$.

\begin{lemm}
Let $x_0 \in L_0$. Then it holds
\begin{equation*}
\nabla V(x_0) \in L_0
	\qquad \rm{and} \qquad
\exists c = c(\abs{x_0}) \ge 0 : \quad \nabla V(x_0) = c x_0 \,.
\end{equation*}
\end{lemm}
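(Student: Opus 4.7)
The plan is to exploit the two symmetries of $V$ (central symmetry and rotational invariance in $L_0$) to first pin down the direction of $\nabla V(x_0)$ and then its sign.

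First I would show that $\nabla V(x_0) \in L_0$. The key observation is that the orthogonal map $M S_\pi M^{-1}$ acts as $-\id$ on $L_0$ and as $\id$ on $L_0^\perp$: indeed, for $y = (y_1,y_2,\ldots,y_d)$, we have $S_\pi y = (-y_1,-y_2,y_3,\ldots,y_d)$, and writing any vector as $M y = M(y_1,y_2,0,\ldots,0) + M(0,0,y_3,\ldots,y_d) \in L_0 \oplus L_0^\perp$ makes the claim clear. Fix now $v \in L_0^\perp$. By assumption \ref{it:assum2} applied at $\theta = \pi$ and then by assumption \ref{it:assum1},
\begin{equation*}
V(x_0 + t v) = V\bigl(M S_\pi M^{-1}(x_0 + tv)\bigr) = V(-x_0 + tv) = V(x_0 - tv) \,, \qquad t \in \R \,.
\end{equation*}
Hence $t \mapsto V(x_0 + t v)$ is even, so $\nabla V(x_0) \cdot v = 0$. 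Since $v \in L_0^\perp$ was arbitrary, $\nabla V(x_0) \in L_0$.

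Next I would show that $\nabla V(x_0)$ is colinear with $x_0$. Assumption \ref{it:assum2} for general $\theta$ implies that the smooth function $f := V_{|L_0}$ is invariant under the rotations $M S_\theta M^{-1}$ of $L_0$. Combined with \eqref{eq:abuseofnotationV} (and the orthogonality of $M$, which gives $|M^{-1} x_0| = |x_0|$), we obtain $f(y) = \tilde V_0(|y|)$ for every $y \in L_0$. Differentiating in $L_0$ at $x_0 \neq 0$ yields
\begin{equation*}
\nabla f(x_0) = \dfrac{\tilde V_0'(|x_0|)}{|x_0|} \, x_0 \,,
\end{equation*}
and since the gradient of $V$ on $L_0$ coincides with $\nabla f$ by the previous step, we arrive at $\nabla V(x_0) = c(|x_0|)\, x_0$ with $c(r) = \tilde V_0'(r)/r$. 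The non-negativity $c \ge 0$ then follows directly from the monotonicity assumption \ref{it:assum3}: $\tilde V_0$ non-decreasing forces $\tilde V_0' \ge 0$. The case $x_0 = 0$ is trivial since $\nabla V(0) \in L_0$ and any scalar $c$ works in $\nabla V(0) = c \cdot 0$.

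No serious obstacle is expected in this argument; the only subtle point is the correct identification of the action of $M S_\pi M^{-1}$ on $L_0$ and $L_0^\perp$, which I treated above.
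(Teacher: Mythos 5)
Your argument is correct, and the two halves compare to the paper's proof as follows. For $\nabla V(x_0)\in L_0$ you use exactly the same two symmetries (assumption~\ref{it:assum1} and assumption~\ref{it:assum2} at $\theta=\pi$), only packaged through the evenness of the scalar map $t\mapsto V(x_0+tv)$ rather than through the gradient identities $-\nabla V(-x)=\nabla V(x)$ and $MS_{-\pi}M^{-1}\nabla V(MS_\pi M^{-1}x)=\nabla V(x)$; this is the same idea in a cleaner guise. For the collinearity, the paper differentiates the invariance $V(x)=V(MS_\theta M^{-1}x)$ at $\theta=0$ to get $\nabla V(x_0)\perp MS_{\pi/2}M^{-1}x_0$ and then invokes $\dim L_0=2$, and obtains $c\ge 0$ by a first-order Taylor expansion; you instead use the explicit radial representation $V_{\vert L_0}(y)=\tilde V_0(\abs{y})$ from~\eqref{eq:abuseofnotationV} and read off $\nabla V(x_0)=\frac{\tilde V_0'(\abs{x_0})}{\abs{x_0}}x_0$ with $\tilde V_0'\ge 0$ by assumption~\ref{it:assum3}. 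Your route is more direct and equally valid (smoothness of $\tilde V_0$ at $r=\abs{x_0}>0$ is immediate from $V\in C^\infty$), while the paper's infinitesimal argument avoids ever writing $V_{\vert L_0}$ in radial form.

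One small slip: for $x_0=0$ the conclusion $\nabla V(0)=c\cdot 0$ is the assertion $\nabla V(0)=0$, so it is not true that ``any scalar $c$ works'' without first checking this; it does hold, either because evenness of $V$ makes $\nabla V$ odd, or by running your own step-one argument at $x_0=0$ with $v$ ranging over all of $\R^d$. Add that half-line and the proof is complete.
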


\begin{proof}
Assumptions~\ref{it:assum1} and~\ref{it:assum2} (with $\theta = \pi$) yield for any $x \in \R^d$:
\begin{equation} \label{eq:twosymmetriesgradientV}
- \nabla V(-x) = \nabla V(x)
    \qquad \rm{and} \qquad
M S_{-\pi} M^{-1} \nabla V(M S_\pi M^{-1} x) = \nabla V(x) \,.
\end{equation}
Yet since $x_0 \in L_0$, we have $\Pi_{L_0} x_0 = x_0$ so that
\begin{equation} \label{eq:rotationpi}
x_0
	= - M S_\pi M^{-1} x_0 \,,
\end{equation}
and noticing that $S_\pi = S_{-\pi}$, we obtain combining the two equations~\eqref{eq:twosymmetriesgradientV}:
\begin{equation*}
\nabla V(x_0)
	= - \nabla V(- x_0)
    = - M S_\pi M^{-1} \nabla V(- M S_\pi M^{-1} x_0)
    = - M S_\pi M^{-1} \nabla V(x_0) \,.
\end{equation*}
That means exactly that $\Pi_{L_0^\perp} \nabla V(x_0) = 0$, or in other words, $\nabla V(x_0) \in L_0$.

Next we prove that $\nabla V(x_0)$ is collinear with $x_0$. We first get rid of the case $x_0 = 0$: the first equation in~\eqref{eq:twosymmetriesgradientV} implies that $\nabla V(0) = 0$. From now on, we assume that $x_0 \neq 0$. We compute
\begin{equation*}
\dfrac{\dd}{\dd \theta} M S_\theta M^{-1}
    = \dfrac{\dd}{\dd \theta} \left(M S_\theta M^{-1} \Pi_{L_0} + M S_\theta M^{-1} \Pi_{L_0^\perp} \right)
    = M S_{\theta + \pi/2} M^{-1} \Pi_{L_0} \,.
\end{equation*}
This is true because $M S_\theta M^{-1} \Pi_{L_0^\perp}$ is independent of $\theta$ ($M S_\theta M^{-1}$ is the identity in $L_0^\perp$). Therefore, differentiating the equality $V(x) = V(M S_\theta M^{-1} x)$ at $\theta = 0$, we obtain
\begin{align*}
0
    &= \dfrac{\dd}{\dd \theta} V(x_0)_{\vert \theta = 0}
    = \dfrac{\dd}{\dd \theta} V(M S_\theta M^{-1} x_0)_{\vert \theta = 0}
    = \nabla V(x_0) \cdot M S_{\pi/2} M^{-1} \Pi_{L_0} x_0 \\
    &= \nabla V(x_0) \cdot M S_{\pi/2} M^{-1} x_0 \,.
\end{align*}
This means that $\nabla V(x_0)$ is orthogonal to $M S_{\pi/2} M^{-1} x_0$. Yet the plane $L_0$ is invariant by $M S_\theta M^{-1}$ and $x_0 \perp M S_{\pi/2} M^{-1} x_0$. Since $\nabla V(x_0) \in L_0$ and $L_0$ has dimension $2$, we deduce that $\nabla V(x_0) = c x_0$ for some $c \in \R$. We claim that $c \ge 0$ as a consequence of assumption~\ref{it:assum3} that $\tilde V_0$ is non-decreasing. Indeed for $t > 0$ close to zero, using~\eqref{eq:abuseofnotationV}, the Taylor formula at order one yields
\begin{equation*}
0
    \le \tilde V_0\left((1 + t) \abs*{M^{-1} x_0}\right) - \tilde V_0\left(\abs*{M^{-1} x_0}\right)
    = V(x_0 + t x_0) - V(x_0)
    = t \nabla V(x_0) \cdot x_0 + o(t) \,.
\end{equation*}
Dividing by $t > 0$, we find that $\nabla V(x_0) \cdot x_0 = c \abs{x_0}^2 \ge 0$. Thus $c = \nabla V(x_0) \cdot \frac{x_0}{\abs{x_0}^2}$ depends only on $\abs{x_0}$ since $V$ restricted to $L_0$ is radial, and the proof is complete.
\end{proof}

This lemma allows us to exhibit periodic circular orbits of the Hamiltonian flow of $p$. For any $x_0 \in L_0$, denoting by $c$ the scalar such that $\nabla V(x_0) = c x_0$, the phase space curve
\begin{equation} \label{eq:candidatecurve}
x^t = M S_{\sqrt{c} t} M^{-1} x_0 \,, \qquad \xi^t = \sqrt{c} M S_{\sqrt{c} t + \pi/2} M^{-1} x_0
\end{equation}
is the trajectory of the Hamiltonian flow with initial data $(x_0, \sqrt{c} M S_{\pi/2} M^{-1} x_0)$. This follows from uniqueness in the Picard-Lindelöf Theorem, since the above curve solves on the one hand:
\begin{equation*}
\dfrac{\dd}{\dd t} x^t
    = \sqrt{c} M S_{\sqrt{c} t + \pi/2} M^{-1} \Pi_{L_0} x_0
    = \xi^t \,,
\end{equation*}
and on the other hand, in view of~\eqref{eq:rotationpi} and observing that $\abs{x^t} = \abs{x_0}$ for any $t$,
\begin{equation*}
\dfrac{\dd}{\dd t} \xi^t
    = c M S_{\sqrt{c} t + \pi} M^{-1} \Pi_{L_0} x_0
    = c M S_{\sqrt{c} t} M^{-1} \left(\Pi_{L_0^\perp} - \Pi_{L_0}\right) x_0
    = - c x^t
    = - \nabla V(x^t) \,.
\end{equation*}

To conclude, we argue as follows: since by assumption observability holds from $\omega(I)$ in time $T > 0$, the necessary condition of Theorem~\ref{thm:main} implies that there exists $R > 0$ such that
\begin{equation*}
\exists \epsilon > 0, \exists A > 0 : \forall \abs{\rho} \ge A \,, \qquad
	\int_0^T \one_{\omega(I)_R \times \R^d}\left(\phi^t(\rho)\right) \dd t \ge \epsilon \,.
\end{equation*}
Let $x_0 \in L_0$ be such that $\abs{x_0} \ge A$. We consider the Hamiltonian trajectory issued from the point $(x_0, \sqrt{c(x_0)} M^{-1} S_{\pi/2} M x_0)$ constructed in~\eqref{eq:candidatecurve}. Then $\abs{x^t}$ is constant over time, which implies that
\begin{equation*}
\epsilon
	\le \int_0^T \one_{\omega(I)_R \times \R^d}\left(\phi^t(\rho)\right) \dd t
	= \int_0^T \one_{\omega(I)_R}\left(x^t\right) \dd t
	= T \one_{I_R}\left(\abs{x_0}\right) \,,
\end{equation*}
whence $\abs{x_0} \in I_R$. We deduce that
\begin{equation*}
\forall s \in \R_+ \,, \qquad
	I_R \cap [s, s + A] \neq \varnothing \,,
\end{equation*}
which implies the desired result~\eqref{eq:nobiggapinI} with $r = A + 2 R$. \hfill \qedsymbol

\subsection{Proof of Proposition~\ref{prop:anisotropicsphericalsets}} \label{subsec:proofanisotropicsphericalsets}

Firstly we assume that $\frac{\nu_2}{\nu_1}$ is rational: we write it as an irreducible fraction $\frac{p}{q}$. The number $T = \frac{2 \pi}{\nu_2} p = \frac{2 \pi}{\nu_1} q$ is the period of the Hamiltonian flow associated with $\frac{1}{2}(x \cdot A x + \abs{\xi}^2)$. Without loss of generality, we can assume that $A$ is diagonal, and that the eigenvectors associated with $\nu_1^2$ and $\nu_2^2$ are the vectors $(1, 0)$ and $(0, 1)$ of the canonical basis of $\R^2$. As mentioned in~\eqref{eq:Lambda(mu)geom}, we seek to compute a sharp uniform upper bound of the ratio between the minimal and the maximal value of the norm of projected trajectories $\abs{x^t}$ on the time interval $[0, T]$. More explicitly, we are interested in the quantity
\begin{equation} \label{eq:defLambda_0}
\Lambda_0
    = \sup_{\rho_0 \in \R^4 \setminus \{0\}} \dfrac{\min_{t \in [0, T]} \abs{(\pi \circ \phi^t)(\rho_0)}}{\max_{t \in [0, T]} \abs{(\pi \circ \phi^t)(\rho_0)}} \,,
\end{equation}
where we recall that $\pi : (x, \xi) \mapsto x$. We start with two remarks, related to explicit expressions of the Hamiltonian flow. First we can replace the supremum on $\R^4$ by a maximum on a compact set parametrizing trajectories, e.g.\ the unit sphere $\sph^3$, because the Hamiltonian flow is homogeneous of degree $1$, that is $\phi^t(\lambda \rho_0) = \lambda \phi^t(\rho_0)$ for any scalar $\lambda \in \R$ (it fact $\phi^t$ is a linear map for all $t$). Second, since $\abs{x^t}^2 = \abs{x_1^t}^2 + \abs{x_2^t}^2$, it will be easier to compute $\Lambda_0^2$. In view of these remarks, and writing the Hamiltonian trajectories in action-angle coordinates:
\begin{equation} \label{eq:hamiltonianflowinactionanglecoordinatesomegaI}
x_1^t = A_1 \sin(\nu_1 t + \theta_1)
    \qquad \rm{and} \qquad
x_2^t = A_2 \sin(\nu_2 t + \theta_2) \,,
\end{equation}
we want to study
\begin{align*}
\Lambda_0^2
    &= \sup_{\substack{A_1^2 + A_2^2 = 1 \\ \theta_1, \theta_2 \in \R}} \dfrac{\min_{t \in [0, T]} \left(A_1^2 \sin^2(\nu_1 t + \theta_1) + A_2^2 \sin^2(\nu_2 t + \theta_2)\right)}{\max_{t \in [0, T]} \left(A_1^2 \sin^2(\nu_1 t + \theta_1) + A_2^2 \sin^2(\nu_2 t + \theta_2)\right)} \\
    &= \sup_{\substack{\lambda \in [0, 1] \\ \theta_1, \theta_2 \in \R}} \dfrac{\min_{t_1 \in [0, T]} \left((1 - \lambda) \sin^2(\nu_1 t_1 + \theta_1) + \lambda \sin^2(\nu_2 t_1 + \theta_2)\right)}{\max_{t_2 \in [0, T]} \left((1 - \lambda) \sin^2(\nu_1 t_2 + \theta_1) + \lambda \sin^2(\nu_2 t_2 + \theta_2)\right)} \\
    &= \sup_{\substack{\lambda \in [0, 1] \\ \theta_1, \theta_2 \in \R}} \dfrac{\min_{t_1 \in [0, T]} \left((1 - \lambda) \sin^2(\nu_1 t_1 + \theta_1) + \lambda \sin^2(\nu_2 t_1 + \theta_2)\right)}{1 - \min_{t_2 \in [0, T]} \left((1 - \lambda) \cos^2(\nu_1 t_2 + \theta_1) + \lambda \cos^2(\nu_2 t_2 + \theta_2)\right)} \,.
\end{align*}
In view of the periodicity in the variables $\theta_1$ and $\theta_2$, the supremum in the variables $\lambda, \theta_1, \theta_2$ is in fact a supremum over $(\lambda, \theta_1, \theta_2) \in [0, 1] \times [0, 2 \pi] \times [0, 2 \pi]$. A compactness and continuity argument shows that this supremum is attained for some triple $(\lambda, \theta_1, \theta_2)$. Furthermore, one can check that $\displaystyle \max_{\lambda, \theta_1, \theta_2} = \max_{\theta_1, \theta_2} \max_\lambda$. Thus we should simplify the problem first by considering fixed values for $\theta_1$ and $\theta_2$, and maximizing with respect to these variables ultimately. Therefore our objective is to compute
\begin{equation} \label{eq:defLambdathetapm}
\Lambda_{\theta_1, \theta_2}^2
    = \max_{\lambda \in [0, 1]} \dfrac{\min_{t_1 \in [0, T]} \left((1 - \lambda) \sin^2(\nu_1 t_1 + \theta_1) + \lambda \sin^2(\nu_2 t_1 + \theta_2)\right)}{1 - \min_{t_2 \in [0, T]} \left((1 - \lambda) \cos^2(\nu_1 t_2 + \theta_1) + \lambda \cos^2(\nu_2 t_2 + \theta_2)\right)} \,.
\end{equation}
We can further simplify this by rewriting in more pleasant terms the minima in the numerator and the denominator. It relies on the following fact.

\medskip
\paragraph*{Step 1 \--- Simplification of the optimization problem.}
The minimum we want to estimate involves a sum of two squared sine functions that oscillate at different frequencies. Intuitively, it looks reasonable that the minimum of such a sum is attained between two zeroes that achieve the minimal distance between a zero of the first sine function, and a zero of the second. This is a motivation to introduce
\begin{equation} \label{eq:defofd_0}
d_0 = d_0(\theta_1, \theta_2)
    = \dfrac{4 p q}{T} \min_{\substack{\sin(\nu_j t_j + \theta_j) = 0 \\ j = 1, 2}} \abs{t_1 - t_2} \,.
\end{equation}
It is indeed a minimum, and not only an infimum, thanks to the rational ratio between $\nu_1$ and $\nu_2$, or equivalently, thanks to the periodicity of the Hamiltonian flow. We can give an explicit expression of this quantity reasoning as follows: the numbers $t_1$ and $t_2$ are such that $\sin(\nu_j t_j + \theta_j) = 0$, $j = 1, 2$, if and only if there exist two integers $k_1$ and $k_2$ such that
\begin{equation*}
\nu_j t_j + \theta_j = k_j \pi \,.
\end{equation*}
Therefore it holds
\begin{equation*}
\abs*{t_1 - t_2}
    = \abs*{\pi \left(\dfrac{k_1}{\nu_1} - \dfrac{k_2}{\nu_2}\right) - \left(\dfrac{\theta_1}{\nu_1} - \dfrac{\theta_2}{\nu_2}\right)}
    = \dfrac{T}{2 p q} \abs*{\left(k_1 p - k_2 q\right) - \left(p \dfrac{\theta_1}{\pi} - q \dfrac{\theta_2}{\pi}\right)} \,.
\end{equation*}
Yet since $p$ and $q$ are coprime integers, it follows from Bézout's identity that $k_1 p - k_2 q$ can take any value in $\Z$ when we vary $k_1$ and $k_2$. We deduce that
\begin{equation*}
d_0
    = 2 \dist\left(p \dfrac{\theta_1}{\pi} - q \dfrac{\theta_2}{\pi}, \Z\right)
    = \dist\left(p \dfrac{\theta_1}{\pi/2} - q \dfrac{\theta_2}{\pi/2}, 2 \Z\right) \,.
\end{equation*}
Incidentally, this expression implies that $d_0 \in [0, 1]$. Now we claim that
\begin{equation} \label{eq:minintvsminins}
\begin{multlined}
\min_{t \in [0, T]} \left((1 - \lambda) \sin^2(\nu_1 t + \theta_1) + \lambda \sin^2(\nu_2 t + \theta_2)\right) \\
    \qquad\qquad = \min_{s \in [0, 1]} \left((1 - \lambda) \sin^2\left(\dfrac{\pi/2}{p} s d_0\right) + \lambda \sin^2\left(\dfrac{\pi/2}{q} (1 - s) d_0\right)\right) \,.
\end{multlined}
\end{equation}
It amounts to prove that the minimum in $t$ in the left-hand side of~\eqref{eq:minintvsminins} is attained between two zeros $t_1, t_2$ of $\sin(\nu_1 t + \theta_1)$ and $\sin(\nu_2 t + \theta_2)$ such that $\abs{t_2 - t_1} = \frac{T}{4 p q} d_0$. We first show that the minimum in $s$ (in the right-hand side) is less than the minimum in $t$ (in the left-hand side). To do so, we pick $t_0 \in [0, T]$ that attains the minimum in $t$. We choose $t_j$ two zeroes of $\sin(\nu_j t + \theta_j)$ respectively, $j = 1, 2$, that are the closest possible to $t_0$. Due to periodicity, they verify $\abs{t_j - t_0} \le \dfrac{\pi}{2 \nu_j}$. That $t_0$ attains the minimum means that it is a critical point of the function
\begin{equation} \label{eq:functionnormxtoft}
\begin{multlined}
F : t \longmapsto (1 - \lambda) \sin^2\left(\nu_1 t + \theta_1\right) + \lambda \sin^2\left(\nu_2 t + \theta_2\right) \\
    \qquad\qquad\qquad = (1 - \lambda) \sin^2\left(\nu_1 (t - t_1)\right) + \lambda \sin^2\left(\nu_2 (t - t_2)\right) \,.
\end{multlined}
\end{equation}
Classical trigonometry formulae then yield
\begin{equation} \label{eq:functionoftderivative}
(1 - \lambda) \nu_1 \sin\left(2 \nu_1 (t_0 - t_1)\right) + \lambda \nu_2 \sin\left(2 \nu_2 (t_0 - t_2)\right) = F'(t_0) = 0 \,.
\end{equation}
Recalling that $\abs{2 \nu_j (t_0 - t_j)} \le \pi$, we see that $\sin(2 \nu_j (t_0 - t_j))$ is of the same sign as $t_0 - t_j$, thus leading to the condition that
\begin{equation*}
    (t_0 - t_1) (t_0 - t_2) \le 0 \,,
\end{equation*}
or in other words, $t_0$ lies between $t_1$ and $t_2$. Let $s_0 \in [0, 1]$ be such that $t_0 = (1 - s_0) t_1 + s_0 t_2$. We obtain
\begin{align*}
F(t_0)
    &= (1 - \lambda) \sin^2\left(\nu_1 (t_0 - t_1)\right) + \lambda \sin^2\left(\nu_2 (t_0 - t_2)\right) \\
    &= (1 - \lambda) \sin^2\left(\nu_1 s_0 (t_2 - t_1)\right) + \lambda \sin^2\left(\nu_2 (1 - s_0) (t_1 - t_2)\right) \,.
\end{align*}
We finally use that $\abs{t_1 - t_2} \ge \frac{T}{4 p q} d_0$ and the monotonicity of the sine function on $[0, \frac{\pi}{2}]$ to deduce one inequality in~\eqref{eq:minintvsminins}, namely:
\begin{equation} \label{eq:oneinequality}
\min_{t \in [0, T]} F(t)
	\ge \min_{s \in [0, 1]} \left((1 - \lambda) \sin^2\left(\dfrac{\pi/2}{p} s d_0\right) + \lambda \sin^2\left(\dfrac{\pi/2}{q} (1 - s) d_0\right)\right) \,.
\end{equation}
To check the converse inequality, we proceed as follows: we pick $t_1$ and $t_2$, zeroes of $\sin(\nu_j t + \theta_j)$ respectively, that satisfy $\abs{t_1 - t_2} = \frac{T}{4 p q} d_0$. Denote by $J$ the closed interval with endpoints $t_1, t_2$. 
Let $t_0 \in J$ be a point where $F$ restricted to $J$ attains its minimum. Then introducing a parameter $s \in [0, 1]$ such that $t = (1 - s) t_1 + s t_2$, we obtain
\begin{equation*}
F(t_0)
    \le F(t)
    = (1 - \lambda) \sin^2\left(\dfrac{\pi/2}{p} s d_0\right) + \lambda \sin^2\left(\dfrac{\pi/2}{q} (1 - s) d_0\right) \,,
\end{equation*}
for all $s \in [0, 1]$.
This results in
\begin{equation*}
\min_{t \in [0, T]} F(t)
	\le \min_{s \in [0, 1]} \left( (1 - \lambda) \sin^2\left(\dfrac{\pi/2}{p} s d_0\right) + \lambda \sin^2\left(\dfrac{\pi/2}{q} (1 - s) d_0\right) \right) \,,
\end{equation*}
which shows together with~\eqref{eq:oneinequality} that~\eqref{eq:minintvsminins} is true. We observe in the definition of $\Lambda_{\theta_1, \theta_2}$ (see~\eqref{eq:defLambdathetapm}) that a similar minimum is involved with cosine functions instead of sine functions. To reduce to the case of sine functions and use~\eqref{eq:minintvsminins}, we simply recall that $\cos(x) = \sin(x + \frac{\pi}{2})$. We obtain
\begin{align*}
\min_{t \in [0, T]} \left((1 - \lambda) \right. &\left. \cos^2(\nu_1 t + \theta_1) + \lambda \cos^2(\nu_2 t + \theta_2)\right) \\
    &= \min_{t \in [0, T]} \left((1 - \lambda) \sin^2\left(\nu_1 t + \theta_1 + \dfrac{\pi}{2}\right) + \lambda \sin^2\left(\nu_2 t + \theta_2 + \dfrac{\pi}{2}\right)\right) \\
    &= \min_{s \in [0, 1]} \left((1 - \lambda) \sin^2\left(\dfrac{\pi/2}{p} s d_{\pi/2}\right) + \lambda \sin^2\left(\dfrac{\pi/2}{q} (1 - s) d_{\pi/2}\right)\right) \,,
\end{align*}
where we set (recall the definition of $d_0$ in~\eqref{eq:defofd_0})
\begin{equation*}
d_{\pi/2} = d_{\pi/2}(\theta_1, \theta_2)
    = d_0\left(\theta_1 + \dfrac{\pi}{2}, \theta_2 + \dfrac{\pi}{2}\right)
    = \dist\left(p \dfrac{\theta_1}{\pi/2} - q \dfrac{\theta_2}{\pi/2} + p - q, 2 \Z\right) \,.
\end{equation*}
Depending on whether $p$ and $q$ have the same parity, we can state that
\begin{equation*}
d_{\pi/2} =
\left\{
\begin{aligned}
&\quad d_0 \qquad &\rm{if} \; && p - q \equiv 0 \pmod{2} \\
&1 - d_0 \qquad &\rm{if} \; && p - q \equiv 1 \pmod{2}
\end{aligned}
\right. \,.
\end{equation*}
With this at hand, we can rewrite $\Lambda_{\theta_1, \theta_2}^2$ defined in~\eqref{eq:defLambdathetapm} as
\begin{equation*}
\Lambda_{\theta_1, \theta_2}^2
    = \max_{\lambda \in [0, 1]} \dfrac{\min_{s_1 \in [0, 1]} \left((1 - \lambda) \sin^2\left(\frac{\pi/2}{p} s_1 d_0\right) + \lambda \sin^2\left(\frac{\pi/2}{q} (1 - s_1) d_0\right)\right)}{1 - \min_{s_2 \in [0, 1]} \left((1 - \lambda) \sin^2\left(\frac{\pi/2}{p} s_2 d_{\pi/2}\right) + \lambda \sin^2\left(\frac{\pi/2}{q} (1 - s_2) d_{\pi/2}\right)\right)} \,.
\end{equation*}

\medskip
\paragraph*{Step 2 \--- Computation of $\Lambda_{\theta_1, \theta_2}^2$.}
We set for any $\lambda \in [0, 1]$ and $s \in [0, 1]$:
\begin{equation*}
g_\lambda(s)
    = g_{\lambda, d_0}(s)
    = (1 - \lambda) \sin^2\left(\frac{\pi/2}{p} s d_0\right) + \lambda \sin^2\left(\frac{\pi/2}{q} (1 - s) d_0\right) \,.
\end{equation*}
In the perspective of computing $\Lambda_{\theta_1, \theta_2}^2$, we first show the following result.

\begin{lemm}
It holds
\begin{equation*}
\max_{\lambda \in [0, 1]} \min_{s \in [0, 1]} g_\lambda(s)
    = g_{\lambda_0}(s_0)
    = \sin^2\left(\dfrac{\pi/2}{p + q} d_0\right) \,,
\end{equation*}
where $s_0 = \frac{p}{p + q}$ and $\lambda_0 = \frac{q}{p + q}$.
\end{lemm}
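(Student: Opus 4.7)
My plan is to establish that $(\lambda_0, s_0)$ is a saddle point of the function $g_\lambda(s)$ on $[0,1]\times[0,1]$, which immediately gives the min-max value as $g_{\lambda_0}(s_0)$. The saddle point structure is revealed by a very convenient algebraic coincidence at $s_0=p/(p+q)$: both sine arguments collapse to the same value. Indeed,
\begin{equation*}
\frac{\pi/2}{p}\,s_0\, d_0 = \frac{\pi/2}{p+q}\, d_0 = \frac{\pi/2}{q}\,(1-s_0)\, d_0,
\end{equation*}
so $g_\lambda(s_0) = \sin^2\bigl(\tfrac{\pi/2}{p+q} d_0\bigr)$ for every $\lambda \in [0,1]$. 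This single observation yields one of the two inequalities for free: minimizing over $s$ gives $\min_s g_\lambda(s)\le g_\lambda(s_0)=\sin^2\bigl(\tfrac{\pi/2}{p+q}d_0\bigr)$, hence the same upper bound after taking the max over $\lambda$.

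For the matching lower bound, I will show that $s_0$ is a global minimizer of $s\mapsto g_{\lambda_0}(s)$ with $\lambda_0 = q/(p+q)$. The derivative computes as
\begin{equation*}
g_{\lambda_0}'(s) = \frac{\pi d_0}{2(p+q)}\Bigl(\sin\alpha(s) - \sin\beta(s)\Bigr),
\qquad \alpha(s)=\tfrac{\pi}{p} s d_0,\quad \beta(s)=\tfrac{\pi}{q}(1-s)d_0,
\end{equation*}
where the $p$ and $q$ in the denominators of $\lambda_0$ and $1-\lambda_0$ exactly cancel those of the $\frac{\pi/2}{p}$ and $\frac{\pi/2}{q}$ factors. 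Applying the sum-to-product identity yields
\begin{equation*}
g_{\lambda_0}'(s) = \frac{\pi d_0}{p+q}\, \cos\!\Bigl(\tfrac{\alpha+\beta}{2}\Bigr)\sin\!\Bigl(\tfrac{\alpha-\beta}{2}\Bigr).
\end{equation*}
The key estimates will be that $\alpha-\beta = \tfrac{\pi d_0(p+q)}{pq}(s-s_0)$ stays in $[-\pi,\pi]$ (because $d_0\le 1$), so $\sin((\alpha-\beta)/2)$ has the sign of $s-s_0$; and that $\alpha+\beta\in[0,\pi d_0/\min(p,q)]\subseteq[0,\pi]$, so $\cos((\alpha+\beta)/2)\ge 0$. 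This forces $g_{\lambda_0}'$ to have the sign of $s-s_0$, making $s_0$ the unique minimizer with value $g_{\lambda_0}(s_0) = \sin^2\bigl(\tfrac{\pi/2}{p+q}d_0\bigr)$.

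The two bounds combine to give $\max_\lambda \min_s g_\lambda(s)= \sin^2\bigl(\tfrac{\pi/2}{p+q}d_0\bigr)$. There is no real obstacle here—the proof is essentially the observation that the ansatz $s_0 = p/(p+q)$ simultaneously balances the two sine arguments, together with the elementary sign analysis of the derivative of $g_{\lambda_0}$. The only point requiring a small check is that the arguments of $\cos$ and $\sin$ stay within the intervals where their signs are controlled, which is ensured by $d_0\le 1$ and the (implicit) hypothesis $p\ne q$ so that $p+q\ge 3$.
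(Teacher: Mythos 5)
Your proposal is correct and follows essentially the same route as the paper: the $\lambda$-independence of $g_\lambda(s_0)$ gives the upper bound, and the factorization $g_{\lambda_0}'(s)=\frac{\pi d_0}{p+q}\cos\bigl(\frac{\alpha+\beta}{2}\bigr)\sin\bigl(\frac{\alpha-\beta}{2}\bigr)$ with the sign analysis (both controlled by $d_0\le 1$) shows $s_0$ minimizes $g_{\lambda_0}$, exactly as in the paper. (The remark that $p\neq q$ is needed is superfluous — your own bounds already hold for $p=q=1$ — but this is harmless.)
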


\begin{proof}
Firstly, we observe that $g_\lambda(s_0)$ is independent of $\lambda$, since it solves
\begin{equation*}
\sin^2\left(\frac{\pi/2}{p} s_0 d_0\right) = \sin^2\left(\frac{\pi/2}{q} (1 - s_0) d_0\right) \,.
\end{equation*}
This remarkable property implies that for any $\lambda \in [0, 1]$, it holds
\begin{equation*}
\forall \lambda' \in [0, 1] \,, \qquad
	\min_{s \in [0, 1]} g_{\lambda'}(s)
		\le g_{\lambda'}(s_0)
		= g_\lambda(s_0) \,,
\end{equation*}
which results in
\begin{equation*}
\max_{\lambda' \in [0, 1]} \min_{s \in [0, 1]} g_{\lambda'}(s) \le g_\lambda(s_0) \,, \qquad \forall \lambda \in [0, 1] \,.
\end{equation*}
Now we to show that the equality is reached when $\lambda = \lambda_0$ introduced in the statement. Noticing that $(1 - \lambda_0) \frac{1}{p} = \lambda_0 \frac{1}{q} = \frac{1}{p + q}$, we obtain using classical trigonometry formulae:
\begin{align*}
g_{\lambda_0}'(s)
	&= \dfrac{\pi}{2} d_0 \Biggl(2 \dfrac{1}{p} (1 - \lambda_0) \cos\left(\dfrac{\pi/2}{p} s d_0\right) \sin\left(\dfrac{\pi/2}{p} s d_0\right) \\
		&\qquad\qquad\qquad\qquad - 2 \dfrac{1}{q} \lambda_0 \cos\left(\dfrac{\pi/2}{q} (1 - s) d_0\right) \sin\left(\dfrac{\pi/2}{q} (1 - s) d_0\right)\Biggr) \\
    &= \dfrac{\pi/2}{p + q} d_0 \left( \sin\left(\frac{\pi}{p} s d_0\right) - \sin\left(\frac{\pi}{q} (1 - s) d_0\right) \right) \\
    &= \dfrac{\pi}{p + q} d_0 \cos\left(\dfrac{\pi}{2} d_0 \left( \dfrac{s}{p} + \dfrac{1 - s}{q} \right)\right) \sin\left(\dfrac{\pi}{2} d_0 \left( \dfrac{s}{p} - \dfrac{1 - s}{q} \right)\right) \\
    &= \dfrac{\pi}{p + q} d_0 \cos\left(\dfrac{\pi}{2} d_0 \left( \dfrac{s}{p} + \dfrac{1 - s}{q} \right)\right) \sin\left(\dfrac{\pi}{2} d_0 \left( \dfrac{1}{p} + \dfrac{1}{q} \right) (s - s_0)\right) \,.
\end{align*}
We observe that the cosine is always non-negative for any $s \in [0, 1]$, because $d_0 \le 1$. As for the sine, it is non-positive for $s \le s_0$ and non-negative for $s \ge s_0$. We deduce that $g_{\lambda_0}'(s) \le 0$ on $[0, s_0]$ and $g_{\lambda_0}'(s) \ge 0$ on $[s_0, 1]$. Therefore, the minimum of $g_{\lambda_0}$ is attained at $s_0$, which concludes the proof of the lemma.
\end{proof}

Regarding the denominator in the definition of $\Lambda_{\theta_1, \theta_2}^2$, observing that $\lambda_0$ and $s_0$ in the above lemma do not dependent on $d_0$ or $d_{\pi/2}$, we find
\begin{multline*}
\min_{\lambda \in [0, 1]} \left(1 - \min_{s \in [0, 1]} \left( (1 - \lambda) \sin^2\left(\frac{\pi/2}{q} s d_{\pi/2}\right) + \lambda \sin^2\left(\frac{\pi/2}{p} (1 - s) d_{\pi/2}\right) \right) \right) \\
    = 1 - \max_{\lambda \in [0, 1]} \min_{s \in [0, 1]} g_{\lambda, d_{\pi/2}}(s)
    = 1 - g_{\lambda_0, d_{\pi/2}}(s_0)
    = \cos^2\left(\dfrac{\pi/2}{p + q} d_{\pi/2}\right) \,.
\end{multline*}
This implies that $\lambda_0$ maximizes the minimum of the numerator and minimizes the maximum of the denominator at once. Moreover, when $\lambda = \lambda_0$, the minimum of the denominator and the maximum of the numerator are reached at a common value $s_0$. Therefore
\begin{equation*}
\Lambda_{\theta_1, \theta_2}^2
    = \dfrac{\sin^2\left(\frac{\pi/2}{p + q} d_0\right)}{\cos^2\left(\frac{\pi/2}{p + q} d_{\pi/2}\right)} \,.
\end{equation*}
When $p$ and $q$ have the same parity, it holds $d_{\pi/2} = d_0$, so that
\begin{equation} \label{eq:sameparity}
\Lambda_{\theta_1, \theta_2}
    = \tan\left(\dfrac{\pi/2}{p + q} d_0\right) \,.
\end{equation}
When they do not have the same parity, then $d_{\pi/2} = 1 - d_0$ and we obtain
\begin{align} \label{eq:notsameparity}
\Lambda_{\theta_1, \theta_2}
    &= \dfrac{\sin\left(\frac{\pi/2}{p + q} d_0 \right)}{\cos\left(\frac{\pi/2}{p + q} (1 - d_0)\right)}
    = \sin\left(\frac{\pi/2}{p + q}\right) - \cos\left(\frac{\pi/2}{p + q}\right) \dfrac{\sin\left(\frac{\pi/2}{p + q} (1 - d_0) \right)}{\cos\left(\frac{\pi/2}{p + q} (1 - d_0)\right)} \nonumber\\
    &= \sin\left(\frac{\pi/2}{p + q}\right) - \cos\left(\frac{\pi/2}{p + q}\right) \tan\left(\frac{\pi/2}{p + q} (1 - d_0)\right) \,.
\end{align}
Recall that in the above formulae, the dependence on the phase shifts $\theta_1$ and $\theta_2$ is hidden in $d_0$. Thus it remains to optimize over these parameters $\theta_1, \theta_2$ to compute the quantity $\Lambda_0$ defined in~\eqref{eq:defLambda_0}. In the first case~\eqref{eq:sameparity}, we notice that $d_0 \le 1$, and that the equality is achieved for $\theta_1 = \frac{\pi}{2p}$ and $\theta_2 = 0$ for instance, so that
\begin{equation*}
\Lambda_0
    = \tan\left(\dfrac{\pi/2}{p + q}\right) \,.
\end{equation*}
In the second case~\eqref{eq:notsameparity}, the maximum is reached for $d_0 = 1$ as well, so that
\begin{equation*}
\Lambda_0
    = \sin\left(\dfrac{\pi/2}{p + q}\right) \,.
\end{equation*}
The conclusion is that $\Lambda_0 = \Lambda(\frac{p}{q})$, where the function $\Lambda$ is the one defined in~\eqref{eq:deffunctionLambda}.

\medskip
\paragraph*{Step 3 \--- Construction of an equivalent shrunk observation set.}
Recall that the sufficient condition of Theorem~\ref{thm:main} implies observability from an ``enlarged" observation set. This leads us to construct a shrunk set $\tilde I \subset I$, such that $\tilde I_R = \tilde I + (-R, R)$ is contained in $I$ up to a bounded set, so that the same is true for the sets $\omega(\tilde I)$ and $\omega(I)$. In the lemma below, when $I \subset \R_+$, we use the notation $I_R := \bigcup_{s \in I} (s - R, s + R)$.

\begin{lemm}[Shrunk observation set] \label{lem:shrunkobsset}
Let $I = \bigcup_n I_n$ where $I_n \subset \R_+$ are open intervals, with $\abs{I_n} \to + \infty$ if the union is infinite. Then there exists a family of disjoint open intervals $(\tilde J_n)_n$ in $\R_+$ (with $\abs{\tilde J_n} \to + \infty$ if there are infinitely many of them) such that the set $\tilde I = \bigcup_n \tilde J_n$ satisfies the following:
\begin{enumerate} [label=(\roman*)]
\item\label{it:shrunki} $\tilde I \subset I$;
\item\label{it:shrunkii} for any $R > 0$, the set $\tilde I_R \setminus I$ is bounded;
\item\label{it:shrunkiii} for any $R > 0$, it holds $\kappa_\star(\tilde I) = \kappa_\star(\tilde I_R) = \kappa_\star(I) = \kappa_\star(I_R)$.
\end{enumerate}
\end{lemm}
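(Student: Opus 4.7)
The plan is to build $\tilde{I}$ by shrinking each connected component of $I$ by an amount that grows to infinity with the index but stays negligible relative to the component's length. The finite-union case is trivial ($\tilde{I}=I$ after identifying disjoint components), so assume the union is infinite and hence $\abs{I_n}\to+\infty$. Let $(J_m=(a_m,b_m))_{m\ge 1}$ denote the connected components of $I$, ordered by increasing left endpoint. A key preliminary observation is that $\abs{J_m}\to+\infty$: every component contains at least one $I_n$, so any component of length $\le L$ must contain some $I_n$ with $\abs{I_n}\le L$, and by hypothesis only finitely many such $n$ exist. Since the $J_m$ are disjoint in $\R_+$, this also forces $a_m\to+\infty$.

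Next, I would set $R_m=\min\bigl(m,\lfloor\sqrt{\abs{J_m}}/2\rfloor\bigr)$ and define $\tilde{J}_m=(a_m+R_m,b_m-R_m)$ when this is nonempty ($\tilde{J}_m=\emptyset$ otherwise). By construction $R_m\to+\infty$, $R_m\le m$, $R_m/\abs{J_m}\to 0$, and the $\tilde{J}_m$ are disjoint open intervals in $\R_+$ with $\abs{\tilde{J}_m}=\abs{J_m}-2R_m\to+\infty$. Property~\ref{it:shrunki} is immediate. For~\ref{it:shrunkii}, given $R>0$, pick $M$ with $R_m>R$ for $m\ge M$; then $\tilde{J}_m+(-R,R)\subset J_m$ for all $m\ge M$, so $\tilde{I}_R\setminus I\subset\bigcup_{m<M}(\tilde{J}_m+(-R,R))$ is bounded.

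The heart of the proof is~\ref{it:shrunkiii}. The inclusions $\tilde{I}\subset\tilde{I}_R\subset I\subset I_R$ modulo bounded sets (using~\ref{it:shrunkii}), together with the fact that the intersection of any bounded set with $[\kappa r,r]$ has measure $o(r)$, give $\kappa_\star(\tilde{I})\le\kappa_\star(\tilde{I}_R)\le\kappa_\star(I)\le\kappa_\star(I_R)$. I would close the loop by proving that
\begin{equation*}
\dfrac{1}{r}\abs*{(I\setminus\tilde{I})\cap[\kappa r,r]}\longrightarrow 0
\quad\text{and}\quad
\dfrac{1}{r}\abs*{(I_R\setminus I)\cap[\kappa r,r]}\longrightarrow 0
\end{equation*}
as $r\to+\infty$, for every $\kappa\in(0,1)$. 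This reduces, for $r$ large, to estimating the components meeting $[\kappa r,r]$, which split into interior ones (fully contained) and at most two boundary ones. For interior components, $R_m\le\epsilon\abs{J_m}$ for $m\ge M(\epsilon)$ bounds the contribution by $2\epsilon(1-\kappa)r$. For a boundary component $J_{m_L}$, the critical ingredient is the cap $R_m\le m$: since all but finitely many $J_m$ have length $\ge L$ and are disjoint, at most $O(r/L)+O(1)$ of them have left endpoint in $[0,r]$, so the boundary index satisfies $m_L=O(r/L)$, yielding a boundary contribution $\le 2m_L=O(r/L)$. Fixing $\epsilon,L>0$ we get $\abs{(I\setminus\tilde{I})\cap[\kappa r,r]}/r\le 2\epsilon(1-\kappa)+O(1/L)+o(1)$, and letting $\epsilon\to 0$ then $L\to+\infty$ gives the claim; an analogous count using $|J_m|\to+\infty$ handles $\abs{(I_R\setminus I)\cap[\kappa r,r]}$.

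The principal obstacle is precisely this boundary control: an individual boundary component can be arbitrarily longer than $r$, and a naive choice such as $R_m=\sqrt{\abs{J_m}}$ without the index cap could shrink it so much that its shrunken version misses $[\kappa r,r]$ entirely, introducing new $\kappa$'s into the liminf-zero set for $\tilde{I}$ and breaking~\ref{it:shrunkiii}. Capping $R_m$ by the left-endpoint index $m$ converts the positional sparsity of disjoint long intervals into a usable upper bound on the shrinkage of boundary components; this is why ordering by left endpoint, rather than by size, is essential in the construction.
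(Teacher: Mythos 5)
Your proof is correct, and it follows the same overall strategy as the paper's --- decompose $I$ into connected components, shrink each one symmetrically by an amount that tends to infinity but stays negligible both relative to the component's length and relative to its position --- but the two key technical devices are implemented differently. The paper shrinks $J_n=(a_n,b_n)$ by $\tfrac{1}{2}\sqrt{4+\delta_n}$ with $\delta_n=\min(a_n,\,b_n-a_n)$; the bound $\delta_n\le a_n$ then makes the shrinkage of any component meeting $[\kappa r,r]$ automatically $O(\sqrt{r})=o(r)$, which is exactly the boundary-component obstruction you single out at the end. You instead cap the shrinkage by the index in the left-endpoint ordering and recover the $o(r)$ bound through a packing count ($m_L=O(r/L)$ because disjoint intervals of length $\ge L$ with left endpoint below $r$ number at most $r/L$ plus finitely many exceptions); this is slightly longer but equally valid, and your remark on why an uncapped $\sqrt{\abs{J_m}}$-shrinkage would fail is precisely what the paper's $\min$ with $a_n$ is designed to prevent. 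The verification of (iii) is also organized differently: the paper proves the one nontrivial inequality $\kappa_\star(I_R)\le\kappa_\star(\tilde I)$ by a direct lower bound on $\tfrac{1}{r}\abs{\tilde I\cap[\kappa r,r]}$ with a dichotomy according to whether interior or boundary components of $I_R$ carry the mass, whereas you show that $I\setminus\tilde I$ and $I_R\setminus I$ have vanishing relative density on every window $[\kappa r,r]$ and transfer the $\liminf$; your formulation is arguably cleaner and yields all four equalities at once. Two cosmetic points: ``finitely many $I_n$'' and ``finitely many connected components'' are not the same hypothesis (infinitely many overlapping $I_n$ can produce finitely many components, one possibly a half-line), though the finite-component case is trivial under either reading; and the existence of the increasing left-endpoint enumeration itself relies on the fact --- which you do establish first --- that all but finitely many components have length bounded below, so the components are locally finite.
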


\begin{proof}
Recall the definition of $\kappa_\star$ in~\eqref{eq:defkappastar}. We write the open set $I$ as a union of disjoint open intervals $I = \bigcup_n J_n$. Let us fix $R > 0$. We first deal with the case where there are only finitely many $J_n$'s. If $I$ is bounded, one has $\kappa_\star(I) = \kappa_\star(I_R) = 0$ and $\tilde I = \varnothing$ satisfies the conclusions of the lemma. If $I$ is not bounded, then there is an index $n_0$ for which $J_{n_0}$ is of the form $J_{n_0} = (a, + \infty)$. Then one has for any $R > 0$ the equality $\kappa_\star(I) = \kappa_\star(I_R) = 1$ and $\tilde I = J_{n_0}$ satisfies the conclusions of the lemma as well.

We now consider the case where there are infinitely many $J_n$'s. By assumption, one has $\abs{J_n} \to + \infty$ as $n \to \infty$. Writing $J_n = (a_n, b_n)$, with $a_n < b_n < \infty$, we define for any index $n$ the interval
\begin{equation*}
\tilde J_n = \left(a_n + \dfrac{\sqrt{4 + \delta_n}}{2}, b_n - \dfrac{\sqrt{4 + \delta_n}}{2}\right) \,,
	\qquad \rm{where} \;\, \delta_n = \min(a_n, b_n - a_n) \,.
\end{equation*}
Since the $J_n$'s are disjoint and $\abs{J_n} = b_n - a_n \to + \infty$, we also have $a_n \to + \infty$, so that $\delta_n \to + \infty$ too. Incidentally, one readily checks that $\abs{\tilde J_n} \to + \infty$ as well. Thus, defining
\begin{equation*}
\tilde I = \bigcup_n \tilde J_n \,,
\end{equation*}
we have $\tilde I \subset I$, namely the property~\ref{it:shrunki}, and given any $R > 0$, there are finitely many $n$'s such that $R \ge \frac{\sqrt{\delta_n}}{2}$. It implies that the thickened set $\tilde I_R$ is contained in $I$ modulo a bounded set, hence~\ref{it:shrunkii}. The crucial point of this construction is the claim~\ref{it:shrunkiii}. As a consequence of the inclusions $\tilde I \subset \tilde I_R$ and $I \subset I_R$, it holds $\kappa_\star(\tilde I) \le \kappa_\star(\tilde I_R)$ and $\kappa_\star(I) \le \kappa_\star(I_R)$. Moreover, in virtue of~\ref{it:shrunkii}, we can write $\tilde I_R = (\tilde I_R \cap I) \cup A$, where $A = \tilde I_R \setminus I$ is bounded. Since $\frac{1}{r} \abs{A \cap [0, r]} \le \frac{1}{r} \abs{A} \to 0$ as $r \to + \infty$, one can check that $\kappa_\star(\tilde I_R) \le \kappa_\star(\tilde I_R \cap I) \le \kappa_\star(I)$. To sum up, we have proved so far that $\kappa_\star(\tilde I) \le \kappa_\star(\tilde I_R) \le \kappa_\star(I) \le \kappa_\star(I_R)$.

Thus, in order to prove~\ref{it:shrunkiii}, it remains to check that $\kappa_\star(I_R) \le \kappa_\star(\tilde I)$. Unless we are in the straightforward case $\kappa_\star(I_R) = 0$, we pick $\kappa \in (0, \kappa_\star(I_R))$, so that by definition of $\kappa_\star$, it holds
\begin{equation} \label{eq:averagekappa}
\exists c > 0, \exists r_0 > 0 : \forall r \ge r_0 \,, \qquad
	\dfrac{1}{r} \abs*{I_R \cap [\kappa r, r]} \ge c \,.
\end{equation}
In the sequel, to simplify notation, we write $J_n^R = (J_n)_R$. Up to enlarging $r_0$, we can assume that for any index $n$ such that $J_n^R \cap [\kappa r_0, + \infty) \neq \varnothing$, it holds $\delta_n \ge 5 + 8 R$ (recall that $\delta_n \to + \infty$). Fix an $r \ge r_0$. Then there is a finite (possibly empty) set of indices $\{n_k\}_k$ such that $J_{n_k}^R \subset [\kappa r, r]$. Assume first that
\begin{equation} \label{eq:boundfrombelowJnk}
\dfrac{1}{r} \abs*{\bigcup_k J_{n_k}^R \cap [\kappa r, r]}
	= \dfrac{1}{r} \sum_k \abs{J_{n_k}^R}
	\ge \dfrac{c}{2} \,.
\end{equation}
Then it holds
\begin{align*}
\dfrac{1}{r} \abs*{\tilde I \cap [\kappa r, r]}
	&\ge \dfrac{1}{r} \sum_k \abs{\tilde J_{n_k}}
	= \dfrac{1}{r} \sum_k \left(\abs{J_{n_k}^R} - \left( \sqrt{4 + \delta_{n_k}} + 2 R \right)\right) \\
	&\ge \dfrac{1}{r} \sum_k \left(1 - \dfrac{\sqrt{4 + \delta_{n_k}} + 2 R}{\delta_{n_k} + 2R}\right) \abs{J_{n_k}^R}
	\ge \dfrac{1}{r} \sum_k \left(1 - \sqrt{\dfrac{4}{\delta_n^2} + \dfrac{1}{\delta_{n_k}}} - \dfrac{2 R}{\delta_n + 2R}\right) \abs{J_{n_k}^R} \,.
\end{align*}
To obtain the second to last inequality, we used the fact that  by definition of $\delta_n$, it holds $\abs{J_n} \ge \delta_n$, which implies in particular that $\abs{J_n^R} \ge \delta_n + 2 R$. Using in the last line that $\delta_{n_k} \ge 5 + 8 R$, together with~\eqref{eq:boundfrombelowJnk}, we obtain
\begin{equation} \label{eq:lowerboundcaseinterior}
\dfrac{1}{r} \abs*{\tilde I \cap [\kappa r, r]}
	\ge \left(1 - \sqrt{\dfrac{9}{25}} - \dfrac{1}{5}\right) \dfrac{1}{r} \sum_k \abs*{J_{n_k}^R}
	\ge \dfrac{1}{5} \times \dfrac{c}{2} \,.
\end{equation}
Otherwise, if now~\eqref{eq:boundfrombelowJnk} is not satisfied, then recalling~\eqref{eq:averagekappa}, it holds
\begin{equation*}
\dfrac{1}{r} \abs*{\left(I_R \setminus \bigcup_k J_{n_k}^R\right) \cap [\kappa r, r]}
	\ge \dfrac{c}{2} \,.
\end{equation*}
Any interval $J_n^R \subset I_R \setminus \bigcup_k J_{n_k}^R$ intersecting $[\kappa r, r]$ must contain $\kappa r$ or $r$, otherwise it would satisfy $J_n^R \cap [\kappa r, r] = \varnothing$, or $J_n^R \subset (\kappa r, r)$ (the latter would imply that $n \in \{n_k\}_k$). Therefore, there are at most two such intervals. We deduce that there is an index $n_\star$ such that $J_{n_\star}^R \not\subset [\kappa r, r]$ but $J_{n_\star}^R \cap [\kappa r, r] \neq \varnothing$, with
\begin{equation*}
\dfrac{1}{r} \abs*{J_{n_\star}^R \cap [\kappa r, r]}
	\ge \dfrac{c}{4} \,.
\end{equation*}
Writing $J_{n_\star}^R = (a_{n_\star} - R, b_{n_\star} + R)$, the fact that $J_{n_\star}^R \cap [\kappa r, r] \neq \varnothing$ imposes that $a_{n_\star} - R \le r$, hence $a_{n_\star} \le r + R$. Thus we obtain
\begin{align} \label{eq:lowerboundcaseboundary}
\dfrac{1}{r} \abs*{\tilde I \cap [\kappa r, r]}
	&\ge \dfrac{1}{r} \abs*{\tilde J_{n_\star} \cap [\kappa r, r]}
	\ge \dfrac{1}{r} \left( \abs{J_{n_\star}^R \cap [\kappa r, r]} - \sqrt{4 + \delta_{n_\star}} - 2 R \right) \nonumber\\
	&\ge \dfrac{c}{4} - \dfrac{\sqrt{4 + r + R} + 2 R}{r} \,.
\end{align}
We used the fact that $\delta_{n_\star} \le a_{n_\star} \le r + R$ to obtain the last inequality. In view of the estimates~\eqref{eq:lowerboundcaseinterior} and~\eqref{eq:lowerboundcaseboundary}, in any case we have
\begin{equation*}
\dfrac{1}{r} \abs*{\tilde I \cap [\kappa r, r]}
	\ge \min\left( \dfrac{c}{10}, \dfrac{c}{4} - \dfrac{\sqrt{4 + r + R} + 2 R}{r} \right) \,.
\end{equation*}
We conclude that
\begin{equation*}
\liminf_{r \to + \infty} \dfrac{1}{r} \abs*{\tilde I \cap [\kappa r, r]}
	> 0 \,.
\end{equation*}
Recalling that $\kappa$ is any arbitrary number $< \kappa_\star(I_R)$, we finally get the desired converse inequality $\kappa_\star(\tilde I) \ge \kappa_\star(I_R)$. Thus~\ref{it:shrunkiii} is proved, which concludes the proof of the lemma.
\end{proof}

In the sequel, we will proceed as follows: to prove that $\kappa_\star(I) > \Lambda(\frac{\nu_2}{\nu_1})$ is a sufficient condition to have observability from $\omega(I)$, we will check that the dynamical condition~\eqref{eq:dynamicalcondition} of Theorem~\ref{thm:main} is true in the smaller set $\omega(\tilde I)$, where $\tilde I$ is given by Lemma~\ref{lem:shrunkobsset}. To show that it is also necessary, we will check that the condition~\eqref{eq:dynamicalcondition} is violated in the larger set $\omega(I)_R = \omega(I_R)$ for any $R > 0$.

\medskip
\paragraph*{Step 4 \--- Geometric condition of observability for rationally dependent characteristic frequencies.}
We investigate the validity of the dynamical condition~\eqref{eq:dynamicalcondition} of Theorem~\ref{thm:main}. In the case where $\frac{\nu_2}{\nu_1} \in \Q$, writing $\frac{\nu_2}{\nu_1} = \frac{p}{q}$ as an irreducible fraction, the period of the Hamiltonian flow is given by $T_0 = \frac{2 \pi}{\nu_2} p = \frac{2 \pi}{\nu_1} q$. We write for short $\Lambda = \Lambda(\frac{\nu_2}{\nu_1})$ and $\kappa_\star = \kappa_\star(I)$. Our goal now is to reformulate the dynamical condition~\eqref{eq:dynamicalcondition} using the Area formula.

\begin{prop}[Area formula {\cite[Theorem 3.9]{EvansGariepy:Book}}] \label{prop:areaformula}
Let $J \subset \R$ be a bounded interval and let $\gamma : J \to \R^n$ be a Lipschitz curve. Then $\gamma$ is differentiable at Lebesgue-almost every point in $J$ and for any Borel set $E \subset \R^n$, it holds
\begin{equation*}
\int_J \one_E\left(\gamma(t)\right) \abs*{\gamma'(t)} \dd t
	= \int_{\im \gamma \cap E} \# \gamma^{-1}\left(\{x\}\right) \dd \haus^1(x) \,.
\end{equation*}
Here, $\im \gamma = \set{\gamma(t)}{t \in J} \subset \R^n$, $\# \gamma^{-1}(\{x\})$ stands for the cardinality of the set $\set{t \in J}{\gamma(t) = x}$, and $\haus^1$ is the one-dimensional Hausdorff measure.
\end{prop}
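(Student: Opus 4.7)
The plan is to follow the classical strategy from geometric measure theory (as in Federer's treatise or the Evans-Gariepy reference cited). By Rademacher's theorem, $\gamma$ is differentiable at Lebesgue-almost every $t \in J$, so $\abs{\gamma'(t)}$ is well-defined a.e. I would split $J$ (up to a Lebesgue null set) into a critical part $J_0 = \set{t \in J}{\gamma'(t) = 0}$ and a regular part $J_+ = J \setminus J_0$, and verify the identity separately on each piece.

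On $J_0$, the left-hand integrand vanishes identically. To handle the right-hand side, I would prove that $\haus^1(\gamma(J_0)) = 0$ by a Morse--Sard-type covering: for any $\eps > 0$ and each $t \in J_0$, by definition of $\gamma'(t) = 0$, there is a neighborhood on which $\gamma$ is $\eps$-Lipschitz; extracting a countable subcover with controlled diameters, pushing forward, and letting $\eps \to 0$ gives the claim. In particular, for $\haus^1$-almost every $x \in \im \gamma$, removing $J_0$ does not change $\#\gamma^{-1}(\{x\})$, so $J_0$ contributes nothing to either side.

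On $J_+$, I would stratify $J_+ = \bigcup_{n \ge 1} \set{t \in J_+}{\abs{\gamma'(t)} \ge 1/n}$ and decompose $J_+$ (modulo a null set) into a countable family of Borel subsets $(E_k)_{k \ge 1}$ on each of which $\gamma|_{E_k}$ is a bi-Lipschitz embedding into $\R^n$. The construction rests on the fact that at any Lebesgue density point of the stratum $\{\abs{\gamma'} \ge 1/n\}$, one has a local lower bound $\abs{\gamma(t) - \gamma(s)} \ge c \abs{t - s}$ for some $c > 0$, so a Vitali-type covering produces countably many pieces of bounded injectivity, exploiting crucially the one-dimensional nature of $J$. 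On each $E_k$, the standard Lipschitz change of variables on $\R$ gives
\begin{equation*}
\int_{E_k} \one_E\left(\gamma(t)\right) \abs*{\gamma'(t)} \dd t = \haus^1\left(\gamma(E_k) \cap E\right).
\end{equation*}

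To conclude, I would sum these identities over $k$ and apply Tonelli's theorem. On the left-hand side, the pieces $E_k$ partition $J_+$ up to a null set, yielding $\int_{J_+} \one_E(\gamma(t)) \abs{\gamma'(t)} \dd t$, which equals the full left-hand side of the area formula. On the right-hand side, because each $\gamma|_{E_k}$ is injective and $J_+$ is the disjoint union of the $E_k$'s modulo a null set, the identity $\#\gamma^{-1}(\{x\}) = \sum_k \one_{\gamma(E_k)}(x)$ holds for $\haus^1$-a.e. $x \in \im \gamma$; this also grants the $\haus^1$-measurability of the multiplicity function, since each $\gamma(E_k)$ is $\haus^1$-measurable as the bi-Lipschitz image of a Borel set. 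Summing and reapplying Tonelli gives the stated equality. The main technical obstacle is the bi-Lipschitz decomposition of $J_+$ together with the bookkeeping that ensures the image pieces $\gamma(E_k)$ assemble with the correct multiplicity, which is what genuinely requires the Vitali-type argument rather than a naive partition.
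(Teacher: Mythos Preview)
The paper does not prove this proposition: it is quoted verbatim as a reference result from \cite[Theorem~3.9]{EvansGariepy:Book} and then applied as a black box in the subsequent computations. Your sketch follows the standard Evans--Gariepy/Federer strategy (Rademacher, Sard-type estimate on the critical set, countable bi-Lipschitz decomposition of the regular set, then summation), which is exactly what the cited reference does, so there is nothing to compare against in the paper itself.

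One small caution on your Step~3: the sentence ``at any Lebesgue density point of the stratum $\{\abs{\gamma'} \ge 1/n\}$, one has a local lower bound $\abs{\gamma(t)-\gamma(s)} \ge c\abs{t-s}$'' is not literally true as written --- a pointwise lower bound on $\abs{\gamma'}$ on a measurable set does not by itself give a bi-Lipschitz estimate, since $\gamma$ could wander on the complement. The correct decomposition (as in Evans--Gariepy, Lemma~3.2.2) linearizes around each differentiability point and selects pieces on which $\gamma$ is uniformly close to an affine map, which is what yields the bi-Lipschitz control. This is a standard refinement and does not affect the overall correctness of your outline.
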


We will apply this formula to a curve of the form $\gamma : t \mapsto \abs{x^t} \in \R$ defined on $J = (0, T)$, where $t \mapsto (x^t, \xi^t)$ is a trajectory of the Hamiltonian flow. Calculations will involve the inverse Jacobian $\abs{\gamma'(t)}^{-1}$. Using anisotropy\footnote{In the excluded isotropic case ($p = q = 1$), one can choose $(x^0, \xi^0)$ so that $\abs{x^t}$ is constant, as we did in the proof of Proposition~\ref{prop:sphericalsetwithaxiallysymmetricpotential} (see Subsection~\ref{subsec:proofaxiallysymmetricpotential}). In such a situation, the set $\im \gamma \subset \R_+$ is reduced to a point. This is a very singular situation, since the Jacobian $\abs{\gamma'(t)}$ is identically zero.} of the harmonic oscillator ($p \neq q$), we can check that the Jacobian vanishes only at a finite number of points.

\begin{lemm} \label{lem:criticalpointsisolated}
Let $t \mapsto (x^t, \xi^t)$ be a trajectory of the Hamiltonian flow of an anisotropic harmonic oscillator, with initial datum $\rho_0 = (x_0, \xi_0)$. Then the curve $\gamma : \R \ni t \mapsto \abs{x^t} \in \R_+$ is Lipschitz with constant $\sqrt{2 p(\rho_0)}$. If $\rho_0 \neq 0$, then $\gamma$ is of class $\cont^\infty$ in $\R \setminus \{\gamma = 0\}$. Moreover, the set
\begin{equation*}
S_\gamma
	:= \set{t \in \R}{\gamma(t) = 0 \quad \rm{or} \quad \gamma'(t) = 0}
\end{equation*}
is locally finite, namely for any bounded interval $I \subset \R$, the set $S_\gamma \cap I$ is finite. In addition, for any bounded interval $I \subset \R$, one has
\begin{equation} \label{eq:cardinalitypreimage}
\exists k = k(I) \in \N : \forall s \in \R_+ \,, \qquad
	\# \gamma^{-1}\left(\{s\}\right) \cap I \le k \,.
\end{equation}
\end{lemm}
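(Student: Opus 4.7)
The plan is to work with $\gamma^2 = |x^t|^2$ rather than $\gamma$ itself: using the action–angle form of the flow~\eqref{eq:hamiltonianflowinactionanglecoordinatesomegaI}, this function reads
\begin{equation*}
\gamma(t)^2 = \tfrac{A_1^2+A_2^2}{2} - \tfrac{A_1^2}{2}\cos(2\nu_1 t + 2\theta_1) - \tfrac{A_2^2}{2}\cos(2\nu_2 t + 2\theta_2),
\end{equation*}
which is a real-analytic trigonometric expression at two distinct frequencies (thanks to anisotropy $\nu_1 \neq \nu_2$). Smoothness of $\gamma$ away from its zeros then follows at once from smoothness of $\sqrt{\,\cdot\,}$ on $(0, +\infty)$. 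For the Lipschitz estimate, I would combine the inverse triangle inequality $|\gamma(t_2) - \gamma(t_1)| \le |x^{t_2} - x^{t_1}|$ with the Hamilton equation $\dot x^t = \xi^t$ and the energy bound $|\xi^t| \le \sqrt{2 p(\rho_0)}$ to get $|\gamma(t_2) - \gamma(t_1)| \le \sqrt{2p(\rho_0)}\,|t_2-t_1|$.

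For the local finiteness of $S_\gamma$, I would observe that $S_\gamma$ is contained in the union of the zero sets of the two real-analytic functions $\gamma^2$ and $(\gamma^2)' = \nu_1 A_1^2 \sin(2\nu_1 t + 2\theta_1) + \nu_2 A_2^2 \sin(2\nu_2 t + 2\theta_2)$. Since $\rho_0 \neq 0$, at least one of $A_1, A_2$ is nonzero, and anisotropy ensures the two trigonometric components have distinct frequencies, so they are linearly independent; thus neither function vanishes identically. Real-analytic nonzero functions on $\mathbb{R}$ have isolated zeros, hence locally finite zero sets, which yields the claim.

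The main content is the uniform cardinality bound~\eqref{eq:cardinalitypreimage}. The key idea is that on any bounded interval $I$, the critical points of $\gamma^2$ partition $I$ into finitely many subintervals of strict monotonicity. More precisely, let $m = m(I)$ be the (finite) number of zeros of $(\gamma^2)'$ in $I$. These zeros partition $I$ into at most $m+1$ closed subintervals on whose interior $(\gamma^2)'$ has constant nonzero sign, so $\gamma^2$ is strictly monotonic on each. For $s \ge 0$, the equation $\gamma(t) = s$ is equivalent to $\gamma^2(t) = s^2$, and strict monotonicity forces at most one solution per subinterval, yielding $\#\gamma^{-1}(\{s\}) \cap I \le m+1$.

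The only delicate point is the uniformity in $s$, which is built into this argument because the number $m$ depends only on the trigonometric structure of $\gamma^2$ on $I$, not on $s$. No quantitative estimate on $m$ is needed for the lemma; for the rational case $\nu_2/\nu_1 = p/q$ one could alternatively invoke that $\gamma^2 - s^2$ is a trigonometric polynomial whose degree is bounded independently of $s$, giving the same conclusion via the classical zero count for trigonometric polynomials.
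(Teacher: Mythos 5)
Your proof is correct, and for the heart of the lemma---the local finiteness of $S_\gamma$---you take a genuinely different route from the paper. You exploit the explicit action--angle form of the flow to write $(\gamma^2)'(t)=\nu_1 A_1^2\sin(2\nu_1 t+2\theta_1)+\nu_2 A_2^2\sin(2\nu_2 t+2\theta_2)$, note that $S_\gamma\subset\{(\gamma^2)'=0\}$, and conclude by real-analyticity: since $\rho_0\neq 0$ forces some $A_j\neq 0$ and anisotropy makes the two sine components linearly independent, $(\gamma^2)'$ is a nonzero real-analytic function, whose zero set is discrete and closed, hence locally finite. The paper instead works directly with $f=\gamma^2$ in the coordinates $(x^t,\xi^t)$, computes $f',f'',f''',f^{(4)}$, and shows via a Taylor expansion that at any point of $S_\gamma$ at least one of these derivatives is nonzero---anisotropy entering only at the fourth derivative, where $f^{(4)}(t_0)=8\nu_x^2(\nu_x^2-\nu_\xi^2)\abs{x^{t_0}}^2\neq 0$ in the degenerate case. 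Your argument is shorter and makes the role of anisotropy transparent (it is exactly what rules out the identically vanishing $(\gamma^2)'$ of a circular orbit); the paper's argument is coordinate-free, avoids invoking the general theory of real-analytic functions, and yields the extra quantitative information that $f'$ vanishes to order at most $3$. The remaining steps (Lipschitz bound via $\dot x^t=\xi^t$ and energy conservation, smoothness off $\{\gamma=0\}$, and the cardinality bound by splitting $I$ along the finitely many critical points into subintervals of strict monotonicity of $\gamma^2$) coincide with the paper's, up to the cosmetic difference that you count $\le m+1$ closed monotonicity intervals while the paper counts $2n+1$ by treating the partition points separately.
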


\begin{proof}
That $\gamma$ is Lipschitz follows from the inverse triangle inequality, the Hamilton equations~\eqref{eq:defHamiltonianflowcoord} and the fact that $p(x, \xi) = V(x) + \frac{1}{2} \abs{\xi}^2$ is preserved by the flow:
\begin{equation*}
\abs*{\gamma(t_2) - \gamma(t_1)}
	\le \abs{x^{t_2} - x^{t_1}}
	\le \abs{t_2 - t_1} \sup_{t \in \R} \abs{\xi^t}
	\le \abs{t_2 - t_1} \sqrt{2 p(\rho_0)} \,.
\end{equation*}
From now on, we assume that $\rho_0 \neq 0$. First notice that the set $\{\gamma = 0\}$ is closed since $\gamma$ is continuous. Given that $t \mapsto x^t$ is smooth, the curve $\gamma$ is smooth in a neighborhood of any point $t \in \R \setminus \{\gamma = 0\}$, so that $\gamma \in \cont^\infty(\R \setminus \{\gamma = 0\})$. To show that $S_\gamma$ is locally finite, it is sufficient to prove that it is closed and also discrete, namely that it is made of isolated points.\footnote{If $S \subset \R$ is closed and discrete, the for any compact interval $I \subset \R$, the set $S \cap I$ is compact. Since $S$ is discrete, the set $S \cap I$ can be covered by open sets containing at most one element of $S$. Then, extracting a finite subcovering shows that $S \cap I$ is finite.}

We first check that it is closed by observing that the map $f : t \mapsto \gamma^2(t) = \abs{x^t}^2$ belongs to $\cont^\infty(\R)$ and that
\begin{equation} \label{eq:Sgammaequality}
S_\gamma
	= \set{t \in \R}{f'(t) = 0} \,.
\end{equation}
To check this equality, we use the fact that $f'(t) = 2 \gamma(t) \gamma'(t)$ for all $t \in \R \setminus S_\gamma$. If $t \not\in S_\gamma$, then it follows that $\gamma(t) \gamma'(t) \neq 0$. Conversely, if $t \in S_\gamma$, either $\gamma(t) \neq 0$, so that $\gamma'(t) = 0$, in which case it holds $f'(t) = 2 \gamma(t) \gamma'(t) = 0$; or $\gamma(t) = 0$, which implies that $x^t = 0$, hence $f'(t) = 2 x^t \cdot \xi^t = 0$. This justifies~\eqref{eq:Sgammaequality}.

Thus it remains to show that $S_\gamma$ is discrete. Let us compute the derivatives of $f$ up to order $4$:
\begin{align}
f'(t)
    &= 2 x^t \cdot \xi^t \,, \label{eq:derivative1}\\
f^{(2)}(t)
    &= 2 \abs*{\xi^t}^2 - 2 x^t \cdot A x^t \,, \label{eq:derivative2}\\
f^{(3)}(t)
    &= - 4 \xi^t \cdot A x^t - 2 \xi^t \cdot A x^t - 2 x^t \cdot A \xi^t
    = - 8 \xi^t \cdot A x^t \,, \label{eq:derivative3}\\
f^{(4)}(t)
    &= 8 \left(\abs*{A x^t}^2 - \xi^t \cdot A \xi^t\right) \,.\label{eq:derivative4}
\end{align}
Let us write the Taylor expansion of $f'$ at order $3$ near $t_0 \in \R$:
\begin{equation} \label{eq:Taylorexp}
f'(t)
	= f'(t_0) + (t - t_0) f^{(2)}(t_0) + \dfrac{(t - t_0)^2}{2} f^{(3)}(t_0)
		+ \dfrac{(t - t_0)^3}{6} f^{(4)}(t_0) + o\left((t - t_0)^3\right) \,.
\end{equation}
Suppose that $t_0 \in S_\gamma$. Then $f'(t_0) = 0$ in virtue of~\eqref{eq:Sgammaequality}. If $f^{(2)}(t_0) \neq 0$, then~\eqref{eq:Taylorexp} yields
\begin{equation*}
\abs*{f'(t)} \ge \dfrac{\abs{f^{(2)}(t_0)}}{2} \abs*{t - t_0}
\end{equation*}
for all $t$ in a neighborhood $U$ of $t_0$. In particular, $S_\gamma \cap U = \{t_0\}$, meaning that $t_0$ is isolated. Likewise, if $f^{(2)}(t_0) = 0$ but $f^{(3)}(t_0) \neq 0$, then~\eqref{eq:Taylorexp} leads to
\begin{equation*}
\abs*{f'(t)} \ge \dfrac{\abs{f^{(3)}(t_0)}}{4} \abs*{t - t_0}^2
\end{equation*}
in a neighborhood of $t_0$, so that $t_0$ is isolated again.

Now, if $f^{(2)}(t_0) = f^{(3)}(t_0) = 0$, we show that necessarily $f^{(4)}(t_0) \neq 0$. In view of~\eqref{eq:derivative1}, \eqref{eq:derivative2}, and~\eqref{eq:derivative3}, it holds
\begin{equation} \label{eq:1-2-3derivativevanishes}
x^{t_0} \cdot \xi^{t_0} = 0
	\qquad
\abs{\xi^{t_0}}^2 = x^{t_0} \cdot A x^{t_0} \,,
	\quad \rm{and} \quad
\xi^{t_0} \cdot A x^{t_0} = 0 \,.
\end{equation}
The first and third equalities mean that $\xi^{t_0} \perp x^{t_0}$ and $\xi^{t_0} \perp A x^{t_0}$. Moreover, the second equality ensures that $x^{t_0} \neq 0$ and $\xi^{t_0} \neq 0$, otherwise $(x^{t_0}, \xi^{t_0}) = (0, 0)$, hence $\rho_0 = 0$. Since we are in two dimensions, we deduce that $A x^{t_0}$ and $x^{t_0}$ are parallel, and therefore $x^{t_0}$ is an eigenvector of $A$. Since $\xi^{t_0} \perp x^{t_0}$ and $\xi^{t_0} \neq 0$, we deduce that $\xi^{t_0}$ is also an eigenvector, associated with a different eigenvalue since $A$ has two distinct eigenvalues by assumption. We relabel $\nu_1$ and $\nu_2$ so that 
$A x^{t_0} = \nu_x^2 x^{t_0}$ and $A \xi^{t_0} = \nu_\xi^2 \xi^{t_0}$. Plugging this into the second equality in~\eqref{eq:1-2-3derivativevanishes} yields $\abs{\xi^{t_0}}^2 = \nu_x^2 \abs{x^{t_0}}^2$, from which we deduce that the fourth derivative~\eqref{eq:derivative4} cannot vanish at $t_0$, given that the oscillator is anisotropic ($\nu_x \neq \nu_\xi$):
\begin{equation*}
\abs*{A x^{t_0}}^2 - \xi^{t_0} \cdot A \xi^{t_0}
    = \nu_x^4 \abs*{x^{t_0}}^2 - \nu_\xi^2 \abs*{\xi^{t_0}}^2
    = \nu_x^2 (\nu_x^2 - \nu_\xi^2) \abs{x^{t_0}}^2
    \neq 0 \,.
\end{equation*}
Therefore~\eqref{eq:Taylorexp} implies that
\begin{equation*}
\abs*{f'(t)} \ge \dfrac{\abs{f^{(4)}(t_0)}}{12} \abs*{t - t_0}^3
\end{equation*}
in a neighborhood of $t_0$, that is to say the critical point $t_0$ is again isolated. To sum up, the above argument shows that there exists a neighborhood $U$ of $t_0$ such that $U \cap S_\gamma = \{t_0\}$, so $S_\gamma$ is indeed a discrete set.

Now fix $I \subset \R$ a bounded interval. We have just shown that $n = \# (S_\gamma \cap I)$ is finite. To prove~\eqref{eq:cardinalitypreimage}, we observe that the complement of $S_\gamma$ in $I$ is a union of at most $n + 1$ open intervals in $I$, on which $\gamma'$ does not vanish and has constant sign (use the intermediate value theorem). Therefore $\gamma$ is one-to-one in each of these intervals. We infer that
\begin{equation*}
\forall s \in \R \,, \qquad
	\# \set{t \in I}{\gamma(t) = s}
		\le n + 1 + \# (S_\gamma \cap I)
		= 2 n + 1 \,.
\end{equation*}
This completes the proof of the lemma.
\end{proof}

Let us assume that $\kappa_\star \le \Lambda$ and fix $R > 0$. Recalling that $\kappa_\star = \kappa_\star(I ) = \kappa_\star(I_R)$ from~\ref{it:shrunkiii} in Lemma~\ref{lem:shrunkobsset}, we know that there exists a sequence $(r_n)_{n \in \N}$ tending to $+ \infty$ along which
\begin{equation} \label{eq:avgtendtozerokappastar}
\dfrac{1}{r_n} \abs*{I_R \cap [\kappa_\star r_n, r_n]}
	\strongto{n \to \infty} 0 \,.
\end{equation}

According to Step 2, considering actions $(A_1, A_2) = (\sqrt{1 - \lambda_0}, \sqrt{\lambda_0}) = (\sqrt{\frac{p}{p + q}}, \sqrt{\frac{q}{p + q}})$ and initial angles $(\theta_1, \theta_2) = (\frac{\pi}{2p}, 0)$, one obtains a trajectory of the Hamiltonian flow $t \mapsto (x^t, \xi^t)$ such that
\begin{equation} \label{eq:criticaltrajectory}
\min_{t \in [0, T]} \abs{x^t} = \Lambda \max_{t \in [0, T]} \abs{x^t} \,,
\end{equation}
that is to say a trajectory that attains the supremum~\eqref{eq:defLambda_0}. Here $T$ is any real number larger than the period of the flow $T_0$. In view of the homogeneity of degree $1$ of the Hamiltonian flow, we know that $t \mapsto (c x^t, c \xi^t)$ is still a trajectory of the Hamiltonian flow, for any scalar $c \in \R$. Note that~\eqref{eq:criticaltrajectory} above ensures that $\abs{x^t}$ is bounded from below by a positive constant for all times. Therefore, Lemma~\ref{lem:criticalpointsisolated} implies that the curve $\gamma : (0, T) \ni t \mapsto \abs{x^t}$ is smooth. The corresponding set $S_\gamma$ of Lemma~\ref{lem:criticalpointsisolated} is nothing but $S_\gamma = \{\gamma' \neq 0\}$. A consequence of this lemma is that $S_\gamma$ has vanishing measure. Thus we write
\begin{equation} \label{eq:monotoneconvergenceBN}
(0, T) \setminus S_\gamma
	= \bigcup_{N \in \N} B_N
	\qquad \rm{where} \quad B_N = \set{t \in (0, T)}{\abs*{\gamma'(t)} \ge 2^{-N}} \,.
\end{equation}
Fix an arbitrary $N \in \N$ and a scalar $c > 0$. Then we obtain
\begin{align} \label{eq:areaformula1}
\int_{B_N} \one_{I_R}\left(c \abs*{x^t}\right) \dd t
    &\le \dfrac{2^N}{c} \int_{B_N} \one_{I_R}\left(c \gamma(t)\right) \abs*{c \gamma'(t)} \dd t
    = \dfrac{2^N}{c} \int_{c \gamma(B_N)} \one_{I_R}(s) \# \set{t \in (0, T)}{\gamma(t) = \dfrac{s}{c}} \dd s \nonumber\\
    &\le \dfrac{2^N k}{c} \int_{c \gamma(B_N)} \one_{I_R}(s) \dd s
    \le \dfrac{2^N k}{c} \int_{c \Lambda \max \gamma}^{c \max \gamma} \one_{I_R}(s) \dd s \,,
\end{align}
where the equality results from the Area formula (Proposition~\ref{prop:areaformula}) applied to $E = I_R$. The integer $k$ is the one from Lemma~\ref{lem:criticalpointsisolated}~\eqref{eq:cardinalitypreimage}. The last inequality follows from the fact that $\abs{x^t}$ spans the interval $[\Lambda \max \gamma, \max \gamma]$ by construction (recall~\eqref{eq:criticaltrajectory}). Thus taking $c = c_n = r_n/ \max \gamma$, with $(r_n)_{n \in \N}$ the sequence from~\eqref{eq:avgtendtozerokappastar}, we obtain
\begin{equation*}
\int_{B_N} \one_{I_R}\left(c_n \gamma(t)\right) \dd t
    \le 2^N k (\max \gamma) \times \dfrac{1}{r_n} \abs*{I_R \cap [\Lambda r_n, r_n]}
    \strongto{n \to \infty} 0 \,,
\end{equation*}
by~\eqref{eq:avgtendtozerokappastar}, since $\Lambda \ge \kappa_\star$. Now going back to~\eqref{eq:monotoneconvergenceBN}, since the set $S_\gamma$ is negligible, monotone convergence ensures that $\abs{B_N} \to T$ as $N \to \infty$. We finally obtain that
\begin{equation*}
\int_0^T \one_{\omega(I_R)}\left(c_n x^t\right) \dd t
    \le \abs*{(0, T) \setminus B_N} + \int_{B_N} \one_{I_R}\left(c_n \abs*{x^t}\right) \dd t
    = T - \abs*{B_N} + o(1)
\end{equation*}
as $n \to \infty$. We let $N \to \infty$ to conclude that the dynamical condition~\eqref{eq:dynamicalcondition} is not fulfilled, namely
\begin{equation*}
\liminf_{\rho \to \infty} \int_0^T \one_{\omega(I)_R \times \R^d}\left(\phi^t(\rho)\right) \dd t
    = 0 \,.
\end{equation*}
The parameter $R > 0$ is arbitrary. Therefore the necessary condition of Theorem~\ref{thm:main} tells us that observability from $\omega(I)$ in time $T$ does not hold, and $T \ge T_0$ itself is arbitrary.

\medskip

We turn to the case where $\kappa_\star > \Lambda$. This time, we take $T = T_0$ to be the period of the Hamiltonian flow and check that the observability condition~\eqref{eq:dynamicalcondition} holds in $\omega(\tilde I)$. We pick $\kappa \in (\Lambda, \kappa_\star)$. In virtue of Lemma~\ref{lem:shrunkobsset}~\ref{it:shrunkiii}, we have $\kappa_\star = \kappa_\star(I) = \kappa_\star(\tilde I)$ so that
\begin{equation} \label{eq:densityboundedfrombelow}
\exists c > 0, \exists r_0 > 0 : \forall r \ge r_0 \,, \qquad
	\dfrac{1}{r} \abs*{\tilde I \cap [\kappa r, r]} \ge c \,.
\end{equation}
Let $(x^t, \xi^t)$ be a trajectory of the Hamiltonian flow with initial datum $\rho_0$. One can estimate $\tilde r = \max \abs{x^t}$ from below as follows: since the time $t_0$ at which the maximum is reached is also a (local) maximum of $\abs{x^t}^2$, the second derivative satisfies:
\begin{equation*}
\dfrac{\dd^2}{\dd t^2} {\abs{x^t}^2}_{\vert t = t_0}
	= 2 \abs{\xi^{t_0}}^2 - 2 x^{t_0} \cdot A x^{t_0}
	\le 0 \,.
\end{equation*}
Thus it holds
\begin{equation} \label{eq:estimatetilderbyp}
\tilde r^2
    := \abs*{x^{t_0}}^2
	\ge x^{t_0} \cdot \dfrac{A}{\norm{A}} x^{t_0}
	\ge \dfrac{1}{\norm{A}} \left( \dfrac{1}{2} x^{t_0} \cdot A x^{t_0} + \dfrac{1}{2} \abs{\xi^{t_0}}^2 \right)
	= \dfrac{1}{\norm{A}} p(\rho_0) \,.
\end{equation}
Provided $\abs{\rho_0}$ is large enough so that $p(\rho_0) \ge \norm{A} r_0^2$, we see in particular that $\tilde r \ge r_0$. Introduce $\gamma : (0, T) \ni t \mapsto \abs{x^t}$. We know from Lemma~\ref{lem:criticalpointsisolated} that $\gamma$ is Lipschitz with constant $\sqrt{2 p(\rho_0)} \le \sqrt{2 \norm{A}} \tilde r$ (this inequality is a consequence of~\eqref{eq:estimatetilderbyp} above). In particular, we have $\abs{\gamma'(t)} \le \sqrt{2 \norm{A}} \tilde r$ outside the set $S_\gamma$ from Lemma~\ref{lem:criticalpointsisolated}. Thus we can apply again the Area formula (Proposition~\ref{prop:areaformula}):
\begin{align} \label{eq:lowerboundareaformula}
\int_0^T \one_{\omega(\tilde I)}(x^t) \dd t
	&= \int_0^T \one_{\tilde I}\left(\abs{x^t}\right) \dd t
	\ge (2 \norm{A})^{-1/2} \dfrac{1}{\tilde r} \int_0^T \one_{\tilde I}\left(\gamma(t)\right) \abs*{\gamma'(t)}\dd t \nonumber\\
	&= (2 \norm{A})^{-1/2} \dfrac{1}{\tilde r} \int_{\gamma((0, T))} \one_{\tilde I}(s) \# \set{t \in (0, T)}{\gamma(t) = s} \dd s \nonumber\\
	&\ge (2 \norm{A})^{-1/2} \dfrac{1}{\tilde r} \int_{\gamma((0, T))} \one_{\tilde I}(s) \dd s \,.
\end{align}
This time, one has $\gamma((0, T)) \supset [\Lambda \tilde r, \tilde r] \supset [\kappa \tilde r, \tilde r]$ (by definition of $\Lambda$, see~\eqref{eq:defLambda_0}). This means that
\begin{equation} \label{eq:checkdynassumptionT/2}
\int_0^T \one_{\omega(\tilde I)}(x^t) \dd t
	\ge (2 \norm{A})^{-1/2} \dfrac{1}{\tilde r} \int_{\kappa \tilde r}^{\tilde r} \one_{\tilde I}(s) \dd s
	\ge (2 \norm{A})^{-1/2} c \,,
\end{equation}
where the last inequality is due to~\eqref{eq:densityboundedfrombelow} (recall that $\tilde r \ge r_0$). Therefore the dynamical condition~\eqref{eq:dynamicalcondition} of Theorem~\ref{thm:main} is satisfied. In fact, the explicit expression of the Hamiltonian flow in action-angle coordinates~\eqref{eq:hamiltonianflowinactionanglecoordinatesomegaI} shows that $\abs{x^t}^2$ is $\frac{T_0}{2}$-periodic.\footnote{One can check that the projected trajectories of rational harmonic oscillators are invariant by point reflection with respect to the origin or axial symmetry with respect to some coordinate axis, depending on whether $p$ and $q$ have the same parity or not.} Therefore, setting $\tilde c := (2 \norm{A})^{-1/2} c$, the dynamical condition~\eqref{eq:dynamicalcondition} is equivalently satisfied in time $\frac{T_0}{2} - \frac{\tilde c}{4}$:
\begin{equation*}
\int_0^{\frac{T_0}{2} - \frac{\tilde c}{4}} \one_{\omega(\tilde I)}(x^t) \dd t
	\ge \dfrac{1}{2} \int_0^{T_0} \one_{\omega(\tilde I)}(x^t) \dd t - \dfrac{\tilde c}{4}
	\ge (2 \norm{A})^{-1/2} \dfrac{c}{4} \,.
\end{equation*}
By Theorem~\ref{thm:main}, this implies that observability holds from $\omega(\tilde I)_R \setminus K$ for some $R > 0$ and any compact set $K \subset \R^d$ in any time $> \frac{T_0}{2} - \frac{\tilde c}{4}$, which in turn implies observability from $\omega(I)$ in virtue of Lemma~\ref{lem:shrunkobsset}~\ref{it:shrunkii}. Incidentally, the optimal observation time is strictly smaller than $T_0/2$.

\paragraph*{Step 5 \--- Diophantine approximation in the irrational case.}
We assume that $\frac{\nu_2}{\nu_1} \in \R \setminus \Q$ and denote by $\frac{p_j}{q_j}$ the reduced fraction expression of its convergents (see Remark~\ref{rmk:nonDiophantineirrationals}). We investigate the validity of the dynamical condition~\eqref{eq:dynamicalcondition} by approximating the trajectories of the ``irrational" Hamiltonian flow by the trajectories of the ``rational" Hamiltonian flow obtained by replacing $\frac{\nu_2}{\nu_1}$ with its convergent $\frac{p_j}{q_j}$. For instance, a projected trajectory of the irrational harmonic oscillator of the form
\begin{equation} \label{eq:initialtrajectory}
x_1^t = A_1 \sin\left(\nu_1 t + \theta_1\right) \,, \qquad x_2^t = A_2 \sin\left(\nu_2 t + \theta_2\right) \,,
\end{equation}
should be compared to
\begin{equation} \label{eq:approximatingtrajectory}
x_{j, 1}^t = A_1 \sin\left(\nu_1 t + \theta_1\right) \,, \qquad x_{j, 2}^t = A_2 \sin\left(\dfrac{p_j}{q_j} \nu_1 t + \theta_2\right) \,,
\end{equation}
which is a trajectory of the Hamiltonian flow of the (rational) harmonic oscillator with characteristic frequencies $\nu_1$ and $\frac{p_j}{q_j} \nu_1$, whose classical Hamiltonian is:
\begin{equation*}
p_j(x, \xi)
	= \dfrac{1}{2} \left( \nu_1^2 x_1^2 + \dfrac{p_j^2}{q_j^2} \nu_1^2 x_2^2\right) + \dfrac{1}{2} \left( \xi_1^2 + \xi_2^2 \right) \,.
\end{equation*}
The distance between these two trajectories is
\begin{equation} \label{eq:approximationirrationalflow}
\abs*{x^t - x_j^t}
    = \abs*{x_2^t - x_{j, 2}^t}
    \le A_2 \abs*{\nu_2 - \dfrac{p_j}{q_j} \nu_1} \abs*{t}
    \le A_2 \frac{\nu_1 \abs{t}}{q_j^2} \,,
\end{equation}
owing to the fact that the sine function is $1$-Lipschitz and to the Diophantine approximation result~\eqref{eq:diophantineapproximation}. We already know from Step 2 that
\begin{equation*}
\min_{t \in [0, T_j]} \abs{x_j^t} \le \Lambda_j \max_{t \in [0, T_j]} \abs{x_j^t} \,,
	\qquad \rm{where} \quad T_j = \frac{2 \pi}{\nu_1} q_j \,, \;\; \Lambda_j = \Lambda\left(\dfrac{p_j}{q_j}\right) \,.
\end{equation*}
The time $T_j$ is the period of the flow of the rational harmonic oscillator with characteristic frequencies $\nu_1$ and $\frac{p_j}{q_j} \nu_1$. Let us set
\begin{equation} \label{eq:defM_jm_j}
m_j = \min_{t \in \R} \abs{x_j^t}
	\qquad \rm{and} \qquad
M_j = \max_{t \in \R} \abs{x_j^t} \,.
\end{equation}
Although the trajectory $t \mapsto x_j^t$ is $T_j$-periodic, it will be convenient to compare $x_j^t$ and $x^t$ on smaller times. Then in view of~\eqref{eq:approximationirrationalflow}, on the time interval $[0, \eta T_j]$, where $\eta \in (0, 1]$, the norm $\abs{x^t}$ spans an interval $J_j^\eta$ such that
\begin{equation} \label{eq:imageofcurvextxjteta}
J_j^\eta
    \subset \left[m_j - A_2 \eta \dfrac{2 \pi}{q_j}, M_j + A_2 \eta \dfrac{2 \pi}{q_j}\right] \,,
\end{equation}
and if $\eta = 1$, since $\abs{x_j^t}$ attains $m_j$ and $M_j$ on the time interval $[0, T_j]$, it holds
\begin{equation} \label{eq:imageofcurvextxjteta=1}
\left[m_j + A_2 \dfrac{2 \pi}{q_j}, M_j - A_2 \dfrac{2 \pi}{q_j}\right]
    \subset J_j^1 \,.
\end{equation}
So now, according to the value of $\kappa_\star$, we check whether the dynamical condition~\eqref{eq:dynamicalcondition} of Theorem~\ref{thm:main} is satisfied, using the Area formula.

\medskip
\paragraph*{Step 6 \--- Geometric condition of observability for rationally independent characteristic frequencies.}
Take $\eta = 1$, that is we consider a whole period of the rational Hamiltonian flow. We first establish a lower bound on the time spent by $t \mapsto x^t$ in $\omega(\tilde I)$. We consider $\kappa_\star > 0$ here. From Lemma~\ref{lem:criticalpointsisolated}, we know that $\gamma : (0, T_j) \ni t \mapsto \abs{x^t}$ is Lipschitz with constant $\sqrt{2 p(\rho_0)}$. Yet, similarly to~\eqref{eq:estimatetilderbyp}, it holds
\begin{equation*}
p(\rho_0)
	\le \norm{A} \tilde M_j^2 \,,
		\qquad \rm{where} \;\; \tilde M_j = \max_{t \in [0, T_j]} \abs{x^t} \,,
\end{equation*}
so that $\gamma$ is Lipschitz with constant $\sqrt{2 \norm{A}} \tilde M_j$. Applying the Area formula (Proposition~\ref{prop:areaformula}), we obtain as in~\eqref{eq:lowerboundareaformula} the lower bound:
\begin{equation*}
\int_0^{T_j} \one_{\omega(\tilde I)}(x^t) \dd t
	\ge (2 \norm{A})^{-1/2} \dfrac{1}{\tilde M_j} \int_{J_j^1} \one_{\tilde I}(s) \dd s \,,
\end{equation*}
and in view of~\eqref{eq:imageofcurvextxjteta=1} and~\eqref{eq:imageofcurvextxjteta} with $\eta = 1$, we deduce that
\begin{equation*}
\int_0^{T_j} \one_{\omega(\tilde I)}(x^t) \dd t
	\ge \dfrac{(2 \norm{A})^{-1/2}}{M_j + A_2 \frac{2 \pi}{q_j}} \int_{m_j + A_2 \frac{2 \pi}{q_j}}^{M_j - A_2 \frac{2 \pi}{q_j}} \one_{\tilde I}(s) \dd s \,.
\end{equation*}
Observing that $A_2 \le M_j$ and that $m_j \le \Lambda_j M_j$, we obtain
\begin{equation} \label{eq:A_2lessthanM_j}
\int_0^{T_j} \one_{\omega(\tilde I)}(x^t) \dd t
	\ge \dfrac{(2 \norm{A})^{-1/2}}{M_j (1 + \frac{2 \pi}{q_j})} \int_{M_j (\Lambda_j + \frac{2 \pi}{q_j})}^{M_j(1 - \frac{2 \pi}{q_j})} \one_{\tilde I}(s) \dd s \,.
\end{equation}
Setting
\begin{equation} \label{eq:defofrinfunctionofM_j}
r = M_j \left(1 - \dfrac{2 \pi}{q_j}\right)
    \qquad \rm{and} \qquad
\tilde \Lambda_j = \dfrac{\Lambda_j + \frac{2 \pi}{q_j}}{1 - \frac{2 \pi}{q_j}} \,,
\end{equation}
we can write the lower bound in~\eqref{eq:A_2lessthanM_j} under the form
\begin{equation} \label{eq:estimatevalidforallcurves}
\int_0^{T_j} \one_{\omega(\tilde I)}(x^t) \dd t
	\ge (2 \norm{A})^{-1/2} \dfrac{1 - \frac{2 \pi}{q_j}}{1 + \frac{2 \pi}{q_j}} \times \dfrac{1}{r} \int_{\tilde \Lambda_j r}^{r} \one_{\tilde I}(s) \dd s \,.
\end{equation}
We assume that $q_j > 2 \pi$ so that $r > 0$, which is the case for $j$ large enough since $q_j \to \infty$. The above estimate~\eqref{eq:estimatevalidforallcurves} is valid for any trajectory of the (irrational) Hamiltonian flow with initial datum $\rho_0 \neq 0$. In addition, we remark that $M_j$ defined in~\eqref{eq:defM_jm_j} tends to infinity as $\rho_0 \to \infty$, so that $r$ defined in~\eqref{eq:defofrinfunctionofM_j} tends to $+\infty$ as $\rho_0 \to \infty$ too. Thus~\eqref{eq:estimatevalidforallcurves} leads to
\begin{equation} \label{eq:dynconditionokirrational}
\liminf_{\rho \to \infty} \int_0^{T_j} \one_{\omega(\tilde I) \times \R^d}\left(\phi^t(\rho)\right) \dd t
	\ge (2 \norm{A})^{-1/2} \dfrac{1 - \frac{2 \pi}{q_j}}{1 + \frac{2 \pi}{q_j}} \times \liminf_{r \to + \infty} \dfrac{1}{r} \abs*{\tilde I \cap [\tilde \Lambda_j r, r]} \,.
\end{equation}
In order to deduce a positive lower bound, it suffices that $\tilde \Lambda_j < \kappa_\star = \kappa_\star(\tilde I) = \kappa_\star(I)$. This is achieved provided $q_j \ge 6 \pi / \kappa_\star \ge 6 \pi$. Indeed, under this condition, we have on the one hand
\begin{equation} \label{eq:dynconditionokirrational1}
\dfrac{1 - \frac{2 \pi}{q_j}}{1 + \frac{2 \pi}{q_j}}
    \ge \dfrac{1 - \frac{2 \pi}{6 \pi}}{1 + \frac{2 \pi}{6 \pi}}
    = \dfrac{1}{2} \,,
\end{equation}
and on the other hand, recalling the definition of $\tilde \Lambda_j$ in~\eqref{eq:defofrinfunctionofM_j}, the formula for $\Lambda_j$~\eqref{eq:deffunctionLambda} shown in Step 2, and using that $\sin x \le x$ and $\tan x \le \frac{4}{\pi} x$ for $x \in [0, \frac{\pi}{4}]$, we obtain
\begin{equation} \label{eq:dynconditionokirrational2}
\tilde \Lambda_j
    \le \dfrac{\frac{4}{\pi} \times \frac{\pi/2}{p_j + q_j} + \frac{2 \pi}{q_j}}{1 - \frac{2 \pi}{6 \pi}}
    \le 3 \dfrac{1 + \pi}{q_j}
    \le \dfrac{1}{2} \left(\dfrac{1}{\pi} + 1\right) \kappa_\star
    < \kappa_\star \,.
\end{equation}

\medskip

Now we turn to the upper bound on the time spent by projected trajectories of the (irrational) Hamiltonian flow in $\omega(I)_R$, for a fixed $R > 0$. We consider $\kappa_\star \in [0, 1]$ arbitrary now, with the convention $1/\kappa_\star = + \infty$ if $\kappa_\star = 0$. We go back to $\eta \in (0, 1]$. We select a curve $t \mapsto (x_j^t, \xi_j^t)$ of the rational flow that maximizes the ratio $\min_t \abs{x_j^t} / \max_t \abs{x_j^t}$, namely that satisfies
\begin{equation*}
m_j
	= \min_{t \in [0, T_j]} \abs{x_j^t}
    = \Lambda_j \max_{t \in [ 0, T_j]} \abs{x_j^t}
    = \Lambda_j M_j \,.
\end{equation*}
This curve is of the form~\eqref{eq:approximatingtrajectory} for well-chosen action and angle variables. We consider $t \mapsto (x^t, \xi^t)$ the corresponding trajectory of the irrational flow given by~\eqref{eq:initialtrajectory}, that is the integral curve obtained by substituting $\nu_2$ for $\frac{p_j}{q_j} \nu_1$ in~\eqref{eq:approximatingtrajectory}. Notice that this trajectory depends on $j$. We still write $\gamma(t) = \abs{x^t}$. By Lemma~\ref{lem:criticalpointsisolated}, we know that it is a Lipschitz map and that there exists an integer $k_0$ such that $\# \gamma^{-1}(s) \cap [0, T_j] \le k_0$ for all $s \in \R_+$. Reproducing the computation~\eqref{eq:areaformula1}, we find:
\begin{equation*}
\int_{B_N} \one_{I_R}\left(c \abs*{x^t}\right) \dd t
    \le \dfrac{2^N k_0}{c} \int_{c \Lambda_j \max \gamma}^{c \max \gamma} \one_{I_R}(s) \dd s
    \le \dfrac{2^N k_0}{c} \int_{c J_j^\eta} \one_{I_R}(s) \dd s \,,
\end{equation*}
where we recall that the parameter $c > 0$ is an arbitrary scaling factor, and $B_N$ is defined similarly to~\eqref{eq:monotoneconvergenceBN} by
\begin{equation*}
B_N = \set{t \in [0, \eta T_j]}{\abs*{\gamma'(t)} \ge 2^{-N}} \,.
\end{equation*}
In view of~\eqref{eq:imageofcurvextxjteta}, this leads to
\begin{equation*}
\int_{B_N} \one_{I_R}\left(c \abs*{x^t}\right) \dd t
    \le \dfrac{2^N k_0}{c} \int_{c (m_j - A_2 \eta \frac{2 \pi}{q_j})}^{c (M_j + A_2 \eta \frac{2 \pi}{q_j})} \one_{I_R}(s) \dd s \,.
\end{equation*}
As we did before in~\eqref{eq:A_2lessthanM_j}, we use the fact that $A_2 \le M_j$, together with $m_j = \Lambda_j M_j$ (the equality is important here) to obtain
\begin{equation*}
\int_{B_N} \one_{I_R}\left(c \abs*{x^t}\right) \dd t
    \le \dfrac{2^N k_0}{c} \int_{c M_j (\Lambda_j - \eta \frac{2 \pi}{q_j})}^{c M_j (1 + \eta \frac{2 \pi}{q_j})} \one_{I_R}(s) \dd s \,.
\end{equation*}
Defining now
\begin{equation*}
r = M_j \left(1 + \eta \dfrac{2 \pi}{q_j}\right)
    \qquad \rm{and} \qquad
\tilde \Lambda_j = \dfrac{\Lambda_j - \eta \frac{2 \pi}{q_j}}{1 + \eta \frac{2 \pi}{q_j}} \,,
\end{equation*}
we end up with
\begin{equation*}
\int_{B_N} \one_{I_R}\left(c \abs*{x^t}\right) \dd t
    \le 2^N k_0 M_j \left(1 + \eta \dfrac{2 \pi}{q_j}\right) \dfrac{1}{c r} \int_{\tilde \Lambda_j c r}^{c r} \one_{I_R}(s) \dd s \,.
\end{equation*}
We finally prove that this upper bound tends to zero along a well-chosen sequence of parameters $c$ provided $\tilde \Lambda_j \ge \kappa_\star$. This is fulfilled whenever $q_j \le \delta / \kappa_\star$, for a small enough constant $\delta$. To see this, we can use that $\tan x \ge x$ and $\sin x \ge \frac{2}{\pi} x$ on $[0, \frac{\pi}{2}]$ to control $\Lambda_j$ from below by $\frac{1}{p_j + q_j}$. Then~\eqref{eq:diophantineapproximation} leads to $\frac{p_j}{q_j} \le \frac{\nu_2}{\nu_1} + 1$, which yields
\begin{equation*}
\Lambda_j
	\ge \dfrac{1}{p_j + q_j}
    \ge \dfrac{1}{q_j (2 + \frac{\nu_2}{\nu_1})} =: \dfrac{C}{q_j} \,.
\end{equation*}
Assuming that $\eta < \frac{C}{2 \pi} \le 1$, we obtain
\begin{equation*}
\tilde \Lambda_j
    \ge \dfrac{\frac{C - 2 \pi \eta}{q_j}}{1 + \eta \frac{2 \pi}{q_j}}
    \ge \dfrac{1}{q_j} \times \dfrac{C - 2 \pi \eta}{1 + 2 \pi \eta}
    \ge \dfrac{C - 2 \pi \eta}{\delta (1 + 2 \pi \eta)} \kappa_\star \,.
\end{equation*}
This yields $\tilde \Lambda_j \ge \kappa_\star$ if $\delta$ is small enough, so that by definition of $\kappa_\star$, letting $c \to + \infty$, we obtain
\begin{align} \label{eq:dyncondnotokirrational}
\liminf_{c \to + \infty} \int_0^{\eta T_j} \one_{\omega(I_R)}\left(c x^t\right) \dd t
    &\le \abs*{[0, \eta T_j] \setminus B_N} + 2^N k_0 M_j \left(1 + \eta \dfrac{2 \pi}{q_j}\right) \liminf_{c \to + \infty} \dfrac{1}{c r} \int_{\tilde \Lambda_j c r}^{c r} \one_{I_R}(s) \dd s \nonumber\\
    &= \eta T_j - \abs*{B_N} \,,
\end{align}
which tends to zero as $N \to \infty$.

The general conclusion is the following: if $\kappa_\star > 0$ and $j \in \N$ is such that $q_j \ge \frac{6 \pi}{\kappa_\star}$, we know by~\eqref{eq:dynconditionokirrational2} that $\tilde \Lambda_j < \kappa_\star$, so that by definition of $\kappa_\star$, the estimate~\eqref{eq:dynconditionokirrational}, together with~\eqref{eq:dynconditionokirrational1}, proves that the dynamical condition~\eqref{eq:dynamicalcondition} of Theorem~\ref{thm:main} holds for $\omega(\tilde I)$ in time $T_j = \frac{2 \pi}{\nu_1} q_j$. If on the contrary $\kappa_\star \in [0, 1]$ and it holds $q_j \le \frac{\delta}{\kappa_\star}$ for some $\delta > 0$ depending only on $\frac{\nu_2}{\nu_1}$, then from~\eqref{eq:dyncondnotokirrational}, the dynamical condition~\eqref{eq:dynamicalcondition} is violated in $\omega(I)_R$ for any $R > 0$ on the time interval $[0, \eta T_j]$, where $\eta > 0$ depends only on $\frac{\nu_2}{\nu_1}$ again. Theorem~\ref{thm:main} then implies that the Schrödinger equation is observable from $\omega(I)$ if and only if $\kappa_\star > 0$. If indeed $\kappa_\star > 0$, then the optimal observation time $T_\star = T_\star(\omega(I))$ is controlled as follows: there exist constants $C, c > 0$ such that
\begin{equation} \label{eq:boundsonTstar}
c q_{j_1} \le T_\star \le C q_{j_2} \,,
\end{equation}
where $j_1$ is the largest index such that $q_j \le \frac{\delta}{\kappa_\star}$ and $j_2$ is the smallest index such that $q_j \ge \frac{6 \pi}{\kappa_\star}$.

To go from~\eqref{eq:boundsonTstar} to the desired estimate~\eqref{eq:optimaltimeinDiophantinecase} in the case where $\frac{\nu_2}{\nu_1}$ is Diophantine, we use the fact that the irrationality exponent $\tau$, defined in~\eqref{eq:defirrationalityexponent}, is related to the growth of the $q_j$'s. This comes from the formula
\begin{equation*}
\tau(\mu)
	= 1 + \limsup_{j \to \infty} \dfrac{\log q_{j+1}}{\log q_j}
\end{equation*}
(see~\cite[Proposition 1.8]{Durand:mnt} or~\cite[Theorem 1]{Sondow:04}). When $\tau$ is finite, we deduce in particular that for any $\eps > 0$, we have for any $j$ large enough
\begin{equation*}
\dfrac{\log q_{j + 1}}{\log q_j} \le \tau - 1 + \eps \,,
\end{equation*}
which leads to the existence of a constant $C_\eps > 0$ such that
\begin{equation*}
q_{j + 1} \le C_\eps {q_j}^{\tau - 1 + \eps} \,, \qquad \forall j \in \N \,.
\end{equation*}
We obtain by definition of the indices $j_1$ and $j_2$:
\begin{equation*}
\dfrac{\delta}{\kappa_\star} \le q_{j_1 + 1} \le C_\eps {q_{j_1}}^{\tau - 1 + \eps}
    \qquad \rm{and} \qquad
q_{j_2} \le C_\eps {q_{j_2 - 1}}^{\tau - 1 + \eps} \le C_\eps \left(\dfrac{6 \pi}{\kappa_\star}\right)^{\tau - 1 + \eps} \,.
\end{equation*}
Plugging this into~\eqref{eq:boundsonTstar}, we finally deduce~\eqref{eq:optimaltimeinDiophantinecase}. This concludes the proof of Proposition~\ref{prop:anisotropicsphericalsets}.

\appendix 

\section{Reduction to a weaker observability inequality} \label{app:reduction}

The following proposition shows that $\Obs(\omega, T)$ is equivalent to a similar inequality with a remainder involving a compact operator. The argument goes back to Bardos, Lebeau and Rauch~\cite{BLR:92}. This reformulation of the problem paves the way for the use of microlocal analysis: we are interested in the propagation of high-energy modes through the Schrödinger evolution, discarding anything that is microlocalized near a fixed energy sub-level $\{p \le \rm{cst}\}$. An alternative route could be to slice the phase space according to energy layers of the Hamiltonian $p(x, \xi) = V(x) + \frac{1}{2} \abs{\xi}^2$; see~\cite{Leb:92,BZ:12,AM:14}.

\begin{prop} \label{prop:reduction}
Suppose $P$ is a self-adjoint operator with compact resolvent, and let $B$ be a bounded operator on $L^2(\R^d)$ satisfying the unique continuation property:
\begin{equation} \label{eq:uniquecontinuationP}
\textrm{for any eigenfunction $u$ of $P$,} \qquad B u = 0 \; \Longrightarrow u = 0 \,.
\end{equation}
Let $T_0 > 0$ and assume there exists a compact self-adjoint operator $K$ such that
\begin{equation} \label{eq:obsK}
\exists C_0 > 0 : \forall u \in L^2(\R^d) \,, \qquad
    \norm*{u}_{L^2}^2 \le C_0 \int_0^{T_0} \norm*{B \e^{- \ii t P} u}_{L^2}^2 \dd t + \inp*{u}{K u}_{L^2} \,.
\end{equation}
Then for every $T > T_0$, there exists $C > 0$ such that
\begin{equation*}
\forall u \in L^2(\R^d) \,, \qquad
    \norm*{u}_{L^2}^2 \le C \int_0^T \norm*{B \e^{- \ii t P} u}_{L^2}^2 \dd t \,.
\end{equation*}
\end{prop}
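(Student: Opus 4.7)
The plan is to prove the proposition by the classical uniqueness--compactness argument of Bardos--Lebeau--Rauch. I would argue by contradiction: suppose that the observability inequality fails in some time $T > T_0$, so that there is a sequence $(u_n) \subset L^2(\R^d)$ with $\norm*{u_n}_{L^2} = 1$ and $\int_0^T \norm*{B \e^{- \ii t P} u_n}_{L^2}^2 \dd t \to 0$. Extracting a weakly convergent subsequence $u_n \weakto{} u$ and invoking the boundedness of $B$ together with weak lower semi-continuity of the norm in $L^2((0, T); L^2(\R^d))$, one checks that $u$ lies in the defect space
\begin{equation*}
N_T := \set{v \in L^2(\R^d)}{B \e^{- \ii t P} v = 0 \text{ for a.e. } t \in [0, T]} \,.
\end{equation*}
If $u = 0$, compactness of $K$ forces $\inp*{u_n}{K u_n}_{L^2} \to 0$, and applying hypothesis~\eqref{eq:obsK} to $u_n$ (using $[0, T_0] \subset [0, T]$) yields $1 \to 0$, a contradiction. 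Hence $u \neq 0$.

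The goal is now to show that $N_{T_0} \supset N_T$ is in fact trivial, which will give the desired contradiction. For any $v \in N_{T_0}$, hypothesis~\eqref{eq:obsK} directly yields $\norm*{v}_{L^2}^2 \le \inp*{v}{K v}_{L^2}$, so by compactness of $K$ the closed unit ball of $N_{T_0}$ is precompact and $N_{T_0}$ is finite-dimensional. The strict inequality $T > T_0$ is precisely what provides the slack needed to extract a bona fide $P$-eigenvector inside $N_{T_0}$: for each $\chi \in \cont_\comp^\infty((0, T - T_0))$, define the smoothed element
\begin{equation*}
u_\chi := \int_\R \chi(s) \e^{- \ii s P} u \dd s \in L^2(\R^d) \,.
\end{equation*}
A direct computation gives $B \e^{- \ii t P} u_\chi = \int \chi(s) B \e^{- \ii (t + s) P} u \dd s$, which vanishes whenever $t \in [0, T_0]$, because $t + s \in (0, T)$ as soon as $s \in \supp \chi$; hence $u_\chi \in N_{T_0}$. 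Integration by parts in $s$, justified via the functional calculus of $P$, shows $u_\chi \in \dom P$ with $P u_\chi = - \ii u_{\chi'}$, and by iteration, $P^k u_\chi = (- \ii)^k u_{\chi^{(k)}} \in N_{T_0}$ for every $k \in \N$. Consequently the subspace
\begin{equation*}
W := \Span \set{u_\chi}{\chi \in \cont_\comp^\infty((0, T - T_0))}
\end{equation*}
is a finite-dimensional, $P$-invariant subspace of $N_{T_0}$ whose elements are smooth with respect to the functional calculus of $P$.

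To conclude, one checks $W \neq \{0\}$ by choosing a mollifier $\chi_\eps \in \cont_\comp^\infty((0, \eps))$ with $\int \chi_\eps = 1$ and $\eps < T - T_0$; strong continuity of the unitary group $(\e^{- \ii s P})_{s \in \R}$ gives $u_{\chi_\eps} \to u$ in $L^2(\R^d)$ as $\eps \to 0$, so $u_{\chi_\eps} \neq 0$ for $\eps$ small. The restriction $P|_W$ is then a symmetric operator on the finite-dimensional space $W$, hence self-adjoint and diagonalizable, so it admits an eigenvector $\varphi \in W \setminus \{0\}$ with $P \varphi = \lambda \varphi$ for some $\lambda \in \R$. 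Since $\varphi \in N_{T_0}$, it follows that $0 = B \e^{- \ii t P} \varphi = \e^{- \ii \lambda t} B \varphi$ on $[0, T_0]$, so $B \varphi = 0$; the unique continuation hypothesis~\eqref{eq:uniquecontinuationP} then forces $\varphi = 0$, the sought contradiction. The main obstacle is precisely the extraction of an honest eigenvector from the merely abstract defect space: the smoothing $u \mapsto u_\chi$ is tailor-made for this, since the support condition $\supp \chi \subset (0, T - T_0)$ is what makes essential use of the slack $T > T_0$ to keep the mollified vectors inside $N_{T_0}$ while simultaneously regularizing them with respect to $P$.
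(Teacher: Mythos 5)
Your proof is correct, and it reaches the conclusion by a route that differs from the paper's in two places. First, at the top level: the paper argues directly, showing that the operator $A_T = \int_0^T \e^{\ii t P} B^\ast B \e^{-\ii t P}\dd t$ is invertible because $A_T + K$ is coercive, $K$ is compact (so the essential spectra of $A_T$ and $A_T+K$ coincide), and $0$ is not an eigenvalue of $A_T$; you instead run the contradiction/weak-compactness argument with a non-observable sequence $u_n$, using compactness of $K$ only to rule out $u_n \weakto{} 0$. Second, and more substantively, in how the slack $T > T_0$ is converted into a genuine $P$-eigenvector inside the invisible space: the paper works with $\cal{I}_{T_0}^- = \bigcup_{S'>T_0}\cal{I}_{S'}$ and proves it lies in $\dom P$ via difference quotients $\eps^{-1}(\e^{-\ii\eps P}u - u)$, exploiting the equivalence of the norms $v\mapsto\norm{v}$ and $v\mapsto\norm{(P-\ii)^{-1}v}$ on the finite-dimensional space $\cal{I}_{T_0}$; you instead mollify, setting $u_\chi = \int\chi(s)\e^{-\ii s P}u\,\dd s$ with $\supp\chi\subset(0,T-T_0)$, which lands in $N_{T_0}\cap\dom P^\infty$ by the support condition and the identity $Pu_\chi = -\ii u_{\chi'}$, and then take the span. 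Both mechanisms use the finite-dimensionality of the $T_0$-invisible space (established identically in the two proofs from the coercivity $\norm{v}\le\norm{Kv}$) and both end by diagonalizing the symmetric restriction of $P$ to a nonzero finite-dimensional invariant subspace and invoking unique continuation. Your mollification step is arguably cleaner than the difference-quotient argument, since it produces smooth vectors for the group in one stroke and does not need the norm-equivalence trick; the price is the extra weak-compactness preamble, which the paper's operator-theoretic conclusion avoids. All the delicate points (that $u\neq 0$, that $u_\chi\in N_{T_0}$, that $W$ is $P$-invariant and nonzero) are correctly handled.
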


\begin{rema} \label{rmk:uniquecontinuation}
The operators of the form $P = V(x) - \tfrac{1}{2} \lap$ that we consider, with $V$ subject to Assumption~\ref{assum:V}, satisfy the unique continuation property of the statement when $B$ is the multiplication by the indicator function of a non-empty open set. See~\cite[Theorem 5.2]{LRLRbook:22}.
\end{rema}

\begin{proof}
Let us introduce for any $S \in \R$:
\begin{equation*}
A_S = \int_0^S \e^{\ii t P} B^\ast B \e^{- \ii t P} \dd t \,,
\end{equation*}
and denote by $\cal{I}_S$ its kernel (the space of so-called invisible solutions). One can check that
\begin{equation*}
\cal{I}_S = \bigcap_{t \in [0, S]} \ker B \e^{- \ii t P} = \set{u \in L^2(\R^d)}{\forall t \in [0, S] \,, \; B \e^{- \ii t P} u = 0} \,,
\end{equation*}
using the fact that $\e^{\ii t P} B^\ast B \e^{- \ii t P} \ge 0$ for all $t \in \R$ as operators, and that the map $t \mapsto B \e^{- \ii t P}$ is strongly continuous.
The space $\cal{I}_S$ is a closed linear subspace of $L^2(\R^d)$, both for the strong and the weak topology (use for instance that $A_S$ is a bounded operator). Moreover, one has the property that $S_1 \le S_2$ yields $\cal{I}_{S_1} \supset \cal{I}_{S_2}$. It implies that for any $S$, the set
\begin{equation*}
\cal{I}_S^- = \bigcup_{S' > S} \cal{I}_{S'}
\end{equation*}
is also a linear subspace, contained in $\cal{I}_S$.

\medskip
\emph{Step 1 \--- $\cal{I}_{T_0}$ is finite-dimensional.}
This assertion is a consequence of the fact that $K$ is coercive on $\cal{I}_{T_0}$, namely
\begin{equation*}
\forall u \in \cal{I}_{T_0} \,, \qquad
    \norm*{u}_{L^2} \le \norm*{K u}_{L^2} \,,
\end{equation*}
It follows directly from the assumption~\eqref{eq:obsK} and the Cauchy-Schwarz inequality. Setting $W = \ran K_{\vert \cal{I}_{T_0}}$, we deduce that $K : \cal{I}_{T_0} \to W$ is one-to-one and its inverse $K^{-1}$ is bounded as an operator in $\cal{L}(W, \cal{I}_{T_0})$. Now denote by $\bar B_{\cal{I}_{T_0}}$ the closed unit ball of $\cal{I}_{T_0}$. Since $\cal{I}_{T_0}$ is strongly and weakly closed, the same holds for its closed unit ball as a subset of $L^2(\R^d)$. We deduce that $\bar B_{\cal{I}_{T_0}}$ is weakly compact. The compactness of $K$ implies that $K(\bar B_{\cal{I}_{T_0}})$ is (strongly) compact in $L^2(\R^d)$. Since it is contained in $W$, it is compact in $W$. Therefore the fact that $K^{-1} : W \to \cal{I}_{T_0}$ is bounded implies that $\bar B_{\cal{I}_{T_0}} = K^{-1}(K(\bar B_{\cal{I}_{T_0}}))$ is compact. We deduce by the Riesz's Theorem that $\cal{I}_{T_0}$ is finite-dimensional.

\medskip
\emph{Step 2 \--- $\cal{I}_{T_0}^-$ is stable by $P$.}
Let us check that $\cal{I}_{T_0}^- \subset \dom P$. Let $u \in \cal{I}_{T_0}^-$ and set
\begin{equation*}
u_\epsilon = \dfrac{\e^{- \ii \epsilon P} u - u}{\epsilon} \,, \qquad \forall \epsilon \neq 0 \,.
\end{equation*}
By definition of $\cal{I}_{T_0}^-$, the function $u$ belongs to $\cal{I}_{T_0 + \epsilon_0}$ for some $\epsilon_0 > 0$, so that $u_\epsilon \in \cal{I}_{T_0}^-$ for any $\epsilon \in (0, \epsilon_0)$. Recall from the previous step that $\cal{I}_{T_0} \supset \cal{I}_{T_0}^-$ is finite dimensional. We observe that $v \mapsto \norm*{(P - \ii)^{-1} v}_{L^2}$ is a norm on $\cal{I}_{T_0}$, so it is equivalent to the $L^2$ norm. Yet we see that
\begin{equation*}
(P - \ii)^{-1} u_\epsilon
    = \dfrac{\e^{- \ii \epsilon P} (P - \ii)^{-1} u - (P - \ii)^{-1} u}{\epsilon} \,,
\end{equation*}
with $(P - \ii)^{-1} u \in \dom P$, so that $(P - \ii)^{-1} u_\epsilon$ converges as $\epsilon \to 0$ to some $v \in \dom P$. Therefore we conclude that
\begin{equation*}
\norm*{u_\epsilon - (P - \ii) v}_{L^2}
    \le C \norm*{(P - \ii)^{-1} u_\epsilon - v}_{L^2}
    \strongto{\epsilon \to 0} 0 \,.
\end{equation*}
The fact that $u_\epsilon$ converges shows that $u \in \dom P$, hence $\cal{I}_{T_0}^- \subset \dom P$. It remains to see that $\lim_{\epsilon \to 0} u_\epsilon = - \ii P u$ belongs to $\cal{I}_{T_0}^-$, which is a consequence of the fact that $\cal{I}_{T_0}^-$ is finite-dimensional, hence closed.

\medskip
\emph{Step 3 \--- $\cal{I}_{T_0}^- = \{0\}$.}
This results from the unique continuation property~\eqref{eq:uniquecontinuationP}. Indeed, we can argue as follows: from the previous steps, $\cal{I}_{T_0}^-$ is a finite-dimensional linear subspace of $L^2(\R^d)$ which is stable by the self-adjoint operator $P$. Therefore there exists a basis $(u_1, u_2, \ldots, u_n)$ of $\cal{I}_{T_0}^-$ made of eigenvectors of $P$. By definition of $\cal{I}_S$, these eigenvectors satisfy in particular $B u_j = 0$. So by the unique continuation result~\eqref{eq:uniquecontinuationP}, we find that $\cal{I}_{T_0}^-$ must be trivial.

\medskip
\emph{Step 4 \--- Conclusion.}
Let $T > T_0$. We want to show that $A_T \ge c$ for some $c > 0$. To do this, it suffices to prove that $A_T$ is invertible, because $A_T$ is self-adjoint and $A_T \ge 0$. The assumption~\eqref{eq:obsK} implies that the self-adjoint operator $A_T + K$ is invertible, meaning that zero does not belong to its spectrum. Since $K$ is compact and self-adjoint, we classically know that $A_T$ has the same essential spectrum as $A_T + K$, so in particular zero is not in the essential spectrum of $A_T$. It is not an eigenvalue neither since $\ker A_T \subset \cal{I}_{T_0}^- = \{0\}$. Therefore $A_T$ is invertible, and the conclusion follows.
\end{proof}

The following lemma is not related to the previous proposition. Still, it is worth stating it properly since we use it on several occasions throughout the article.

\begin{lemm} \label{lem:obsopenintime}
Let $\omega \subset \R^d$ be measurable. Assume $\Obs(\omega, T)$ holds in some time $T > 0$ with a cost $C > 0$, namely
\begin{equation*}
\forall u \in L^2(\R^d) \,, \qquad
	\norm*{u}_{L^2(\R^d)}^2
		\le C \int_0^T \norm*{\e^{- \ii t P} u}_{L^2(\omega)}^2 \dd t \,.
\end{equation*}
Then $\Obs(\omega, T - \eps)$ holds for any $\eps < 1/C$.
\end{lemm}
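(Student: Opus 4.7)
The plan is to split the observation time interval $[0, T]$ into two pieces, $[0, T - \eps]$ and $[T - \eps, T]$, and to bound the contribution of the second piece by $\eps \norm{u}_{L^2}^2$ using the fact that the propagator $\e^{- \ii t P}$ is an isometry on $L^2(\R^d)$. The key inequality will be absorbable into the left-hand side of the observability inequality precisely when $\eps < 1/C$.

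More concretely, I would first write, using the assumption,
\begin{equation*}
\norm*{u}_{L^2(\R^d)}^2
    \le C \int_0^{T - \eps} \norm*{\e^{- \ii t P} u}_{L^2(\omega)}^2 \dd t
        + C \int_{T - \eps}^T \norm*{\e^{- \ii t P} u}_{L^2(\omega)}^2 \dd t \,.
\end{equation*}
On the second integral, I would perform the change of variable $s = t - (T - \eps)$ and use the trivial bound $\norm{\e^{- \ii s P} v}_{L^2(\omega)} \le \norm{\e^{- \ii s P} v}_{L^2(\R^d)} = \norm{v}_{L^2(\R^d)}$ applied to $v = \e^{- \ii (T - \eps) P} u$. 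This yields
\begin{equation*}
\int_{T - \eps}^T \norm*{\e^{- \ii t P} u}_{L^2(\omega)}^2 \dd t
    \le \int_0^\eps \norm*{\e^{- \ii (T - \eps) P} u}_{L^2(\R^d)}^2 \dd s
    = \eps \norm*{u}_{L^2(\R^d)}^2 \,,
\end{equation*}
since $\e^{- \ii (T - \eps) P}$ is an $L^2$-isometry.

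Combining these two estimates gives
\begin{equation*}
(1 - C \eps) \norm*{u}_{L^2(\R^d)}^2
    \le C \int_0^{T - \eps} \norm*{\e^{- \ii t P} u}_{L^2(\omega)}^2 \dd t \,,
\end{equation*}
and since $\eps < 1/C$ by assumption, the factor $1 - C \eps$ is positive and can be divided out. This provides $\Obs(\omega, T - \eps)$ with cost $\tfrac{C}{1 - C \eps}$, finishing the proof. There is no serious technical obstacle here; the argument is elementary and relies only on the isometry property of the Schrödinger group and the additivity of the integral.
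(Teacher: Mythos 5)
Your proof is correct and is essentially identical to the one given in the paper: both split off the interval $[T-\eps, T]$, bound its contribution by $C\eps\norm{u}_{L^2(\R^d)}^2$ using the isometry of the propagator, and absorb this term into the left-hand side to obtain $\Obs(\omega, T-\eps)$ with cost $C(1-C\eps)^{-1}$.
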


\begin{proof}
We use the fact that the propagator $\e^{- \ii t P}$ is an isometry on $L^2(\R^d)$ to get
\begin{equation*}
C \int_{T - \eps}^T \norm*{\e^{- \ii t P} u}_{L^2(\omega)}^2 \dd t
	\le C \int_{T - \eps}^T \norm*{\e^{- \ii t P} u}_{L^2(\R^d)}^2 \dd t
	= C \eps \norm*{u}_{L^2(\R^d)}^2 \,.
\end{equation*}
Thus we can absorb this term in the left-hand side of the observability inequality provided $C \eps < 1$:
\begin{equation*}
(1 - C \eps) \norm*{u}_{L^2(\R^d)}^2
	\le C \int_0^{T - \eps} \norm*{\e^{- \ii t P} u}_{L^2(\omega)}^2 \dd t \,,
\end{equation*}
namely $\Obs(\omega, T - \eps)$ holds with cost $C (1 - C \eps)^{-1}$.
\end{proof}

\section{Pseudodifferential operators} \label{app:pseudo}

We recall below basics of the theory of pseudodifferential operators (see the textbooks~\cite{Hormander:V3,Lerner:10,MartinezBook, Zworski:book} for further details). We will also need a precise bound on the remainder of the pseudodifferential calculus and of the sharp G{\aa}rding inequality. This is why we reproduce the proofs of these results below.

\subsection{Weyl quantization}

Let $a \in \sch(\R^{2d})$. We define the operator $\Op{a}$ acting on the Schwartz class $\sch(\R^d)$ by
\begin{equation*}
\left[\Op{a} u\right](x)
	= (2 \pi)^{-d} \int_{\R^{2d}} \e^{\ii (x - y) \cdot \xi} a\left(\dfrac{x + y}{2}, \xi\right) u(y) \dd y \dd \xi \,, \; u \in \sch(\R^d), x \in \R^d \,.
\end{equation*}
It is known that $\Op{a} : \sch(\R^d) \to \sch(\R^d)$ is continuous. The quantization $\quantization$ extends to tempered distributions: for any $a \in \sch'(\R^{2d})$, the operator $\Op{a} : \sch(\R^d) \to \sch'(\R^d)$ is continuous.

\subsection{Symbol classes}

\begin{defi}[Symbol classes]
Let $f$ be an order function.\footnote{A positive function $f$ on the phase space is said to be an order function if
\begin{equation*}
\exists C > 0, \exists N > 0 : \forall \rho, \rho_0 \in \R^{2d} \,, \qquad
	f(\rho) \le C \jap{\rho - \rho_0}^N f(\rho_0) \,.
\end{equation*}} Then the symbol class $S(f)$ is the set of functions $a \in \cont^\infty(\R^{2d})$ satisfying
\begin{equation*}
\forall \alpha \in \N^{2d}, \exists C_\alpha > 0 : \forall \rho \in \R^{2d} \,, \qquad
    \abs{\partial^\alpha a(\rho)} \le C_\alpha f(\rho) \,.
\end{equation*}
Collecting the best constants $C_\alpha$ for each $\alpha$, the quantities
\begin{equation*}
\abs{a}_{S(f)}^\ell = \max_{\abs{\alpha} \le \ell} C_\alpha \,, \qquad \ell \in \N \,,
\end{equation*}
are seminorms that turn the vector space $S(f)$ into a Fréchet space.
\end{defi}

Any $a \in S(f)$ is a tempered distribution and yields a continuous linear operator $\Op{a} : \sch(\R^d) \to \sch(\R^d)$.

\subsection{\texorpdfstring{$L^2$}{L^2}-boundedness of pseudodifferential operators}

\begin{theo}[\texorpdfstring{Calder\'{o}n-Vaillancourt}{Calderon-Vaillancourt}] \label{thm:CV}
There exist constants $C_d, k_d > 0$ depending only on the dimension $d$ such that the following holds: for any $a \in S(1)$, the operator $\Op{a}$ can be extended to a bounded operator on $L^2(\R^d)$ with the bound
\begin{equation*}
\norm*{\Op{a}}_{L^2 \to L^2}
    \le C_d \abs{a}_{S(1)}^{k_d} \,.
\end{equation*}
\end{theo}

\subsection{Refined estimate in the pseudodifferential calculus}

Let $a_1, a_2$ be two symbols. We have seen previously that the composition $\Op{a_1} \Op{a_2}$ makes sense as an operator on the Schwartz space. This operator is also a pseudodifferential operator, whose symbol is denoted by $a_1 \moyal a_2$, called the Moyal product of $a_1$ and $a_2$, and satisfies
\begin{equation} \label{eq:defmoyalproduct}
\Op{a_1} \Op{a_2} = \Op{a_1 \moyal a_2} \,.
\end{equation}
More generally, one can define the $h$-Moyal product, depending on a parameter $h \in (0, 1]$, as
\begin{equation*}
\left(a_1 \moyal_h a_2\right)(\rho)
    = \e^{- \frac{\ii h}{2} \sympf(\partial_{\rho_1}, \partial_{\rho_2})} a_1(\rho_1) a_2(\rho_2)_{\vert \rho_1 = \rho_2 = \rho} \,,
\end{equation*}
where $\sympf$ is the canonical symplectic form on $\R^{2d}$. Taking $h = 1$, one gets a formula for the Moyal product in~\eqref{eq:defmoyalproduct} above. The $h$-Moyal product is known to be a bilinear continuous map between symbol classes; see~\cite[Theorem 4.17]{Zworski:book} or~\cite[Theorem 2.3.7]{Lerner:10} for instance.

\begin{prop}[Continuity of Moyal product] \label{prop:continuitymoyalproduct}
Let $f_1, f_2$ be two order functions. Then the map
\begin{align*}
S(f_1) \times S(f_2) &\to S(f_1 f_2) \\
(a_1, a_2) &\mapsto a_1 \moyal_h a_2
\end{align*}
is bilinear continuous, with constants independent of $h \in (0, 1]$. More precisely, for any $\ell \in \N$, there exist $k \in \N$ and $C_\ell > 0$ such that
\begin{equation*}
\abs*{a_1 \moyal_h a_2}_{S(f_1 f_2)}^\ell
    \le C_\ell \abs*{a_1}_{S(f_1)}^k \abs*{a_2}_{S(f_2)}^k \,, \qquad \forall h \in (0, 1], \forall (a_1, a_2) \in S(f_1) \times S(f_2) \,.
\end{equation*}
\end{prop}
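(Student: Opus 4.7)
The plan is to represent the Moyal product as an absolutely convergent oscillatory integral after a suitable rescaling that removes the $h$-dependence from the phase, and then to control the amplitude and its $z$-derivatives using the order function property and integration by parts. This is the classical approach of Hörmander, but we need to track how all constants depend on $h$.

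First I would write the $h$-Moyal product as the oscillatory integral
\begin{equation*}
(a_1 \moyal_h a_2)(z)
   = \frac{1}{(\pi h)^{2d}} \iint_{\R^{2d} \times \R^{2d}} e^{\frac{2i}{h}\sigma(w_1, w_2)}\, a_1(z + w_1)\, a_2(z + w_2)\, dw_1\, dw_2 \,,
\end{equation*}
understood as a distributional pairing when the amplitude is not integrable. The crucial step is the change of variables $w_i = \sqrt{h/2}\, \tilde w_i$, which gives
\begin{equation*}
(a_1 \moyal_h a_2)(z)
   = \pi^{-2d} \iint e^{2i\sigma(\tilde w_1, \tilde w_2)}\, a_1\bigl(z + \sqrt{h/2}\,\tilde w_1\bigr)\, a_2\bigl(z + \sqrt{h/2}\,\tilde w_2\bigr)\, d\tilde w_1\, d\tilde w_2 \,.
\end{equation*}
After this rescaling the phase is $h$-independent, and the amplitude depends on $h$ only through the shift by $\sqrt{h/2}\,\tilde w_i$, with $\sqrt{h/2} \le 1$ for $h \in (0,1]$.

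Next I would make the integral absolutely convergent by iterated integration by parts using the identities
\begin{equation*}
(1 - \tfrac{1}{4}\Delta_{\tilde w_1}) e^{2i\sigma(\tilde w_1, \tilde w_2)} = (1 + |\tilde w_2|^2) e^{2i\sigma(\tilde w_1, \tilde w_2)} \,, \qquad
(1 - \tfrac{1}{4}\Delta_{\tilde w_2}) e^{2i\sigma(\tilde w_1, \tilde w_2)} = (1 + |\tilde w_1|^2) e^{2i\sigma(\tilde w_1, \tilde w_2)} \,.
\end{equation*}
Applying each of these $N$ times produces integrable weights $(1 + |\tilde w_1|^2)^{-N}(1 + |\tilde w_2|^2)^{-N}$ at the cost of derivatives of the amplitude. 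Each derivative with respect to $\tilde w_i$ of $a_i(z + \sqrt{h/2}\,\tilde w_i)$ produces a factor $\sqrt{h/2} \le 1$ together with a derivative of $a_i$ evaluated at $z + \sqrt{h/2}\,\tilde w_i$, so the result is bounded uniformly in $h \in (0,1]$ by a seminorm of $a_i$ in $S(f_i)$.

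To estimate derivatives $\partial_z^\alpha (a_1 \moyal_h a_2)$ I would simply differentiate under the integral sign: each $z$-derivative falls on $a_1$ or $a_2$, producing derivatives of $a_i$ of order up to $|\alpha|$, again evaluated at the shifted point. To conclude, I would invoke the order function property: by definition there exist $C > 0$ and $N_0 \in \N$ such that
\begin{equation*}
f_i\bigl(z + \sqrt{h/2}\,\tilde w_i\bigr) \le C \langle \sqrt{h/2}\,\tilde w_i \rangle^{N_0} f_i(z) \le C \langle \tilde w_i \rangle^{N_0} f_i(z)
\end{equation*}
for $h \in (0,1]$. Choosing the number $N$ of integrations by parts so that $2N - N_0 > 2d$ makes the weight $\langle \tilde w_1\rangle^{N_0 - 2N}\langle \tilde w_2\rangle^{N_0 - 2N}$ integrable, and collecting everything yields, for each $\ell \in \N$, the estimate
\begin{equation*}
|a_1 \moyal_h a_2|_{S(f_1 f_2)}^\ell \le C_\ell\, |a_1|_{S(f_1)}^k\, |a_2|_{S(f_2)}^k
\end{equation*}
with $k = \ell + 2N$ and $C_\ell$ independent of $h$ and of $(a_1, a_2)$.

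The main obstacle is really just the bookkeeping: making rigorous the oscillatory integral representation for symbols that are not in $\sch$ requires regularizing (for instance by multiplying $a_1, a_2$ by cut-offs $\chi(\varepsilon \cdot) \in \cont_c^\infty$ and passing to the limit $\varepsilon \to 0$ using the uniform bounds above, which is standard), and verifying that the factor $\sqrt{h/2}$ coming out of each $\tilde w$-derivative really does absorb the dangerous growth of the order function uniformly in $h \in (0,1]$.
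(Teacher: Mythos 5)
Your argument is correct and is precisely the standard proof of the result that the paper cites rather than reproves (Zworski, Theorem~4.17; Lerner, Theorem~2.3.7): oscillatory-integral representation of $\moyal_h$, rescaling to remove $h$ from the phase, iterated integration by parts against $(1-\tfrac14\Delta_{\tilde w_i})/(1+|\tilde w_{3-i}|^2)$, and the polynomial-growth bound from the order-function property, all uniform in $h\in(0,1]$ because each transferred derivative costs a factor bounded by $1$. The only slip is cosmetic: with prefactor $(\pi h)^{-2d}$ and phase $e^{\frac{2i}{h}\sigma(w_1,w_2)}$, the substitution producing the phase $e^{2i\sigma(\tilde w_1,\tilde w_2)}$ and prefactor $\pi^{-2d}$ is $w_i=\sqrt{h}\,\tilde w_i$ rather than $\sqrt{h/2}\,\tilde w_i$; since $\sqrt{h}\le 1$ on $(0,1]$ this changes nothing in the estimates.
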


A stationary phase argument leads to an asymptotic expansion of the Moyal product
\begin{equation*}
a_1 \moyal a_2
    \sim \sum_j \dfrac{(- \ii/2)^j}{j!} \sympf(\partial_{\rho_1}, \partial_{\rho_2})^j a_1(\rho_1) a_2(\rho_2)_{\vert \rho = \rho_1 = \rho_2} \,.
\end{equation*}
In the sequel, we denote by $\cal{R}_{j_0}(a_1, a_2)$ the remainder of order $j_0$ in this asymptotic expansion, namely
\begin{equation*}
\cal{R}_{j_0}(a_1, a_2)(\rho)
    = a_1 \moyal a_2 - \sum_{j = 0}^{j_0 - 1} \dfrac{(- \ii/2)^j}{j!} \sympf(\partial_{\rho_1}, \partial_{\rho_2})^j a_1(\rho_1) a_2(\rho_2)_{\vert \rho_1 = \rho_2 = \rho} \,.
\end{equation*}

Estimates on this remainder term are usually stated as follows.

\begin{prop}[Pseudodifferential calculus]
Let $f_1, f_2$ be two order functions. Then for any integer $j_0 \ge 1$, the map
\begin{align*}
S(f_1) \times S(f_2) &\to S(f_1 f_2) \\
(a_1, a_2) &\mapsto \cal{R}_{j_0}(a_1, a_2)
\end{align*}
is bilinear continuous.
\end{prop}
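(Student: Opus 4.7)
The plan is to obtain the remainder via Taylor's formula applied to the exponential appearing in the integral representation of the Moyal product, and then to recognize each term in the resulting expression as a semi-classical Moyal product covered by Proposition~\ref{prop:continuitymoyalproduct}. More precisely, starting from
\begin{equation*}
(a_1 \moyal a_2)(\rho)
	= \e^{- \frac{\ii}{2} \sympf(\partial_{\rho_1}, \partial_{\rho_2})} a_1(\rho_1) a_2(\rho_2) \big|_{\rho_1 = \rho_2 = \rho} \,,
\end{equation*}
I would apply the one-variable Taylor formula with integral remainder
\begin{equation*}
\e^{x} = \sum_{j = 0}^{j_0 - 1} \dfrac{x^j}{j!} + \dfrac{x^{j_0}}{(j_0 - 1)!} \int_0^1 (1 - s)^{j_0 - 1} \e^{s x} \dd s
\end{equation*}
formally to $x = - \frac{\ii}{2} \sympf(\partial_{\rho_1}, \partial_{\rho_2})$, yielding the candidate identity
\begin{equation*}
\cal{R}_{j_0}(a_1, a_2)(\rho)
	= \dfrac{(- \ii/2)^{j_0}}{(j_0 - 1)!} \int_0^1 (1 - s)^{j_0 - 1} \, \e^{- \frac{\ii s}{2} \sympf(\partial_{\rho_1}, \partial_{\rho_2})} \sympf(\partial_{\rho_1}, \partial_{\rho_2})^{j_0} a_1(\rho_1) a_2(\rho_2) \big|_{\rho_1 = \rho_2 = \rho} \dd s \,.
\end{equation*}

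Next, expanding $\sympf(\partial_{\rho_1}, \partial_{\rho_2})^{j_0}$ by the multinomial formula produces a finite linear combination of terms of the form $(\partial^\alpha a_1)(\rho_1) \cdot (\partial^\beta a_2)(\rho_2)$ with $\abs{\alpha} = \abs{\beta} = j_0$. Applying $\e^{- \frac{\ii s}{2} \sympf(\partial_{\rho_1}, \partial_{\rho_2})}$ and restricting to $\rho_1 = \rho_2 = \rho$ precisely gives the $s$-Moyal product $(\partial^\alpha a_1) \moyal_s (\partial^\beta a_2)(\rho)$. I would therefore obtain an identity of the shape
\begin{equation*}
\cal{R}_{j_0}(a_1, a_2)
	= \sum_{\abs{\alpha} = \abs{\beta} = j_0} c_{\alpha, \beta} \int_0^1 (1 - s)^{j_0 - 1} \left( \partial^\alpha a_1 \right) \moyal_s \left( \partial^\beta a_2 \right) \dd s \,.
\end{equation*}
Since $\partial^\alpha a_1 \in S(f_1)$ with $\abs{\partial^\alpha a_1}_{S(f_1)}^\ell \le \abs{a_1}_{S(f_1)}^{\ell + j_0}$, and similarly for $a_2$, Proposition~\ref{prop:continuitymoyalproduct}---whose constants are uniform in $s \in [0, 1]$---provides, for each seminorm index $\ell$, an integer $k$ and a constant $C$ such that
\begin{equation*}
\abs*{\left( \partial^\alpha a_1 \right) \moyal_s \left( \partial^\beta a_2 \right)}_{S(f_1 f_2)}^\ell
	\le C \abs*{a_1}_{S(f_1)}^{k + j_0} \abs*{a_2}_{S(f_2)}^{k + j_0} \,,
\end{equation*}
uniformly in $s$. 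Integration over $s \in [0, 1]$ and summation over the finitely many $(\alpha, \beta)$ would then yield the required seminorm estimate.

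The main obstacle is the rigorous justification of the Taylor identity above, since $\e^{- \frac{\ii}{2} \sympf(\partial_{\rho_1}, \partial_{\rho_2})}$ is an unbounded formal series of differential operators and must be understood as an oscillatory integral operator. I would handle this by first establishing the identity on the dense subspace $\sch(\R^{2d}) \times \sch(\R^{2d})$, where the Moyal product and its $s$-deformation admit an absolutely convergent Fourier representation that makes Taylor expansion under the integral sign fully legitimate. Having verified the pointwise identity for Schwartz symbols, I would extend it to arbitrary $(a_1, a_2) \in S(f_1) \times S(f_2)$ by a standard approximation argument: truncating with a Schwartz cut-off $\chi_\epsilon \to 1$ preserves membership in the relevant symbol classes with controlled seminorms, and Proposition~\ref{prop:continuitymoyalproduct} together with the bilinear continuity of the truncated Taylor expansion of $\moyal_h$ (already known from the proof of Proposition~\ref{prop:continuitymoyalproduct}) allows passing to the limit in the Fréchet topology of $S(f_1 f_2)$.
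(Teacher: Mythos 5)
Your proposal is correct and follows essentially the same route as the paper: the paper's proof (given for the refined version, Proposition~\ref{prop:refinedestimateremainderpseudocalc}) also starts from the integral Taylor-remainder formula for $\cal{R}_{j_0}$ (cited from Zworski, Theorem 4.11), expands $\sympf(\partial_{\rho_1}, \partial_{\rho_2})^{j_0}$ to recognize the integrand as a finite sum of $t$-Moyal products $\partial^{\alpha_1} a_1 \moyal_t \partial^{\alpha_2} a_2$ with $\abs{\alpha_1} = \abs{\alpha_2} = j_0$, and concludes by the $t$-uniform bilinear continuity of $\moyal_t$ from Proposition~\ref{prop:continuitymoyalproduct}. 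The only difference is that the paper simply cites the explicit integral formula rather than rederiving it via the Schwartz-density argument you sketch, which is standard and poses no difficulty.
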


In our study, it will be convenient to have a slightly more precise statement. Actually, the explicit formula for the remainder allows to prove that its seminorms are controlled not only by the seminorms of $a_1$ and $a_2$ but more precisely by the seminorms of the derivatives $\dd^{j_0} a_1$ and $\dd^{j_0} a_2$.

\begin{prop}[Refined estimate] \label{prop:refinedestimateremainderpseudocalc}
Let $f_1, f_2$ be two order functions. Then for any $j_0 \ge 1$, it holds
\begin{equation*}
\forall \ell \in \N, \exists k \in \N, \exists C_\ell > 0 : \qquad
	\abs*{\cal{R}_{j_0}(a_1, a_2)}_{S(f_1 f_2)}^\ell
    	\le C_\ell \abs*{\dd^{j_0} a_1}_{S(f_1)}^k \abs*{\dd^{j_0} a_2}_{S(f_2)}^k \,,
\end{equation*}
for all $(a_1, a_2) \in S(f_1) \times S(f_2)$.
\end{prop}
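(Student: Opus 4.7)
The plan is to exploit the integral form of the Taylor remainder applied to the one-parameter family of $t$-Moyal products, and then reduce to the continuity statement of Proposition~\ref{prop:continuitymoyalproduct}, which is uniform in $h \in (0,1]$.

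More precisely, I would start from the identity
\begin{equation*}
(a_1 \moyal a_2)(\rho)
    = \e^{-\frac{\ii}{2} \sympf(\partial_{\rho_1}, \partial_{\rho_2})} a_1(\rho_1) a_2(\rho_2)_{\vert \rho_1 = \rho_2 = \rho}
\end{equation*}
and introduce the auxiliary function
\begin{equation*}
F(t; \rho)
    = \e^{-\frac{\ii t}{2} \sympf(\partial_{\rho_1}, \partial_{\rho_2})} a_1(\rho_1) a_2(\rho_2)_{\vert \rho_1 = \rho_2 = \rho}
    = (a_1 \moyal_t a_2)(\rho) \,, \qquad t \in [0,1] \,.
\end{equation*}
A direct computation shows that $\partial_t^j F(t;\rho)$ equals $(-\ii/2)^j \sympf(\partial_{\rho_1},\partial_{\rho_2})^j a_1 a_2$ evaluated at $\rho_1=\rho_2=\rho$ for $t=0$, and more generally the analogous $t$-Moyal product at $t\in[0,1]$. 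Writing Taylor's formula with integral remainder at $t=1$ around $t=0$ then yields exactly
\begin{equation*}
\cal{R}_{j_0}(a_1, a_2)(\rho)
    = \left(-\dfrac{\ii}{2}\right)^{j_0} \int_0^1 \dfrac{(1 - t)^{j_0 - 1}}{(j_0 - 1)!} \left[\sympf(\partial_{\rho_1}, \partial_{\rho_2})^{j_0} a_1(\rho_1) a_2(\rho_2)\right]_t(\rho) \dd t \,,
\end{equation*}
where $[\,\cdot\,]_t$ denotes the corresponding $t$-Moyal pairing.

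The next step is to rewrite $\sympf(\partial_{\rho_1}, \partial_{\rho_2})^{j_0}$ as a finite linear combination of products $\partial^\alpha_{\rho_1} \partial^\beta_{\rho_2}$ with $\abs{\alpha} = \abs{\beta} = j_0$: this follows from the very structure of $\sympf(\partial_{\rho_1}, \partial_{\rho_2}) = \sum_j (\partial_{x_1, j} \partial_{\xi_2, j} - \partial_{\xi_1, j} \partial_{x_2, j})$, each application of which differentiates $a_1$ exactly once and $a_2$ exactly once. Consequently,
\begin{equation*}
\cal{R}_{j_0}(a_1, a_2)
    = \left(-\dfrac{\ii}{2}\right)^{j_0} \sum_{\abs{\alpha} = \abs{\beta} = j_0} c_{\alpha,\beta} \int_0^1 \dfrac{(1 - t)^{j_0 - 1}}{(j_0 - 1)!} \left(\partial^\alpha a_1\right) \moyal_t \left(\partial^\beta a_2\right) \dd t \,,
\end{equation*}
for some fixed combinatorial constants $c_{\alpha,\beta}$.

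Now I would invoke Proposition~\ref{prop:continuitymoyalproduct}: since $\partial^\alpha a_1 \in S(f_1)$ and $\partial^\beta a_2 \in S(f_2)$ for $\abs{\alpha} = \abs{\beta} = j_0$ (and their seminorms are controlled by $\abs{\dd^{j_0} a_j}_{S(f_j)}^\bullet$), the $t$-Moyal product lies in $S(f_1 f_2)$ with seminorm estimates uniform in $t \in [0,1]$. Integrating against the finite measure $(1-t)^{j_0-1}\dd t/(j_0-1)!$ on $[0,1]$ and summing the finitely many contributions of multi-indices $(\alpha, \beta)$ gives the claimed bound
\begin{equation*}
\abs*{\cal{R}_{j_0}(a_1, a_2)}_{S(f_1 f_2)}^\ell
    \le C_\ell \abs*{\dd^{j_0} a_1}_{S(f_1)}^k \abs*{\dd^{j_0} a_2}_{S(f_2)}^k \,.
\end{equation*}

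The only mildly delicate point is the uniformity in $t \in [0,1]$ of the bounds in Proposition~\ref{prop:continuitymoyalproduct}; this is however already built into its statement. The rest is bookkeeping of multi-indices and exchanging integration and symbol seminorms, which is straightforward since the integrand depends continuously on $t$ in the Fréchet space $S(f_1 f_2)$. No delicate point arises in extracting $j_0$ derivatives instead of plain seminorms of $a_1, a_2$: this is precisely the content of the explicit Taylor remainder, which forces every differentiation to hit one of the two factors $j_0$ times.
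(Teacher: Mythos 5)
Your proposal is correct and follows essentially the same route as the paper: both start from the explicit integral form of the Taylor remainder, expand $\sympf(\partial_{\rho_1},\partial_{\rho_2})^{j_0}$ into terms of the form $(\partial^\alpha a_1)\moyal_t(\partial^\beta a_2)$ with $\abs{\alpha}=\abs{\beta}=j_0$, and conclude by the $t$-uniform continuity of the Moyal product. The only cosmetic difference is that the paper first records the $\ell=0$ seminorm bound and then propagates it to higher seminorms via the Leibniz identity $\partial\cal{R}_{j_0}(a_1,a_2)=\cal{R}_{j_0}(\partial a_1,a_2)+\cal{R}_{j_0}(a_1,\partial a_2)$, whereas you invoke Proposition~\ref{prop:continuitymoyalproduct} for all seminorms at once; both are valid.
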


\begin{proof}
We outline the arguments of the proof, which are classical, trying to keep track of constants carefully. The starting point of this result is the explicit expression of the remainder (see~\cite[Theorem 4.11]{Zworski:book} for instance):
\begin{equation*}
\cal{R}_{j_0}(a_1, a_2)(\rho)
    = \left(\dfrac{- \ii}{2}\right)^{j_0} \int_0^1 \dfrac{(1 - t)^{j_0 - 1}}{(j_0 - 1)!} \e^{-\ii \frac{t}{2} \sympf(\partial_{\rho_1}, \partial_{\rho_2})} \sympf(\partial_{\rho_1}, \partial_{\rho_2})^{j_0} a_1(\rho_1) a_2(\rho_2)_{\vert \rho_1 = \rho_2 = \rho} \dd t \,.
\end{equation*}
The binomial expansion of $\sympf(\partial_{\rho_1}, \partial_{\rho_2})^{j_0}$ exhibits a particular structure: we observe that the integrand of the integral over $t$ can be written as a sum of terms of the form
\begin{equation*}
\e^{-\ii \frac{t}{2} \sympf(\partial_{\rho_1}, \partial_{\rho_2})} (\partial^{\alpha_1} a_1)(\rho_1) (\partial^{\alpha_2} a_2)(\rho_2)_{\vert \rho_1 = \rho_2 = \rho}
\end{equation*}
with $\abs{\alpha_1} = \abs{\alpha_2} = j_0$, which corresponds exactly to $\partial^{\alpha_1} a_1 \moyal_t \partial^{\alpha_2} a_2$. By Proposition~\ref{prop:continuitymoyalproduct}, we know that the Moyal product is a bilinear continuous map $S(f_1) \times S(f_2) \to S(f_1 f_2)$ with respect to the Fréchet space topology, with seminorm estimates independent of $t \in (0, 1]$. This yields
\begin{equation} \label{eq:continuityRcalell=0}
\abs*{\cal{R}_{j_0}(a_1, a_2)}_{S(f_1 f_2)}^0
	\le C_0 \abs*{\dd^{j_0} a_1}_{S(f_1)}^k \abs*{\dd^{j_0} a_2}_{S(f_2)}^k \,.
\end{equation}
In order to handle seminorms of order $\ell \ge 0$, we use the Leibniz formula:
\begin{equation*}
\partial \cal{R}_{j_0}(a_1, a_2)
	= \cal{R}_{j_0}\left(\partial a_1, a_2\right) + \cal{R}_{j_0}\left(a_1, \partial a_2\right) \,,
\end{equation*}
and we apply~\eqref{eq:continuityRcalell=0}. The result follows.
\end{proof}

\subsection{Positivity}

Heuristically, the quantization of a non-negative symbol is an almost-non-negative operator. The formal statement, known as the G{\aa}rding inequality, says that the negative part of the operator is controlled in terms of the Planck parameter in semiclassical analysis, or exhibits some decay at infinity in the phase space in microlocal analysis. In the main part of the article, we need to apply the G{\aa}rding inequality to a symbol in $S(1)$ whose derivatives, of any order, behave like $1/R$, where $R$ is a large parameter. Unfortunately, such a symbol does not fit in the semiclassical framework, in which derivatives of order $j$ behave like $1/R^j$. Thus we provide in this paragraph a refined statement of the sharp G{\aa}rding inequality that keeps track of the dependence of the remainder term on the seminorms of the derivatives of the symbol.

\begin{prop}[Sharp G{\aa}rding inequality] \label{prop:Gaarding}
There exists a constant $c_d > 0$ and an integer $k_d \ge 0$ depending only on the dimension $d$ such that the following holds. For any real-valued symbol $a \in S(1)$ satisfying $a \ge 0$, one has
\begin{equation*}
    \Op{a} \ge -c_d \abs*{\hess a}_{S(1)}^{k_d} \id \,.
\end{equation*}
\end{prop}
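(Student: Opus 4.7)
The plan is to carry out the Friedrichs (anti-Wick) symmetrization, paying careful attention to how the remainder depends on $a$. The anti-Wick quantization is defined, via the family of coherent states $\varphi_{\rho_0}(x) = \pi^{-d/4} \e^{\ii \xi_0 \cdot (x - x_0/2)} \e^{- \abs{x - x_0}^2/2}$, by
\begin{equation*}
\Op[AW]{a} u = \int_{\R^{2d}} a(\rho_0) \inp*{u}{\varphi_{\rho_0}}_{L^2} \varphi_{\rho_0} \dd \rho_0 \,,
\end{equation*}
and enjoys the automatic positivity property $a \ge 0 \Rightarrow \Op[AW]{a} \ge 0$, since the pairing against $u$ produces $\int a(\rho_0) \abs{\inp*{u}{\varphi_{\rho_0}}}^2 \dd \rho_0 \ge 0$. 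Thus the strategy is to write $\Op{a} = \Op[AW]{a} + \bigl( \Op{a} - \Op[AW]{a} \bigr)$ and to show that the correction term is a bounded operator whose norm is controlled solely by seminorms of $\hess a$.

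The key identity is the classical formula $\Op[AW]{a} = \Op{a \ast G}$, where $G$ is the phase-space Gaussian $G(\rho) = \pi^{-d} \e^{- \abs{\rho}^2}$ arising as the Wigner transform of $\varphi_0$. Therefore $\Op{a} - \Op[AW]{a} = \Op{a - a \ast G}$, and the task reduces to estimating $a - a\ast G$ in $S(1)$ in terms of $\hess a$. Because $G$ is even with unit mass, the zeroth and first order terms of a Taylor expansion of $a(\rho - \eta)$ inside the convolution cancel, leaving the integral remainder
\begin{equation*}
(a \ast G - a)(\rho) = \int_{\R^{2d}} \int_0^1 (1 - t) \inp*{\hess a(\rho - t \eta) \eta}{\eta} G(\eta) \dd t \dd \eta \,.
\end{equation*}
Differentiating under the integral sign and exploiting the Gaussian decay of $G(\eta) \abs{\eta}^2$ in $\eta$, one obtains, for every $\ell \in \N$, a constant $C_\ell > 0$ such that $\abs*{a - a \ast G}_{S(1)}^\ell \le C_\ell \abs*{\hess a}_{S(1)}^\ell$.

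Finally, applying the Calderón--Vaillancourt theorem (Theorem~\ref{thm:CV}) to the symbol $a - a \ast G \in S(1)$ yields an integer $k_d$ and a constant $c_d > 0$, both depending only on $d$, such that $\norm*{\Op{a - a \ast G}}_{L^2 \to L^2} \le c_d \abs*{\hess a}_{S(1)}^{k_d}$. Combining with the positivity of $\Op[AW]{a}$ gives
\begin{equation*}
\Op{a} \ge \Op[AW]{a} - c_d \abs*{\hess a}_{S(1)}^{k_d} \id \ge - c_d \abs*{\hess a}_{S(1)}^{k_d} \id \,,
\end{equation*}
which is the claim. The main technical point will be verifying the seminorm bound on $a - a\ast G$ in step two: one must justify differentiation under the double integral and check that the decay of the Gaussian weight absorbs the polynomial growth coming from the order function for $S(1)$; all of this is routine via dominated convergence, but is where the restriction to \emph{Hessian} seminorms (rather than all seminorms of $a$) is genuinely exploited.
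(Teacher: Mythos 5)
Your proof is correct, and it takes a genuinely different route from the paper's. You use the Friedrichs/anti-Wick symmetrization: positivity comes for free from the coherent-state representation, the identity between the anti-Wick quantization of $a$ and $\Op{a \ast G}$ reduces everything to the correction symbol $a - a \ast G$, and the second-order Taylor remainder (the zeroth- and first-order terms cancelling because $G$ is even with unit mass) shows that every $S(1)$-seminorm of $a - a \ast G$ is bounded by the corresponding seminorm of $\hess a$ --- indeed $\partial^\alpha(a - a\ast G) = \partial^\alpha a - (\partial^\alpha a)\ast G$, so the same integral remainder formula applies verbatim to each derivative, and the restriction to Hessian seminorms is exactly what this cancellation buys. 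One application of Theorem~\ref{thm:CV} then finishes. The paper instead runs a resolvent argument: it shows that $\Op{a - z}$ is invertible for $z$ sufficiently negative by building the approximate inverse $\Op{(a-z)^{-1}}$, where the pointwise inequality $\abs{\nabla a} \le \sqrt{2 \abs{\hess a}_\infty\, a}$ for non-negative symbols feeds into a Fa\`a di Bruno computation of the seminorms of $(a-z)^{-1}$, followed by the refined remainder estimate of Proposition~\ref{prop:refinedestimateremainderpseudocalc} and a Neumann series. Your route is more direct and makes the dependence on $\hess a$ more transparent (a single explicit Taylor remainder), at the cost of introducing the anti-Wick calculus, which the paper does not otherwise set up; the paper's route stays entirely within the pseudodifferential toolbox already established in its appendix. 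The one point to fix in your write-up is the normalization: with the resolution of identity $(2\pi)^{-d}\int_{\R^{2d}} \inp*{u}{\varphi_{\rho_0}}\,\varphi_{\rho_0}\,\dd\rho_0 = u$, the operator equal to $\Op{a\ast G}$ is $(2\pi)^{-d}$ times the integral you wrote. This missing factor is harmless for positivity (a positive multiple of a non-negative operator is non-negative), but it should appear if you want the quantization of the constant symbol $1$ to be $\id$.
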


\begin{proof}
We redo the usual proof (see for instance~\cite{Zworski:book}) using the refined estimate on the remainder in the pseudodifferential calculus (Proposition~\ref{prop:refinedestimateremainderpseudocalc}). Let us prove that for $z$ sufficiently negative, the operator $\Op{a - z}$ is invertible, which in turn shows that it is non-negative by classical arguments.

\emph{Step 1 \--- Estimate of the derivatives of $(a - z)^{-1}$.}
Using the assumption that $a \ge 0$, we classically have
\begin{equation} \label{eq:nablaa}
\abs{\nabla a(\rho)} \le \sqrt{2 \abs{\hess a}_\infty a(\rho)} \,, \qquad \forall \rho \in \R^{2d}
\end{equation}
(see~\cite[Lemma 4.31]{Zworski:book} for instance). Besides the Faà di Bruno Formula tells us that for any nonzero $\alpha \in \N^{2d}$, the partial derivative $\partial^\alpha (a - z)^{-1}$ can be computed as a sum of terms of the form
\begin{equation*}
\dfrac{1}{(a - z)^{1 + \ell}} \prod_{j = 1}^\ell \partial^{\alpha_j} a \,,
\end{equation*}
with $1 \le \ell \le \abs{\alpha}$, $\sum_{j = 1}^\ell \alpha_j = \alpha$, $\abs{\alpha_j} \neq 0, \forall j$. Denote by $\ell'$ the number indices $j$ such that $\abs{\alpha_j} = 1$. We apply~\eqref{eq:nablaa} to the $\ell'$ factors of the form $\partial^{\alpha_j} a$ corresponding to these indices, and we bound the $\ell - \ell'$ other ones by seminorms of the Hessian of $a$ (recall that $\abs{\alpha_j} \ge 2$ for those remaining indices). We obtain
\begin{equation*}
\abs*{\dfrac{1}{(a - z)^{1 + \ell}} \prod_{j = 1}^\ell \partial^{\alpha_j} a}
    \le \dfrac{1}{\abs{a - z}^{1 + \ell}} \left(2 \abs*{\hess a}_\infty a(\rho)\right)^{\ell'/2} \left(\abs*{\hess a}_{S(1)}^{\abs{\alpha}}\right)^{\ell - \ell'} \,.
\end{equation*}
We deduce that
\begin{align*}
\abs*{\dfrac{1}{(a - z)^{1 + \ell}} \prod_{j = 1}^\ell \partial^{\alpha_j} a}
    &\le \dfrac{1}{\abs{a - z}^{1 + \ell}} 2^{\frac{\ell'}{2}} \abs*{a(\rho)}^{\frac{\ell'}{2}} \left(\abs*{\hess a}_{S(1)}^{\abs{\alpha}}\right)^{\ell - \frac{\ell'}{2}} \\
    &\le \dfrac{1}{\abs{a - z}^{1 + \ell}} 2^{\ell'} \left(\abs*{a - z}^{\frac{\ell'}{2}} + \abs{z}^{\frac{\ell'}{2}}\right) \left(\abs*{\hess a}_{S(1)}^{\abs{\alpha}}\right)^{\ell - \frac{\ell'}{2}} \,.
\end{align*}
Putting together all the terms in the Faà di Bruno Formula, and using that $a - z \ge \abs{z}$ (since $z \le 0$), we finally get that there exists a constant $C > 0$ (depending on $\abs{\alpha}$) such that
\begin{equation*}
\abs*{\partial^\alpha \dfrac{1}{a - z}}
    \le \dfrac{C}{\abs{z}} \max_{\substack{1 \le \ell \le \abs{\alpha} \\ 0 \le \ell' \le \ell}} \left( \dfrac{\abs{\hess a}_{S(1)}^{\abs{\alpha}}}{\abs{z}} \right)^{\ell - \frac{\ell'}{2}} \,.
\end{equation*}
Assuming that $\abs{z} \ge \abs{\hess a}_{S(1)}^{\abs{\alpha}}$, we arrive at
\begin{equation} \label{eq:seminorminversea-z}
\abs*{\partial^\alpha \dfrac{1}{a - z}}
    \le \dfrac{C}{\abs{z}} \sqrt{\dfrac{\abs{\hess a}_{S(1)}^{\abs{\alpha}}}{\abs{z}}} \,.
\end{equation}

\emph{Step 2 \---  Invertibility of $\Op{a - z}$.}
From the previous step, we know that $a - z$ and $(a - z)^{-1}$ are in $S(1)$ with explicit seminorm estimates, provided $\abs{z}$ is large enough. We perform the pseudodifferential calculus:
\begin{equation*}
\Op{a - z} \Op{\dfrac{1}{a - z}} = \id + 0 + \Op{\cal{R}_2} \,,
\end{equation*}
keeping in mind that the second term in the asymptotic expansion vanishes because both symbols are functions of the same symbol. According to the Calder\'{o}n-Vaillancourt Theorem (Theorem~\ref{thm:CV}), our refined estimate on the remainder (Proposition~\ref{prop:refinedestimateremainderpseudocalc}), and finally to~\eqref{eq:seminorminversea-z}, we obtain
\begin{equation*}
\norm*{\Op{\cal{R}_2}}_{L^2 \to L^2}
    \le C_d \abs*{\cal{R}_2}_{S(1)}^{k_d}
    \le C_d \abs*{\hess a}_{S(1)}^{k_1'} \abs*{\hess (a - z)^{-1}}_{S(1)}^{k_2'}
    \le C \left(\dfrac{\abs{\hess a}_{S(1)}^k}{\abs{z}}\right)^{3/2} \,,
\end{equation*}
for some constant $C$ and some integer $k$ independent of $a$ and $z$, and provided $z$ is negative enough. Actually when $z \le - (2 C)^{2/3} \abs{\hess a}_{S(1)}^k$, we obtain that $\norm{\Op{\cal{R}_2}} \le 1/2$, so that $\id + \Op{\cal{R}_2}$ is invertible by Neumann series. This leads classically to the invertibility of $\Op{a - z}$, which concludes the proof.
\end{proof}

\backmatter
\small
\bibliographystyle{plain}
\bibliography{biblio}

\end{document}